\numberwithin{equation}{section}
\def\namedlabel#1#2{\begingroup
    #2%
    \def\@currentlabel{#2}%
    \phantomsection\label{#1}\endgroup
}
\newtheorem{Theorem}{Theorem}[section]
\newtheorem{Lemma}[Theorem]{Lemma}
\newtheorem{Proposition}[Theorem]{Proposition}	
\newtheorem{Corollary}[Theorem]{Corollary}	
\theoremstyle{definition}
\newtheorem{Remark}[Theorem]{Remark}	 
\newtheorem{Definition}[Theorem]{Definition} 
\numberwithin{equation}{section}
\newcommand{\C}{\mathbb{C}} 
\newcommand{\R}{\mathbb{R}} 
\newcommand{\Z}{\mathbb{Z}} 
\newcommand{\N}{\mathbb{N}} 
\newcommand{\per}{\textup{per}}
\renewcommand{\Re}{\operatorname{Re}}
\renewcommand{\Im}{\operatorname{Im}}
\newcommand{\iu}{\mathrm{i}}
\newcommand{\eu}{\mathrm{e}}
\newcommand{\eps}{\varepsilon}
\newcommand{\at}{\mathsf{a}}
\newcommand{\ub}{\mathbf{u}}
\newcommand{\El}{\mathcal{L}}
\newcommand{\de}{\mathrm{d}}
\newcommand{\A}{\mathcal{A}}
\newcommand{\F}{\mathbb{F}}
\newcommand{\unu}{\underline{u}}
\newcommand{\unub}{\underline{\mathbf{u}}}
\newcommand{\Non}{\mathcal{N}}
\DeclareMathOperator{\sech}{sech}
\title{Multiple front and pulse solutions in spatially periodic systems}
\author{Lukas Bengel$^*$ and Bj\"orn de Rijk\thanks{Department of Mathematics, Karlsruhe Institute of Technology, Englerstra\ss e 2, 76131 Karlsruhe, Germany; \texttt{lukas.bengel@kit.edu}, \texttt{bjoern.de-rijk@kit.edu}}}
\begin{document}
\maketitle

\begin{abstract} 
In this paper, we develop a comprehensive mathematical toolbox for the construction and spectral stability analysis of stationary multiple front and pulse solutions to general semilinear evolution problems on the real line with spatially periodic coefficients. Starting from a collection of $N$ nondegenerate primary front solutions with matching periodic end states, we realize multifront solutions near a formal concatenation of these $N$ primary fronts, provided the distances between the front interfaces is sufficiently large. Moreover, we prove that nondegenerate primary pulses are accompanied by periodic pulse solutions of large spatial period. We show that spectral (in)stability properties of the underlying primary fronts or pulses are inherited by the bifurcating multifronts or periodic pulse solutions. The existence and spectral analyses rely on contraction-mapping arguments and Evans-function techniques, leveraging exponential dichotomies to characterize invertibility and Fredholm properties. To demonstrate the applicability of our methods, we analyze the existence and stability of multifronts and periodic pulse solutions in some benchmark models, such as the Gross-Pitaevskii equation with periodic potential and a Klausmeier reaction-diffusion-advection system, thereby identifying novel classes of (stable) solutions. In particular, our methods yield the first spectral and orbital stability result of periodic waves in the Gross-Pitaevskii equation with periodic potential, as well as new instability criteria for multipulse solutions to this equation.

\paragraph*{Keywords.} Periodic coefficients, multifronts, periodic pulse solutions, spectral stability, exponential dichotomies, Evans function \\
\textbf{Mathematics Subject Classification (2020).} Primary, 34L05, 35B10, 35B35; Secondary, 34C37, 35K57, 35Q55

\end{abstract}


\section{Introduction} \label{sec:intro}

Let $k,m \in \N$, $T > 0$, and $\F \in \{\R,\C\}$. This paper focuses on stationary front and pulse solutions in general semilinear evolution systems on the real line of the form
\begin{align} \label{e:sys_intro}
\begin{split}
\partial_t u &= \alpha_k(x) \partial_x^k u + \ldots + \alpha_1(x) \partial_x u + \Non(u,x), \qquad u(x,t) \in \F^m, \, x \in \R, \, t \geq 0,\end{split}
\end{align}
with continuous $T$-periodic coefficient functions $\alpha_i \colon \R \to \F^{m \times m}$, and continuous nonlinearity $\Non \colon \F^m \times \R \to \F^m$ which is $C^2$ in its first argument and $T$-periodic in its second argument. We further assume that $\alpha_k(x)$ is invertible for each $x \in \R$. 

Spatially periodic systems of the form~\eqref{e:sys_intro} arise in a wide range of contexts. For instance, they appear as a mean-field approximations in the study of Bose-Einstein condensates in periodic trapping lattices~\cite{Pelinovsky2011}, as models in nonlinear optics with periodic potential~\cite{Bengel_Pinning_2024} or periodic forcing~\cite{Gasmi_Bandwith_2023}, as hydrodynamic bifurcation problems over oscillating domains~\cite{Eckhaus_1997,Schielen_1998}, as ecological models for the dynamics of vegetation patterns on periodic topographies~\cite{Bastiaansen2020}, as equations describing the vertical infiltration of water through periodically layered unsaturated soils~\cite{Fennemore_1998}, or as reaction-diffusion systems in population biology in which the environment consists of favorable and unfavorable patches that are arranged periodically~\cite{Kinezaki_2003,Shigesada_1986}. The solutions of interest in these problems are typically pulse or front solutions: stationary gap solitons in Bose-Einstein condensates~\cite{Pelinovsky2011}, optical signals composed of (a sequence of) standing, highly localized pulses in~\cite{Bengel_Pinning_2024,Gasmi_Bandwith_2023}, stationary patterns consisting of localized patches of vegetation in~\cite{Bastiaansen2020}, so-called wetting fronts describing water infiltration in~\cite{Fennemore_1998}, and invasion fronts mediating population spreading in~\cite{Kinezaki_2003,Shigesada_1986}.

Due to the significance to applications, mathematical studies on the existence, stability, and dynamics of front and pulse solutions have been extensively conducted across a wide variety of spatially periodic systems, see, for instance, the survey paper~\cite{Xin_Front_2000}, the memoirs~\cite{Zelik_2009}, and references therein.
We note that \emph{traveling} front and pulse solutions in such systems are typically \emph{modulated}, i.e., they are time-periodic in the co-moving frame, see Remark~\ref{rem:TW}. 

In this paper, we focus on \emph{stationary} front and pulse solutions to general spatially periodic systems of the form~\eqref{e:sys_intro}. We show that any collection of nondegenerate front (or pulse) solutions with matching asymptotic end states is accompanied by a family of multiple front (or pulse) solutions arising through concatenation or periodic extension. Moreover, we prove that their spectral (in)stability properties are determined by the comprising primary front (or pulse) solutions. 

While stationary multifront and multipulse solutions have been studied in specific model problems, such as the Allen-Cahn equation with spatial  inhomogeneity~\cite{bastiaansen2025multifront} and the Gross-Pitaevskii equation with periodic potential~\cite{Ackerman2019Unstable,Alfimov2013,Pelinovsky2011}, a systematic framework for their construction and spectral stability analysis in general spatially periodic systems of the form~\eqref{e:sys_intro} appears to be novel.\footnote{We note that the existence and spectral analysis of multifronts in~\cite{bastiaansen2025multifront} depend on the smallness rather than the periodicity of the inhomogeneity, in contrast to our results and those in~\cite{Ackerman2019Unstable,Alfimov2013,Pelinovsky2011}.} Furthermore, the spectral stability of large-wavelength \emph{periodic} pulse solutions accompanying a primary pulse has, to the authors' best knowledge, not yet been rigorously addressed in any spatially periodic system prior to this paper.

\begin{Remark} \label{rem:TW}
The existence problem for \emph{traveling-wave solutions} of the form $u(x,t) = \phi(x-ct)$ to the constant-coefficient system~\eqref{e:sys_intro_cc} with wavespeed $c \in \R$ is the same as~\eqref{e:intro_stat} with the sole difference that the coefficient $\alpha_1$ is replaced by $\alpha_1 + c$. However, this no longer holds when the coefficients are spatially periodic. In a frame $\xi = x-ct$ moving with speed $c \in \R$, the coefficients of~\eqref{e:sys_intro} read $\alpha_j(\xi + ct)$. That is, they are periodic in time and space. Consequently, traveling wave solutions to~\eqref{e:sys_intro} of the form $u(x,t) = \phi(x-ct)$ propagating with nonzero speed $c$ while maintaining a fixed shape, cannot generally be expected. Instead, one typically finds modulated traveling waves of the form $u(x,t) = \phi(x-ct,x)$, where $\phi$ is $T$-periodic in its second component, cf.~\cite{Ding2015,Giannoulis_Interaction_2008,Gurevich_2018,Pelinovsky_Moving_2008,Uecker_Stable_2001}.
\end{Remark}

\subsection{The case of constant coefficients}

The construction of multiple front (or pulse) solutions through concatenation or periodic extension is well-documented in general semilinear problems with constant coefficients. If the coefficients $\alpha_i$ and nonlinearity $\Non(u)$ do not depend on $x$, then system~\eqref{e:sys_intro} reads
\begin{align} \label{e:sys_intro_cc}
\begin{split}
\partial_t u &= \alpha_k \partial_x^k u + \ldots + \alpha_1 \partial_x u + \Non(u). \end{split}
\end{align}
Stationary solutions of~\eqref{e:sys_intro_cc} obey the autonomous ordinary differential equation
\begin{align} \label{e:intro_stat}
\alpha_k \partial_x^k u + \ldots + \alpha_1 \partial_x u + \Non(u) = 0,
\end{align}
which can be written as a dynamical system $U' = F(U)$ in $U = (u,\partial_x u,\ldots,\partial_x^{k-1} u)$. Pulse and front solutions can be identified with homoclinic and heteroclinic connections in $U' = F(U)$. The existence of periodic, homoclinic or heteroclinic orbits near a nondegenerate homoclinic connection or a heteroclinic chain follows from homoclinic or heteroclinic bifurcation theory, relying on techniques such as Lin's method, Shil'nikov variables, and homoclinic center manifolds. The bifurcating orbits correspond to periodic pulse solutions with large spatial periods, as well as multifront (or multipulse) solutions with well-separated interfaces. An overview of homoclinic and heteroclinic bifurcation theory can be found in the survey paper~\cite{Homburg2010Homoclinic}. 

Spectral stability of the bifurcating periodic pulse and multifront solutions to~\eqref{e:sys_intro_cc} has been studied in~\cite{Alexander_Topological_1990, Gardner1997Spectral,Sandstede_Stability_1998,Sandstede_Stability_2001} using Lin's method and Evans-function techniques. One finds that there are precisely $M$ eigenvalues of the linearization about an $M$-pulse solution bifurcating from each simple isolated eigenvalue of the underlying primary pulse~\cite{Alexander_Topological_1990,Sandstede_Stability_1998}. Moreover, periodic pulse solutions have continua of eigenvalues in a neighborhood of each isolated eigenvalue of the primary pulse~\cite{Gardner1997Spectral}. Since~\eqref{e:sys_intro_cc} is translational invariant, $0$ must be an eigenvalue of each primary pulse or front. Hence, there are $M$ eigenvalues of the linearization of~\eqref{e:sys_intro_cc} about an $M$-front or $M$-pulse converging to the $0$ as the distance between interfaces tends to infinity. In addition, the linearization of~\eqref{e:sys_intro_cc} about a periodic pulse solution, posed on a space of localized perturbations, features a spectral curve converging to $0$ as the period tends to infinity. Leading-order control on the spectrum in a neighborhood of the origin, established in~\cite{Sandstede_Stability_1998,Sandstede_Stability_2001}, shows that the bifurcating periodic pulse and multifront solutions to~\eqref{e:sys_intro_cc} can be unstable, even if all the underlying primary front or pulse solutions are spectrally stable.

\subsection{Main results}

Our existence and spectral stability analysis of stationary multiple front and pulse solutions in spatially periodic systems differs fundamentally from the constant-coefficient case. On the one hand, there seems to be no natural way to formulate the existence problem as an autonomous dynamical system that would facilitate the application of homoclinic or heteroclinic bifurcation theory. Furthermore, stationary pulse and front solutions to~\eqref{e:sys_intro} generally converge to spatially periodic end states rather than to constant states. As a result, it appears that there is no obvious method for augmenting the eigenvalue problem and transforming it into an autonomous system that would enable the use of geometric dynamical systems techniques for analyzing spectral stability as in~\cite{Alexander_Topological_1990, Gardner1997Spectral}. 

On the other hand, the spatially periodic coefficients of~\eqref{e:sys_intro} break the translational invariance, so that the linearization about a stationary front or pulse solution is generically invertible. Unlike the constant-coefficient case, we can (and do) leverage this property in the existence analysis of periodic pulse and multifront solutions. If system~\eqref{e:sys_intro} is dissipative, then front and pulse solutions can be strongly spectrally stable, meaning that the spectrum of the linearization about the front or pulse is confined to the open left-half plane. This significantly reduces the complexity of the spectral analysis compared to the constant-coefficient case where an eigenvalue must reside at $0$ due to translational invariance. However, we emphasize that our spectral techniques are also useful if system~\eqref{e:sys_intro} is conservative, which naturally precludes strong spectral stability of solutions. We will illustrate this by establishing spectral stability and instability for periodic pulses and multipulses in the Gross-Pitaevskii equation with periodic potential. 

Our existence result may informally be stated as follows, see also Figure~\ref{fig:intro_pic}.

\begin{Theorem}[Informal existence result] \label{t:informalexistence}
Let $M \in \N$. Let $Z_1(x),\ldots,Z_M(x)$ be $M$ stationary front solutions to system~\eqref{e:sys_intro} converging to $T$-periodic end states $v_{1,\pm}(x), \ldots,v_{M,\pm}(x)$ as $x \to \pm \infty$. Assume that $v_{j,+} = v_{j+1,-}$ for $j = 1,\ldots,M-1$. Take a stationary pulse solution $Z_0(x)$ to~\eqref{e:sys_intro} converging to a $T$-periodic end state $v_0(x)$ as $x \to \pm \infty$. Assume that $Z_j$ is nondegenerate in the sense that the linearization of~\eqref{e:sys_intro} about $Z_j$ is invertible for $j = 0,\ldots,M$. Then, there exists $N \in \N$ such that for all $n \in \N$ with $n \geq N$ the following assertions hold.
\begin{itemize}
    \item[1.] There exists a stationary $M$-front solution $u_n(x)$ of~\eqref{e:sys_intro}, which converges uniformly to the formal concatenation
    \begin{align*}
    w_n(x) = \begin{cases} 
    v_{1,-}(x), & x \leq \tfrac12 nT,\\
    Z_j(x - j nT), & x \in \left[(j-\tfrac12) nT, (j+\tfrac12) nT\right], \qquad j = 1,\ldots,M,\\
    v_{M,+}(x), & x \geq (M+\tfrac12) nT,
    \end{cases}
    \end{align*}
    as $n \to \infty$.
    \item[2.] There exists a stationary $nT$-periodic pulse solution $\tilde{u}_n(x)$ of~\eqref{e:sys_intro}, which converges uniformly to the formal periodic extension $\tilde{w}_n(x)$ given by
    \begin{align*}
    \tilde{w}_n(x) = Z_0(x - jnT), \qquad x \in \left[(j-\tfrac12) nT, (j+\tfrac12) nT\right], \, j \in \Z,
    \end{align*}
    as $n \to \infty$.
\end{itemize}
\end{Theorem}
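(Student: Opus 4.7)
The strategy is a Newton / contraction-mapping scheme centered on a smoothed version of the formal concatenation $w_n$. Denote the stationary operator by $G(u)(x) := \alpha_k(x)\partial_x^k u + \ldots + \alpha_1(x)\partial_x u + \Non(u,x)$, so that stationary solutions of \eqref{e:sys_intro} are precisely the zeros of $G$. Since $w_n$ has small but nonzero jumps at the interface points $x_j := (j+\tfrac12)nT$, I first replace it by a $C^k$--smooth approximant $\hat w_n$ obtained by interpolating between adjacent pieces on fixed-width windows around each $x_j$, with $\hat w_n = w_n$ outside these windows and $\|\hat w_n - w_n\|_\infty$ exponentially small in $n$. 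Writing $u_n = \hat w_n + v_n$, the stationary equation $G(u_n) = 0$ is recast as the fixed-point problem
\[
v_n = -L_n^{-1}\bigl[G(\hat w_n) + \mathcal{R}_n(v_n)\bigr], \qquad L_n := DG(\hat w_n),
\]
where $\mathcal{R}_n(v) := G(\hat w_n + v) - G(\hat w_n) - L_n v$ is quadratic in $v$.

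The residual $G(\hat w_n)$ is supported in the fixed-width windows around each $x_j$, and on each such window it measures the discrepancy between $Z_j(\,\cdot - jnT)$ evaluated near $+\infty$ and $Z_{j+1}(\,\cdot - (j+1)nT)$ evaluated near $-\infty$ (with the conventions $Z_0 = v_{1,-}$, $Z_{M+1} = v_{M,+}$). Both approach the common periodic end state $v_{j,+} = v_{j+1,-}$, and by nondegeneracy the asymptotic linear problems admit exponential dichotomies, which forces this approach to occur exponentially fast at a uniform rate $\eta > 0$. Hence $\|G(\hat w_n)\|_{L^2(\R)} \le C\eu^{-\eta nT}$ with $C$ independent of $n$.

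The heart of the proof is to show that $L_n : H^k(\R) \to L^2(\R)$ is invertible with $\|L_n^{-1}\|$ bounded uniformly in $n$. The linearizations $L_j := DG(Z_j)$ about the individual primary fronts are invertible by nondegeneracy, and their asymptotic linearizations possess exponential dichotomies on $\R_\pm$. I would assemble a candidate inverse of $L_n$ by a Lin-type gluing procedure: on each interval $[(j-\tfrac12)nT,(j+\tfrac12)nT]$ represent the candidate inverse through the dichotomies of $L_j$, and then enforce $C^{k-1}$ matching at the interfaces $x_j$. The matching conditions reduce to a finite-dimensional linear system whose matrix differs from a limiting invertible matrix (built from the projections of the dichotomies of the asymptotic end states) by corrections of order $\eu^{-\eta nT}$; for $n$ large it is therefore invertible with condition number bounded uniformly in $n$. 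This uniform-in-$n$ invertibility of $L_n$ is the principal obstacle: the exponentially small interaction terms coming from the overlapping tails of adjacent primary fronts must be controlled carefully so that they do not conspire to produce a small singular value of the Lin-matrix.

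Once $\|L_n^{-1}\| \le C$ and $\|G(\hat w_n)\|_{L^2} \le C\eu^{-\eta nT}$ are in place, Banach's fixed-point theorem applied on a small ball in $H^k(\R)$ yields a unique solution $v_n$ with $\|v_n\|_{H^k} = O(\eu^{-\eta nT})$, proving assertion~1 together with $u_n \to w_n$ uniformly as $n \to \infty$. Assertion~2 proceeds by the same scheme, but posed on $H^k_{\mathrm{per}}([0,nT])$: the smoothed $nT$-periodic extension $\hat{\tilde w}_n$ of $Z_0$ again has exponentially small residual, and nondegeneracy of $Z_0$ combined with the exponential dichotomies of the end state $v_0$ yields the analogous uniform invertibility of the linearization on the periodic space, after which the same contraction argument produces $\tilde u_n$.
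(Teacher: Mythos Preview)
Your overall architecture matches the paper's: write $u_n = \hat w_n + v_n$, bound the residual $G(\hat w_n)$, establish uniform-in-$n$ invertibility of the linearization $L_n$, and close by Banach's fixed-point theorem. The paper sets up $w_n$ via a smooth partition of unity $\chi_{j,n}$ from the outset rather than smoothing a discontinuous candidate, but this is cosmetic.

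The genuine difference lies in how the uniform bound on $L_n^{-1}$ is obtained. You propose a Lin-type construction: build local parametrices from the dichotomies of each $L_j$, impose $C^{k-1}$ matching at the interfaces, and show the resulting finite-dimensional Lin matrix is an exponentially small perturbation of an invertible limit. The paper instead works entirely at the level of exponential dichotomies and never assembles a candidate inverse or a matching system. From the invertibility of $\El(Z_j)$ it deduces (via Palmer's characterization) that the first-order eigenvalue problem about each $Z_j$ has an exponential dichotomy on all of $\R$; roughness then transfers this to a dichotomy for the system about $w_n$ on each of a family of overlapping intervals $I_j$; finally a pasting lemma, applied iteratively at the midpoints and using that the dichotomy projections on the overlaps differ by $O(\eu^{-\mu nT})$, glues these into a single dichotomy for the $w_n$-system on the whole line. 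Invertibility of $L_n$ with an $n$-uniform bound then follows directly from this global dichotomy via a variation-of-constants formula.

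Both routes are sound. The paper's dichotomy-pasting argument is more streamlined: it sidesteps the explicit Lin matching analysis you flag as the ``principal obstacle,'' replacing it with a single projection-comparison estimate at each interface. Your Lin approach is closer to what one would do in the constant-coefficient setting where translation invariance forces a kernel and a Lyapunov--Schmidt step is unavoidable; here the nondegeneracy hypothesis makes the simpler route available, and the paper exploits exactly that. The periodic-pulse case is handled analogously in the paper, with an additional periodic-extension lemma for dichotomies (Palmer) replacing the final pasting step.
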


\begin{figure}[t]
    \centering
    \includegraphics[width=0.99\textwidth]{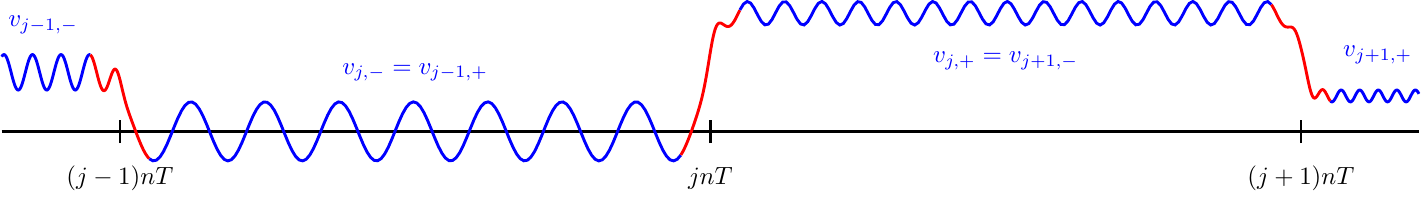}
    \vspace{1em}\\
    \includegraphics[width=0.99\textwidth]{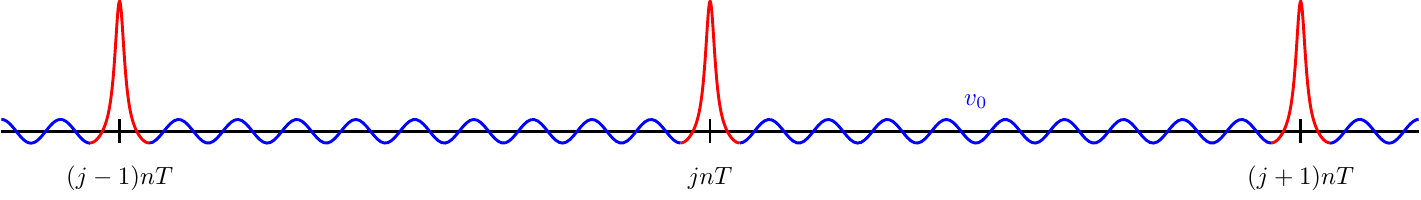}
    \caption{Illustration of the multifront solution $u_n(x)$ (top) and periodic pulse solution $\tilde{u}_n(x)$ (bottom) as established in Theorem~\ref{t:informalexistence}. Both are depicted on the interval $[(j-1)nT,(j+1)nT]$ for some $j \in \{2,\ldots,M-1\}$. The multifront $u_n(x)$ transitions between the $T$-periodic states $v_{\ell,\pm}$ (in blue), with front interfaces (in red) located at $x = \ell nT$, $\ell=1,\dots,M$. The periodic solution $\tilde{u}_n(x)$ consists of a series of localized pulses (in red) centered at $x = jnT$, $j \in \Z$, superimposed on the $T$-periodic background state $v_0$ (in blue).
    }
    \label{fig:intro_pic}
\end{figure}

For the precise statements, we refer to Theorems~\ref{t:existence_multifront} and~\ref{t:existence_periodic}. The proof of the existence of the multifronts and periodic pulse solutions relies on a contraction-mapping argument in the function spaces $H^k(\R)$ and $H^k_{\mathrm{per}}(0,nT)$, respectively. The key idea is to insert the formal multifront $w_n$ or periodic pulse solution $\tilde{w}_n$ into system~\eqref{e:sys_intro} and derive an equation for the resulting error. We then convert the error equation into a fix-point problem by showing that the linearization about the formal solution is invertible. Showing the invertibility is the main technical challenge, which follows from the nondegeneracy of the primary fronts or pulses with the aid of exponential dichotomies. 

We emphasize that this procedure sharply contrasts with the constant-coefficient case, where the lack of invertibility of the linearization about the primary front or pulses necessitates a Lyapunov-Schmidt reduction argument. The reduced problem is typically solved by introducing an additional degree of freedom in the form of a bifurcation parameter or by exploiting additional structure such as reversible symmetry, see~\cite{Homburg2010Homoclinic} and references therein. 

We note that the distances between the interfaces of the multifront solutions in Theorem~\ref{t:informalexistence}, as well as the wavelength of the periodic pulse solutions, are multiples of the period $T$, see Figure~\ref{fig:intro_pic}. In fact, the possible locations of front interfaces and pulse peaks are restricted by the spatial periodicity of~\eqref{e:sys_intro}. This phenomenon, known as \emph{trapping} or \emph{pinning}, cf.~\cite{bastiaansen2025multifront,Dirr_pinning_2006} and references therein, precludes translational invariance and contributes to the enhanced stability properties compared to the constant-coefficient case, where the interfaces are typically free to occupy a continuum of positions. For example, if the constant-coefficient system~\eqref{e:sys_intro_cc} admits a reversible symmetry, then stationary periodic pulse solutions exist for any sufficiently large wavelength~\cite{Vanderbauwhede1992Homoclinic}.

The main outcomes of our spectral analysis may informally be summarized as follows.

\begin{Theorem}[Informal spectral result] \label{t:informalstability}
Let $\mathcal{K} \subset \C$ be compact. Let $M$, $Z_j$, $N$, $u_n$ and $\tilde{u}_n$ be as in Theorem~\ref{t:informalexistence}. Assume that the $L^2$-spectrum in $\mathcal{K}$ of the linearization $\El(Z_j)$ of~\eqref{e:sys_intro} about $Z_j$ consists of isolated eigenvalues of finite algebraic multiplicity only for $j = 0,\ldots,M$. Then, there exists $N_1 \in \N$ with $N \geq N_1$ such that for all $n \in \N$ with $n \geq N_1$ the following holds.
\begin{itemize}
    \item[1.] The $L^2$-spectrum of the linearization $\El(u_n)$ of~\eqref{e:sys_intro} about $u_n$ in the compact set $\mathcal{K}$ consists of isolated eigenvalues only and converges in Hausdorff distance to the union
    \begin{align*}
    \bigcup_{j = 1}^M \sigma(\El(Z_j)) \cap \mathcal{K}
    \end{align*}
    as $n \to \infty$. The total algebraic multiplicity of the eigenvalues of $\El(u_n)$ in $\mathcal{K}$ equals the sum of the total algebraic multiplicies of the eigenvalues of $\El(Z_1),\ldots,\El(Z_M)$ in $\mathcal{K}$.
    \item[2.] The spectrum in $\mathcal{K}$ of the linearization $\El_\per(\tilde{u}_n)$ of~\eqref{e:sys_intro} about $\tilde{u}_n$ on $L^2_{\mathrm{per}}(0,nT)$ consists of isolated eigenvalues only and converges in Hausdorff distance to
    \begin{align*}
    \sigma(\El(Z_0)) \cap \mathcal{K}
    \end{align*}
    as $n \to \infty$. The total algebraic multiplicity of the eigenvalues of $\El_\per(\tilde{u}_n)$ in $\mathcal{K}$ equals  the total algebraic multiplicity of the eigenvalues of $\El(Z_0)$ in $\mathcal{K}$.    
\end{itemize}
\end{Theorem}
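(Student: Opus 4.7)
The plan is to combine exponential dichotomies with an Evans-function approach, adapting the gluing techniques from the constant-coefficient theory (e.g.\ \cite{Sandstede_Stability_1998,Sandstede_Stability_2001}) to the present spatially periodic setting. First I would rewrite the eigenvalue equation $(\El(u_n)-\lambda)v = 0$ as a first-order linear system $V'=A_n(x,\lambda)V$ on $\F^{km}$, and similarly obtain systems $V'=A_j(x,\lambda)V$ for each primary front or pulse $Z_j$. The assumption that $\sigma(\El(Z_j))\cap\mathcal{K}$ consists only of isolated eigenvalues of finite algebraic multiplicity means that $\mathcal{K}$ is disjoint from the essential spectrum of each $\El(Z_j)$. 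Since the essential spectra are determined by the asymptotic $T$-periodic linearizations, Floquet theory combined with the dichotomy machinery assumed available in the paper yields, uniformly for $\lambda\in\mathcal{K}$, exponential dichotomies on both half-lines for every $V'=A_j(x,\lambda)V$ with analytically-varying stable and unstable subspaces.

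For each primary solution $Z_j$, I would define an Evans function $E_j(\lambda)$ as the determinant at $x=0$ formed by analytically-chosen bases of the unstable subspace propagated forward from $-\infty$ and the stable subspace propagated backward from $+\infty$. This function is analytic on a neighborhood of $\mathcal{K}$, and its zeros coincide, with algebraic multiplicity, with the eigenvalues of $\El(Z_j)$ in $\mathcal{K}$. For the multifront $u_n$ one defines an analogous Evans function $E_n(\lambda)$ for the global system $V'=A_n(x,\lambda)V$.

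The heart of the argument is to establish the asymptotic factorization
\begin{align*}
E_n(\lambda) \;=\; c_n(\lambda)\,\prod_{j=1}^{M} E_j(\lambda)\;+\;r_n(\lambda),
\end{align*}
uniformly on $\mathcal{K}$, where $c_n(\lambda)$ is analytic and bounded away from zero, and $r_n(\lambda)\to 0$ as $n\to\infty$. To prove it, I would propagate the unstable subspace of $V'=A_n(x,\lambda)V$ forward from $x=-\infty$ through the intervals $[(j-\tfrac12)nT,(j+\tfrac12)nT]$ in succession. On each such interval, $u_n$ is uniformly close to the shifted primary front $Z_j$, so by the roughness of exponential dichotomies the propagated subspace is close to the push-forward of the corresponding unstable subspace for $A_j$. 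In between consecutive fronts, $u_n$ is exponentially close to a common $T$-periodic end state over a length of order $nT$; the Floquet dichotomy of that periodic system collapses the propagated subspace onto the stable subspace of the next asymptotic system with error that is exponentially small in $n$. Iterating through all $M$ fronts, the matching determinant at $x=+\infty$ factorizes into $M$ independent matching determinants, each contributing a factor of $E_j(\lambda)$. An application of Rouch\'e's theorem on small disks around each zero of $\prod_{j=1}^{M} E_j(\lambda)$ in $\mathcal{K}$ then yields both the Hausdorff convergence of spectra and the additivity of algebraic multiplicities. Part (ii) for the periodic pulse $\tilde u_n$ is handled analogously by replacing $E_n$ with the Floquet-type characteristic function $\det(\Phi_n(\lambda)-I)$, where $\Phi_n(\lambda)$ is the monodromy matrix of $V'=A_n(x,\lambda)V$ on $[0,nT]$, and deriving $\det(\Phi_n(\lambda)-I)=\tilde c_n(\lambda)\,E_0(\lambda)+o(1)$ by tracking the flow through one long interval that is close to $v_0$ with a single embedded copy of $Z_0$.

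The main obstacle will be the factorization estimate itself, and in particular controlling the propagation of the relevant stable/unstable subspaces across the long intervals of length $\sim nT/2$ on which $u_n$ lies close to a $T$-periodic background. This requires a careful marriage of uniform roughness estimates for exponential dichotomies with the exponential decay of each $Z_j$ to its end states, producing an error that is uniform in $\lambda\in\mathcal{K}$ and vanishes as $n\to\infty$. Once this is in place, the algebraic-multiplicity bookkeeping reduces to a standard Rouch\'e count for analytic functions.
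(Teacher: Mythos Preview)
Your plan is essentially the paper's own approach: rewrite the eigenvalue problem as a first-order system, use exponential dichotomies (obtained via roughness from those of the primary fronts/pulses and their periodic end states) to define Evans functions, prove an asymptotic factorization $E_n(\lambda)=c_n(\lambda)\prod_{j=1}^M\mathcal{E}_j(\lambda)+o(1)$ uniformly on $\mathcal{K}$, and conclude via Rouch\'e; for the periodic pulse the paper likewise compares a Floquet-type Evans function with $\mathcal{E}_0$. The only point worth flagging is that the factorization step you summarize heuristically (``the Floquet dichotomy collapses the propagated subspace\ldots'') is where all the work lies: the paper carries this out by an explicit inductive block-determinant identity rather than by tracking subspaces geometrically, and you should expect that this bookkeeping---not the Rouch\'e step---is the technical core.
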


For the precise statements and further extensions of Theorem~\ref{t:informalstability}, we refer to Theorems~\ref{thm:instability_multifront} and~\ref{thm:instability_periodic} and Corollaries~\ref{cor:stability_multifront},~\ref{cor:stability_multifront_extension} and~\ref{cor:stability_periodic}. 

The spectral analysis in this paper is divided into two parts. In the first part, we follow an approach similar to the one used in the existence analysis. Specifically, we utilize exponential dichotomies to characterize invertibility and demonstrate that, if the resolvent problem associated with the linearization about the primary fronts or pulses is uniquely solvable on a compact set in $\C$, then the same holds for the resolvent problem corresponding to the linearization about the multifront or periodic pulse solution. 

In the second part, we identify eigenvalues with zeros of the analytic Evans function, see~\cite{KapitulaPromislow2013, Sandstede2002Stability} and references therein, to show that isolated eigenvalues of finite algebraic multiplicity of the linearization about the primary front or pulse perturb continuously into eigenvalues of the linearization about the multifront or periodic pulse posed on $L^2(\R)$ or $L^2_{\mathrm{per}}(0,nT)$, respectively, thereby preserving the total algebraic multiplicity. 

As mentioned earlier, similar results have been obtained for the constant-coefficient case, cf.~\cite{Alexander_Topological_1990,Gardner1997Spectral}. Unlike our existence analysis, which fails in the constant-coefficient setting due to the nondegeneracy condition, our spectral analysis does apply to multifronts and periodic pulse solutions to constant-coefficient systems. Yet, our method differs significantly from the geometric dynamical systems approach employed in~\cite{Alexander_Topological_1990, Gardner1997Spectral}. Instead, it is inspired by the Evans-function analyses in~\cite{RijkDoelmanRademacher2016,Sandstede2000Absolute}. It employs exponential weights and relies on roughness and analyticity properties of exponential dichotomies.

Theorem~\ref{t:informalstability} shows that, if a point $\lambda \in \C$ with $\Re(\lambda) > 0$ is an eigenvalue of finite algebraic multiplicity of the linearizations about some of the primary fronts or pulses and lies in the resolvent set of the linearizations about the other primary fronts or pulses, then bifurcating multifronts and periodic pulse solutions are spectrally unstable. However, if spectral instability of the primary fronts is induced by unstable \emph{essential spectrum}, then the associated multifront may still be spectrally stable. This phenomenon is well-documented in systems with constant coefficients~\cite{Romeo_Stability_2000,Sandstede_2000_gluing}. In~\S\ref{sec:stability_multifront}, we present an extension of Theorem~\ref{t:informalstability}, providing control on the spectrum of the multifront outside the so-called \emph{absolute spectrum}, cf.~\cite{Sandstede2000Absolute,Sandstede_2000_gluing}. This result can be employed to establish strong spectral stability of multifronts, even when the constituting primary fronts are spectrally unstable.

Many dissipative systems admit a-priori bounds that preclude spectrum with nonnegative real part and large modulus. By combining such a-priori bounds with Theorem~\ref{t:informalstability}, one finds that strong spectral stability of the primary fronts or pulses is carried over to the bifurcating multifronts and periodic pulse solutions. This contrasts sharply with the constant-coefficient case where multifronts or periodic pulse solutions can be spectrally unstable, even if the constituting primary fronts are all spectrally stable, cf.~\cite{Sandstede_Stability_1998,Sandstede_Stability_2001}

\subsection{Application to benchmark models}

To illustrate the applicability of our methods, we construct multifronts and periodic pulse solutions in several prototypical models, analyze their spectral stability, and corroborate our findings with numerical simulations performed with the MATLAB package \texttt{pde2path}~\cite{pde2path}. Specifically, we examine multifronts in a scalar reaction-diffusion toy model with a periodic potential and consider multipulses and periodic pulses in an extended Klausmeier model, which describes the dynamics of vegetation patterns on periodic topographies~\cite{Bastiaansen2020}. We demonstrate that the spectral (in)stability of the multifronts, multipulses and periodic pulses is inherited from the comprising primary fronts and pulses. In particular, our analysis shows that the Klausmeier model supports stable periodic (multi)pulses, which has not been identified in the previous work~\cite{Bastiaansen2020}. 

Additionally, we consider the Gross-Pitaevskii equation with a general periodic potential, which arises in the study of Bose-Einstein condensates in optical lattices~\cite{Pelinovsky2011}. Our methods lead to multifront, multipulse and periodic pulse solutions. By combining our spectral results with Krein index counting theory~\cite{KapitulaPromislow2013,KapitulaKevrekidis2004, AddendumKapitulaKevrekidis2004}, we obtain novel spectral instability and stability results for the constructed multipulse and periodic pulse solutions. Due to the conservative nature of the Gross-Piteavskii equation, spectral stability entails that the spectrum of the linearization is confined to the imaginary axis. Notably, the preservation of algebraic multiplicities, as stated in Theorem~\ref{t:informalstability}, is instrumental for the effective application of Krein index counting theory. The spectral stability analysis of the periodic pulse solutions yields that they are orbitally stable. To the best of the authors' knowledge, this is the first orbital stability result of periodic waves in the Gross-Pitaevskii equation with periodic potential. 

\paragraph*{Outline of paper.} In~\S\ref{sec:Abstract_ex_stab} we introduce the necessary notation and formulate the existence and (weighted) eigenvalue problems associated with stationary solutions to~\eqref{e:sys_intro}. The existence analysis of multifronts and periodic pulse solutions is presented in~\S\ref{sec:m-front} and~\S\ref{sec:periodic_pulse}, respectively. In~\S\ref{sec:stability_1fronts} we introduce the necessary concepts for the spectral analysis of fronts solutions with periodic tails. Sections~\ref{sec:stability_multifront} and~\S\ref{sec:stability_periodic} are devoted to the spectral analysis of multifronts and periodic pulse solutions, respectively. We demonstrate the applicability of our methods in several benchmark models in~\S\ref{sec:applications} and corroborate our findings with numerical simulations. Finally, the Appendices~\ref{app:projections},~\ref{app:exp_dich} and~\ref{app:compact} contain several auxiliary results on projections, exponential dichotomies, and multiplication operators, respectively. 

\paragraph*{Acknowledgments.}  This project is funded by the Deutsche Forschungsgemeinschaft (DFG, German Research Foundation) -- Project-ID 258734477 -- SFB 1173.

\section{Notation and set-up}\label{sec:Abstract_ex_stab}

Let $k,m \in \N$, $T>0$ and $\F \in \{\R,\C\}$. This paper focuses on the existence and spectral analysis of stationary front and pulse solutions to the general semilinear evolution system~\eqref{e:sys_intro} of $m$ components on the real line. For notational convenience, we abbreviate the $k$-th order linear differential operator in~\eqref{e:sys_intro} as
\begin{align*}
A u = \alpha_k(x) \partial_x^k u + \alpha_{k-1}(x) \partial_x^k u + \ldots + \alpha_1(x) \partial_x u.
\end{align*}
We recall that $A$ has $T$-periodic coefficients $\alpha_1,\ldots,\alpha_k \in C(\R,\F^{m\times m})$, where $\alpha_k(x)$ invertible for all $x \in \R$. Moreover, the nonlinearity $\Non \in C\big(\F^m \times \R,\F^m\big)$ in~\eqref{e:sys_intro} is twice continuously differentiable in its first argument and $T$-periodic in its second argument.

\subsection{Formulation of the existence and eigenvalue problems}

Stationary solutions to~\eqref{e:sys_intro} obey
\begin{align} \label{existence_problem}
Au + \Non(u,\cdot) = 0.
\end{align}
The ordinary differential equation~\eqref{existence_problem} is the main object of study in the existence analysis of stationary front and pulse solutions to~\eqref{e:sys_intro}. 

For $\unu \in L^\infty(\R)$ we define the linear differential operator $\El(\unu) \colon D(\El(\unu)) \subset L^2(\R) \to L^2(\R)$ by
\begin{align*}
\El(\unu) u = A u + \partial_u \Non(\unu,\cdot) u.
\end{align*}
Clearly, $\El(\unu)$ is a closed operator and has dense domain $D(\El(\unu)) = H^k(\R)$. If $\unu$ is a solution of~\eqref{existence_problem}, then $\El(\unu)$ corresponds to the linearization of~\eqref{e:sys_intro} about $\unu$. The associated eigenvalue problem reads
\begin{align} \label{spectral_problem}
\left(\El(\unu) - \lambda\right) u = 0.
\end{align}
The linear ordinary differential equation~\eqref{spectral_problem} is the main object of study in the spectral analysis of the stationary front and pulse solutions to~\eqref{e:sys_intro}. We adopt the following notions of nondegeneracy and spectral stability.

\begin{Definition}\label{def:spectral_stability}
Let $\unu \in L^\infty(\R)$.
\begin{itemize}
    \item[(i)] We call $\unu$ \emph{nondegenerate} if $\El(\unu)$ is invertible.
    \item[(ii)] We say that $\unu$ is \emph{spectrally stable} if
    $$
        \sigma(\El(\unu)) \subset \{\lambda \in \C : \Re(\lambda)\leq 0\}.   
    $$
    \item[(iii)] We say that $\unu$ is \emph{spectrally stable with simple eigenvalue $\lambda = 0$} if there exists $\varrho>0$ such that
    $$
        \sigma(\El(\unu)) \subset \{\lambda \in \C : \Re(\lambda)\leq -\varrho\} \cup \{0\},
    $$
    and the algebraic multiplicity of $\lambda = 0$ is one.
    \item[(iv)]
    We say that $\unu$ is \emph{strongly spectrally stable} if there exists $\varrho>0$ such that
    $$
        \sigma(\El(\unu)) \subset \{\lambda \in \C : \Re(\lambda)\leq -\varrho\}.
    $$
    \item[(v)] We call $\unu$ \emph{spectrally unstable} it there exists $\lambda \in \sigma(\El(\unu))$ with $\Re(\lambda)>0$.
\end{itemize}
\end{Definition}

As explained in the introduction, the concept of nondegeneracy plays a key role in the construction of multiple front and pulse solutions. 

Spectrally stable front or pulse solutions with a simple eigenvalue at $\lambda = 0$ arise in systems with translational invariance. Such solutions serve as a basis for bifurcation arguments in the models explored in the application section~\S\ref{sec:applications}. 

\subsection{First-order formulation} Because the coefficient matrix $\alpha_k(x)$ is invertible for all $x \in \R$ and the nonlinearity $\Non$ is twice continuously differentiable in its first argument, the eigenvalue problem~\eqref{spectral_problem} can be written as a first-order system
\begin{align} \label{first-order_formulation}
U' = \mathcal{A}(x,\unu(x);\lambda) U,
\end{align}
by setting $U = (u,\partial_x u, \ldots, \partial_x^{k-1} u)^\top$, where the coefficient matrix $\A \colon \R \times \F^m \times \C \to \C^{km}$ is continuous and $T$-periodic in its first argument, continuously differentiable in its second argument, and analytic in its third argument. The formulation~\eqref{first-order_formulation} of the eigenvalue problem as a linear nonautonomous first-order system is essential for applying the theory of exponential dichotomies.

\subsection{Exponentially weighted linearization operator}

Let $\unu \in L^\infty(\R)$ and $\eta_\pm \in \R$. For the spectral analysis of stationary front solutions to~\eqref{e:sys_intro}, it is convenient to consider the exponentially weighted linearization operator $\El_{\eta_-,\eta_+}(\unu) \colon D(\El_{\eta_-,\eta_+}(\unu)) \subset L^2(\R) \to L^2(\R)$ with dense domain $D(\El_{\eta_-,\eta_+}(\unu)) = H^k(\R)$ given by
\begin{align*}
\El_{\eta_-,\eta_+}(\unu) u = \eu^{-\omega_{\eta_-,\eta_+}} \El(\unu) \left[\eu^{\omega_{\eta_-,\eta_+}} u\right]
\end{align*}
where $\omega_{\eta_-,\eta_+} \colon \R \to \R$ is a smooth weight function whose derivative satisfies
\begin{align*}
    \omega_{\eta_-, \eta_+}'(x) = \begin{cases}
    \eta_-, &x \leq -1, \\
    \eta_+, &x \geq 1. 
    \end{cases}
\end{align*}
The associated eigenvalue problem
\begin{align*}
\left(\El_{\eta_-,\eta_+}(\unu) - \lambda\right) u = 0
\end{align*}
can be written as the first-order system
\begin{align*}
U' = \left(\mathcal{A}(x,\unu(x);\lambda) - \omega_{\eta_-,\eta_+}'(x)\right) U.
\end{align*}
In case $\eta_+ = \eta_- = \eta \in \R$, we take $\omega_{\eta_-,\eta_+}(x) = \eta x$ and adopt the notation $\El_{\eta_-,\eta_+}(\unu) = \El_{\eta}(\unu)$. Consequently, it holds $\El_{0,0}(\unu) = \El_0(\unu) = \El(\unu)$.

\subsection{Periodic differential operators} \label{sec:periodic_diff_operators}

Let $n \in \N$ and let $\unu \in C(\R)$ be an $nT$-periodic function. Then, $\El(\unu)$ is a differential operator with $nT$-periodic coefficients. We collect some basic properties of periodic differential operators and their Bloch transforms, which are essential for our spectral analysis. We refer to~\cite{Gardner93,KapitulaPromislow2013,ReedSimon_1978,Scarpellini_1999} for further background. 

Since $\El(\unu)$ has $nT$-periodic coefficients, we can study its action on the space $L^2_\per(0,nT)$, which is convenient for analyzing the spectral stability of $\unu$ against co-periodic perturbations. Thus, we define the operator $\El_{\per}(\unu) \colon D(\El_{\per}(\unu)) \subset L_\per^2(0,nT) \to L_\per^2(0,nT)$ by
\begin{align*}
\El_{\per}(\unu) u = A u + \partial_u \Non(\unu,\cdot)u.
\end{align*}
The operator $\El_{\per}(\unu)$ is closed and has dense domain $D(\El_{\per}(\unu)) = H_\per^k(0,nT)$. Due to the compact embedding $H^k_{\per}(0,nT) \hookrightarrow L^2_{\per}(0,nT)$, it has compact resolvent and its spectrum consists of isolated eigenvalues of finite algebraic multiplicity only. Hence, a point $\lambda \in \C$ lies in the spectrum $\sigma(\El_\per(\unu))$ if and only if the first-order eigenvalue problem~\eqref{first-order_formulation} admits a nontrivial $nT$-periodic solution.

On the other hand, the spectrum of the $nT$-periodic differential operator $\El(\unu)$ on $L^2(\R)$ is purely essential. It is characterized by family of Bloch operators $\El_{\xi,\per}(\unu) \colon D(\El_{\xi,\per}(\unu)) \subset L^2_\per(0,nT) \to L^2_\per(0,nT)$ with dense domain $D(\El_{\xi,\per}(\unu)) = H^k_\per(0,nT)$ given by
\begin{align*}
\El_{\xi,\per}(\unu) u = M_\xi^{-1} \El(\unu) M_\xi u, \qquad \xi \in \left[\frac{\pi}{nT},\frac{\pi}{nT}\right),
\end{align*}
where $M_\xi \colon H^\ell(\R) \to H^\ell(\R)$ is the invertible multiplication operator defined by $(M_\xi u)(x) = \eu^{\iu \xi x} u(x)$ for $\ell \in \N_0$. Since the Bloch operators $\El_{\xi,\per}(\unu)$ have compact resolvent, their spectrum consists of isolated eigenvalues of finite algebraic multiplicity only. Therefore, a point $\lambda \in \C$ lies in the spectrum of $\El_{\xi,\per}(\unu)$ if and only if the first-order system~\eqref{first-order_formulation} admits a nontrivial solution $U(x)$ obeying the boundary condition $U(-\frac{nT}{2}) = \eu^{\iu \xi nT} U(\frac{nT}{2})$. The spectrum of $\El(\unu)$ is then given by the union
\begin{align*}
    \sigma(\El(\unu)) = \bigcup_{\xi \in \left[\frac{\pi}{nT},\frac{\pi}{nT}\right)} \sigma(\El_{\xi,\per}(u_n)),
\end{align*}
which implies
\begin{align}
\label{Bloch_inclusion} \sigma(\El_\per(\unu)) \subset \sigma(\El(\unu)).
\end{align}
Hence, $\lambda \in \C$ lies in the spectrum of $\El(\unu)$ if and only if there exist a point $\gamma$ on the unit circle $S^1 \subset \C$ and a nontrivial solution $U(x)$ of~\eqref{first-order_formulation} obeying $U(-\frac{nT}{2}) = \gamma U(\frac{nT}{2})$. 

\section{Existence of multifront solutions}\label{sec:m-front}

Let $M \in \mathbb{N}_{\geq 2}$. In this section, we construct an $M$-front solution to~\eqref{existence_problem} by concatenating $M$ nondegenerate front solutions with matching periodic limit states. Specifically, we impose the following assumptions:
\begin{itemize}
    \item[\namedlabel{assH1}{\textbf{(H1)}}] There exist $M$ fronts $Z_1,\ldots,Z_M \in L^\infty(\R)$ with associated end states $v_{1,\pm},\ldots,v_{M,\pm} \in H_\per^k(0,T)$. It holds $\chi_\pm\left(Z_j - v_{j,\pm}\right) \in H^k(\R)$ for $j = 1,\ldots,M$, where $\chi_\pm \colon \R \to [0,1]$ is a smooth partition of unity such that $\chi_+$ is supported on $(-1,\infty)$ and $\chi_-$ is supported on $(-\infty,1)$.
    \item[\namedlabel{assH2}{\textbf{(H2)}}] The matching condition $v_{j,+} = v_{j+1,-}$ holds for $j = 1,\ldots,M-1$. 
    \item[\namedlabel{assH3}{\textbf{(H3)}}] The front $Z_j$ is a nondegenerate solution of~\eqref{existence_problem} for $j = 1,\ldots,M$.
\end{itemize}

We realize the $M$-front close to the formal concatenation of the $M$ primary fronts $Z_1,\ldots,Z_M$, see Figure~\ref{fig:intro_pic}. Thus, our ansatz for the multifront solution to~\eqref{existence_problem} reads
\begin{align} \label{multifront_ansatz}
u_n = a_n + w_n, \qquad w_n := \sum_{j = 1}^M \chi_{j,n}Z_j(\cdot - jnT)
\end{align}
with $\chi_{j,n} \colon \R \to [0,1]$, $j = 1,\ldots,M$ a smooth partition of unity satisfying $\|\chi_{j,n}\|_{W^{k,\infty}} \leq 1$,
where $\chi_{1,n}$ is supported on $(-\infty,\frac{3}{2}nT+1)$, $\chi_{M,n}$ is supported on $((M-\frac{1}{2}) nT - 1,\infty)$, and $\chi_{j,n}$ is supported on $((j-\frac{1}{2})nT-1,(j+\frac12) nT + 1)$ for $j = 2,\ldots,M-1$ (only in case $M > 2$). Moreover, $a_n \in H^k(\R)$ is an error term that accounts for the fact that the formal concatenation $w_n$ of the $M$ fronts is not an actual solution to~\eqref{existence_problem}. Our main result of this section confirms that there exists a small $a_n \in H^k(\R)$ such that $u_n$ is indeed a solution to~\eqref{existence_problem}, provided $n \in \N$ is sufficiently large.

\begin{Theorem} \label{t:existence_multifront}
Assume~{\upshape \ref{assH1}},~{\upshape \ref{assH2}} and~{\upshape \ref{assH3}}. Then, there exist $C > 0$ and $N \in \N$ such that for each $n \in \N$ with $n \geq N$ there exists an $M$-front solution to~\eqref{existence_problem} given by~\eqref{multifront_ansatz} with $a_n \in H^k(\R)$ satisfying 
\begin{align*}
\|a_n\|_{H^k} \leq C \sum_{j = 1}^M \Big( \|\chi_-(Z_j-v_{j,-})\|_{H^k(\R \setminus [-\tfrac{n}{2}T+1,\tfrac{n}{2}T-1])} + \|\chi_+(Z_j-v_{j,+})\|_{H^k(\R \setminus [-\tfrac{n}{2}T+1,\tfrac{n}{2}T-1])} \Big).
\end{align*}
In particular, $\|a_n\|_{H^k}$ converges to $0$ as $n \to \infty$.
\end{Theorem}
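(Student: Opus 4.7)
The plan is to derive an equation for the error term $a_n$ and solve it by Banach's fixed-point theorem. Substituting $u_n = a_n + w_n$ into~\eqref{existence_problem} and expanding the nonlinearity to first order around $w_n$ yields
\[
\El(w_n)\, a_n \;=\; r_n - R_n(a_n), \qquad r_n := -\bigl(A w_n + \Non(w_n,\cdot)\bigr),
\]
where $R_n(a) := \Non(w_n+a,\cdot) - \Non(w_n,\cdot) - \partial_u \Non(w_n,\cdot)\, a$ collects the quadratic and higher-order terms. Since $\Non$ is $C^2$ in its first argument and $H^k(\R) \hookrightarrow L^\infty(\R)$, the map $R_n \colon H^k(\R) \to L^2(\R)$ is locally Lipschitz with constant of order $\|a_1\|_{H^k} + \|a_2\|_{H^k}$, uniformly in $n$.

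The residual $r_n$ is responsible for the bound in the statement. By $T$-periodicity of the coefficients, each shifted primary front $Z_j(\cdot - jnT)$ is an exact solution of~\eqref{existence_problem}. On the intervals $((j-\tfrac12)nT+1,(j+\tfrac12)nT-1)$ only $\chi_{j,n}$ is active, so $w_n \equiv Z_j(\cdot - jnT)$ and $r_n$ vanishes. On the transition zones $((j+\tfrac12)nT-1,(j+\tfrac12)nT+1)$, commutators between $A$ and the cut-offs together with a Taylor expansion of $\Non$ around the common end state $v_{j,+} = v_{j+1,-}$ (invoking~\ref{assH2}) express $r_n$ as a linear combination of the tail quantities $\chi_+(Z_j - v_{j,+})$ and $\chi_-(Z_{j+1} - v_{j+1,-})$, evaluated on the appropriate far-field pieces. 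Translating the resulting $H^{k}$-bound back to the fixed window $\R \setminus [-\tfrac{n}{2}T+1,\tfrac{n}{2}T-1]$ produces exactly the estimate stated in the theorem, which tends to $0$ as $n \to \infty$ by~\ref{assH1}.

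The main obstacle is to show that $\El(w_n) \colon H^k(\R) \to L^2(\R)$ is invertible with operator-norm bound uniform in $n \geq N$. Casting the problem into the first-order form~\eqref{first-order_formulation} with coefficient matrix $\A(x, w_n(x); 0)$, assumption~\ref{assH3} together with standard exponential-dichotomy theory (see Appendix~\ref{app:exp_dich}) implies that each of the linearizations $\A(\cdot, Z_j; 0)$ admits exponential dichotomies on $\R_-$ and $\R_+$ whose stable and unstable subspaces intersect trivially. On the window $((j-\tfrac12)nT,(j+\tfrac12)nT)$ the coefficient matrix $\A(\cdot, w_n; 0)$ agrees with a shift of $\A(\cdot, Z_j; 0)$ up to a term that is $H^k$-small in the tails, so roughness of exponential dichotomies transfers these dichotomies to $\A(\cdot, w_n; 0)$ on the two half-windows with constants independent of $n$. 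The matching condition~\ref{assH2} forces the dichotomy subspaces of the two adjacent windows at the glueing point $x = (j+\tfrac12)nT$ to coincide up to exponentially small errors. A Lin-type transmission argument across the $M$ interfaces then produces a Green's function for $\El(w_n)$ with $L^2$-bound uniform in $n$, which furnishes the required uniform inverse.

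With these two ingredients the conclusion is immediate. The map $\mathcal{T}(a) := \El(w_n)^{-1}\bigl(r_n - R_n(a)\bigr)$ is well defined on $H^k(\R)$, and for $n$ sufficiently large the smallness of $\|r_n\|_{L^2}$ combined with the quadratic Lipschitz behavior of $R_n$ and the uniform bound on $\El(w_n)^{-1}$ shows that $\mathcal{T}$ is a self-map and a contraction on the closed ball of radius $2C\|r_n\|_{L^2}$ in $H^k(\R)$. The unique fixed point $a_n$ of $\mathcal{T}$ delivers the multifront $u_n$, and the claimed estimate on $\|a_n\|_{H^k}$ is inherited from the residual bound.
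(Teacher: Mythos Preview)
Your proposal is correct and follows essentially the same route as the paper: derive an error equation $\El(w_n)a = r_n - R_n(a)$, bound the residual $r_n$ via tail quantities using that each shifted $Z_j(\cdot - jnT)$ solves~\eqref{existence_problem} exactly and that $w_n$ deviates from it only on the overlap zones, prove uniform invertibility of $\El(w_n)$ by transferring the exponential dichotomies of the primary-front linearizations (which exist on all of $\R$ by~\ref{assH3} and Lemma~\ref{lem:expdi1}) via roughness and then pasting them across the interfaces (the paper packages this as Lemma~\ref{lem:invertibility_multifront} using Lemma~\ref{l:pastingexpdi}), and close with Banach's fixed-point theorem. The only cosmetic difference is that the paper rescales $a \mapsto \delta_n^{-1}a$ before applying the contraction, while you work directly on a ball of radius $\mathcal{O}(\|r_n\|_{L^2})$; both are standard and equivalent.
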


\begin{Remark}
One could prove a more general result where the front interfaces are located on positions $n_1T,\ldots,n_mT$ as long as the distances $n_{i+1}-n_i$ are sufficiently large for each $i = 1,\ldots,M-1$. More precisely, there exist $C > 0$ and $N \in \N$ such that for each vector $n = (n_1,\ldots,n_m) \in \N^M$  with $\kappa := \min\{n_{i+1} - n_i : i \in \{1,\ldots,M-1\}\} \geq N$, there exists an $M$-front solution
\begin{align*}
    u_n = a_n + \sum_{j = 1}^M \tilde\chi_{j,n} Z_j(\cdot - jn_jT)
\end{align*}
of~\eqref{existence_problem}, where $\tilde\chi_{j,n} \colon \R \to [0,1]$, $j = 1,\ldots,M$ is a smooth partition of unity satisfying $\|\tilde\chi_{j,n}\|_{W^{k,\infty}} \leq 1$ with $\tilde\chi_{1,n}$ supported on $(-\infty,\frac{1}{2}(n_1+n_2)T+1)$, $\tilde\chi_{M,n}$ supported on $(\frac{1}{2} (n_{M-1}+n_M) T - 1,\infty)$, and $\tilde\chi_{j,n}$ supported on $(\frac12 (n_{j-1}+n_j)T-1,\frac12 (n_j + n_{j+1})T + 1)$ for $j = 2,\ldots,M-1$ (only in case $M > 2$). Moreover, the error $a_n \in H^k(\R)$ converges to $0$ as $\kappa \to \infty$, The proof of this statement proceeds along the lines of Theorem~\ref{t:existence_multifront}, but with considerably more involved notation, and is therefore omitted.
\end{Remark} 

The proof of Theorem~\ref{t:existence_multifront} relies on a contraction-mapping argument. Inserting the ansatz~\eqref{multifront_ansatz} into~\eqref{existence_problem}, one arrives at an equation for the error $a_n$, whose linear part is given by $\El(w_n) a_n$. Here, $\El(w_n)$ represents the linearization of~\eqref{e:sys_intro} about the formal concatenation $w_n$ of the $M$ fronts, with $\El(\cdot)$ defined in~\S\ref{sec:Abstract_ex_stab}. To solve for the error, we recast the equation as a fixed-point problem in $H^k(\R)$ by inverting the linear operator $\El(w_n)$.

The invertibility of $\El(w_n)$ is established by transferring the nondegeneracy of the primary fronts $Z_1,\ldots,Z_M$ to the concatenation $w_n$. This is achieved by characterizing invertibility through exponential dichotomies~\cite{MasseraSchaefer1966}. Specifically, $\El(\unu) - \lambda$ can be inverted if and only if the first-order formulation~\eqref{first-order_formulation} of the eigenvalue problem admits an exponential dichotomy on $\R$. By applying pasting and roughness techniques to the exponential dichotomies arising through the nondegeneracy of the individual fronts, we construct an exponential dichotomy on $\R$ for the first-order formulation of the eigenvalue problem associated with $\El(w_n)$, thereby establishing its invertibility.

The invertibility result is formalized in the following lemma, which is stated in a slightly more general form. This generalization also plays a central role in the subsequent spectral analysis of the multifront.

\begin{Lemma} \label{lem:invertibility_multifront}
Assume~{\upshape \ref{assH1}} and~{\upshape \ref{assH2}}. Let $\mathcal{K} \subset \C$ be a compact set.  Moreover, let $\{a_n\}_n$ be a sequence in $H^k(\R)$ with $\|a_n\|_{H^k} \to 0$ as $n \to \infty$. 

Suppose that the linear operator $\El(Z_j) - \lambda$ is invertible for each $\lambda \in \mathcal{K}$ and $j = 1,\ldots,M$. Then, there exist $C > 0$ and $N \in \N$ such that for each $n \in \N$ with $n \geq N$ the resolvent set of the operator
\begin{align*}
\El(w_n + a_n), \qquad w_n := \sum_{j = 1}^M \chi_{j,n} Z_j(\cdot - jnT)
\end{align*}
contains $\mathcal{K}$ and the resolvent obeys the bound
\begin{align} \label{inversebound}
\left\|(\El(w_n+a_n)-\lambda)^{-1}\right\|_{L^2 \to H^k} \leq C
\end{align}
for $\lambda \in \mathcal{K}$.
\end{Lemma}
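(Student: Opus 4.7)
The plan is to reformulate the problem at the level of the first-order system~\eqref{first-order_formulation} and to characterize invertibility of $\El(\cdot) - \lambda$ via exponential dichotomies on $\R$, as recalled in Appendix~\ref{app:exp_dich}. First, I would exploit the hypothesis that $\El(Z_j) - \lambda$ is invertible for every $\lambda \in \mathcal{K}$ and every $j$: this is equivalent to the first-order system $U' = \A(x,Z_j(x);\lambda) U$ admitting exponential dichotomies on $\R_-$ and $\R_+$ whose stable/unstable projections at $x = 0$ are complementary. Since $\mathcal{K}$ is compact and $\A$ depends analytically on $\lambda$, the dichotomy constants and projections can be chosen uniformly in $\lambda \in \mathcal{K}$ for each fixed $j$.

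Second, I would paste the dichotomies of the shifted fronts $Z_j(\cdot - jnT)$ into an exponential dichotomy on $\R$ for the system associated with $\El(w_n)$. Translation gives dichotomies on the half-lines $(-\infty,jnT]$ and $[jnT,\infty)$. Thanks to the matching condition~\ref{assH2}, in the overlap region around $x = (j+\tfrac12)nT$ both $Z_j(\cdot - jnT)$ and $Z_{j+1}(\cdot - (j+1)nT)$ are exponentially close to the common $T$-periodic end state $v_{j,+} = v_{j+1,-}$. The corresponding first-order systems are then exponentially close (in $n$) to the purely periodic system $U' = \A(x, v_{j,+}(x);\lambda)U$, whose projections are uniquely determined because the hypothesized invertibility rules out bounded solutions of the asymptotic equation. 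Roughness of exponential dichotomies, together with the uniqueness of the asymptotic projections, forces the dichotomy projections from the left and right half-problems to coincide, up to errors that vanish as $n \to \infty$, in the overlap window. Iterating the pasting lemma (Appendix~\ref{app:exp_dich}) over $j = 1, \dots, M-1$ yields an exponential dichotomy on all of $\R$ for the first-order system associated with $\El(w_n) - \lambda$, with constants uniform in $\lambda \in \mathcal{K}$ and in $n \geq N$.

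Third, I would absorb the error $a_n$ by roughness. Since $\partial_u \Non$ is $C^1$ in its first argument and $\|a_n\|_{H^k} \to 0$ implies $\|a_n\|_\infty \to 0$, the coefficient matrix $\A(x, w_n(x) + a_n(x);\lambda) - \A(x, w_n(x);\lambda)$ tends uniformly in $x$ and $\lambda \in \mathcal{K}$ to zero as $n \to \infty$. The roughness theorem therefore transfers the exponential dichotomy on $\R$ from the system associated with $\El(w_n)$ to that associated with $\El(w_n + a_n)$, preserving uniform constants. Invertibility of $\El(w_n + a_n) - \lambda : H^k(\R) \to L^2(\R)$ and the resolvent bound~\eqref{inversebound} then follow from the standard Green's-function representation in terms of the dichotomy, together with a bootstrapping argument using~\eqref{existence_problem} and the invertibility of $\alpha_k(x)$ to upgrade $L^2$ estimates to $H^k$ estimates.

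The main obstacle I anticipate is the pasting step: producing a \emph{single} exponential dichotomy on $\R$ from the half-line dichotomies requires that, in the overlap regions, the stable subspaces of the left dichotomy and the unstable subspaces of the right dichotomy form a direct sum, uniformly in $\lambda \in \mathcal{K}$ and $n \geq N$. The key point is that both projections are exponentially close to the unique dichotomy projection of the asymptotic periodic system, and that uniqueness is guaranteed precisely by the hypothesized invertibility $\El(Z_j) - \lambda$ on the primary fronts, which excludes non-trivial exponentially decaying solutions at the matching end state $v_{j,+} = v_{j+1,-}$. Establishing this transversality with constants that do not deteriorate as $\lambda$ ranges over the compact set $\mathcal{K}$ is the technical heart of the argument.
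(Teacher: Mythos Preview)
Your proposal is correct and follows essentially the same approach as the paper: characterize invertibility via exponential dichotomies on $\R$ for the first-order system, obtain $\lambda$-uniform dichotomy constants on $\mathcal{K}$ by compactness and roughness, glue the translated dichotomies across the overlap regions using the matching condition~\ref{assH2}, and convert the resulting dichotomy on $\R$ into the resolvent bound via the Green's-function/variation-of-constants formula. The only notable difference is the order of operations: you propose to first paste a dichotomy on $\R$ for the system associated with $w_n$ and then absorb $a_n$ by a final roughness step, whereas the paper absorbs both the cutoff errors in $w_n - Z_j(\cdot - jnT)$ and the perturbation $a_n$ in a single roughness step on each interval $I_j$, and only then pastes; moreover, in the pasting step the paper compares the two dichotomy projections directly on the overlap interval (via an estimate of the form $\|P_1 - P_2\| \lesssim \eu^{-\mu nT}$ for two dichotomies on the same interval) rather than routing both through the asymptotic periodic projection as you describe --- but these are equivalent mechanisms and yield the same uniform transversality.
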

\begin{proof}
Lemma~\ref{lem:expdi1} yields that the first-order system
\begin{align} \label{variationalsys1}
U' = \A\left(x,Z_j(x);\lambda \right) U
\end{align}
admits an exponential dichotomy on $\R$ for each $\lambda \in \mathcal{K}$ and for $j = 1,\ldots,M$. By continuity of $\A$ and roughness of exponential dichotomies, cf.~\cite[Proposition~4.1]{Coppel1978}, there exists for each $\lambda_0 \in \mathcal{K}$ an open disk $B_{\lambda_0} \subset \C$ with $\lambda_0 \in B_{\lambda_0}$ and constants $K_{\lambda_0},\mu_{\lambda_0} > 0$ such that~\eqref{variationalsys1} has an exponential dichotomy on $\R$ for each $\lambda \in B_{\lambda_0}$ with constants $K_{\lambda_0},\mu_{\lambda_0} > 0$. By compactness of $\mathcal{K}$ the open cover $\{B_{\lambda_0} : \lambda_0 \in \mathcal{K}\}$ has a finite subcover. It follows that~\eqref{variationalsys1} has for each $\lambda \in \mathcal{K}$ an exponential dichotomy on $\R$ with $\lambda$-independent constants.

Clearly, system
\begin{align} \label{variationalsys2}
U' = \A\left(x,Z_j(x - j nT) ;\lambda \right) U
\end{align}
is for each $n \in \N$ and $j = 1,\ldots,M$ a $jnT$-translation of system~\eqref{variationalsys1}. So,~\eqref{variationalsys2} possesses for each $n \in \N$, $j = 1,\ldots,M$ and $\lambda \in \mathcal{K}$ an exponential dichotomy on $\R$ with $\lambda$- and $n$-independent constants.

We use roughness techniques to transfer the exponential dichotomy of system~\eqref{variationalsys2} to an exponential dichotomy of system
\begin{align} \label{variationalsysfull}
    U' = \A(x,w_n(x) + a_n(x);\lambda) U
\end{align}
on an interval $I_j$ for $j = 1,\ldots,M$, where we denote $I_1 = (-\infty,\frac{5}{3} nT]$, $I_j = [(j-\frac{2}{3}) nT, (j+\frac{2}{3}) nT]$ for $j = 2,\ldots,M-1$ (only in case $M > 2$), and $I_M = [(M-\frac{2}{3}) nT,\infty)$. Since $\partial_u \A$ is continuous, $\mathcal{K}$ is compact and we have $Z_1,\ldots,Z_M \in L^\infty(\R)$ and $a_n \in H^k(\R) \hookrightarrow L^\infty(\R)$, we obtain by the mean value theorem a $\lambda$- and $n$-independent constant $K_0 > 0$ such that
\begin{align} \label{existest1}
\begin{split}
    &\left\|\A(x,w_n(x) + a_n(x);\lambda) - \A\left(x,Z_j(x-jnT);\lambda \right)\right\|\\ 
    &\qquad \leq K_0\Bigg(\|a_n\|_{L^\infty} + \sum_{\ell \in \{1,\ldots,M\}} \Big(\|\chi_-\left(Z_\ell-v_{\ell,-}\right)\|_{L^\infty(\R \setminus (-\frac{1}{3}nT,\frac{1}{3}nT))} \\
    & \qquad\qquad\qquad
    + \|\chi_+\left(Z_\ell-v_{\ell,+}\right)\|_{L^\infty(\R \setminus (-\frac{1}{3}nT,\frac{1}{3}nT))}\Big)\Bigg)
\end{split}
\end{align}
for $x \in I_j$, $n \in \N$, $\lambda \in \mathcal{K}$ and $j = 1,\ldots,M$.

It is readily seen by approximation with simple functions that for each $g \in L^2(\R)$ it holds $\|g\|_{L^2(\R \setminus [-R,R])} \to 0$ as $R \to \infty$. Thus, $\|\chi_\pm(Z_j-v_{j,\pm})\|_{H^1(\R \setminus [-R,R])}$ converges to $0$ as $R \to \infty$ for $j = 1,\ldots,M$. Hence, noting that $H^1(I)$ continuously embeds into $L^\infty(I)$ for each interval $I \subset \R$, the right-hand side of the estimate~\eqref{existest1} converges to $0$ uniformly on $I_j$ as $n \to \infty$ for $j = 1,\ldots,M$. Therefore, using that~\eqref{variationalsys2} has an exponential dichotomy on $\R$ with $\lambda$- and $n$-independent constants, we establish, provided $n \in \N$ is sufficiently large, by~\cite[Proposition~4.1]{Coppel1978} an exponential dichotomy for~\eqref{variationalsysfull} on $I_j$ with $\lambda$-, $j$- and $n$-independent constants $K_1,\mu_1 > 0$ and projections $P_{j,n}(x;\lambda)$ for each $\lambda \in \mathcal{K}$ and $j = 1,\ldots,M$.

For $j = 1,\ldots,M-1$ we iteratively paste the exponential dichotomies for~\eqref{variationalsysfull} on the intervals $(-\infty,(j+\frac{2}{3}) nT]$ and $I_{j+1}$ together at the point $x = (j + \frac12) n T$ to obtain an exponential dichotomy on $\R$. Let $j \in \{1,\ldots,M-1\}$. Given an exponential dichotomy for~\eqref{variationalsysfull} on $(-\infty,(j+\frac{2}{3}) nT]$ with $\lambda$- and $n$-independent constants $K_j,\mu_j > 0$ and projections $Q_{j,n}(x;\lambda)$, we employ Lemma~\ref{l:projest} and arrive, provided $n \in \N$ is sufficiently large, at
\begin{align*}
\left\|Q_{j,n}\left((j+\tfrac12) n T;\lambda\right) - P_{j+1,n}\left((j+\tfrac12) n T;\lambda\right)\right\| \leq 2 K_1 K_j \eu^{-(\mu_1 + \mu_j) \frac{n}{6} T} < 1,
\end{align*}
for each $\lambda \in \mathcal{K}$. Hence, the subspaces $\ker(Q_{j,n}((j+\tfrac12) n T;\lambda))$ and $\mathrm{ran}(P_{j+1,n}((j+\tfrac12) n T;\lambda))$ are complementary by Lemma~\ref{l:projest2} and the associated projection $P_{\circ,n}(\lambda)$ onto $\mathrm{ran}(P_{j+1,n}((j+\tfrac12) n T;\lambda))$ along $\ker(Q_{j,n}((j+\tfrac12) n T;\lambda))$ is well-defined and satisfies
\begin{align*} \left\|P_{\circ,n}(\lambda)\right\| \leq \frac{K_j}{1 - 2 K_1K_j \eu^{-(\mu_1 + \mu_j) \frac{n}{6} T}},\end{align*}
for each $\lambda \in \mathcal{K}$. Hence, provided $n \in \N$ is sufficiently large, Lemma~\ref{l:pastingexpdi} yields an exponential dichotomy for system~\eqref{variationalsysfull} on $(-\infty,(j+\frac{2}{3}) nT] \cup I_{j+1}$ with $\lambda$- and $n$-independent constants for each $\lambda \in \mathcal{K}$. Thus, iteratively repeating the above procedure for $j = 1,\ldots,M-1$, we establish, provided $n \in \N$ is sufficiently large, an exponential dichotomy of~\eqref{variationalsysfull} on $(-\infty,(M-\frac{1}{3}) nT] \cup I_{M} = \R$ with $\lambda$- and $n$-independent constants for each $\lambda \in \mathcal{K}$.

Using the compactness of $\mathcal{K}$ and the continuity of $\A$, it follows that $\|\A(x,w_n(x) + a_n(x);\lambda)\|_{L^\infty}$ can be bounded by a $\lambda$- and $n$-independent constant for each $\lambda \in \mathcal{K}$ and $n \in \N$. So, provided $n \in \N$ is sufficiently large, Lemma~\ref{l:inhomexpdi} yields a $\lambda$- and $n$-independent constant $C > 0$ such that for each $g \in H^1(\R) \hookrightarrow C(\R)$ and $\lambda \in \mathcal{K}$ the inhomogeneous linear problem
\begin{align*}
U' = \A(x,w_n(x) + a_n(x);\lambda)U + \psi
\end{align*}
with inhomogeneity $\psi = (0,\ldots,0,g)^\top \in H^1(\R)$ has a solution $U \in H^1(\R)$ satisfying
\begin{align*}
\|U\|_{H^1} \leq C \|\psi\|_{L^2} = C \|g\|_{L^2}.
\end{align*}
Using that we have $U_i' = U_{i+1} \in H^1(\R)$ for $i = 1,\ldots,k-1$, we readily observe that $u = U_1 \in H^k(\R)$ solves the resolvent problem
\begin{align} \label{resolvent_multifront}
\left(\El(w_n + a_n) - \lambda\right) u = g,
\end{align}
and satisfies
\begin{align} \label{resolvent_multifront_bound}
\|u\|_{H^k} \leq \|U\|_{H^1} \leq C\|g\|_{L^2}.
\end{align}
Since the operator $\El(w_n+a_n)$ is closed, it follows by the density of $H^1(\R)$ in $L^2(\R)$ that, provided $n \in \N$ is sufficiently large, the resolvent problem~\eqref{resolvent_multifront} possesses for each $g \in L^2(\R)$ and $\lambda \in \mathcal{K}$ a solution $u \in H^k(\R)$ satisfying~\eqref{resolvent_multifront_bound}.

Finally, if $u \in H^k(\R)$ lies in the kernel of $\El(w_n+a_n) - \lambda$, then $U = (u,\partial_x u, \ldots,\partial_x^{k-1} U)^\top \in H^1(\R)$ is a localized solution of the first-order variational problem~\eqref{variationalsysfull}. Since~\eqref{variationalsysfull} has an exponential dichotomy on $\R$, $U$ must be the trivial solution and, thus, we find $u = 0$. 

So, we have established that, provided $n \in \N$ is sufficiently large, $\El(w_n+a_n) - \lambda$ is bounded invertible and satisfies~\eqref{inversebound} for each $\lambda \in \mathcal{K}$.
\end{proof}

With the aid of Lemma~\ref{lem:invertibility_multifront}, we now prove Theorem~\ref{t:existence_multifront} using a contraction-mapping argument.

\begin{proof}[Proof of Theorem~\ref{t:existence_multifront}]
First, Lemma~\ref{lem:invertibility_multifront} implies that the linear operator 
$\El\left(w_n\right)$ is invertible and there exists an $n$-independent constant $K > 0$ such that
\begin{align} \label{linest}
\|\El(w_n)^{-1}\|_{L^2 \to H^k} \leq K.
\end{align}

Inserting the ansatz $u = w_n + a$ with correction term $a \in H^k(\R)$ into~\eqref{existence_problem} yields the equation
\begin{align} \label{perteq1}
a = \widetilde{\Non}(a) + R,
\end{align}
where the nonlinear map $\widetilde{\Non} \colon H^k(\R) \to H^k(\R)$ is given by
\begin{align*}
\widetilde{\Non}(a) &= \El(w_n)^{-1} \left( \Non\left(w_n,\cdot\right) + \partial_u \Non\left(w_n,\cdot\right)a - \Non\left(w_n + a,\cdot\right)\right)
\end{align*}
and the residual $R \in H^k(\R)$ is given by
\begin{align*}
R = -\El(w_n)^{-1} \left(\Non(w_n,\cdot) + A(w_n)\right).
\end{align*}

Using the continuous embedding $H^1(\R) \hookrightarrow L^\infty(\R)$ and the fact that the nonlinearity $\Non$ is twice continuously differentiable in its first argument, it follows by Taylor's theorem and estimate~\eqref{linest} that $\smash{\widetilde{\Non}} \colon H^k(\R) \to H^k(\R)$ is well-defined and for all $\rho >0 $ there exists an $n$-independent constant $C_1 > 0$ such that
\begin{align} \label{nonlLipschitz}
\left\|\widetilde{\Non}(a_0) - \widetilde{\Non}(a_1)\right\|_{H^k} \leq C_1\left(\|a_0\|_{H^k} + \|a_1\|_{H^k}\right)\left\|a_0 - a_1\right\|_{H^k}.
\end{align}
for $a_0,a_1 \in H^k(\R)$ with $\|a_0\|_{L^\infty},\|a_1\|_{L^\infty} \leq \rho$.

Next, the fact that $Z_j(\cdot - jnT)$ is a solution of~\eqref{existence_problem} implies that
\begin{align*}
R &= \El(w_n)^{-1}\left(\Non\left(Z_j(\cdot - jnT),\cdot\right) - \Non(w_n,\cdot) - A\left(w_n - Z_j(\cdot-jnT)\right)\right)
\end{align*}
for $j = 1,\ldots,M$. We partition $\R = I_1 \cup \ldots \cup I_M$ with $I_1 = (-\infty,\frac{3}{2} nT]$, $I_j = ((j-\frac{1}{2}) nT, (j+\frac{1}{2}) nT]$ for $j = 2,\ldots,M-1$ (only in case $M > 2$), and $I_M = ((M-\frac{1}{2}) nT,\infty)$. Since the nonlinearity $\Non$ is continuously differentiable in its first argument and it holds $\|\chi_{j,n}\|_{W^{k,\infty}} \leq 1$ for $j = 1,\ldots,M$, the mean value theorem and estimate~\eqref{linest} yield an $n$-independent constant $C_0 > 0$ such that
\begin{align} \label{inhomMbound}
\begin{split}
\|R\|_{H^k} &\leq K \sum_{j = 1}^M \left\|\Non\left(Z_j(\cdot - jnT),\cdot\right) - \Non(w_n,\cdot) - A\left(w_n - Z_j(\cdot-jnT)\right)\right\|_{L^2(I_j)} \leq C_0 \delta_n,
\end{split}
\end{align}
where we denote
\begin{align*}
\delta_n = \sum_{j = 1}^M \Big( \|\chi_-(Z_j-v_{j,-})\|_{H^k(\R \setminus [-\tfrac{n}{2}T+1,\tfrac{n}{2}T-1])} + \|\chi_+(Z_j-v_{j,+})\|_{H^k(\R \setminus [-\tfrac{n}{2}T+1,\tfrac{n}{2}T-1])} \Big).
\end{align*}
We observe that $\delta_n$ converges to $0$ as $n \to \infty$. 

Motivated by the estimate~\eqref{inhomMbound}, we introduce the rescaled variable
\begin{align} \label{rescaling}
\at = \delta_{n}^{-1} a,
\end{align}
in which~\eqref{perteq1} reads
\begin{align} \label{perteq2}
\begin{split}
\at &= \delta_{n}^{-1} \widetilde{\Non}(\delta_n \at) + \delta_{n}^{-1} R.
\end{split}
\end{align}
We regard~\eqref{perteq2} as an abstract fixed point problem
\begin{align} \at = \mathcal{F}_{n}(\at)\label{contrmap}
\end{align}
and show that $\mathcal{F}_{n} \colon B_0(2C_0) \to B_0(2C_0)$ is a well-defined contracting mapping on the ball $B_0(2C_0)$ of radius $2C_0$ in $H^k(\R)$ centered at the origin, where $C_0 > 0$ is the $n$-independent constant appearing in the bound~\eqref{inhomMbound} on $R$. Combining the estimates~\eqref{nonlLipschitz} and~\eqref{inhomMbound} and noting $\smash{\widetilde{\Non}(0)} = 0$, yields an $n$-independent constant $K_0 > 0$ such that
\begin{align*}
\begin{split}
\left\|\mathcal{F}_{n}(g)\right\|_{H^k} &\leq C_0 + K_0 \delta_{n},\\
\left\|\mathcal{F}_{n}(g) - \mathcal{F}_{n}(h)\right\|_{H^2} &\leq K_0 \delta_{n} \|g-h\|_{H^k},
\end{split}
\end{align*}
for $g,h \in B_0(2C_0)$. Therefore, using that $\delta_n \to 0 $ as $n \to \infty$, $\mathcal{F}_{n}$ is a well-defined contraction mapping on $B_0(2C_0)$, provided $n \in \N$ is sufficiently large. By the Banach fixed point theorem there exists a unique solution $\at_n \in B_0(2C_0)$ to~\eqref{contrmap}. Undoing the rescaling~\eqref{rescaling}, we found a solution $a_n \in H^k(\R)$ of equation~\eqref{perteq1} with
\begin{align*} 
\|a_n\|_{H^k} \leq 2C_0 \delta_{n},
\end{align*}
provided $n \in \N$ is sufficiently large, which yields the result.
\end{proof}

\section{Existence of periodic pulse solutions}\label{sec:periodic_pulse}

In this section, we construct a periodic solution to~\eqref{existence_problem} by periodically extending a nondegenerate pulse solution, see Figure~\ref{fig:intro_pic}. Thus, we impose the following assumptions: 

\begin{itemize}
    \item[\namedlabel{assH4}{\textbf{(H4)}}] There exist $v \in H_\per^k(0,T)$ and $z \in H^k(\R)$ such that $v$ and $z + v$ are solutions to~\eqref{existence_problem}.
    \item[\namedlabel{assH5}{\textbf{(H5)}}] The pulse $z+v$ is nondegenerate.
\end{itemize}

The main result of this section reads as follows.

\begin{Theorem} \label{t:existence_periodic}
Assume~{\upshape \ref{assH4}} and~{\upshape \ref{assH5}}. Then, there exist $C > 0$ and $N \in \N$ such that for each $n \in \N$ with $n \geq N$ there exists an $nT$-periodic pulse solution $u_n \in H_\per^k(0,nT)$ of~\eqref{existence_problem} given by
\begin{align} \label{ansatz_periodic}
u_n(x) = \chi_n(x) z(x) + v(x) + a_n(x), \qquad x \in \left[-\tfrac{n}{2} T, \tfrac{n}{2}T\right),
\end{align}
 with $a_n \in H_\per^k(0,nT)$ satisfying 
\begin{align*}
\|a_n\|_{H_\per^k(0,nT)} \leq C \|z\|_{H^k(\R \setminus (-\frac{n}{6}T,\frac{n}{6} T))},
\end{align*}
and where $\chi_n \colon \R \to [0,1]$ is a smooth $nT$-periodic cut-off function satisfying $\|\chi_n\|_{W^{k,\infty}} \leq 1$, $\chi_n(x) = 1$ for $x \in [-\frac{n}{6}T,\frac{n}{6} T]$ and $\chi_n(x) = 0$ for $x \in [-\frac{n}{2} T,-\frac{n}{3} T] \cup [\frac{n}{3} T,\frac{n}{2} T]$. In particular, $\|a_n\|_{H_\per^k(0,nT)}$ converges to $0$ as $n \to \infty$.
\end{Theorem}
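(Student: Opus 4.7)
The plan is to mirror the strategy used for Theorem~\ref{t:existence_multifront}: substitute the ansatz $u = w_n + a$ with $w_n := \chi_n z + v$ into the existence problem~\eqref{existence_problem} to derive an equation
\begin{align*}
a = \widetilde{\Non}_n(a) + R_n, \qquad R_n := -\El_\per(w_n)^{-1}\left(\Non(w_n,\cdot) + A w_n\right),
\end{align*}
for the correction $a \in H_\per^k(0,nT)$, and then solve it by a contraction-mapping argument on a small ball in $H_\per^k(0,nT)$. The critical ingredient is an analogue of Lemma~\ref{lem:invertibility_multifront} establishing invertibility of the periodic linearization $\El_\per(w_n)$ on $L_\per^2(0,nT)$ with an $n$-uniform bound on the inverse.

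For the invertibility step I would proceed via exponential dichotomies as in the proof of Lemma~\ref{lem:invertibility_multifront}. By nondegeneracy~{\upshape \ref{assH5}}, the first-order variational system $U' = \A(x,z(x)+v(x);0)U$ admits an exponential dichotomy on $\R$. Since $z(x) \to 0$ as $|x| \to \infty$, the asymptotic system is $U' = \A(x,v(x);0) U$, which has $T$-periodic coefficients; the invertibility of $\El(z+v)$ forces $0 \notin \sigma(\El(v))$, so by Lemma~\ref{lem:expdi1} this asymptotic system admits an exponential dichotomy on $\R$ as well, with $T$-periodic projections. On the central interval $[-\tfrac{n}{6}T, \tfrac{n}{6} T]$ one has $w_n = z + v$, so the first dichotomy applies verbatim; on the outer regions where $\chi_n \equiv 0$ one has $w_n = v$, so the second dichotomy applies. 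Standard roughness estimates (as in the proof of Lemma~\ref{lem:invertibility_multifront}, using that the transition region has fixed width) together with the pasting lemma from Appendix~\ref{app:exp_dich} yield an exponential dichotomy for $U' = \A(x,w_n(x);0) U$ on the entire interval $[-\tfrac{n}{2}T, \tfrac{n}{2} T]$ with $n$-independent constants, provided $n$ is sufficiently large.

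The main twist compared to the multifront case is passing from a dichotomy on the interval to invertibility on $L_\per^2(0,nT)$. Because the projections of the outer dichotomy at $v$ are $T$-periodic, in particular they coincide at the endpoints $x = \pm \tfrac{n}{2}T$; hence the pasted dichotomy for $w_n$ has projections at $-\tfrac{n}{2}T$ and $\tfrac{n}{2}T$ that agree up to $O(\eu^{-\mu n T})$. Using the characterization from~\S\ref{sec:periodic_diff_operators}, $\lambda = 0 \in \sigma(\El_\per(w_n))$ if and only if the first-order system admits a nontrivial $nT$-periodic solution; equivalently, the monodromy matrix over the period $nT$ has $1$ as an eigenvalue. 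The dichotomy forces the monodromy to have spectrum of modulus at most $K \eu^{-\mu n T}$ on the stable subspace and at least $K^{-1} \eu^{\mu n T}$ on the unstable subspace, so $1$ lies in the resolvent of the monodromy for large $n$. A Green's function for the periodic boundary value problem can then be built from the dichotomy projections in the standard way, yielding the uniform bound $\|\El_\per(w_n)^{-1}\|_{L_\per^2 \to H_\per^k} \leq C$.

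With invertibility of $\El_\per(w_n)$ established, the remainder of the argument is a direct adaptation of the proof of Theorem~\ref{t:existence_multifront}. Since both $z + v$ and $v$ solve~\eqref{existence_problem} on $\R$, the residual $R_n$ is supported in the transition region $\tfrac{n}{6}T \leq |x| \leq \tfrac{n}{3}T$ where $\chi_n$ is non-constant, and one obtains $\|R_n\|_{H_\per^k} \leq C_0 \|z\|_{H^k(\R \setminus (-\frac{n}{6}T, \frac{n}{6}T))}$ via the mean value theorem and the uniform bound on the inverse. Rescaling $\at := \|z\|_{H^k(\R \setminus (-\frac{n}{6}T, \frac{n}{6}T))}^{-1} a$ and applying the Banach fixed-point theorem on a ball of radius $2 C_0$ in $H_\per^k(0,nT)$ yields the desired solution $a_n$ with the claimed estimate. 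I expect the main obstacle to lie in the pasting step and the verification that the resulting dichotomy is compatible with periodic boundary conditions; once that is in place, the contraction argument is routine.
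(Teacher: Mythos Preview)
Your overall strategy---insert the ansatz, invert the periodic linearization $\El_\per(w_n)$ via exponential dichotomies, then run a contraction---matches the paper's proof. The difference lies in how you pass from a dichotomy on one period to invertibility of $\El_\per(w_n)$. You propose to analyze the monodromy over $[-\tfrac{n}{2}T,\tfrac{n}{2}T]$ directly and build a periodic Green's function. The paper instead constructs the dichotomy on an interval of length $2nT$ (namely $[-\tfrac{n}{2}T,\tfrac{3n}{2}T]$), by comparing the full system to both the pulse system $U'=\A(x,z(x)+v(x);0)U$ and its $nT$-translate, and then invokes Palmer's result~\cite[Theorem~1]{Palmer1987}: an $nT$-periodic system with a dichotomy on an interval of length $2nT$ has one on all of $\R$. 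Once a dichotomy on $\R$ is in hand, Lemma~\ref{l:inhomexpdi} yields invertibility on $L^2_\per(0,nT)$ with the uniform bound directly. Your monodromy route is viable, but the spectral claim (that the monodromy has eigenvalues only of modulus $\lesssim \eu^{-\mu nT}$ or $\gtrsim \eu^{\mu nT}$) does not follow merely from the norm estimates of a dichotomy on a single period when the endpoint projections differ; making it rigorous essentially reproduces Palmer's argument. The paper's approach sidesteps this by citing the ready-made result.

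One small slip: the transition region $\tfrac{n}{6}T \leq |x| \leq \tfrac{n}{3}T$ does not have fixed width---it grows like $nT$. This does not hurt the roughness step, since the perturbation $|\chi_n(x)z(x) - z(x)|$ is controlled there by $\|z\|_{L^\infty(\R\setminus(-\frac{n}{6}T,\frac{n}{6}T))}$, which tends to zero; but the reason is decay of $z$, not narrowness of the region.
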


We prove Theorem~\ref{t:existence_periodic} using a similar strategy as for Theorem~\ref{t:existence_multifront}. Specifically, we arrive at an equation for the error $a_n$ by substituting the ansatz $u_n = w_n + a_n$ into~\eqref{existence_problem}, where $w_n$ is the $nT$-periodic extension of the primary pulse $\chi_n z + v$ on $[-\frac{n}{2} nT,\frac{n}{2} nT)$. We convert this equation into a fixed-point problem in $H^k_{\mathrm{per}}(0,nT)$ by inverting its linear component, given by the linearization $\El_{\per}(w_n)$ of~\eqref{e:intro_stat} about the formal periodic extension $w_n$, with $\El_{\per}(\cdot)$ defined in~\S\ref{sec:Abstract_ex_stab}. The proof is then completed by applying a contraction-mapping argument.

The invertibility of $\El_{\per}(w_n)$ is ensured by the following lemma, which serves as the periodic counterpart of Lemma~\ref{lem:invertibility_multifront} and also plays an essential role in the subsequent spectral analysis of the periodic pulse solution. Its proof proceeds along the lines of the proof of Lemma~\ref{lem:invertibility_multifront}, now relying on a periodic extension result for exponential dichotomies~\cite{Palmer1987}.

\begin{Lemma} \label{lem:invertibility_periodic}
Let $\mathcal{K} \subset \C$ be a compact. Let $v \in H_\per^k(0,T)$ and $z \in H^k(\R)$. Moreover, let $\{a_n\}_n$ be a sequence with $a_n \in H_\per^k(0,nT)$ satisfying $\|a_n\|_{H_\per^k(0,nT)} \to 0$ as $n \to \infty$. Finally, let $z_n \in H_\per^k(0,nT)$ be the $nT$-periodic extension of the function $\chi_n z$ on $[-\tfrac{n}{2} T, \tfrac{n}{2}T)$, where $\chi_n$ is the cut-off function from Theorem~\ref{t:existence_periodic}. 

Suppose that the linear operator $\El(z + v) - \lambda$ is invertible for each $\lambda \in \mathcal{K}$. Then, there exist $C > 0$ and $N \in \N$ such that for each $\lambda \in \mathcal{K}$ and each $n \in \N$ with $n \geq N$ the operators $\El(z_n + v + a_n) - \lambda$ and $\El_{\per}(z_n + v + a_n) - \lambda$ are invertible with
\begin{align} \label{inversebound2}
\left\|\left(\El(z_n + v + a_n) - \lambda\right)^{-1}\right\|_{L^2 \to H^k} \leq C,
\end{align}
and
\begin{align} \label{inverseboundper}
\left\|\left(\El_{\per}(z_n + v + a_n) - \lambda\right)^{-1}\right\|_{L_\per^2(0,nT) \to H_\per^k(0,nT)} \leq C.
\end{align}
\end{Lemma}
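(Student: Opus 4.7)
The strategy follows the blueprint of the proof of Lemma~\ref{lem:invertibility_multifront}: I would establish an exponential dichotomy on $\R$ for the first-order system
\begin{align*}
U' = \A\bigl(x,(z_n+v+a_n)(x);\lambda\bigr)U,
\end{align*}
with constants uniform in $\lambda \in \mathcal{K}$ and in $n$, and then deduce the resolvent bound~\eqref{inversebound2} via Lemma~\ref{l:inhomexpdi} exactly as in the last paragraphs of that proof. The bound~\eqref{inverseboundper} on $L^2_\per(0,nT)$ comes along for free: because the perturbed system has $nT$-periodic coefficients, the dichotomy projections may be taken $nT$-periodic (by uniqueness of the stable/unstable fibers), so the bounded solution of the inhomogeneous problem with $nT$-periodic right-hand side produced by Lemma~\ref{l:inhomexpdi} is automatically $nT$-periodic.

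To build the required dichotomy, I would first prepare two building-block dichotomies. From the invertibility of $\El(z+v)-\lambda$, Lemma~\ref{lem:expdi1}, compactness of $\mathcal{K}$, and roughness~\cite[Proposition~4.1]{Coppel1978} give an exponential dichotomy of the pulse system $U' = \A(x,(z+v)(x);\lambda)U$ on $\R$ with $\lambda$-independent constants. Since $z \in H^k(\R)$ decays at infinity, roughness applied in the tails transfers this to an exponential dichotomy for the $T$-periodic background system $U' = \A(x,v(x);\lambda)U$ on the half-lines $(-\infty,-R]$ and $[R,\infty)$, for some $R>0$ and with $\lambda$-independent constants. This is where the hint's reference enters: because $v$ is $T$-periodic, Palmer's periodic extension theorem~\cite{Palmer1987} upgrades these half-line dichotomies to an exponential dichotomy of the background system on all of $\R$, with $T$-periodic projections and the same $\lambda$-independent constants. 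Then, working on a fundamental period $[-\tfrac{n}{2}T,\tfrac{n}{2}T]$, I would split it into a central interval $J_c^n = [-\tfrac{n}{4}T,\tfrac{n}{4}T]$ and edge intervals $J_\pm^n = \pm[\tfrac{n}{4}T,\tfrac{n}{2}T]$. On $J_c^n$ the coefficient difference between the perturbed system and the pulse system is bounded by $C\bigl(\|a_n\|_{L^\infty} + \|(\chi_n-1)z\|_{L^\infty(J_c^n)}\bigr)$, and on $J_\pm^n$ the difference from the background system is bounded by $C\bigl(\|a_n\|_{L^\infty} + \|\chi_n z\|_{L^\infty(J_\pm^n)}\bigr)$; both vanish as $n\to\infty$ because $z$ decays and $(\chi_n-1)z$ is supported where $|x| \geq \tfrac{n}{6}T$. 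Roughness then furnishes dichotomies for the perturbed system on $J_c^n$ and on each $J_\pm^n$ with $\lambda$- and $n$-independent constants; extending $nT$-periodically and pasting via Lemmas~\ref{l:projest},~\ref{l:projest2} and~\ref{l:pastingexpdi} at the transition points $\pm\tfrac{n}{4}T + nT\Z$, exactly as in Lemma~\ref{lem:invertibility_multifront}, yields the desired dichotomy on $\R$.

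The main obstacle is the upgrade from a half-line to a full-line dichotomy for the background $T$-periodic system; this is precisely where Palmer's theorem~\cite{Palmer1987} is needed and where $T$-periodicity of $v$ is indispensable, since without it the pasting scheme on the edge intervals $J_\pm^n$ would have no dichotomy to rely on. A secondary technical point is that the $L^\infty$ bound on $a_n$ required for roughness must be $n$-uniform; this I would obtain from an $n$-uniform periodic Sobolev embedding $H^k_\per(0,nT) \hookrightarrow L^\infty$ together with the hypothesis $\|a_n\|_{H^k_\per(0,nT)}\to 0$.
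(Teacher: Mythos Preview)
Your route differs from the paper's and has a gap at the final step.

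\textbf{How the paper proceeds.} The paper never isolates the background system $U'=\A(x,v(x);\lambda)U$. Instead it uses the pulse system $U'=\A(x,(z+v)(x);\lambda)U$ and its $nT$-translate $U'=\A(x,(z(\cdot-nT)+v)(x);\lambda)U$ as the two comparison systems. Roughness transfers their full-line dichotomies to the perturbed $nT$-periodic system on overlapping intervals $[-\tfrac{n}{2}T,\tfrac{5n}{6}T]$ and $[\tfrac{n}{2}T,\tfrac{3n}{2}T]$; a single paste at $\tfrac{2n}{3}T$ produces a dichotomy on $[-\tfrac{n}{2}T,\tfrac{3n}{2}T]$, an interval of length $2nT$. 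At this point Palmer's theorem~\cite[Theorem~1]{Palmer1987} is invoked on the \emph{perturbed} $nT$-periodic system to upgrade to a dichotomy on $\R$ with $\lambda$- and $n$-independent constants. The two resolvent bounds then follow simultaneously from the two cases of Lemma~\ref{l:inhomexpdi}.

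\textbf{Where your sketch breaks.} Your use of Palmer on the background $T$-periodic system is fine, and comparing with the background on the edge intervals $J_\pm^n$ is a legitimate alternative to the paper's translate trick. The problem is the phrase ``extending $nT$-periodically and pasting\ldots at the transition points $\pm\tfrac{n}{4}T+nT\Z$, exactly as in Lemma~\ref{lem:invertibility_multifront}.'' That lemma performs $M-1$ pastes; here you would need infinitely many. Each application of Lemma~\ref{l:pastingexpdi} increases the constant ($C=K+K^2\|Q\|+K^3$), so iterated pasting does not give $n$-uniform constants. Nor can you simply declare the projections $nT$-periodic after one pass over $[-\tfrac{n}{2}T,\tfrac{n}{2}T]$: the projection you obtain at $\tfrac{n}{2}T$ is only \emph{close} to the one at $-\tfrac{n}{2}T$ (both are near the background projection), not equal to it, so the naive periodic extension is not a dichotomy. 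The mechanism that corrects this mismatch and produces a genuine full-line dichotomy with controlled constants is precisely Palmer's theorem applied to the perturbed $nT$-periodic system --- not to the background. The fix is immediate: after pasting on one period, repeat on an adjacent period by translation and paste once more to cover an interval of length $2nT$, then apply~\cite[Theorem~1]{Palmer1987} to the perturbed system. Your argument for~\eqref{inverseboundper} via Lemma~\ref{l:inhomexpdi} is correct once the full-line dichotomy is in hand.
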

\begin{proof}
As in the proof of Lemma~\ref{lem:invertibility_multifront}, we obtain that for each $\lambda \in \mathcal{K}$ the first-order system
\begin{align} \label{variationalsysper}
U' = \A\left(x,z(x) + v(x);\lambda \right) U
\end{align}
admits an exponential dichotomy on $\R$ with $\lambda$-independent constants and projections $P(x;\lambda)$. So, we find that the $nT$-translated system
\begin{align} \label{variationalsysper2}
U' = \A\left(x,z(x - nT) + v(x);\lambda \right) U
\end{align}
has for each $n \in \N$ and $\lambda \in \mathcal{K}$ also an exponential dichotomy on $\R$ with $\lambda$- and $n$-independent constants.

We apply roughness techniques to carry over the exponential dichotomies of~\eqref{variationalsysper} and~\eqref{variationalsysper2} to the system
\begin{align} \label{variationalsysfullper}
U' = \A\left(x,z_n(x) + v(x) + a_n(x);\lambda \right) U.
\end{align} 
Since $\partial_u \A$ is continuous, $\mathcal{K}$ is compact, it holds $z \in H^1(\R)$ and $v,a_n \in H^1_\per(0,nT)$, and $H^1(\R)$ and $H^1_\per(0,nT)$ embed continuously into $L^\infty(\R)$ with $n$-independent constant, we obtain by the mean value theorem a $\lambda$- and $n$-independent constant $K_0 > 0$ such that
\begin{align} \label{existestper1}
\begin{split}
&\left\|\A\left(x,z_n(x) + v(x) + a_n(x);\lambda \right) - \A\left(x,z(x) + v(x);\lambda \right)\right\| 
\\ &\qquad \leq K_0\begin{cases} \left(\|(1-\chi_n(x)) z(x)\| + \|a_n(x)\|\right), & x \in \left[-\tfrac{n}{2} T,\frac{n}{2} T\right],\\
\left(\|z(x)\| + \|z_n(x)\| + \|a_n(x)\|\right), & x \in \left[\tfrac{n}{2} T,\frac{5n}{6} T\right],\end{cases}
\end{split}
\end{align}
for $x \in [-\tfrac{n}{2} T,\frac{5n}{6} T]$ and $\lambda \in \mathcal{K}$, and
\begin{align} \label{existestper2}
\begin{split}
&\left\|\A\left(x,z_n(x) + v(x) + a_n(x);\lambda \right) - \A\left(x,z(x-nT) + v(x);\lambda \right)\right\| 
\\ &\qquad \leq K_0 \left(\|(1-\chi_n(x-nT)) z(x-nT)\| + \|a_n(x)\|\right)
\end{split}
\end{align}
for $x \in [\tfrac{n}{2} T,\frac{3n}{2} T]$ and $\lambda \in \mathcal{K}$. Since $H^1(I)$ embeds continuously into $L^\infty(I)$ for each interval $I \subset \R$ and $\|z\|_{H^1(\R \setminus [-R,R])}$ converges to $0$ as $R \to \infty$, we find that the right-hand sides of the estimates~\eqref{existestper1} and~\eqref{existestper2} converge to $0$ uniformly on $[-\tfrac{n}{2} T,\frac{5n}{6} T]$ and on $[\tfrac{n}{2} T,\frac{3n}{2} T]$, respectively, as $n \to \infty$. Combing the latter with the fact that~\eqref{variationalsysper} admits an exponential dichotomy on $\R$ with $\lambda$- and $n$-independent constants, we infer thanks to~\cite[Proposition~5.1]{Coppel1978} that, provided $n \in \N$ is sufficiently large,~\eqref{variationalsysfullper} has an exponential dichotomy on $[-\frac{n}{2} T,\frac{5n}{6} T]$ with $\lambda$- and $n$-independent constants $K_1,\mu_1 > 0$ and projections $P_{1,n}(x;\lambda)$ for each $\lambda \in \mathcal{K}$.  Similarly, provided $n \in \N$ is sufficiently large, the exponential dichotomy of~\eqref{variationalsysper2} on $\R$ yields an exponential dichotomy of~\eqref{variationalsysfullper} on $[\frac{n}{2} T,\frac{3n}{2} T]$ with $\lambda$- and $n$-independent constants $K_2,\mu_2 > 0$ and projections $P_{2,n}(x;\lambda)$ for each $\lambda \in \mathcal{K}$.

We glue the exponential dichotomies of~\eqref{variationalsysfullper} on $[-\tfrac{n}{2} T,\frac{5n}{6} T]$ and on $[\tfrac{n}{2} T,\frac{3n}{2} T]$ together at the point $x = \frac{2n}{3} T$. First, Lemma~\ref{l:projest} yields, provided $n \in \N$ is sufficiently large, the bound
\begin{align*}
\left\|P_{1,n}\left(\tfrac{2n}{3} T;\lambda\right) - P_{2,n}\left(\tfrac{2n}{3} T;\lambda\right)\right\| \leq 2 K_1K_2 \eu^{-(\mu_1 + \mu_2) \frac{n}{6} T} < 1,
\end{align*}
for each $\lambda \in \mathcal{K}$. So, the subspaces $\ker(P_{1,n}(\tfrac{2n}{3} T;\lambda))$ and $\mathrm{ran}(P_{2,n}(\tfrac{2n}{3} T;\lambda))$ are complementary by Lemma~\ref{l:projest2}. We infer that the projection $P_{\circ,n}(\lambda)$ onto $\mathrm{ran}(P_{2,n}(\tfrac{2n}{3} T;\lambda))$ along $\ker(P_{1,n}(\tfrac{2n}{3} T;\lambda))$ is well-defined and satisfies
\begin{align*} \left\|P_{\circ,n}(\lambda)\right\| \leq \frac{K_2}{1 - 2 K_1K_2 \eu^{-(\mu_1 + \mu_2) \frac{n}{6} T}},\end{align*}
for each $\lambda \in \mathcal{K}$. Therefore, provided $n \in \N$ is sufficiently large, Lemma~\ref{l:pastingexpdi} establishes an exponential dichotomy for system~\eqref{variationalsysfullper} on the interval $[-\frac{n}{2} T,\frac{3n}{2} T]$ of length $2nT$ with $\lambda$- and $n$-independent constants for each $\lambda \in \mathcal{K}$. Moreover, using the continuity of $\A$ and the compactness of $\mathcal{K}$, it follows that $\|\A\left(\cdot,z_n(\cdot) + v(\cdot) + a_n(\cdot);\lambda \right)\|_{L^\infty}$ can be bounded by a $\lambda$- and $n$-independent constant for each $\lambda \in \mathcal{K}$. Combining the last two sentences with the fact that system~\eqref{variationalsysfullper} has $nT$-periodic coefficients,~\cite[Theorem~1]{Palmer1987} yields, provided $n \in \N$ is sufficiently large, an exponential dichotomy of system~\eqref{variationalsysfullper} on $\R$ with $\lambda$- and $n$-independent constants $K_0,\mu_0 > 0$ for each $\lambda \in \mathcal{K}$. 

In the following, we denote by $\mathcal{H}^l$ either the space $H^l(\R)$ or the space $H_\per^l(0,nT)$ and $l \in \N_0$. Since $\|\A\left(\cdot,z_n(\cdot) + v(\cdot) + a_n(\cdot);\lambda \right)\|_{L^\infty}$ can be bounded by a $\lambda$- and $n$-independent constant and the $nT$-periodic system~\eqref{variationalsysfullper} has an exponential dichotomy on $\R$ with $\lambda$- and $n$-independent constants, Lemma~\ref{l:inhomexpdi} yields, provided $n \in \N$ is sufficiently large, a $\lambda$- and $n$-independent constant $C > 0$ such that for each $g \in \mathcal{H}^1 \hookrightarrow C(\R)$ and $\lambda \in \mathcal{K}$ the inhomogeneous problem
\begin{align*}
U' = \A\left(x,z_n(x) + v(x) + a_n(x);\lambda \right) U + \psi
\end{align*}
with $\psi = (0,\ldots,0,g)^\top \in \mathcal{H}^1$ has a solution $U \in \mathcal{H}^1$ satisfying
\begin{align*}
\|U\|_{\mathcal{H}^1} \leq C \|\psi\|_{\mathcal{H}^0} = C \|g\|_{\mathcal{H}^0}.
\end{align*}
Using that we have $U_j' = U_{j+1} \in \mathcal{H}^1$ for $j = 1,\ldots,k-1$, we readily observe that $u = U_1 \in \mathcal{H}^k$ solves the resolvent problem
\begin{align} \label{resolvent_periodic}
\left(\El(z_n + v + a_n) - \lambda\right) u = g,
\end{align}
in case $\mathcal{H}^l = H^l(\R)$, and
\begin{align} \label{resolvent_periodic2}
\left(\El_{\per}(z_n + v + a_n) - \lambda\right) u = g,
\end{align}
in case $\mathcal{H}^l = H_\per^l(0,nT)$. Moreover, it obeys the estimate
\begin{align} \label{resolvent_bound_periodic}
\|u\|_{\mathcal{H}^k} \leq \|U\|_{\mathcal{H}^1} \leq C\|g\|_{\mathcal{H}^0}.
\end{align}
Since $\El(z_n + v + a_n)$ and $\El_\per(z_n+v+a_n)$ are closed operators, it follows by the density of $\mathcal{H}^1$ in $\mathcal{H}^0$ that the resolvent problem~\eqref{resolvent_periodic}, in case $\mathcal{H}^l = H^l(\R)$, and the resolvent problem~\eqref{resolvent_periodic2}, in case $\mathcal{H}^l = H_\per^l(0,nT)$, possesses for each $g \in \mathcal{H}^0$ and $\lambda \in \mathcal{K}$ a solution $u \in \mathcal{H}^k$ satisfying~\eqref{resolvent_bound_periodic}.

On the other hand, an element $u \in \ker(\El(z_n + v + a_n) - \lambda)$ or $u \in \ker(\El_{\per}(z_n + v + a_n) - \lambda)$ yields a bounded solution $U = (u,\partial_x u,\ldots,\partial_x^{k-1} u)$ of system~\eqref{variationalsysfullper}, which must be $0$, because~\eqref{variationalsysfullper} has an exponential dichotomy on $\R$ for each $\lambda \in \mathcal{K}$. 

We conclude that, provided $n \in \N$ is sufficiently large, $\El(z_n + v + a_n) - \lambda$ and $\El_{\per}(z_n + v + a_n) - \lambda$ are bounded invertible and obey~\eqref{inversebound2} and~\eqref{inverseboundper}, respectively, for each $\lambda \in \mathcal{K}$.
\end{proof}

With the aid of Lemma~\ref{lem:invertibility_periodic}, we are now able to establish the main result of this section using a contraction-mapping argument.

\begin{proof}[Proof of Theorem~\ref{t:existence_periodic}]
Let $z_n \in H_\per^k(0,nT)$ be the $nT$-periodic extension of the function $\chi_n z$ on $[-\tfrac{n}{2} T,\tfrac{n}{2} T)$. By Lemma~\ref{lem:invertibility_periodic} the linear operator $\El_\circ := \El_{\per}(z_n + v)$ is invertible and there exists an $n$-independent constant $K > 0$ such that
\begin{align} \label{linestper}
\|\El_\circ^{-1}\|_{L_\per^2(0,nT) \to H_\per^k(0,nT)} \leq K.
\end{align}

We substitute the ansatz $u = \chi_n z + v + a$ with correction term $a \in H_\per^k(0,nT)$ into~\eqref{existence_problem} and arrive at the equation
\begin{align} \label{perteq1per}
a = \widetilde{\Non}(a) + R,
\end{align}
with nonlinearity $\widetilde{\Non} \colon H_\per^k(0,nT) \to H_\per^k(0,nT)$ given by
\begin{align*}
\widetilde{\Non}(a) &= \El_\circ^{-1} \left(\Non(z_n + v,\cdot) + \partial_u \Non(z_n  + v,\cdot)a - \Non(z_n + v + a,\cdot)\right)
\end{align*}
and residual $R \in H_\per^k(0,nT)$ given by
\begin{align*}
R = -\El_\circ^{-1} \left(\Non(z_n +v,\cdot) + A(z_n+v)\right).
\end{align*}

Since $\Non$ is twice continuously differentiable in its first argument and $H_\per^1(0,nT)$ embeds continuously into $L^\infty(\R)$ with $n$-independent constant, Taylor's theorem and estimate~\eqref{linestper} imply that $\smash{\widetilde{\Non}}$ is well-defined and for all $\rho >0 $ there exists an $n$-independent constant $C_1 > 0$ such that
\begin{align*}
\left\|\widetilde{\Non}(a_0) - \widetilde{\Non}(a_1)\right\|_{H_\per^k(0,nT)} \leq C_1\left(\|a_0\|_{H_\per^k(0,nT)} + \|a_1\|_{H_\per^k(0,nT)}\right)\left\|a_0 - a_1\right\|_{H_\per^k(0,nT)},
\end{align*}
for $a_0,a_1 \in H_\per^k(0,nT)$ with $\|a_0\|_{L^\infty},\|a_1\|_{L^\infty} \leq \rho$.

We proceed with bounding the residual. First, as $z + v$ and $v$ are solutions of~\eqref{existence_problem}, we have
\begin{align*}
\Non(z_n(x) +v(x),x) + Az_n(x) + Av(x) = 0,
\end{align*}
for $x \in [-\frac{n}{6} T, \frac{n}{6} T]$ and
\begin{align*}
\Non(z_n(x) +v(x),x) + Az_n(x) + Av(x) = \Non(z_n(x) +v(x),x) - \Non(v(x),x) + Az_n(x),
\end{align*}
for $x \in [-\frac{n}{2}T,\frac{n}{2} T]$. Hence, since $H_\per^1(0,ntT)$ embeds continuously into $L^\infty(\R)$ with $n$-independent constant, $\Non$ is continuously differentiable and it holds $\|\chi_n\|_{W^{k,\infty}} \leq 1$, there exists by the mean value theorem and estimate~\eqref{linestper} an $n$-independent constant $C_0 > 0$ such that
\begin{align} \label{inhomMboundper}
 \begin{split}
\|R\|_{H_\per^k(0,nT)} &\leq C_0 \delta_n,
\end{split}
\end{align}
where we denote
\begin{align*}
\delta_n := \|z\|_{H^k(\R \setminus (-\frac{n}{6}T,\frac{n}{6} T))}.
\end{align*}
We observe that $\delta_n$ converges to $0$ as $n \to \infty$. 

We introduce the rescaled variable
\begin{align} \label{rescalingper}
\at = \delta_{n}^{-1} a,
\end{align}
in which~\eqref{perteq1per} reads
\begin{align} \label{perteq2per}
\begin{split}
\at &= \delta_{n}^{-1} \widetilde{\Non}(\delta_n \at) + \delta_{n}^{-1} R.
\end{split}
\end{align}
Regard~\eqref{perteq2per} as an abstract fixed point problem
\begin{align} \at = \mathcal{F}_{n}(\at). \label{contrmapper}
\end{align}
Analogous to the proof of Theorem~\ref{t:existence_multifront}, one establishes, provided $n \in \N$ is sufficiently large, that $\mathcal{F}_n \colon B_0(2C_0) \to B_0(2C_0)$ is a well-defined contracting mapping on the ball $B_0(2C_0)$ of radius $2C_0$ in $H_\per^k(0,nT)$ centered at the origin, where $C_0 > 0$ is the $n$-independent constant appearing in the bound~\eqref{inhomMboundper} on the residual $R$. Then, an application of the Banach fixed point theorem yields a unique solution $\at_n \in B_0(2C_0)$ to~\eqref{contrmapper}. Undoing the rescaling~\eqref{rescalingper}, we obtain a solution $a_n \in H_\per^k(0,nT)$ of equation~\eqref{perteq1per} with $\|a_n\|_{H_\per^k(0,nT)} \leq 2C_0 \delta_{n}$, provided $n \in \N$ is sufficiently large, which yields the result.
\end{proof}

\section{Spectral analysis of front solutions with periodic tails} \label{sec:stability_1fronts}

In this section, we collect some background material on the spectral stability of front solutions connecting periodic end states. Specifically, we impose the following assumption:

\begin{itemize}
    \item[\namedlabel{assH6}{\textbf{(H6)}}] There exists a front $Z \in L^{\infty}(\R)$ with associated end states $v_\pm \in H_\per^k(0,T)$. We have $\chi_\pm(Z-v_\pm) \in H^k(\R)$, where $\chi_\pm \colon \R \to [0,1]$ is a smooth partition of unity such that $\chi_+$ is supported on $(-1,\infty)$ and $\chi_-$ is supported on $(-\infty,1)$.
\end{itemize}

We adopt the following distinction between essential and point spectrum, cf.~\cite{Sandstede2002Stability,KapitulaPromislow2013}. 

\begin{Definition}
Let $L \colon D(L) \subset L^2(\R) \to L^2(\R)$ be a linear operator with domain $D(L)=H^k(\R)$. The \emph{essential spectrum} of $L$ is defined as the set of all $\lambda \in \C$ for which the operator $L-\lambda$ is not Fredholm of index zero. The \emph{point spectrum} is defined as the complement $\sigma(L) \setminus \sigma_\text{ess}(L)$.
\end{Definition}

Assume~\ref{assH6}. Let $\eta_{\pm} \in \R$. The Fredholm properties of the exponentially weighted linearization operator $\El_{\eta_-,\eta_+}(Z)$, see~\S\ref{sec:Abstract_ex_stab}, are determined by the periodic end states $v_\pm$. By leveraging Lemma~\ref{l:compact} and Weyl's theorem, cf.~\cite[Theorem~VI.5.26]{Kato1995}, we infer that, for each $\lambda \in \C$, the operator $\El_{\eta_-,\eta_+}(\chi_-v_-+\chi_+v_+) - \lambda$ is Fredholm of index $j \in \Z$ if and only if $\El_{\eta_-,\eta_+}(Z)-\lambda$ is Fredholm of index $j$. Consequently, the essential spectra of $\El_{\eta_-,\eta_+}(Z)$ and $\El_{\eta_-,\eta_+}(\chi_-v_-+\chi_+v_+)$ coincide. 

The results of Palmer,~\cite[Lemma~4.2]{Palmer1984},~\cite[Theorem~1]{Palmer1987} and~\cite[Theorem~1]{Palmer1988}, imply that $\El_{\eta_-,\eta_+}(\chi_-v_-+\chi_+v_+) - \lambda$ is Fredholm if and only if both of the associated first-order eigenvalue problems
\begin{align}\label{variationalbackground}
    U' = \left(\A(x,v_\pm(x);\lambda) - \eta_\pm\right) U
\end{align}
admit an exponential dichotomy on $\R$. Its Fredholm index is then given by
$$
    \text{ind}(\El_{\eta_-,\eta_+}(Z)-\lambda) = \text{ind}(\El_{\eta_-,\eta_+}(\chi_-v_-+\chi_+v_+) - \lambda) =     l_{-}(\lambda) - l_{+}(\lambda),
$$
where the \emph{Morse index} $l_\pm(\lambda)$ is the rank of the projection associated with the exponential dichotomy of~\eqref{variationalbackground} on $\R$. We note that~\eqref{variationalbackground} possesses an exponential dichotomy on $\R$ if and only if the operator $\El_{\eta_\pm}(v_\pm)-\lambda$ is invertible, cf.~Lemmas~\ref{l:inhomexpdi} and~\ref{lem:expdi1}. Furthermore, Floquet's theorem,~\cite[Theorem~2.1.27]{KapitulaPromislow2013}, yields that the $T$-periodic system~\eqref{variationalbackground} has an exponential dichotomy on $\R$ if and only if it possesses no Floquet exponents $\nu \in \C$ on the imaginary axis~\footnote{The Floquet exponents are the principal logarithms of the eigenvalues of the monodromy matrix $\mathcal{T}_\pm(T,0;\lambda)$, where $\mathcal{T}_\pm(x,y;\lambda)$ is the evolution of system~\eqref{variationalbackground}. We refer to~\cite{Brown2013,KapitulaPromislow2013} for further background on Floquet theory.}. The Morse index is then equal to the number of Floquet exponents $\nu \in \C$ with negative real part (counted with algebraic multiplicity). We summarize these observations in the following proposition.

\begin{Proposition} \label{prop:essential_spec1front}    
Assume~{\upshape \ref{assH6}}. Let $\eta_\pm \in \R$. Then, the following assertions hold true.
\begin{itemize}
\item[1.] A point $\lambda \in \C$ lies in the spectrum of $\El_{\eta_\pm}(v_\pm)$ if and only if the $T$-periodic system~\eqref{variationalbackground} possesses purely imaginary Floquet exponents.
    \item[2.] We have
\begin{align*}
\sigma_{\mathrm{ess}}\left( \El_{\eta_-,\eta_+}(Z)\right) &= \sigma_{\mathrm{ess}}\left(\El_{\eta_-,\eta_+}(\chi_- v_{-} + \chi_+ v_{+})\right)\\
&= \left\{\lambda \in \C : l_-(\lambda) \neq l_+(\lambda)\right\} \cup \sigma(\El_{\eta_-}(v_-)) \cup \sigma(\El_{\eta_+}(v_+)),
\end{align*}
where $l_\pm(\lambda)$ is the number of Floquet exponenents $\nu \in \C$ (counted with algebraic multiplicity) of~\eqref{variationalbackground} with negative real part. 
\end{itemize}
\end{Proposition}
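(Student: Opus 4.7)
The plan is to assemble the proposition from the building blocks already introduced in the paragraphs preceding the statement: the characterization of invertibility of $\El_{\eta_\pm}(v_\pm)-\lambda$ via exponential dichotomies of the first-order system~\eqref{variationalbackground}, Floquet theory for $T$-periodic linear systems, the Weyl-type perturbation argument based on Lemma~\ref{l:compact}, and Palmer's Fredholm theorem identifying the Fredholm index with a difference of Morse indices.

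For assertion 1, I would first recast the eigenvalue problem $(\El_{\eta_\pm}(v_\pm)-\lambda)u = 0$ in its first-order form~\eqref{variationalbackground}. By Lemmas~\ref{l:inhomexpdi} and~\ref{lem:expdi1}, the operator $\El_{\eta_\pm}(v_\pm)-\lambda$ is bounded invertible on $L^2(\R)$ if and only if~\eqref{variationalbackground} admits an exponential dichotomy on $\R$. Since the coefficients of~\eqref{variationalbackground} are $T$-periodic, Floquet's theorem (\cite[Theorem~2.1.27]{KapitulaPromislow2013}) guarantees that this exponential dichotomy exists precisely when no Floquet exponent of the system lies on the imaginary axis. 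Contrapositively, $\lambda \in \sigma(\El_{\eta_\pm}(v_\pm))$ if and only if~\eqref{variationalbackground} has a purely imaginary Floquet exponent.

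For assertion 2, I would begin by comparing $\El_{\eta_-,\eta_+}(Z)$ with $\El_{\eta_-,\eta_+}(\chi_- v_- + \chi_+ v_+)$. These two operators differ by multiplication with the bounded matrix function $\partial_u\Non(Z,\cdot) - \partial_u\Non(\chi_- v_- + \chi_+ v_+,\cdot)$. By~\ref{assH6} and the $C^2$-regularity of $\Non$ in its first argument, this difference decays in the $L^\infty$-sense outside compact intervals, and Lemma~\ref{l:compact} identifies the associated multiplication operator, viewed from $H^k(\R)$ to $L^2(\R)$, as relatively compact with respect to $\El_{\eta_-,\eta_+}(\chi_-v_-+\chi_+v_+)$. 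Weyl's theorem (\cite[Theorem~VI.5.26]{Kato1995}) then yields the equality of the essential spectra, together with the preservation of Fredholm indices.

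It remains to identify $\sigma_{\mathrm{ess}}(\El_{\eta_-,\eta_+}(\chi_- v_- + \chi_+ v_+))$. Palmer's theorem (\cite[Lemma~4.2]{Palmer1984},~\cite[Theorem~1]{Palmer1988}) states that $\El_{\eta_-,\eta_+}(\chi_- v_- + \chi_+ v_+)-\lambda$ is Fredholm if and only if both limiting systems~\eqref{variationalbackground} admit an exponential dichotomy on $\R$, with Fredholm index equal to $l_-(\lambda) - l_+(\lambda)$. Combining this with assertion 1, failure of one of the dichotomies is precisely the condition $\lambda \in \sigma(\El_{\eta_-}(v_-)) \cup \sigma(\El_{\eta_+}(v_+))$, while non-zero index corresponds to $l_-(\lambda) \neq l_+(\lambda)$. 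Taking the union of these two obstructions to being Fredholm of index zero yields the claimed description of the essential spectrum. The main subtlety in this plan is ensuring that the multiplication operator in the Weyl argument is genuinely relatively compact as a map $H^k(\R) \to L^2(\R)$; since the coefficient decays at infinity and $H^k_{\mathrm{loc}} \hookrightarrow L^2_{\mathrm{loc}}$ compactly, this is exactly what Lemma~\ref{l:compact} is designed to supply, so no further work is needed beyond citing it.
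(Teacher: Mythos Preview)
Your proposal is correct and follows essentially the same approach as the paper: the proposition is stated as a summary of the preceding discussion, which invokes exactly the ingredients you list---Lemmas~\ref{l:inhomexpdi} and~\ref{lem:expdi1} together with Floquet's theorem for assertion~1, and Lemma~\ref{l:compact} with Weyl's theorem followed by Palmer's Fredholm results for assertion~2. Your identification of the relative-compactness step as the only point requiring care is apt, and the paper handles it in the same way by citing Lemma~\ref{l:compact}.
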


In the following proposition, we introduce the \emph{Evans function}, a well-known tool to locate point spectrum in the stability analysis of nonlinear waves, see~\cite{Alexander_Topological_1990,Evans1976,KapitulaPromislow2013, Pego1992,Sandstede2002Stability} and references therein. The Evans function is an analytic determinantal function measuring the alignment or mismatch between the subspace of solutions decaying as $x \to \infty$ and the subspace of solutions decaying as $x \to -\infty$. Consequently, its zeros correspond to eigenvalues, including their multiplicities.

\begin{Proposition} \label{prop:Evans1front}
Assume~{\upshape \ref{assH6}}. Let $\eta_\pm \in \R$. Let $\Omega$ be a connected component of 
$$\C \setminus \big(\sigma_\textup{ess}(\El_{\eta_-}(v_-)) \cup \sigma_\textup{ess}(\El_{\eta_+}(v_+))\big).$$ 
Then, the following assertions hold true.
\begin{itemize}
\item[1.] The number $l_{0,\pm} \in \{0,\ldots,km\}$ of Floquet exponents $\nu \in \C$ (counted with algebraic multiplicity) of~\eqref{variationalbackground} with negative real part is constant for each $\lambda \in \Omega$. 
\item[2.] System~\eqref{variationalbackground} has for each $\lambda \in \Omega$ an exponential dichotomy on $\R$ with projections $Q_\pm(x;\lambda)$ of rank $l_{0,\pm}$. Here, $Q_\pm(\cdot;\lambda) \colon \R \to \C^{km \times km}$ is $T$-periodic for each $\lambda \in \Omega$ and $Q_\pm(x;\cdot) \colon \Omega \to \C^{km \times km}$ is analytic for each $x \in \R$.
\item[3.] System 
\begin{align} \label{variationalsys}
U' = \left(\A\left(x,Z(x);\lambda \right) - \omega_{\eta_-,\eta_+}'(x)\right) U
\end{align}
possesses for each $\lambda \in \Omega$ exponential dichotomies on $\R_\pm$ with projections $P_\pm(\pm x;\lambda), x \geq 0$ of fixed rank $l_{0,\pm}$, where $\omega_{\eta_-,\eta_+}$ is the weight function defined in~\S\ref{sec:Abstract_ex_stab}. Moreover, there exist analytic functions $B_{\mathrm{s}} \colon \Omega \to \C^{km \times l_{0,+}}$ and $B_{\mathrm{u}}  \colon \Omega \to \C^{km \times (km-l_{0,-})}$ such that $B_{\mathrm{s}}(\lambda)$ is a basis of $\mathrm{ran}(P_+(0;\lambda))$ and $B_{\mathrm{u}}(\lambda)$ is a basis of $\ker(P_-(0;\lambda))$ for each $\lambda \in \Omega$. 
\item[4.] Assume further $l_{0,-} = l_{0,+}$. Then, there is no essential spectrum of $\El_{\eta_-,\eta_+}(Z)$ in $\Omega$. Moreover, a point $\lambda_0 \in \Omega$ lies in the point spectrum of $\El_{\eta_-,\eta_+}(Z)$ if and only if $\lambda_0$ is a root of the analytic Evans function $\mathcal{E} \colon \Omega \to \C$ given by
\begin{align*}
\mathcal{E}(\lambda) = \det(B_{\mathrm{u}}(\lambda) \mid B_{\mathrm{s}}(\lambda)).
\end{align*}
The geometric multiplicity $m_g(\lambda_0)$ of an eigenvalue $\lambda_0 \in \Omega$ of the operator $\El_{\eta_-,\eta_+}(Z)$ is equal to $\dim(\ker(P_-(0;\lambda_0)) \cap \mathrm{ran}(P_+(0;\lambda_0)))$. Moreover, if $\mathcal{E}$ does not vanish identically on $\Omega$, then the algebraic multiplicity $m_a(\lambda_0)$ of an eigenvalue $\lambda_0 \in \Omega$ of $\El_{\eta_-,\eta_+}(Z)$ is equal to the multiplicity of $\lambda_0$ as a root of $\mathcal{E}$. In particular, the roots of $\mathcal{E}$ and their multiplicities are independent of the choice of bases.
\item[5.] Let $\mathcal{K} \subset \Omega$ be compact. Then, there exist constants $K_0,\mu_0,\tau_0 > 0$ such that system~\eqref{variationalsys} admits for each $\lambda \in \mathcal{K}$ exponential dichotomies on $\R_\pm$ with constants $K_0,\mu_0 > 0$ and projections $\tilde P_\pm(\pm x;\lambda), x \geq 0$ satisfying
\begin{align} \label{projest2}
\begin{split}
\left\|\tilde{P}_\pm(\pm x;\lambda) - Q_\pm(\pm x;\lambda)\right\| &\leq K_0\left(\eu^{-\mu_0 x} + 
\sup_{y \in [x,\infty)} \left\|Z(\pm y)-v_\pm(\pm y)\right\|\right),
\end{split}
\end{align}
 for each $x \geq \tau_0$. 
\end{itemize}
\end{Proposition}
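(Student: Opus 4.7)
The plan is to treat the five assertions in turn, the common input being that for every $\lambda\in\Omega$ the asymptotic operators $\El_{\eta_\pm}(v_\pm)$ have $\lambda$ in their resolvent set (their spectra are purely essential), so by Lemma~\ref{lem:expdi1} the $T$-periodic systems~\eqref{variationalbackground} admit exponential dichotomies on $\R$. For item~1, the Floquet exponents of~\eqref{variationalbackground} are principal logarithms of eigenvalues of the monodromy matrix $\mathcal{T}_\pm(T,0;\lambda)$, which is analytic in $\lambda$; hence they depend continuously on $\lambda$, and none may cross the imaginary axis on $\Omega$ (that would place $\lambda$ in $\sigma(\El_{\eta_\pm}(v_\pm))$ by Proposition~\ref{prop:essential_spec1front}(1)), so connectedness of $\Omega$ forces $l_{0,\pm}$ to be constant. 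For item~2 I would construct $Q_\pm(x;\lambda)$ by first forming the Riesz spectral projection of $\mathcal{T}_\pm(T,0;\lambda)$ onto the part of its spectrum lying in the open unit disk, which has rank $l_{0,\pm}$ and is analytic in $\lambda$ by the spectral gap, and then transporting it with the fundamental matrix of~\eqref{variationalbackground} to produce $T$-periodic analytic projections.

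For item~3 I would apply Coppel's roughness theorem,~\cite[Proposition~4.1]{Coppel1978}, to~\eqref{variationalsys}, which on $\R_\pm$ differs from~\eqref{variationalbackground} by a perturbation bounded (through the $C^1$ dependence of $\A$ in its second argument) by $\|Z(x)-v_\pm(x)\|$; this quantity tends to zero at $\pm\infty$ since $\chi_\pm(Z-v_\pm)\in H^k(\R)\hookrightarrow L^\infty$ vanishes at infinity, as in the proof of Lemma~\ref{lem:invertibility_multifront}. The resulting dichotomies on half-lines $[\tau,\infty)$ and $(-\infty,-\tau]$, with projections close to $Q_\pm$, are pulled back to $x=0$ using the evolution (which preserves both rank and analyticity in $\lambda$), yielding $P_\pm$. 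The quantitative bound~\eqref{projest2} claimed in item~5 is then just the Coppel estimate made uniform in $\lambda$ on a compact $\mathcal{K}\subset\Omega$, on which the asymptotic dichotomy constants are uniformly bounded by continuity in $\lambda$. For the analytic bases $B_s,B_u$: near any $\lambda_0\in\Omega$, $\mathrm{ran}(P_+(0;\lambda))$ is parametrized as a graph over the fixed analytic subspace $\mathrm{ran}(Q_+(0;\lambda_0))$, which produces analytic local bases, and the globalization to all of $\Omega$ follows from triviality of holomorphic vector bundles over open subsets of $\C$ (which are Stein manifolds). The same argument applies to $\ker(P_-(0;\lambda))$.

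For item~4, the equality $l_{0,-}=l_{0,+}=:l_0$ together with Proposition~\ref{prop:essential_spec1front}(2) immediately rules out essential spectrum of $\El_{\eta_-,\eta_+}(Z)$ in $\Omega$. A point $\lambda_0\in\Omega$ is then an eigenvalue if and only if~\eqref{variationalsys} has a nontrivial bounded solution, if and only if $\mathrm{ran}(P_+(0;\lambda_0))\cap\ker(P_-(0;\lambda_0))\neq\{0\}$; the dimension count $\dim\mathrm{ran}(P_+(0;\lambda_0))+\dim\ker(P_-(0;\lambda_0))=l_0+(km-l_0)=km$ makes the block matrix $(B_u(\lambda)\mid B_s(\lambda))$ square, so $\mathcal{E}(\lambda_0)=0$ is equivalent to a nontrivial intersection and $m_g(\lambda_0)$ equals its dimension. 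Basis independence of the zero set and its multiplicities follows from multiplicativity of the determinant. The identification $m_a(\lambda_0)=\mathrm{ord}_{\lambda_0}\mathcal{E}$ when $\mathcal{E}\not\equiv 0$ is the classical Gardner--Jones--Sandstede argument (see, e.g.,~\cite[Ch.~9]{KapitulaPromislow2013} or~\cite[\S4]{Sandstede2002Stability}), obtained by relating Jordan chains of $\El_{\eta_-,\eta_+}(Z)-\lambda_0$ to successive $\lambda$-derivatives of analytic curves of intersection data.

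The main obstacles I anticipate are two: first, producing genuinely global analytic bases $B_s,B_u$ on all of $\Omega$, since the local graph construction is easy but the globalization requires a nontrivial appeal to triviality of holomorphic vector bundles over open subsets of $\C$; and second, the Jordan-chain calculation underlying the equality $m_a(\lambda_0)=\mathrm{ord}_{\lambda_0}\mathcal{E}$ in item~4, which becomes delicate once the geometric multiplicity exceeds one and must be matched carefully to successive derivatives of $\mathcal{E}$ at $\lambda_0$.
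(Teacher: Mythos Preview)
Your proposal is correct and follows essentially the same route as the paper's proof. Regarding your two anticipated obstacles: the paper sidesteps both by citation, obtaining global analytic bases directly from Kato's transformation-function construction~\cite[Section~II.4.2]{Kato1995} (which works once the projections are globally analytic on $\Omega$, without any appeal to holomorphic-bundle triviality over Stein manifolds) and quoting the identification $m_a(\lambda_0)=\mathrm{ord}_{\lambda_0}\mathcal{E}$ from~\cite[Theorem~2.9]{Kapitula2004}; for items~3 and~5 the paper invokes~\cite[Lemma~3.4]{Palmer1984} rather than Coppel's roughness plus pullback, but that lemma packages exactly the argument you describe.
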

\begin{proof}
The first assertion is an immediate consequence of Proposition~\ref{prop:essential_spec1front} and the fact that the Floquet exponents of~\eqref{variationalbackground} depend continuously on $\lambda$.

It follows from Floquet's theorem, cf.~\cite[Theorem~2.1.27]{KapitulaPromislow2013}, Proposition~\ref{prop:essential_spec1front} and Lemma~\ref{l:projper} that the $T$-periodic system~\eqref{variationalbackground} possesses for each $\lambda \in \Omega$ an exponential dichotomy on $\R$ with projections $Q_\pm(x;\lambda)$, which have rank $l_{0,\pm}$ and are $T$-periodic in their first component. Thanks to the uniqueness of exponential dichotomies on $\R$, cf.~\cite[p.~19]{Coppel1978}, and the fact that~\eqref{variationalbackground} depends analytically on $\lambda$, it follows from~\cite[Lemma~A.2]{DasLatushkin2011} that $Q_\pm(x;\cdot)$ is analytic on $\Omega$ for each $x \in \R$. This proves the second assertion.

Next, we observe that~\cite[Lemma~3.4]{Palmer1984},~\ref{assH6} and the continuous embedding $H^1(\R) \hookrightarrow L^\infty(\R)$ yield for each $\lambda \in \Omega$ exponential dichotomies for system~\eqref{variationalsys} on $\R_\pm$ with projections $P_\pm(\pm x;\lambda), x \geq 0$ of rank $l_{0,\pm}$. Using that system~\eqref{variationalsys} depends analytically on $\lambda$ and the subspaces $\ker(P_-(-x;\lambda))$ and $\mathrm{ran}(P_+(x;\lambda))$ are by~\cite[p.~19]{Coppel1978} uniquely determined for $x \geq 0$ and $\lambda \in \Omega$,~\cite[Lemma~A.2]{DasLatushkin2011} and~\cite[Section~II.4.2]{Kato1995} provide analytic functions $B_{\mathrm{s}} \colon \Omega \to \C^{km \times l_0}$ and $B_{\mathrm{u}}  \colon \Omega \to \C^{km \times (km-l_0)}$ such that $B_{\mathrm{s}}(\lambda)$ is a basis of $\mathrm{ran}(P_+(0;\lambda))$ and $B_{\mathrm{u}}(\lambda)$ is a basis of $\ker(P_-(0;\lambda))$ for each $\lambda \in \Omega$. Thus, we have established the third assertion.

Assume $l_{0,+} = l_{0,-}$. Then, there is no essential spectrum of $\El_{\eta_-,\eta_+}(Z)$ in $\Omega$ by Proposition~\ref{prop:essential_spec1front}. Set $l_0 = l_{0,\pm}$ and take $\lambda_0 \in \Omega$. As $\El_{\eta_-,\eta_+}(Z) - \lambda_0$ is Fredholm of index $0$, it is invertible if and only if $\lambda_0$ is not an eigenvalue of $\El_{\eta_-,\eta_+}(Z)$. Since~\eqref{variationalsys} is the first-order formulation of the eigenvalue problem $\El_{\eta_-,\eta_+}(Z) u = \lambda u$, there is a one-to-one correspondence between elements $u_0 \in \ker(\El_{\eta_-,\eta_+}(Z)-\lambda_0)$ and $H^1$-solutions $U_0 = (u_0,\partial_x u_0,\ldots,\partial_x^{k-1} u_0) \in H^1(\R)$ of~\eqref{variationalsys} at $\lambda = \lambda_0$. The exponential dichotomies of~\eqref{variationalsys} on $\R_\pm$ yield that $U_0$ is an $H^1$-solution of~\eqref{variationalsys} at $\lambda = \lambda_0$ if and only if $U_0(0) \in \ker(P_-(0;\lambda_0)) \cap \mathrm{ran}(P_+(0;\lambda_0))$. Therefore, $\lambda_0 \in \Omega$ is an eigenvalue of $\El_{\eta_-,\eta_+}(Z)$ if and only if $\mathcal{E}(\lambda_0) = 0$. In addition, the geometric multiplicity $m_g(\lambda_0)$ of $\lambda_0$ equals $\dim(\ker(P_-(0;\lambda_0)) \cap \mathrm{ran}(P_+(0;\lambda_0)))$. Finally,~\cite[Theorem 2.9]{Kapitula2004} asserts that, if $\mathcal{E}$ is not identically $0$, then the algebraic multiplicity $m_a(\lambda_0)$ of $\lambda_0$ is equal to the multiplicity of $\lambda_0$ as a root of $\mathcal{E}$. This completes the proof of the fourth assertion.

Finally, we recall that system~\eqref{variationalbackground} possesses an exponential dichotomy on $\R$ for each $\lambda$ in the compact set $\mathcal{K}$ with projections $Q_\pm(x;\lambda)$. Thus, as in the proof of Lemma~\ref{lem:invertibility_multifront}, we find that~\eqref{variationalbackground} has for each $\lambda \in \mathcal{K}$ an exponential dichotomy on $\R$ with $\lambda$-independent constants and, by uniqueness, projections $Q_\pm(x;\lambda)$. Thus, the fifth assertion follows from~\cite[Lemma~3.4]{Palmer1984} and its proof.
\end{proof}

\section{Spectral analysis of multifront solutions}\label{sec:stability_multifront}

This section is devoted to the spectral stability analysis of stationary multifront solutions to~\eqref{e:sys_intro}. We consider a multifront $u_n$ of the form~\eqref{multifront_ansatz}, where $w_n$ is a formal concatenation of $M$ primary fronts $Z_1,\ldots,Z_M$ with matching periodic end states, and $a_n$ is an error term converging to $0$ in $H^k(\R)$ as $n \to \infty$. Our goal is to transfer spectral properties of the linearizations $\El(Z_1),\ldots,\El(Z_M)$ of~\eqref{e:sys_intro} about the primary front solutions to the linearization $\El(u_n)$ about the multifront. 

A first key observation, provided by Lemma~\ref{lem:invertibility_multifront}, is that, if $\El(Z_1) - \lambda,\ldots,\El(Z_M)-\lambda$ are invertible for each $\lambda$ in a compact set $\mathcal{K} \subset \C$, then so is $\El(u_n)- \lambda$ for $n \in \N$ sufficiently large. In other words, if $\mathcal{K}$ is a compact subset of the resolvent set $\rho(\El(Z_j))$ for each $j = 1,\ldots,M$, then $\mathcal{K}$ is also contained in $\rho(\El(u_n))$, provided $n \in \N$ is sufficiently large.

In dissipative systems, such as the reaction-diffusion models discussed in~\S\ref{sec:applications}, the spectral stability analysis can often be reduced to an $n$-independent compact set with the aid of a-priori bounds that preclude spectrum with nonnegative real part and large modulus. As a result, Lemma~\ref{lem:invertibility_multifront} yields the following corollary, asserting that strong spectral stability of the $M$ primary fronts $Z_1,\ldots,Z_M$ is inherited by the $M$-front $u_n$.

\begin{Corollary}\label{cor:stability_multifront}
Assume~{\upshape \ref{assH1}},~{\upshape \ref{assH2}} and~{\upshape \ref{assH3}}. Suppose that the front solutions $Z_j$ are strongly spectrally stable for $j = 1,\ldots,M$. Moreover, assume that there exist a compact set $\mathcal{K} \subset \C$ and $N \in \N$ such that
\begin{align*}
    \sigma(\El(u_n)) \cap \{z \in \C : \Re(z) \geq 0\} \subset \mathcal{K}
\end{align*}
for $n \geq N$, where $u_n$ is the multifront solution established in Theorem~\ref{t:existence_multifront}. Then, there exists $N_1 \in \N$ with $N_1 \geq N$ such that for all $n \in \N$ with $n \geq N_1$ the multifront $u_n$ is strongly spectrally stable.
\end{Corollary}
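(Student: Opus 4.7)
The plan is to combine Lemma~\ref{lem:invertibility_multifront} with the a priori bound on the right half plane spectrum. First, by Definition~\ref{def:spectral_stability}(iv), pick a common $\varrho > 0$ such that $\sigma(\El(Z_j)) \subset \{\lambda \in \C : \Re(\lambda) \leq -\varrho\}$ for every $j = 1, \ldots, M$. Replacing $\mathcal{K}$ by $\mathcal{K} \cap \{\Re(\lambda) \geq 0\}$ (still compact, and still absorbing $\sigma(\El(u_n)) \cap \{\Re(\lambda) \geq 0\}$), I may arrange $\mathcal{K} \subset \{\Re(\lambda) \geq 0\}$; hence $\mathcal{K} \cap \sigma(\El(Z_j)) = \emptyset$, i.e., $\mathcal{K} \subset \rho(\El(Z_j))$ for each $j$.

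Applying Lemma~\ref{lem:invertibility_multifront} with this compact set $\mathcal{K}$ and the error sequence $\{a_n\}_n$ supplied by Theorem~\ref{t:existence_multifront} (which satisfies $\|a_n\|_{H^k} \to 0$) yields some $N_1 \geq N$ with $\mathcal{K} \subset \rho(\El(w_n + a_n)) = \rho(\El(u_n))$ for every $n \geq N_1$. Intersecting with the hypothesis gives $\sigma(\El(u_n)) \cap \{\Re(\lambda) \geq 0\} \subset \mathcal{K} \cap \rho(\El(u_n)) = \emptyset$, so the spectrum of $\El(u_n)$ lies entirely in $\{\Re(\lambda) < 0\}$.

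To upgrade this to a positive spectral gap, I analyze the essential spectrum of $\El(u_n)$ using Proposition~\ref{prop:essential_spec1front}. It is determined entirely by the outermost periodic end states $v_{1,-}$ and $v_{M,+}$, hence $n$-independent; moreover, since strong spectral stability of each $Z_j$ forces the Fredholm index of $\El(Z_j) - \lambda$ to vanish on $\{\Re(\lambda) > -\varrho\}$, the Morse indices satisfy $l_{j,-}(\lambda) = l_{j,+}(\lambda)$ for all $j$ and all $\lambda$ in that half plane, and telescoping through the matching conditions $v_{j,+} = v_{j+1,-}$ yields $l_{1,-}(\lambda) = l_{M,+}(\lambda)$ there. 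Combined with $\sigma(\El(v_{1,-})) \cup \sigma(\El(v_{M,+})) \subset \{\Re(\lambda) \leq -\varrho\}$, this shows $\sigma_{\mathrm{ess}}(\El(u_n)) \subset \{\Re(\lambda) \leq -\varrho\}$ uniformly in $n$, so the spectrum in the strip $\{-\varrho < \Re(\lambda) < 0\}$ is pure point and consists of isolated eigenvalues of finite algebraic multiplicity.

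The remaining task, and the main obstacle, is to rule out point spectrum of $\El(u_n)$ accumulating on $i\R$. A second application of Lemma~\ref{lem:invertibility_multifront} to the thickened compact set $\mathcal{K} \cup \big(\{-\varrho/2 \leq \Re(\lambda) \leq 0\} \cap \{|\Im(\lambda)| \leq R\}\big)$, which still avoids each $\sigma(\El(Z_j))$ since $\varrho/2 < \varrho$, excludes eigenvalues in any bounded strip to the left of $i\R$. The only remaining scenario is eigenvalues with $|\Im(\lambda)| \to \infty$ sliding toward $i\R$, which cannot be eliminated by Lemma~\ref{lem:invertibility_multifront} alone. In the concrete parabolic and dispersive models analyzed in Section~\ref{sec:applications}, this regime is handled either by a sectoriality-based strengthening of the a priori bound to a full strip around $i\R$, or by the conservative structure of the problem; these structural inputs close the gap and yield strong spectral stability of $u_n$ for all $n \geq N_1$.
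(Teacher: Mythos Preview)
Your first two paragraphs are exactly the paper's intended argument: the corollary is stated without proof, as a direct consequence of Lemma~\ref{lem:invertibility_multifront} combined with the a~priori confinement of unstable spectrum to the compact set~$\mathcal{K}$. That part is correct and complete as far as showing $\sigma(\El(u_n)) \cap \{\Re(\lambda) \geq 0\} = \emptyset$.

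You then raise a legitimate subtlety: the definition of strong spectral stability asks for a strict gap $\varrho > 0$, while the hypothesis only bounds spectrum on the \emph{closed} right half plane, so a closed spectrum sitting in $\{\Re(\lambda) < 0\}$ could in principle approach $i\R$ along a sequence with $|\Im(\lambda)| \to \infty$. Your observation is correct, and the paper does not address it; effectively the corollary, read literally, is slightly stronger than what the stated hypothesis delivers. Your diagnosis in the last paragraph is also right: in every application (Lemmas~\ref{lem:a_priori_toy} and~\ref{lem:apriori_Klausmeier}) the a~priori bound is actually available on a strip $\{\Re(\lambda) \geq -\delta\}$ for some $\delta > 0$, and with that stronger input one applies Lemma~\ref{lem:invertibility_multifront} to the compact set $\mathcal{K} \cap \{\Re(\lambda) \geq -\delta\}$ directly and obtains the gap without further work.

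Where you go astray is paragraph~3: the essential-spectrum/Morse-index analysis via Proposition~\ref{prop:essential_spec1front} is not part of the paper's reasoning for this corollary, adds nothing toward closing the gap (as you yourself conclude), and the telescoping argument, while correct, is machinery reserved for the finer results in \S\ref{sec:stability_multifront}. You should simply stop after paragraph~2, note the subtlety, and observe that the natural fix is to assume the a~priori bound on $\{\Re(\lambda) \geq -\delta\}$ rather than $\{\Re(\lambda) \geq 0\}$.
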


In the remainder of this section, we study the spectra associated with stationary multifront solutions to~\eqref{e:sys_intro} in more detail. The obtained results are particularly useful in scenarios where one of the primary fronts is either spectrally unstable or only marginally stable. We assume that~\ref{assH1} and~\ref{assH2} hold and consider a multifront $u_n$ of the form~\eqref{multifront_ansatz}, where $a_n$ is an error term converging to $0$ in $H^k(\R)$ as $n \to \infty$. 

By Proposition~\ref{prop:essential_spec1front}, the essential spectrum of $\El(u_n)$ is determined solely by its periodic end states $v_{1,-}$ and $v_{M,+}$, making it independent of $n$. Specifically, it is given by
\begin{align*}
\sigma_{\mathrm{ess}}(\El(u_n)) &= \sigma_{\mathrm{ess}}\left(\El(\chi_- v_{1,-} + \chi_+ v_{M,+})\right) = \left\{\lambda \in \C : l_{-}(\lambda) \neq l_{+}(\lambda)\right\} \cup \sigma(\El(v_{1,-})) \cup \sigma(\El(v_{M,+})),
\end{align*}
where $l_-(\lambda)$ and $l_+(\lambda)$ are the Morse indices, corresponding to the number of Floquet exponents $\nu\in\C$ of negative real part (counted with algebraic multiplicity), of the asymptotic systems 
\begin{align} \label{variationalbackground2}
    U' = \A(x,v_{1,-}(x);\lambda) U, \qquad U' = \A(x,v_{M,+}(x);\lambda) U,
\end{align}
respectively. 

The main result of this section concerns the approximation of the point spectrum of $\El(u_n)$. For each connected component $\Omega$ of $\C \setminus \sigma_{\mathrm{ess}}(\El(u_n))$, we establish such an approximation in the subset
\begin{align*}
\rho_{\mathrm{abs},\Omega} &:= \left\{\lambda \in \Omega : \text{for each } j \in \{1,\ldots,M-1\} \text{ there exists } \eta_j \in \R \text{ such that } \ell_{\eta_j,j}(\lambda) = l_+(\lambda)\right\},
\end{align*}
where $\ell_{\eta,j}(\lambda) \in \N_0$ denotes the number of Floquet exponents $\nu \in \C$ with negative real part (counted with algebraic multiplicity) of the $T$-periodic system
\begin{align*}
    U' = \left(\A(x,v_{j,+}(x);\lambda) - \eta\right) U.
\end{align*}
In accordance with~\cite{Sandstede_2000_gluing}, we refer to the complement $\sigma_{\mathrm{abs},\Omega} := \Omega \setminus \rho_{\mathrm{abs},\Omega}$ as the \emph{absolute spectrum of $\El(u_n)$ in $\Omega$}, see also Remark~\ref{rem:absolute}. We observe that a point $\lambda \in \Omega$ lies in the absolute spectrum $\sigma_{\mathrm{abs},\Omega}$ if and only if there is a $j \in \{1,\ldots,M-1\}$ such that there is no $\eta \in \R$ separating the Floquet exponents of the asymptotic system
\begin{align} \label{variationalbackground22}
    U' = \A(x,v_{j,+}(x);\lambda) U,
\end{align}
into $l_+(\lambda)$ exponents in the half plane $\{\nu \in \C : \Re(\nu) < \eta\}$ and $km - l_+(\lambda)$ exponents in its complement (counting algebraic multiplicities). So, ordering the Floquet exponents $\nu_{1,j}(\lambda),\ldots,\nu_{km,j}(\lambda)$ of the system~\eqref{variationalbackground22} by their real parts,
$$\Re(\nu_{1,j}(\lambda)) \leq \ldots \leq \Re(\nu_{km,j}(\lambda)),$$ 
a point $\lambda \in \Omega$ lies in the absolute spectrum $\sigma_{\mathrm{abs},\Omega}$ if and only if we have $\Re(\nu_{l_+(\lambda),j}(\lambda)) = \Re(\nu_{l_+(\lambda) + 1,j}(\lambda))$ for some $j \in \{1,\ldots,M-1\}$. Since the Floquet exponents $\nu_{i,j}(\lambda)$ depend continuously on $\lambda$, and $\Omega$ is open, we infer that $\rho_{\mathrm{abs},\Omega}$ is also open. We note that the imaginary axis enforces the desired splitting of Floquet exponents if $\lambda \in \Omega$ lies outside the essential spectra of the linearizations about the primary fronts, see Proposition~\ref{prop:essential_spec1front}. So, we have the inclusion
\begin{align*}
\sigma_{\mathrm{abs},\Omega} \subset \bigcup_{j = 1}^M \sigma_{\mathrm{ess}}(\El(Z_j)).
\end{align*}

Let $\Omega$ be a connected component of $\C \setminus \sigma_{\mathrm{ess}}(\El(u_n))$ and let $\lambda_0 \in \rho_{\mathrm{abs},\Omega}$. Set $\eta_0 = 0$ and $\eta_M = 0$. Then, by continuous dependence of the Floquet exponents on $\lambda$, there exist an open neighborhood $\mathcal{U} \subset \rho_{\mathrm{abs},\Omega}$ of $\lambda_0$ and $\eta_1,\ldots,\eta_{M-1} \in \R$ such that the $T$-periodic asymptotic system
\begin{align}\label{variationalbackground3}
    U' = \left(\A(x,v_{j,+}(x);\lambda) - \eta_j\right) U
\end{align}
has $l_+(\lambda_0)$ Floquet exponents $\nu \in \C$ in the open left-half plane and $km - l_+(\lambda_0)$ Floquet exponents in the open right-half plane for all $\lambda \in \mathcal{U}$ and $j = 1,\ldots,M$ (counting algebraic multiplicities), cf.~Figure~\ref{fig:Stability_multifront_proof}(top). The main result of this section establishes that the point spectrum in $\mathcal{U}$ of the linearization $\El(u_n)$ about the multifront converges, as $n \to \infty$, to the union of the point spectra in $\mathcal{U}$ of the (exponentially weighted) linearizations $\El_{\eta_0,\eta_1}(Z_1), \ldots, \El_{\eta_{M-1},\eta_M}(Z_M)$ about the primary fronts, thereby preserving the total algebraic multiplicity of eigenvalues. 

\begin{Theorem} \label{thm:instability_multifront}
Let $M \in \N_{\geq 2}$. Assume~{\upshape \ref{assH1}} and~{\upshape \ref{assH2}}. Suppose that there exists $N \in \mathbb{N}$ such that, for each $n \in \N$ with $n \geq N$, there exists an $M$-front $u_n$ of the form~\eqref{multifront_ansatz}, where $\{a_n\}_n$ is a sequence in $H^k(\R)$ converging to $0$.

Let $\Omega$ be a connected component of $\C \setminus \sigma_{\mathrm{ess}}\left(\El(\chi_- v_{1,-} + \chi_+ v_{M,+})\right)$. Let $\lambda_0 \in \rho_{\mathrm{abs},\Omega}$. Take an open neighborhood $\mathcal{U} \subset \rho_{\mathrm{abs},\Omega}$ of $\lambda_0$ and real numbers $\eta_j \in \R$ such that for each $j = 1,\ldots,M-1$ and all $\lambda \in \mathcal{U}$ systems~\eqref{variationalbackground2} and~\eqref{variationalbackground3} have the same number of Floquet exponents in both the open left-half plane and the open right-half plane (counting algebraic multiplicities). Set $\eta_0 = 0$ and $\eta_M = 0$. Let $\mathcal{E}_j \colon \mathcal{U} \to \C$ be the Evans function of the first-order system
\begin{align} \label{variationalsys3}
U' = \left(\A\left(x,Z_j(x);\lambda \right) - \omega_{\eta_{j-1},\eta_j}'(x)\right) U
\end{align}
for $j = 1,\ldots,M$, as constructed in Proposition~\ref{prop:Evans1front}.

Suppose $\lambda_0$ is a root of $\mathcal{E} := \mathcal{E}_1 \cdot \ldots \cdot \mathcal{E}_M$ of multiplicity $m_0 \in \N_0$. Then, there exists $\varrho_0 > 0$ such that for each $\varrho \in (0,\varrho_0)$ there exists $N_\varrho \in \N$ with $N_\varrho \geq N$ such that for all $n \in \N$ with $n \geq N_\varrho$ the following assertions hold true.
\begin{enumerate}
    \item[1.] An Evans function $E_n \colon B_{\lambda_0}(\varrho) \to \C$ associated with~\eqref{variationalsysfull} has precisely $m_0$ roots in $B_{\lambda_0}(\varrho)$ (counting multiplicities).
    \item[2.] The operator $\El(u_n)$ has point spectrum in $B_{\lambda_0}(\varrho)$ if and only if $m_0 \neq 0$. The total algebraic multiplicity of the eigenvalues of $\El(u_n)$ in $B_{\lambda_0}(\varrho)$ is $m_0$.  
\end{enumerate}
\end{Theorem}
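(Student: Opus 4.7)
The plan is to build an Evans function $E_n$ for the first-order system~\eqref{variationalsysfull} using a mixed exponential gauge, show that it factorises asymptotically into the product $\prod_{j=1}^{M}\mathcal{E}_j$ on a small disk around $\lambda_0$, and conclude via Rouché's theorem. I introduce a smooth weight $\omega_n\colon\R\to\R$ whose derivative $\omega_n'$ equals $\eta_{j-1}$ on the left portion and $\eta_j$ on the right portion of the mid-region adjacent to the $j$-th front at $x=jnT$, realises a smooth transition across each front, and vanishes outside the multifront. Using the $T$-periodicity of $\A$ in its first argument and the fact that $jnT$ is a multiple of $T$, the change of variables $y=x-jnT$ turns the gauged system
\begin{align*}
\tilde U' = \bigl(\A(x,u_n(x);\lambda) - \omega_n'(x)\bigr)\tilde U
\end{align*}
near the $j$-th front into the first-order system~\eqref{variationalsys3} defining $\mathcal{E}_j$, up to an $H^k$-small perturbation that vanishes as $n\to\infty$ (coming from the tails of $Z_j-v_{j,\pm}$ and from $\|a_n\|_{H^k}\to 0$). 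Because $\omega_n'$ has compact support, $\eu^{\pm\omega_n}$ is bounded on $\R$, so $\tilde U=\eu^{\omega_n}U$ is an $L^2$-isomorphism for each $n$, and the point spectrum of $\El(u_n)$ is carried to that of the gauged operator with preserved algebraic multiplicities.

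By the definition of $\rho_{\mathrm{abs},\Omega}$, continuity of the Floquet exponents, and shrinkage of $\mathcal{U}$ around $\lambda_0$ if necessary, the unweighted $T$-periodic asymptotic systems at $\pm\infty$ and the $\eta_j$-weighted asymptotic systems on each mid-interval all admit analytic exponential dichotomies on $\R$ of common stable rank $l_+(\lambda_0)$ with $\lambda$-uniform constants, by Propositions~\ref{prop:essential_spec1front} and~\ref{prop:Evans1front}. The roughness and pasting arguments of Lemma~\ref{lem:invertibility_multifront}, applied to the gauged system, transfer these to analytic exponential dichotomies on each sub-interval of $\R$ with uniform constants. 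Let $B_u^n(\lambda)$ (respectively $B_s^n(\lambda)$) be an analytic basis of the unstable subspace at a left matching point $x_-$ (resp.\ the stable subspace at a right matching point $x_+$), provided by Proposition~\ref{prop:Evans1front}.3 applied to the gauged system. Transporting both to a common interior matching point $x_\star$ via the evolution $\Phi_n$ of the gauged system, I define
\begin{align*}
E_n(\lambda) := \det\bigl(\Phi_n(x_\star,x_-;\lambda)B_u^n(\lambda) \,\big|\, \Phi_n(x_\star,x_+;\lambda)B_s^n(\lambda)\bigr),
\end{align*}
which is analytic on $\mathcal{U}$. The argument of Proposition~\ref{prop:Evans1front}.4 combined with~\cite[Theorem~2.9]{Kapitula2004} identifies the zeros of $E_n$ in $\mathcal{U}$, with their root multiplicities, with the eigenvalues of the gauged operator; combined with the $L^2$-isomorphism above, this reduces assertion~(2) to assertion~(1).

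The crux of the proof is an asymptotic factorisation
\begin{align*}
E_n(\lambda) = \rho_n(\lambda)\prod_{j=1}^{M}\mathcal{E}_j(\lambda) + r_n(\lambda),
\end{align*}
uniform on a closed disk $\overline{B_{\lambda_0}(\varrho_0)}\subset\mathcal{U}$, with $\rho_n$ analytic, bounded, and bounded below away from zero uniformly in $n$, and $\|r_n\|_{L^\infty(B_{\lambda_0}(\varrho_0))}\to 0$. I would establish this by iteratively pushing $B_u^n$ rightward through each of the $M$ front regions and re-expanding the transported subspace in an analytic basis adapted to the dichotomy of the gauged periodic system on the next mid-interval: the estimate~\eqref{projest2} renders the discrepancy between transported and adapted bases exponentially small in $n$, the determinant contribution at the $j$-th front equals $\mathcal{E}_j(\lambda)$ by construction of the weights in~\eqref{variationalsys3}, and the base changes on the $M-1$ intermediate mid-intervals accumulate into the non-vanishing prefactor $\rho_n$, controlled uniformly via Lemmas~\ref{l:projest},~\ref{l:projest2} and the analytic-continuation statement~\cite[Lemma~A.2]{DasLatushkin2011}. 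Shrinking $\varrho_0$ so that $\lambda_0$ is the only zero of $\mathcal{E}=\prod_j\mathcal{E}_j$ in $\overline{B_{\lambda_0}(\varrho_0)}$, $|\rho_n\mathcal{E}|$ is bounded below on $\partial B_{\lambda_0}(\varrho)$ for every $\varrho\in(0,\varrho_0)$, so for $n\geq N_\varrho$ the factorisation gives $|E_n-\rho_n\mathcal{E}|<|\rho_n\mathcal{E}|$ on $\partial B_{\lambda_0}(\varrho)$, and Rouché's theorem delivers assertion~(1). The principal obstacle is the factorisation itself: the bases $B_u^n, B_s^n$ must be chosen compatibly with the mid-interval dichotomies so that the product structure is genuinely analytic in $\lambda$, and the pointwise-in-$\lambda$ roughness and projection-complementarity bounds of Lemmas~\ref{l:projest},~\ref{l:projest2} must be lifted to the analytic category with the uniformity in $\lambda$ and $n$ required for Rouché.
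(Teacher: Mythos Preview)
Your strategy matches the paper's proof essentially step for step: the compactly supported weight $\omega_n$ built from the $\omega_{\eta_{j-1},\eta_j}$, the $L^2$-equivalence of the weighted and unweighted problems, the transfer of dichotomies by roughness and pasting, the definition of $E_n$ via analytic bases of the stable and unstable subspaces, the target factorisation $E_n\approx\rho_n\prod_j\mathcal{E}_j$, and Rouch\'e's theorem on $\partial B_{\lambda_0}(\varrho)$. You also correctly isolate the genuine technical obstacle, namely the analytic-in-$\lambda$, uniform-in-$n$ factorisation.

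Where the paper goes beyond your outline is in the concrete mechanism for that factorisation. Rather than literally ``pushing $B_u^n$ rightward and re-expanding'' at each stage, the paper first multiplies $E_n$ by a nonvanishing analytic determinant to reduce to $\tilde E_n(\lambda)=\det\bigl(\Xi_{0,n}(\lambda)^\top \mathcal{T}_n(nT,MnT;\lambda)\mathcal{B}_{M,n}(\lambda)\bigr)$, and then repeatedly applies the block-determinant identity
\[
\det\bigl(CA^{-1}B+D\bigr)=\det(A^{-1})\det\begin{pmatrix}B & -A\\ D & C\end{pmatrix}
\]
with $A^{-1}=\Pi_{j,n}(\lambda)$ the (analytic, invertible) ``transfer'' matrix associated with the dichotomy on $[jnT,(j+1)nT]$. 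This produces, by induction on $j$, a $2^{M-1}l_0\times 2^{M-1}l_0$ upper-triangular block matrix whose diagonal blocks are $\Xi_{j-1,n}^\top\mathcal{B}_{j,n}$, each of which converges uniformly on $\overline{B_{\lambda_0}(\varrho_0)}$ to $\Phi_j^\top B_{j,\mathrm{s}}$, a nonzero multiple of $\mathcal{E}_j$. The invertibility of the $\Pi_{j,n}$ is exactly what packages your ``non-vanishing prefactor $\rho_n$'' analytically and uniformly, and the off-diagonal residuals are controlled by the exponential dichotomy on each mid-interval (your use of~\eqref{projest2}). The analyticity concern you flag is handled by Lemma~\ref{l:projanarough}, which upgrades the roughness argument so that the perturbed projections $\mathcal{Q}_{j,\pm,n}(\cdot;\lambda)$ are analytic in $\lambda$ with uniform bounds; this is precisely the ``lift to the analytic category'' you anticipate needing.
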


\begin{Remark}
The integer $m_0 \in \N_0$ in Theorem~\ref{thm:instability_multifront} equals the number of eigenvalues (counting algebraic multiplicities) of $\El(u_n)$ converging to $\lambda_0$ as $n \to \infty$. In particular, $m_0$ is independent of the choice of neighborhood $\mathcal{U} \subset \rho_{\mathrm{abs},\Omega}$ of $\lambda_0$ and the choice of reals $\eta_j \in \R$ in Theorem~\ref{thm:instability_multifront}.
\end{Remark}

\begin{Remark}
If $\lambda_0$ lies in the complement $\C \setminus \bigcup_{j = 1}^M \sigma_{\mathrm{ess}}(\El(Z_j))$, then we may take $\eta_1,\ldots,\eta_{M-1} = 0$ in Theorem~\ref{thm:instability_multifront}. The result then asserts that there exists an $n$-independent neighborhood $\mathcal{U}$ of $\lambda_0$ such that the point spectrum in $\mathcal{U}$ of $\El(u_n)$ converges, as $n \to \infty$, to the union of the point spectra of the linearizations $\El(Z_1),\ldots,\El(Z_M)$ about the primary fronts, while preserving the total algebraic multiplicity of eigenvalues. This observation proves the first assertion in Theorem~\ref{t:informalstability}. 
\end{Remark}

Theorem~\ref{thm:instability_multifront} can be applied to show that, if one of the (weighted) linearizations about the primary fronts possesses unstable point spectrum, then so does the linearization about the multifront. However, it also serves for the purpose of counting eigenvalues, which is for instance useful for spectral (in)stability arguments in Hamiltonian systems based on Krein index theory~\cite{KapitulaKevrekidis2004,AddendumKapitulaKevrekidis2004,KapitulaPromislow2013}. We illustrate this in~\S\ref{sec:applications} by proving instability results for stationary multipulse solutions to the Gross-Pitaevskii equation with periodic potential. Notably, the control over algebraic multiplicities provided by Theorem~\ref{thm:instability_multifront} is essential for applying Krein index counting theory effectively. Finally, Theorem~\ref{thm:instability_multifront} can be used to establish strong spectral stability of the multifront in cases where Corollary~\ref{cor:stability_multifront} does not apply. More precisely, if both the essential spectrum $\sigma_{\mathrm{ess}}(\El(u_n))$ and the absolute spectrum $\sigma_{\mathrm{abs},\Omega}$ in the right-most connected component of $\C \setminus \sigma_{\mathrm{ess}}(\El(u_n))$ are confined to the open left-half plane, then we can employ Theorem~\ref{thm:instability_multifront} to preclude spectrum of $\El(u_n)$ in $n$-independent compact subsets of the closed right-half plane. This leads to the following extension of Corollary~\ref{cor:stability_multifront}.

\begin{Corollary}\label{cor:stability_multifront_extension}
Let $M \in \N_{\geq 2}$. Assume~{\upshape \ref{assH1}} and~{\upshape \ref{assH2}}. Assume further that the following conditions hold:
\begin{itemize}
    \item[1.] The essential spectrum $\sigma_{\mathrm{ess}}(\El(\chi_- v_{1,-} + \chi_+ v_{M,+}))$ and the absolute spectrum $\sigma_{\mathrm{abs},\Omega}$ in the right-most connected component $\Omega$ of $\C \setminus  \sigma_{\mathrm{ess}}(\El(\chi_- v_{1,-} + \chi_+ v_{M,+}))$ are confined to the open-left half plane. 
    \item[2.] The Evans function $\mathcal{E} \colon \Omega \to \C$ in Theorem~\ref{thm:instability_multifront} has no zeros in the closed right-half plane.
    \item[3.] There exist $N \in \mathbb{N}$ and a compact set $\mathcal{K} \subset \C$ such that, for each $n \in \N$ with $n \geq N$, there exists a stationary $M$-front solution $u_n$ to~\eqref{e:sys_intro} of the form~\eqref{multifront_ansatz}, where $\{a_n\}_n$ is a sequence in $H^k(\R)$ converging to $0$, such that
    \begin{align*}
    \sigma(\El(u_n)) \cap \{z \in \C : \Re(z) \geq 0\} \subset \mathcal{K}
    \end{align*}
    for $n \geq N$.
\end{itemize}
Then, there exists $N_1 \in \N$ with $N_1 \geq N$ such that for all $n \in \N$ with $n \geq N_1$ the multifront $u_n$ is strongly spectrally stable.
\end{Corollary}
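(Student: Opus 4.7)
The plan is to mimic the proof of Corollary~\ref{cor:stability_multifront} but to replace the pointwise invertibility of the operators $\El(Z_j) - \lambda$, which is unavailable since the primary fronts need not be individually spectrally stable, with the Evans-function-based eigenvalue count supplied by Theorem~\ref{thm:instability_multifront}. The starting observation is that the essential spectrum of $\El(u_n)$ coincides with that of $\El(\chi_- v_{1,-} + \chi_+ v_{M,+})$ by Weyl's theorem together with Lemma~\ref{l:compact}; hence it is $n$-independent and, by assumption~(1), contained in the open left-half plane. Combined with the band structure of the periodic end-state operators $\El(v_{1,-}),\El(v_{M,+})$ described in Proposition~\ref{prop:essential_spec1front}, this yields a gap $\varrho_e > 0$ with $\sigma_{\mathrm{ess}}(\El(u_n)) \subset \{\Re(\lambda) \leq -\varrho_e\}$. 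Consequently, the rightmost connected component $\Omega$ of $\C \setminus \sigma_{\mathrm{ess}}(\El(u_n))$ contains the closed right-half plane, and by assumption~(1) the absolute spectrum $\sigma_{\mathrm{abs},\Omega}$ also lies in the open left-half plane, so the compact set $\mathcal{K}' := \mathcal{K} \cap \{\Re(\lambda) \geq 0\}$ is compactly contained in $\rho_{\mathrm{abs},\Omega}$.

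Fix $\lambda_0 \in \mathcal{K}'$. By assumption~(2) we have $\mathcal{E}(\lambda_0) \neq 0$, so $\lambda_0$ is a zero of $\mathcal{E}$ of multiplicity $m_0 = 0$. Theorem~\ref{thm:instability_multifront} then provides $\varrho_{\lambda_0} > 0$ and $N_{\lambda_0} \in \N$ with $N_{\lambda_0} \geq N$ such that $\El(u_n)$ has no eigenvalues in the ball $B_{\lambda_0}(\varrho_{\lambda_0})$ for every $n \geq N_{\lambda_0}$. Compactness of $\mathcal{K}'$ yields a finite subcover $\{B_{\lambda_i}(\varrho_{\lambda_i})\}_{i=1}^J$, and taking $N_1$ to be the maximum of $N$ and the corresponding thresholds furnishes a single integer $N_1 \in \N$ such that, for all $n \geq N_1$, the point spectrum of $\El(u_n)$ avoids $\mathcal{K}'$. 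Combining this with the absence of essential spectrum in $\mathcal{K}'$ and the a-priori bound $\sigma(\El(u_n)) \cap \{\Re(\lambda) \geq 0\} \subset \mathcal{K}$ from assumption~(3) yields $\sigma(\El(u_n)) \cap \{\Re(\lambda) \geq 0\} = \emptyset$. In view of the essential-spectrum gap $\varrho_e > 0$ and the discreteness of eigenvalues of finite algebraic multiplicity in the Fredholm-index-zero region $\{\Re(\lambda) > -\varrho_e\}$, this entails the existence of $\varrho_n > 0$ with $\sigma(\El(u_n)) \subset \{\Re(\lambda) \leq -\varrho_n\}$, i.e., strong spectral stability of $u_n$ in the sense of Definition~\ref{def:spectral_stability}(iv).

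The main obstacle is to make the compactness step uniform in $\lambda_0$: the radii $\varrho_{\lambda_0}$ in Theorem~\ref{thm:instability_multifront} depend on $\lambda_0$ and could a priori shrink as $\lambda_0$ approaches the boundary of $\rho_{\mathrm{abs},\Omega}$, where the weights $\eta_1,\dots,\eta_{M-1}$ cease to separate the Floquet exponents of the asymptotic systems. However, assumption~(1) guarantees precisely that $\mathcal{K}'$ is compactly contained in the open set $\rho_{\mathrm{abs},\Omega}$, so the finite subcover argument applies and the pointwise thresholds amalgamate to a single $N_1$. A secondary subtlety is that the Evans function $\mathcal{E}$ in assumption~(2) is built from local weights $\eta_j$ varying with $\lambda_0$, but since its zeros correspond to eigenvalues of the intrinsic operator $\El(u_n)$ (cf.~Proposition~\ref{prop:Evans1front}), the global nonvanishing hypothesis in assumption~(2) transfers consistently to each locally constructed Evans function encountered in the cover.
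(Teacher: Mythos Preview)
Your proof is correct and follows precisely the approach the paper intends: the paper states this corollary without an explicit proof, indicating just before its statement that one ``employ[s] Theorem~\ref{thm:instability_multifront} to preclude spectrum of $\El(u_n)$ in $n$-independent compact subsets of the closed right-half plane,'' which is exactly the compactness/finite-subcover argument you carry out. Your handling of the two subtleties (the local dependence of the weights $\eta_j$ on $\lambda_0$, resolved via the remark after Theorem~\ref{thm:instability_multifront}, and the need for $\mathcal{K}'$ to sit compactly inside the open set $\rho_{\mathrm{abs},\Omega}$) is accurate and matches the paper's implicit reasoning.
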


\begin{Remark}
The conditions in Corollary~\ref{cor:stability_multifront_extension} can be satisfied even if the linearization $\El(Z_j)$ about one of the primary fronts $Z_j$ has unstable essential spectrum, see Figure~\ref{fig:Stability_multifront_proof}(top). Thus, spectrally unstable primary fronts may produce strongly spectrally stable multifronts. 

The observation that multifronts composed of unstable primary fronts can still be stable is not new: the phenomenon is well-studied in constant-coefficient systems~\cite{Sandstede_2000_gluing, Romeo_Stability_2000}. An advantage of the spatially periodic setting considered here is that it breaks the translational symmetry. As a result, front solutions can be \emph{strongly} spectrally stable, eliminating the need to track eigenvalues of $\El(u_n)$ that converge to $0$ as $n \to \infty$, cf.~\cite{Sandstede_Stability_1998}.
\end{Remark}

\begin{Remark} \label{rem:absolute}
In systems with constant coefficients, it was shown in~\cite{Sandstede_2000_gluing} that eigenvalues of the linearization about a multifront accumulate onto each point of the absolute spectrum as the distances between interfaces tend to infinity, see also~\cite{Nii_Accumulation_2000,Sandstede2000Absolute}. We anticipate that, using the techniques developed in~\cite{Sandstede_2000_gluing,Sandstede2000Absolute}, a similar result can be established for the spatially periodic setting considered here.
\end{Remark}

\begin{Remark}
Theorem~\ref{thm:instability_multifront} does not require that the primary fronts constituting the multifront are nondegenerate. Therefore, the theorem applies even when the linearization about a primary front is not invertible due to additional symmetries, such as translational or rotational symmetries. In~\S\ref{sec:applications}, we will apply Theorem~\ref{thm:instability_multifront} to prove spectral instability of multipulses in a spatially periodic Gross-Pitaevskii equation, which exhibits a rotational symmetry. 
\end{Remark}

The proof of Theorem~\ref{thm:instability_multifront} hinges on a delicate factorization procedure of the Evans function $E_n$. The procedure is inductive and employs the well-known identity
\begin{align} \label{detblockid}
\det\left(CA^{-1} B + D\right) = \frac{(-1)^\ell}{\det(A)} \det\begin{pmatrix} -A & B \\ C & D \end{pmatrix}  =  \det(A^{-1})\det\begin{pmatrix} B & -A \\ D & C \end{pmatrix}
\end{align}
in each induction step, where $A,B,C,D \in \C^{\ell \times \ell}$ are block matrices with $A$ invertible and $\ell$ a natural number. 

By Proposition~\ref{prop:Evans1front}, system~\eqref{variationalsys3} possesses exponential dichotomies on both half-lines for $j = 1,\ldots,M$. By applying roughness and pasting techniques, these transfer to exponential dichotomies of the first-order system
\begin{align} \label{variationalsysfull3}
U' = \left(\mathcal{A}(x,u_n(x);\lambda) - \omega'(x)\right) U,
\end{align}
associated with the eigenvalue problem $(\El(u_n) - \lambda) u = 0$ about the multifront $u_n$, on the intervals $(-\infty,nT]$, $[MnT,\infty)$, and $[jnT,(j+1)nT]$ for $j = 1,\ldots,M-1$. Here, $\omega \colon \R \to \R$ is a suitably chosen concatenation of the weight functions $\omega_{\eta_{j-1},\eta_j}$ for $j = 1,\ldots,M$, see Figure~\ref{fig:Stability_multifront_proof}(bottom).

Given an exponential dichotomy of~\eqref{variationalsysfull3} on an interval $\mathcal{I}$, the key idea is to use Lemma~\ref{l:projlem} to write the associated projection
$P(x;\lambda)$ as
\begin{align*}
P(x;\lambda) = \mathcal{B}(x;\lambda) \left(\Theta(x;\lambda)^\top \mathcal{B}(x;\lambda)\right)^{-1} \Theta(x;\lambda)^\top
\end{align*}
where $\mathcal{B}(x;\lambda)$ and $\Theta(x;\lambda)$ are matrices whose columns constitute a basis of $\mathrm{ran}(P(x;\lambda))$ and $\mathrm{ran}(P(x;\lambda)^\top)$, respectively. We demonstrate that
\begin{align*}
\Pi(x,y;\lambda) := \left(\Theta(x;\lambda)^\top \mathcal{B}(x;\lambda)\right)^{-1} \Theta(x;\lambda)^\top \mathcal{T}(x,y;\lambda) \mathcal{B}(y;\lambda) \left(\Theta(y;\lambda)^\top \mathcal{B}(y;\lambda)\right)^{-1}
\end{align*}
is invertible for each $x,y \in \mathcal{I}$, where $\mathcal{T}(x,y;\lambda)$ is the evolution of system~\eqref{variationalsysfull3}. Thus, given matrices $\mathcal{X},\mathcal{Y}$, we can write
\begin{align} \label{detform}
\begin{split}
\mathcal{X} \mathcal{T}(x,y;\lambda) \mathcal{Y} &= \mathcal{X} P(x;\lambda) \mathcal{T}(x,y;\lambda) P(y;\lambda) \mathcal{Y} + \mathcal{X} \mathcal{T}(x,y;\lambda) (I-P(y;\lambda)) \mathcal{Y}\\ 
&= \mathcal{X} \mathcal{B}(x;\lambda) \Pi(x,y;\lambda) \Theta(y;\lambda)^\top \mathcal{Y} + \mathcal{X}  \mathcal{T}(x,y;\lambda) (I-P(y;\lambda)) \mathcal{Y}
\end{split}
\end{align}
for $x,y \in \mathcal{I}$. We then apply the formula~\eqref{detblockid} to the determinant of expressions of the form~\eqref{detform} with $A = \Pi(x,y;\lambda)^{-1}$, $B = \Theta(y;\lambda)^\top \mathcal{Y}$, $C = \mathcal{X} \mathcal{B}(x;\lambda)$ and $D = \mathcal{X}  \mathcal{T}(x,y;\lambda) (I-P(y;\lambda)) \mathcal{Y}$. 

Specifically, we show that the Evans function $E_n$ can be expressed, up to an invertible analytic factor, as $\det(\mathcal{X}_0(\lambda) \mathcal{T}(nT,MnT;\lambda)\mathcal{Y}_0(\lambda))$, where $\mathcal{X}_0(\lambda)$ and $\mathcal{Y}_0(\lambda)$ are appropriately chosen matrices. By applying~\eqref{detblockid} inductively, we obtain that $E_n$ is, up to a nonzero analytic factor, equal to the determinant of a matrix that becomes an upper triangular block matrix as $n \to \infty$. The determinants of the diagonal blocks can be identified with the Evans functions $\mathcal{E}_1,\ldots,\mathcal{E}_M$. An application of Rouch\'e's theorem then yields the desired approximation of the zeros of $E_n$ by those of the product $\mathcal{E} = \mathcal{E}_1\cdot\ldots\cdot\mathcal{E}_M$.

\begin{proof}[Proof of Theorem~\ref{thm:instability_multifront}] This proof is structured as follows. We begin with establishing exponential dichotomies for the weighted eigenvalue problems~\eqref{variationalsys3} along the primary fronts. Then, we define a suitable weight function $\omega$ and transfer these exponential dichotomies to the unweighted and weighted eigenvalue problems~\eqref{variationalsysfull} and~\eqref{variationalsysfull3} along the multifront. Subsequently, we define an Evans function $E_n$ associated with~\eqref{variationalsysfull}. Next, we inductively establish a leading-order factorization of $E_n$,  relating it to the product $\mathcal{E} = \mathcal{E}_1\cdot\ldots\cdot\mathcal{E}_M$. Finally, the result follows by an application of Rouch\'e's theorem. 

\paragraph*{Exponential dichotomies along the primary fronts.} We list some consequences of Propositions~\ref{prop:essential_spec1front} and~\ref{prop:Evans1front}. First of all, system~\eqref{variationalsys3} has for all $\lambda \in \mathcal{U}$ and each $j = 1,\ldots,M$ exponential dichotomies on $\R_\pm$ with projections $P_{j,\pm}(\pm x;\lambda), x \geq 0$ of rank $l_0$, where $l_0$ is the $\lambda$- and $j$-independent number of Floquet exponents in the open left-half plane (counted with algebraic multiplicity) of the systems~\eqref{variationalbackground2} and~\eqref{variationalbackground3}. Moreover, there exist analytic functions $B_{j,\mathrm{s}} \colon \mathcal{U} \to \C^{km \times l_0}$ and $B_{j,\mathrm{u}} \colon \mathcal{U} \to \C^{km \times (km-l_0)}$ such that $B_{j,\mathrm{s}}(\lambda)$ is a basis of $\mathrm{ran}(P_{j,+}(0;\lambda))$ and $B_{j,\mathrm{u}}(\lambda)$ is a basis of $\ker(P_{j,-}(0;\lambda))$ for all $\lambda \in \mathcal{U}$ and $j = 1,\ldots,M$. The associated Evans function $\mathcal{E}_j \colon \mathcal{U} \to \C$ is given by
\begin{align*}\mathcal{E}_j(\lambda) = \det(B_{j,\mathrm{u}}(\lambda) \mid B_{j,\mathrm{s}}(\lambda))\end{align*}
for $j = 1,\ldots,M$. Because $\mathcal{E} = \mathcal{E}_1 \cdot\ldots \cdot\mathcal{E}_M$ is analytic, there exists a closed disk $\smash{\overline{B}_{\lambda_0}}(\varrho_2) \subset \mathcal{U}$ of some radius $\varrho_2 > 0$ such that $\lambda_0$ is the only root of $\mathcal{E}$ in $\smash{\overline{B}_{\lambda_0}}(\varrho_2)$. Finally, there exist constants $K_0,\mu_0,\tau_0 > 0$ such that~\eqref{variationalsys3} possesses for $\lambda \in \smash{\overline{B}_{\lambda_0}}(\varrho_2)$ and $j = 1,\ldots,M$ exponential dichotomies on $\R_\pm$ with constants $K_0,\mu_0 > 0$ and projections $\smash{\tilde P_{j,\pm}}(\pm x;\lambda), x \geq 0$ satisfying~\eqref{projest2} for each $x \geq \tau_0$. By uniqueness of the range of $P_{j,+}(0;\lambda)$ and the kernel of $P_{j,-}(0;\lambda)$, cf.~\cite[p.~19]{Coppel1978}, $B_{j,\mathrm{s}}(\lambda)$ is a basis of $\mathrm{ran}(P_{j,+}(0;\lambda)) = \mathrm{ran}(\smash{\tilde P_{j,+}}(0;\lambda))$ and $B_{j,\mathrm{u}}(\lambda)$ is a basis of $\ker(P_{j,-}(0;\lambda)) = \ker(\smash{\tilde P_{j,-}}(0;\lambda))$ for each $\lambda \in \smash{\overline{B}_{\lambda_0}}(\varrho_2)$ and $j = 1,\ldots,M$.

Since $\mathcal{A}$ is $T$-periodic in $x$, system
\begin{align} \label{variationalsys32}
U' = \left(\A\left(x,Z_j(x-jnT);\lambda \right) - \omega_{\eta_{j-1},\eta_j}'(x-jnT) \right) U
\end{align}
is, for each $n \in \N$, a $jnT$-translation of system~\eqref{variationalsys3} for $j = 1,\ldots,M$. So, it admits for each $\lambda \in \smash{\overline{B}_{\lambda_0}}(\varrho_2)$ and $j = 1,\ldots,M$ exponential dichotomies on the half lines $(-\infty,jnT]$ and $[jnT,\infty)$ with constants $K_0,\mu_0$ and projections $\smash{\check{P}_{j,\pm}(jnT \pm x;\lambda) = \tilde{P}_{j,\pm}(\pm x;\lambda)}, x \geq 0$. 

\paragraph*{Weighted eigenvalue problem.} Let $\tilde{\omega} \colon \R \to \R$ be given by
\begin{align*}
\tilde{\omega}(x) = \begin{cases} \omega_{\eta_0,\eta_1}'(x-nT), & x \in \left(-\infty,\tfrac{3}{2}nT\right],\\
\omega_{\eta_{j-1},\eta_j}'(x - jnT), & x \in \left((j-\tfrac12)nT,(j+\tfrac12)nT\right], \, j = 2,\ldots,M-1,\\
\omega_{\eta_{M-1},\eta_M}'(x - MnT), & x \in \left((M-\tfrac12)nT,\infty\right),\end{cases}
\end{align*}
see Figure~\ref{fig:Stability_multifront_proof}(bottom). By definition of the exponential weights $\omega_{\eta_{j-1},\eta_j}, j = 1,\ldots,M$, see~\S\ref{sec:Abstract_ex_stab}, the function $\tilde{\omega}$ is smooth and has support within the interval $(nT-1,MnT+1)$. Therefore, its primitive $\omega \colon \R \to \R$, given by
$$\omega(x) = \int_0^x \tilde{\omega}(y) \de y,$$
is smooth and bounded. 

Denote by $T_n(x,y;\lambda)$ and $\mathcal{T}_n(x,y;\lambda)$ the evolutions of system~\eqref{variationalsysfull} and~\eqref{variationalsysfull3}, respectively. Since $\omega$ is bounded and it holds
\begin{align*}
T_n(x,y;\lambda) = \eu^{\omega(x) - \omega(y)} \mathcal{T}_n(x,y;\lambda)
\end{align*}
for $x,y \in \R$ and $\lambda \in \C$, system~\eqref{variationalsysfull} possesses an exponential dichotomy on an interval $\mathcal{I} \subset \R$ with projections $P_n(x;\lambda)$ if and only if system~\eqref{variationalsysfull3} does.

\begin{figure}[t]
    \centering
    \includegraphics[width=0.99\textwidth]{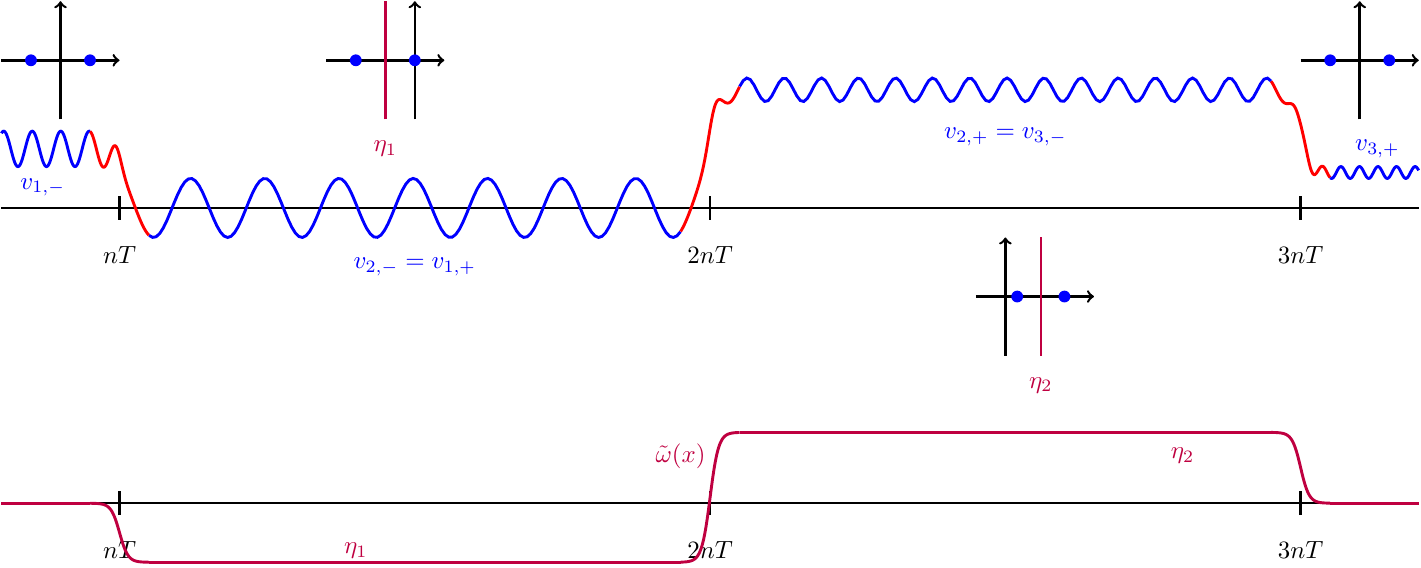}
    \caption{Illustration of a stationary 3-front solution (top), with insets showing the Floquet exponents (blue dots) associated with the periodic end states $v_{j,\pm}$ for $j=1,2,3$, depicted in blue. The derivative of the corresponding weight $\omega$, denoted by $\tilde{\omega}$, is shown in purple (bottom).}
    \label{fig:Stability_multifront_proof}
\end{figure}

\paragraph*{Exponential dichotomies along the multifront.} Our next step is to use roughness and pasting techniques to transfer the exponential dichotomies of system~\eqref{variationalsys32} to exponential dichotomies for system~\eqref{variationalsysfull3} (and thus for system~\eqref{variationalsysfull}) on the intervals $(-\infty,\frac{3}{2}nT]$, $[(M-\frac12)nT,\infty)$ and $[(j-\frac12) nT,(j+\frac12)nT]$ for $j = 2,\ldots,M-1$ (only in case $M > 2$). 

Let $\tilde\chi_{j,n} \colon \R \to [0,1]$ be a family of smooth cut-off functions, where $\tilde\chi_{1,n}$ is supported on $(-\infty,\frac32 nT + 2)$ and equal to $1$ on $(-\infty,\frac32 nT+1]$, $\tilde\chi_{M,n}$ is supported on $((M - \frac12)nT - 2, \infty)$ and equal to $1$ on $[(M-\frac12)nT-1,\infty)$, and $\tilde\chi_{j,n}$ is supported on $((j-\frac12) nT - 2, (j+\frac12) nT + 2)$ and equal to $1$ on $[(j-\frac12)nT-1,(j+\frac12)nT+1]$ for $j = 2,\ldots,M-1$ (only in case $M > 2$). We define
\begin{align*}
Z_{j,n} &= a_n + (1-\tilde\chi_{j,n}) Z_j(\cdot-jnT) + \tilde\chi_{j,n} w_n
\end{align*}
for $j = 1,\ldots,M$, where we recall $w_n$ is the formal concatenation of the $M$ primary fronts, defined in~\eqref{multifront_ansatz}. Using that $\partial_u \mathcal{A}$ is continuous, $\smash{\overline{B}_{\lambda_0}}(\varrho_2)$ is compact and we have $a_n \in H^1(\R) \hookrightarrow L^\infty(\R)$ and $Z_\ell \in L^\infty(\R)$ for $\ell = 1,\ldots,M$, the mean value theorem yields a $\lambda$- and $n$-independent constant $R > 0$ such that the estimate
\begin{align} \label{existestsys3}
\left\|\A\left(x,Z_{j,n}(x);\lambda \right) - \A\left(x,Z_j(x-jnT);\lambda \right)\right\| \leq R \delta_n
\end{align}
holds for $x \in \R$, $\lambda \in \smash{\overline{B}_{\lambda_0}}(\varrho_2)$ and $j = 1,\ldots,M$, where we denote
\begin{align*}
\delta_n = \|a_n\|_{L^\infty} + \sum_{j = 1}^M \left( \|\chi_-(Z_j-v_{j,-})\|_{L^\infty\left((-\infty,-\frac12 nT + 2]\right)} + \|\chi_+(Z_j-v_{j,+})\|_{L^\infty\left([\frac12 nT-2,\infty)\right)}\right).
\end{align*}

Using the embedding $H^1(\R) \hookrightarrow L^\infty(\R)$, one readily observes that $\delta_n$ converges to $0$ as $n \to \infty$. Thus, since for all $\lambda \in \smash{\overline{B}_{\lambda_0}}(\varrho_2)$  system~\eqref{variationalsys32} has exponential dichotomies on $(-\infty,jnT]$ and $[jnT,\infty)$ with $\lambda$- and $n$-independent constants $K_0,\mu_0$ and  projections $\smash{\check{P}_{j,\pm}(jnT\pm x;\lambda) = \tilde{P}_{j,\pm}(\pm x;\lambda)}, x \geq 0$, the estimate~\eqref{existestsys3} and Lemma~\ref{l:projanarough} give rise to $\lambda$- and $n$-independent constants $M_1,\mu_1 > 0$ and $\varrho_1 \in (0,\varrho_2)$ such that, provided $n \in \mathbb{N}$ is sufficiently large, the following two statements hold for $j = 1,\ldots,M$. First, the system
\begin{align} \label{variationalcutoff2}
     U' = \left(\mathcal{A}\left(x,Z_{j,n}(x);\lambda\right) - \omega_{\eta_{j-1},\eta_j}'(x)\right)  U
\end{align}
has exponential dichotomies on $(-\infty,jnT]$ and $[jnT,\infty)$ with $\lambda$- and $n$-independent constants and projections $\mathcal{Q}_{j,\pm,n}(jnT \pm x;\lambda), x \geq 0$ for each $\lambda \in \smash{\overline{B}_{\lambda_0}}(\varrho_1)$. Second, the map $\mathcal{Q}_{j,\pm,n}(jnT \pm x;\cdot) \colon B_{\lambda_0}(\varrho_1) \to \C^{km \times km}$ is analytic for each $x \geq 0$ and the estimates
\begin{align} \label{projest3multi}
\begin{split}
\|\mathcal{Q}_{j,-,n}((j-\tfrac12) n T;\lambda) - \tilde{P}_{j,-}(-\tfrac{n}{2} T;\lambda)\| &\leq M_1\left(\delta_n + \eu^{-\mu_1 n T}\right),\\
\|\mathcal{Q}_{j,+,n}((j+\tfrac12) nT;\lambda)-\tilde{P}_{j,+}(\tfrac{n}{2} T;\lambda)\|  &\leq M_1\left(\delta_n + \eu^{-\mu_1 n T}\right),\\
\left\|\mathcal{Q}_{j,\pm,n}(jnT;\lambda) - Q_{j,\pm}(\lambda)\right\| &\leq M_1\delta_n
\end{split}
\end{align}
hold for all $\lambda \in \overline{B}_{\lambda_0}(\varrho_1)$, where we denote by $Q_{j,+}(\lambda)$ the projection onto $\mathrm{ran}(P_{j,+}(0;\lambda))$ along $\mathrm{ran}(P_{j,+}(0;\lambda_0))^\perp$ and $Q_{j,-}(\lambda)$ is the projection onto $\mathrm{ker}(P_{j,-}(0;\lambda_0))^\perp$ along $\mathrm{ker}(P_{j,-}(0;\lambda))$. By Lemma~\ref{l:proj_ana}, the maps $Q_{j,\pm} \colon B_{\lambda_0}(\varrho_1) \to \C^{km \times km}$ are analytic for $j = 1,\ldots,M$. 

Take $j \in \{1,\ldots,M\}$. Owing to~\cite[Section~II.4.2]{Kato1995} there exist analytic maps $\Phi_j \colon B_{\lambda_0}(\varrho_1) \to \C^{km \times l_0}$ and $\Psi_j \colon B_{\lambda_0}(\varrho_1) \to \C^{km \times (km - l_0)}$ such that $\Phi_j(\lambda)$ is a basis of $\smash{\mathrm{ran}(Q_{j,-}(\lambda)^\top)}$ and $\Psi_j(\lambda)$ is a basis of $\smash{\mathrm{ran}(I - Q_{j,-}(\lambda)^\top) = \ker(Q_{j,-}(\lambda)^\top)}$ for $\lambda \in B_{\lambda_0}(\varrho_1)$. Since $B_{j,\mathrm{u}}(\lambda)$ is a basis of $\ker(Q_{j,-}(\lambda))$, we have $\smash{\Phi_j(\lambda)^\top} B_{j,\mathrm{u}}(\lambda) = 0$ for $\lambda \in B_{\lambda_0}(\varrho_1)$.
Therefore, we arrive at
\begin{align*}
\det\left(\left(\Psi_j(\lambda) \mid \Phi_j(\lambda)\right)^\top\right) \mathcal{E}_j(\lambda) = \det\left(\Psi_j(\lambda)^\top B_{j,\mathrm{u}}(\lambda)\right)\det\left(\Phi_j(\lambda)^\top B_{j,\mathrm{s}}(\lambda)\right) 
\end{align*}
for $\lambda \in B_{\lambda_0}(\varrho_1)$. Clearly, the matrix $\left(\Psi_j(\lambda) \mid \Phi_j(\lambda)\right)$ is invertible for all $\lambda \in B_{\lambda_0}(\varrho_1)$, and so is $\Psi_j(\lambda)^\top B_{j,\mathrm{u}}(\lambda)$ by Lemma~\ref{l:projlem}. We conclude that $\mathcal{E}_j$ has the same zeros (including multiplicities) in $B_{\lambda_0}(\varrho_1)$ as the analytic function $\smash{\tilde{\mathcal{E}}_j} \colon B_{\lambda_0}(\varrho_1) \to \C$ given by
\begin{align*}
\tilde{\mathcal{E}}_j(\lambda) = \det\left(\Phi_j(\lambda)^\top B_{j,\mathrm{s}}(\lambda)\right).
\end{align*}

Take $j \in \{1,\ldots,M-1\}$. We invoke Lemma~\ref{l:projest2}, use the $T$-periodicity of $Q_\pm(\cdot;\lambda)$ and employ estimates~\eqref{projest2} and~\eqref{projest3multi}, to conclude that, provided $n \in \mathbb{N}$ is sufficiently large, the subspaces $\ker(\mathcal{Q}_{j,+,n}((j+\frac12) n T;\lambda))$ and $\mathrm{ran}(\mathcal{Q}_{j+1,-,n}((j+\frac12) nT;\lambda))$ are complementary and there exists a $\lambda$- and $n$-independent constant $M_2 > 0$ such that the projection $\smash{\check{\mathcal{P}}_{j,n}(\lambda)}$ onto $\mathrm{ran}(\mathcal{Q}_{j+1,-,n}((j+\frac12) n T;\lambda))$ along $\ker(\mathcal{Q}_{j,+,n}((j+\frac12) n T;\lambda))$ is well-defined and enjoys the bound
\begin{align} \label{projest4multi}
\left\|\check{\mathcal{P}}_{j,n}(\lambda)\right\| \leq M_2
\end{align}
for $\lambda \in \overline{B}_{\lambda_0}(\varrho_1)$. Since $\mathcal{Q}_{j,+,n}((j+\tfrac12) n T;\cdot), \mathcal{Q}_{j+1,-,n}((j+\tfrac12) n T;\cdot) \colon B_{\lambda_0}(\varrho_1) \to \C^{km \times km}$ are analytic, Lemma~\ref{l:proj_ana} affords analytic maps $\mathcal{B}_{j,\pm,n} \colon B_{\lambda_0}(\varrho_1) \to \C^{km \times l_0}$ such that $\mathrm{ran}(\mathcal{Q}_{j+1,-,n}((j+\frac12) n T;\lambda)) = \mathrm{ran}(\mathcal{B}_{j,-,n}(\lambda))$ and $\ker(\mathcal{Q}_{j,+,n}((j+\frac12) n T;\lambda)) = \{u \in \C^{km} : z^\top u = 0 \text{ for all } z \in \mathrm{ran}(\mathcal{B}_{j,+,n}(\lambda))\}$. Therefore, Lemma~\ref{l:proj_ana} implies that $\smash{\check{\mathcal{P}}_{j,n}} \colon B_{\lambda_0}(\varrho_1) \to \C^{km \times km}$ is analytic. 

By construction of the cut-off functions $\tilde\chi_{j,n}$ and the weight $\omega$, system~\eqref{variationalsysfull3} coincides with~\eqref{variationalcutoff2} on $(-\infty,\frac{3}{2} n T]$ for $j = 1$, on $[(M-\frac12) nT,\infty)$ for $j = M$ and on $[(j-\frac{1}{2}) nT,(j+\frac{1}{2}) nT]$ for $j = 2,\ldots,M-1$ (only in case $M > 2$). Hence, it follows, thanks to Lemma~\ref{l:pastingexpdi} and estimate~\eqref{projest4multi}, that system~\eqref{variationalsysfull3} admits for all $\lambda \in \smash{\overline{B}_{\lambda_0}}(\varrho_1)$ an exponential dichotomy on $(-\infty,nT]$ with $\lambda$- and $n$-independent constants and projections $\mathcal{Q}_{1,-,n}(x;\lambda)$, an exponential dichotomy on $[MnT,\infty)$ with $\lambda$- and $n$-independent constants and projections $\mathcal{Q}_{M,+,n}(x;\lambda)$, and an exponential dichotomy on $[jnT,(j+1) nT]$ with $\lambda$- and $n$-independent constants and projections 
\begin{align*}
\mathcal{Q}_{j,\circ,n}(x;\lambda) = \mathcal{T}_n(x,(j+\tfrac12) nT;\lambda) \smash{\check{\mathcal{P}}_{j,n}}(\lambda) \mathcal{T}_n((j+\tfrac12) n T,x;\lambda), 
\end{align*}
for $j = 1,\ldots,M-1$. In addition, there exist $\lambda$- and $n$-independent constants $M_3,\mu_3 > 0$ such that, provided $n \in \N$ is sufficiently large, the projections obey
\begin{align*}
\left\|\mathcal{Q}_{j,\circ,n}(jnT;\lambda) - \mathcal{Q}_{j,+,n}(jnT;\lambda)\right\| &\leq M_3 \eu^{-\mu_3 nT},\\
\left\|\mathcal{Q}_{j,\circ,n}((j+1)nT;\lambda) - \mathcal{Q}_{j+1,-,n}((j+1)nT;\lambda)\right\| &\leq M_3 \eu^{-\mu_3 nT}
\end{align*}
for $\lambda \in \overline{B}_{\lambda_0}(\varrho_1)$ and $j = 1,\ldots,M-1$. Combining the latter with~\eqref{projest3multi}, we arrive at
\begin{align} \label{projest5multi}
\left\|\mathcal{Q}_{j,\circ,n}(jnT;\lambda) - Q_{j,+}(\lambda)\right\|, \left\|\mathcal{Q}_{j,\circ,n}((j+1)nT;\lambda) - Q_{j+1,-}(\lambda)\right\| \leq M_3 \eu^{-\mu_3 nT} + M_1 \delta_n
\end{align}
for $\lambda \in \overline{B}_{\lambda_0}(\varrho_1)$ and $j = 1,\ldots,M-1$. Finally, we observe that $\mathcal{Q}_{j,\circ,n}(x;\cdot) \colon B_{\lambda_0}(\varrho_1) \to \C^{km \times km}$ is analytic, since $\smash{\check{\mathcal{P}}_{j,n}}$ and $\mathcal{T}_n(x,y;\cdot)$ are analytic for $x,y \in [jnT,(j+1)nT]$ and $j = 1,\ldots,M-1$, cf.~\cite[Lemma~2.1.4]{KapitulaPromislow2013}.

We define the analytic maps $\Xi_{0,n}, \Xi_{j,n}, \mathcal{B}_{j,n}, \mathcal{B}_{M,n} \colon B_{\lambda_0}(\varrho_1) \to \C^{km \times l_0}$ by
\begin{align*}
\Xi_{0,n}(\lambda) = \mathcal{Q}_{1,-,n}(nT;\lambda)^\top \Phi_{1}(\lambda),
\qquad
\Xi_{j,n}(\lambda) = \mathcal{Q}_{j,\circ,n}((j+1)nT;\lambda)^\top \Phi_{j+1}(\lambda)
\end{align*}
and
\begin{align*}
\mathcal{B}_{j,n}(\lambda) = \mathcal{Q}_{j,\circ,n}(jnT;\lambda) B_{j,\mathrm{s}}(\lambda), \qquad
\mathcal{B}_{M,n}(\lambda) = \mathcal{Q}_{M,+,n}(MnT;\lambda) B_{M,\mathrm{s}}(\lambda)
\end{align*}
for $j = 1,\ldots,M-1$. Let $\varrho_0 \in (0,\varrho_1)$. Employing estimates~\eqref{projest3multi} and~\eqref{projest5multi}, using the compactness of $\overline{B}_{\lambda_0}(\varrho_0)$ and recalling that $\Phi_{j}$ and $B_{j,\mathrm{s}}$ are analytic on $B_{\lambda_0}(\varrho_1)$, we obtain a $\lambda$- and $n$-independent constant $M_4 > 0$ such that, provided $n \in \N$ is sufficiently large, we have
\begin{align} \label{projestmulti}
\begin{split}
\left\|\Xi_{j-1,n}(\lambda) - \Phi_j(\lambda)\right\|, \left\|\mathcal{B}_{j,n}(\lambda) - B_{j,\mathrm{s}}(\lambda)\right\| &\leq M_4 \left(\eu^{-\mu_4 nT} + \delta_n\right), \\
\left\|\Xi_{j-1,n}(\lambda)\right\|, \left\|\mathcal{B}_{j,n}(\lambda) \right\| &\leq M_4,
\end{split}
\end{align}
for all $\lambda \in \overline{B}_{\lambda_0}(\varrho_0)$ and $j = 1,\ldots,M$.  Because $\Phi_{j}(\lambda)$ and $B_{j,\mathrm{s}}(\lambda)$ are bases, the estimate~\eqref{projestmulti} implies that, provided $n \in \N$ is sufficiently large, $\Xi_{0,n}(\lambda)$ forms a basis of $\smash{\mathrm{ran}(\mathcal{Q}_{1,-,n}(nT;\lambda)^\top)}$, $\Xi_{j,n}(\lambda)$ is a basis of $\smash{\mathrm{ran}(\mathcal{Q}_{j,\circ,n}((j+1)T;\lambda)^\top)}$, $\mathcal{B}_{j,n}(\lambda)$ is a basis of $\mathrm{ran}(\mathcal{Q}_{j,\circ,n}(jnT;\lambda))$, and $\mathcal{B}_{M,n}(\lambda)$ is a basis of $\mathrm{ran}(\mathcal{Q}_{M,+,n}(MnT;\lambda))$ for $j = 1,\ldots,M-1$ and $\lambda \in \overline{B}_{\lambda_0}(\varrho_0)$. 

By~\cite[Section~II.4.2]{Kato1995} there exist analytic maps $\tilde{\Xi}_{j,n}, \tilde{\mathcal{B}}_{j,n} \colon B_{\lambda_0}(\varrho_1) \to \C^{km \times l_0}$ such that $\tilde{\Xi}_{j,n}(\lambda)$ forms a basis of $\mathrm{ran}(\mathcal{Q}_{j,\circ,n}(jnT;\lambda)^\top)$ and $\tilde{\mathcal{B}}_{j,n}(\lambda)$ is a basis of $\mathrm{ran}(\mathcal{Q}_{j,\circ,n}((j+1)nT;\lambda))$ for $\lambda \in B_{\lambda_0}(\varrho_1)$ and $j = 1,\ldots,M-1$. By Lemma~\ref{l:projlem}, the matrices $\tilde{\Xi}_{j,n}(\lambda)^\top \mathcal{B}_{j,n}(\lambda)$ and $\Xi_{j,n}(\lambda)^\top \tilde{\mathcal{B}}_{j,n}(\lambda)$ are invertible for $\lambda \in \overline{B}_{\lambda_0}(\varrho_0)$ and $j = 1,\ldots,M-1$. Hence, one readily verifies that it must hold
\begin{align} \label{e:projids}
\begin{split}
\mathcal{Q}_{j,\circ,n}(jnT;\lambda) &= \mathcal{B}_{j,n}(\lambda)\left(\tilde{\Xi}_{j,n}(\lambda)^\top \mathcal{B}_{j,n}(\lambda)\right)^{-1}\tilde{\Xi}_{j,n}(\lambda)^\top, \\ \mathcal{Q}_{j,\circ,n}((j+1)nT;\lambda) &= \tilde{\mathcal{B}}_{j,n}(\lambda)\left(\Xi_{j,n}(\lambda)^\top \tilde{\mathcal{B}}_{j,n}(\lambda)\right)^{-1}\Xi_{j,n}(\lambda)^\top 
\end{split}
\end{align}
for $\lambda \in \overline{B}_{\lambda_0}(\varrho_0)$ and $j = 1,\ldots,M-1$. Therefore, defining $\Pi_{j,n}, \Theta_{j,n} \colon B_{\lambda_0}(\varrho_1) \to \C^{l_0 \times l_0}$ by
\begin{align*}
\Pi_{j,n}(\lambda) = \left(\tilde{\Xi}_{j,n}(\lambda)^\top \mathcal{B}_{j,n}(\lambda)\right)^{-1} \tilde{\Xi}_{j,n}(\lambda)^\top \mathcal{T}_n(jnT,(j+1)nT;\lambda) \tilde{\mathcal{B}}_{j,n}(\lambda) \left(\Xi_{j,n}(\lambda)^\top \tilde{\mathcal{B}}_{j,n}(\lambda)\right)^{-1}
\end{align*}
and
\begin{align*}
\Theta_{j,n}(\lambda) = \Xi_{j,n}(\lambda)^\top \mathcal{T}_n((j+1)nT,jnT;\lambda)) \mathcal{B}_{j,n}(\lambda),
\end{align*}
we deduce
\begin{align*}
 \Theta_{j,n}(\lambda) \Pi_{j,n}(\lambda) &= \Xi_{j,n}(\lambda)^\top \mathcal{Q}_{j,\circ,n}((j+1)nT;\lambda) \mathcal{T}_n((j+1)nT,jnT;\lambda) \\ 
&\qquad \cdot \mathcal{T}_n(jnT,(j+1)nT;\lambda) \tilde{\mathcal{B}}_{j,n}(\lambda) \left(\Xi_{j,n}(\lambda)^\top \tilde{\mathcal{B}}_{j,n}(\lambda)\right)^{-1} = I
\end{align*}
and, similarly,
\begin{align*}
&\Pi_{j,n}(\lambda) \Theta_{j,n}(\lambda) = I
\end{align*}
for $\lambda \in \overline{B}_{\lambda_0}(\varrho_0)$ and $j = 1,\ldots,M-1$. We conclude that $\Theta_{j,n}(\lambda)$ is invertible with inverse $\Pi_{j,n}(\lambda)$ for $\lambda \in \overline{B}_{\lambda_0}(\varrho_0)$ and $j = 1,\ldots,M-1$. Moreover, $\Phi_{j,n}$ and $\Theta_{j,n}$ are analytic for $j = 1,\ldots,M-1$, since $\Xi_{j,n}$, $\mathcal{B}_{j,n}$, $\smash{\tilde{\Xi}}_{j,n}$, $\smash{\tilde{\mathcal{B}}}_{j,n}$ and $\mathcal{T}_n(x,y;\cdot)$ are analytic for $x,y \in \R$ by~\cite[Lemma~2.1.4]{KapitulaPromislow2013}.

By~\cite[p.~13]{Coppel1978}, we can extend the exponential dichotomy of system~\eqref{variationalsysfull3} on $[MnT,\infty)$ to an exponential dichotomy on $[nT,\infty)$ with projections $\mathcal{Q}_{M,+,n}(x;\lambda), x \geq nT$ by setting
\begin{align*}
\mathcal{Q}_{M,+,n}(x;\lambda) = \mathcal{T}_n(x,MnT;\lambda) \mathcal{Q}_{M,+,n}(MnT;\lambda)\mathcal{T}_n(MnT,x;\lambda)\end{align*}  
for each $x \in [nT,MnT]$ and $\lambda \in B_{\lambda_0}(\varrho_1)$. As noted before, this implies that the unweighted system~\eqref{variationalsysfull} also admits exponential dichotomies on the half lines $(-\infty,nT]$ and $[nT,\infty)$ with projections $\mathcal{Q}_{1,-,n}(x;\lambda), x \leq nT$ and $\mathcal{Q}_{M,+,n}(x;\lambda), x \geq nT$, respectively.

\paragraph*{The Evans function for the multifront.} By~\cite[Section~II.4.2]{Kato1995}, there exist holomorphic functions $ \mathcal{U}_n, \Upsilon_n \colon B_{\lambda_0}(\varrho_1) \to \C^{km \times (km-l_0)}$ such that $\mathcal{U}_n(\lambda)$ forms a basis of $\ker(\mathcal{Q}_{1,-,n}(nT;\lambda)) = \mathrm{ran}(I-\mathcal{Q}_{1,-,n}(nT;\lambda))$ and $\Upsilon_n(\lambda)$ is a basis of $\mathrm{ran}(I - \mathcal{Q}_{1,-,n}(nT;\lambda)^\top)$ for each $\lambda \in B_{\lambda_0}(\varrho_1)$. Upon defining $\mathcal{S}_n \colon B_{\lambda_0}(\varrho_1) \to \C^{km \times l_0}$ by $\mathcal{S}_n(\lambda) = \mathcal{T}_n(nT,MnT;\lambda) \mathcal{B}_{M,n}(\lambda)$, we find that $\mathcal{S}_n(\lambda)$ is a basis of $\mathrm{ran}(\mathcal{Q}_{M,+,n}(nT;\lambda))$ for each $\lambda \in \overline{B}_{\lambda_0}(\varrho_0)$, because $\mathcal{B}_{M,n}(\lambda)$ is a basis of $\mathrm{ran}(\mathcal{Q}_{M,+,n}(MnT;\lambda))$. Moreover, $\mathcal{S}_n$ is analytic, since $\mathcal{B}_{M,n}$ and $\mathcal{T}_n(nT,MnT;\cdot)$ are, cf.~\cite[Lemma~2.1.4]{KapitulaPromislow2013}. Thus, an analytic Evans function $E_n \colon B_{\lambda_0}(\varrho_1) \to \C$ for system~\eqref{variationalsysfull} is given by
\begin{align*}
E_n(\lambda) = \det(\mathcal{U}_n(\lambda) \mid \mathcal{S}_n(\lambda)).
\end{align*}

\paragraph*{Leading-order factorization of the Evans function.}
Our next step is to multiply $E_n(\lambda)$ with several nonzero analytic functions in order to relate it to $\mathcal{E} = \mathcal{E}_1 \cdot\ldots \cdot\mathcal{E}_M$, where $\mathcal{E}_j$ is the Evans function associated with system~\eqref{variationalsys3}.

First, noting that $\Xi_{0,n}(\lambda)^\top \mathcal{U}_n(\lambda) = 0$, we compute
\begin{align*}
\det\left(\left(\Upsilon_{n}(\lambda) \mid \Xi_{0,n}(\lambda)\right)^\top\right) E_n(\lambda) = \det\left(\Upsilon_{n}(\lambda)^\top \mathcal{U}_n(\lambda)\right) \det\left(\Xi_{0,n}(\lambda)^\top \mathcal{S}_n(\lambda) \right)
\end{align*}
for $\lambda \in \overline{B}_{\lambda_0}(\varrho_0)$. Since the matrices $\left(\Upsilon_{n}(\lambda) \mid \Xi_{0,n}(\lambda)\right)$ and $\Upsilon_{n}(\lambda)^\top \mathcal{U}_n(\lambda)$ are invertible for all $\lambda \in \overline{B}_{\lambda_0}(\varrho_0)$ by Lemma~\ref{l:projlem}, the Evans function $E_n$ possesses the same roots (including multiplicities) in $B_{\lambda_0}(\varrho_0)$ as the analytic function $\smash{\tilde{E}_n} \colon B_{\lambda_0}(\varrho_1) \to \C$ given by
\begin{align} \label{e:firststep}
\tilde{E}_n(\lambda) = \det\left(\Xi_{0,n}(\lambda)^\top \mathcal{S}_n(\lambda) \right) = \det\left(\Xi_{0,n}(\lambda)^\top \mathcal{T}_n(nT,MnT;\lambda) \mathcal{B}_{M,n}(\lambda)\right).
\end{align}

Denote by $I_{\ell \times \ell}$ the identity matrix in $\C^{\ell \times \ell}$ and by $0_{\ell \times s}$ the zero matrix in $\C^{\ell \times s}$ for $\ell, s \in \N$. We claim that, provided $n \in \N$ is sufficiently large, there exists an analytic function $h_{j,n} \colon B_{\lambda_0}(\varrho_1) \to \C$ such that
\begin{align} \label{inductionclaim}
\tilde{E}_n(\lambda) = h_{j,n}(\lambda) \det\left(\begin{pmatrix} \tilde{B}_{j,n}(\lambda) & -\tilde{A}_j \\ \tilde{D}_{j,n}(\lambda) & \tilde{C}_{j,n}(\lambda)\end{pmatrix} + \tilde{H}_{j,n}(\lambda)\right), 
\end{align}
for $\lambda \in \overline{B}_{\lambda_0}(\varrho_0)$ and $j = 1,\ldots, M-1$. Here, $\tilde{B}_{j,n}(\lambda) \in \C^{2^{j-1} l_0 \times 2^{j-1} l_0}$ is an upper triangular $(l_0\times l_0)$-block matrix, whose blocks above the diagonal are equal to $-I_{l_0 \times l_0}$ or $0_{l_0 \times l_0}$ and whose diagonal contains exactly one copy of each of the blocks $\Xi_{M-1,n}(\lambda)^\top \mathcal{B}_{M,n}(\lambda), \ldots, \Xi_{M-j,n}(\lambda)^\top \mathcal{B}_{M-j+1,n}(\lambda)$ and further only $(l_0 \times l_0)$-identity matrices. Furthermore, $\tilde{A}_j, \tilde{C}_{j,n}(\lambda), \tilde{D}_{j,n}(\lambda) \in \C^{2^{j-1} l_0 \times 2^{j-1} l_0}$ are given by
\begin{align*}
\tilde{A}_j &= \begin{pmatrix} I_{(2^{j-1} - 1)l_0 \times (2^{j-1}-1)l_0} & 0_{(2^{j-1}-1)l_0 \times l_0}\\ 0_{l_0 \times (2^{j-1}-1)l_0} & 0_{l_0 \times l_0}\end{pmatrix}, \\ 
\tilde{C}_{j,n}(\lambda) &= \begin{pmatrix} I_{(2^{j-1} - 1)l_0 \times (2^{j-1}-1)l_0} & 0_{(2^{j-1}-1)l_0 \times l_0}\\ 0_{l_0 \times (2^{j-1}-1)l_0} & \Xi_{0,n}(\lambda)^\top \mathcal{T}_n(nT, (M-j)nT;\lambda) \mathcal{B}_{M-j,n}(\lambda)\end{pmatrix}
\end{align*}
and
\begin{align*}
&\tilde{D}_{j,n}(\lambda) =\\ 
&\begin{pmatrix} 0_{(2^{j-1} - 1)l_0 \times l_0} & \ldots & 0_{(2^{j-1} - 1)l_0 \times l_0} \\ \Xi_{0,n}(\lambda)^\top \mathcal{T}_n(nT, (M-j)nT;\lambda) H_{j,1,n}(\lambda) & \ldots & \Xi_{0,n}(\lambda)^\top \mathcal{T}_n(nT, (M-j)nT;\lambda) H_{j,2^{j-1},n}(\lambda) \end{pmatrix},
\end{align*}
for $j = 1,\ldots,M-1$ and $\lambda \in B_{\lambda_0}(\varrho_1)$. Moreover, $H_{j,\ell,n}(\lambda) \in \C^{l_0 \times l_0}$ and $\tilde{H}_{j,n}(\lambda) \in \C^{2^j l_0 \times 2^j l_0}$ obey the bound
\begin{align} \label{inductionbound}
\left\|H_{j,\ell,n}(\lambda)\right\|, \left\|\tilde{H}_{j,n}(\lambda)\right\| \leq M_5 \eu^{-\mu_5 nT}
\end{align}
for $\ell = 1,\ldots,2^{j-1}$, $j = 1,\ldots,M-1$ and $\lambda \in \overline{B}_{\lambda_0}(\varrho_0)$, where $M_5,\mu_5 > 0$ are $\lambda$- and $n$-independent constants. Finally, $h_{j,n}$ is nonvanishing on $\overline{B}_{\lambda_0}(\varrho_0)$ for $j = 1,\ldots,M-1$.

We prove our claim inductively for $j = 1,\ldots,M-1$. In our proof we rely on the identity~\eqref{detblockid}. We start our induction proof with considering the case $j = 1$. Here, we employ identities~\eqref{detblockid},~\eqref{e:projids} and~\eqref{e:firststep} and use the fact that $\Theta_{M-1,n}(
\lambda)$ is invertible with inverse $\Pi_{M-1,n}(\lambda)$ to derive
\begin{align*}
\tilde{E}_n(\lambda) &= \det\left(\Xi_{0,n}^\top \mathcal{T}_n(nT,(M-1)nT) \mathcal{B}_{M-1,n} \Pi_{M-1,n} \Xi_{M-1,n}^\top \mathcal{B}_{M,n} \right.\\
&\left.\qquad + \, \Xi_{0,n}^\top \mathcal{T}_n(nT,(M-1)nT) \mathcal{T}_n((M-1)nT,MnT) \left(I-\mathcal{Q}_{M-1,\circ,n}(MnT)\right) \mathcal{B}_{M,n}\right)\\
&= \det(\Pi_{M-1,n})\det\left(\begin{pmatrix}  \Xi_{M-1,n}^\top \mathcal{B}_{M,n} & 0_{l_0 \times l_0} \\ \tilde{D}_{1,n} & \Xi_{0,n}^\top \mathcal{T}_n(nT,(M-1)nT) \mathcal{B}_{M-1,n} \end{pmatrix} + \tilde{H}_{1,n}\right) 
\end{align*}
for $\lambda \in \overline{B}_{\lambda_0}(\varrho_0)$, where we suppressed $\lambda$-dependency on the right-hand side and denote
\begin{align*}
\tilde{D}_{1,n}(\lambda) &= \Xi_{0,n}(\lambda)^\top \mathcal{T}_n(nT,(M-1)nT;\lambda)) H_{1,1,n}(\lambda), \\ H_{1,1,n}(\lambda) &= \mathcal{T}_n((M-1)nT,MnT;\lambda) \left(I-\mathcal{Q}_{M-1,\circ,n}(M nT;\lambda)\right) \mathcal{B}_{M,n}(\lambda) ,
\end{align*}
and
\begin{align*}
\tilde{H}_{1,n}(\lambda) = \begin{pmatrix} 0_{l_0 \times l_0} & -\Theta_{M-1,n}(\lambda) \\ 0_{l_0 \times l_0} & 0_{l_0 \times l_0}\end{pmatrix}.
\end{align*}
Using the bound~\eqref{projestmulti} and the fact that $\mathcal{B}_{M-1,n}(\lambda)$ is a basis of $\mathrm{ran}(\mathcal{Q}_{M-1,\circ,n}((M-1)nT;\lambda))$, we obtain, provided $n \in \N$ is sufficiently large, $\lambda$- and $n$-independent constants $M_6,\mu_6 > 0$ such that
\begin{align*}
\left\|H_{1,1,n}(\lambda)\right\|, \left\|\tilde{H}_{1,n}(\lambda)\right\| \leq M_6 \eu^{-\mu_6 nT}
\end{align*}
for $\lambda \in \overline{B}_{\lambda_0}(\varrho_0)$. Finally, we note that the function $h_{1,n} \colon B_{\lambda_0}(\varrho_1) \to \C$ given by $h_{1,n}(\lambda) = \det(\Pi_{M-1,n}(\lambda))$ is analytic and does not vanish on $\overline{B}_{\lambda_0}(\varrho_0)$, since $\Pi_{M-1,n}$ is analytic and $\Pi_{M-1,n}(\lambda)$ is invertible for all $\lambda \in \overline{B}_{\lambda_0}(\varrho_0)$. We conclude that our claim is valid for $j = 1$.

Next, we perform the induction step. That is, we assume that our claim holds for some $j = j_0 \in \{1,\ldots,M-2\}$ and prove that it then also holds for $j = j_0 + 1$. First, we recall that $\Theta_{M-j_0-1,n}(\lambda)$ is invertible with inverse $\Pi_{M-j_0-1,n}(\lambda)$ and use 
\begin{align*}
&\mathcal{T}_n((M-j_0-1)nT,(M-j_0)nT;\lambda)
= \mathcal{B}_{M-j_0-1}(\lambda) \Pi_{M-j_0-1,n}(\lambda) \Xi_{M-j_0-1,n}(\lambda)^\top\\
&\qquad \qquad +\, \mathcal{T}_n((M-j_0-1)nT,(M-j_0)nT;\lambda) \left(I - \mathcal{Q}_{M-j_0-1,\circ,n}((M-j_0)nT;\lambda)\right),
\end{align*}
cf.~\eqref{e:projids}, to express
\begin{align} \label{induction_reexpress}
\begin{pmatrix} \tilde{B}_{j_0,n}(\lambda) & -\tilde{A}_{j_0} \\ \tilde{D}_{j_0,n}(\lambda) & \tilde{C}_{j_0,n}(\lambda)\end{pmatrix} + \tilde{H}_{j_0,n}(\lambda) =  \tilde{C}_{j_0+1,n}(\lambda) A_{j_0+1,n}(\lambda)^{-1} B_{j_0+1,n}(\lambda) + D_{j_0+1,n}(\lambda)
\end{align}
with
\begin{align*}
A_{j_0+1,n}(\lambda) &= \begin{pmatrix} I_{(2^{j_0} - 1)l_0 \times (2^{j_0}-1)l_0} & 0_{(2^{j_0}-1)l_0 \times l_0}\\ 0_{l_0 \times (2^{j_0}-1)l_0} & \Theta_{M-j_0-1,n}(\lambda)\end{pmatrix}, \\ 
B_{j_0+1,n}(\lambda) &= \begin{pmatrix} \tilde{B}_{j_0,n}(\lambda) & -\tilde{A}_{j_0}\\ \hat{D}_{j_0+1,n}(\lambda) & \hat{C}_{j_0+1,n}(\lambda)\end{pmatrix}, \qquad D_{j_0+1,n}(\lambda) = \tilde{D}_{j_0+1,n}(\lambda) + \tilde{H}_{j_0,n}(\lambda)
\end{align*}
and
\begin{align*}
\hat{C}_{j_0+1,n}(\lambda) &= \begin{pmatrix} I_{(2^{j_0-1} - 1)l_0 \times (2^{j_0-1}-1)l_0} & 0_{(2^{j_0-1}-1)l_0 \times l_0}\\ 0_{l_0 \times (2^{j_0-1}-1)l_0} & \Xi_{M-j_0-1,n}(\lambda)^\top \mathcal{B}_{M-j_0,n}(\lambda)\end{pmatrix}, \\ 
\hat{D}_{j_0+1,n}(\lambda) &= \begin{pmatrix} 0_{(2^{j_0-1} - 1)l_0 \times l_0} & \ldots & 0_{(2^{j_0-1} - 1)l_0 \times l_0} \\ \Xi_{M-j_0-1,n}(\lambda)^\top H_{j_0,1,n}(\lambda) & \ldots & \Xi_{M-j_0-1,n}(\lambda)^\top H_{j_0,2^{j_0-1},n}(\lambda)\end{pmatrix}
\end{align*}
for $\lambda \in \overline{B}_{\lambda_0}(\varrho_0)$, where the matrix $\tilde{D}_{j_0+1,n}(\lambda)$ is defined by setting
\begin{align*}
H_{j_0+1,\ell,n}(\lambda) &= \mathcal{T}_n((M-j_0-1)nT,(M-j_0)nT) \left(I-\mathcal{Q}_{M-j_0-1,\circ,n}((M-j_0)nT)\right) H_{j_0,\ell,n},\\
H_{j_0+1,\tilde{\ell},n}(\lambda) &= 0_{l_0 \times l_0},\\
H_{j_0+1,2^{j_0},n}(\lambda) &= \mathcal{T}_n((M-j_0-1)nT,(M-j_0)nT) \left(I-\mathcal{Q}_{M-j_0-1,\circ,n}((M-j_0)nT)\right) \mathcal{B}_{M-j_0,n}
\end{align*}
for $\ell = 1,\ldots,2^{j_0-1}$ and $\tilde{\ell} = 2^{j_0-1} + 1,\ldots, 2^{j_0} - 1$, suppressing $\lambda$-dependency on the right-hand sides. Next, we take determinants in~\eqref{induction_reexpress}, use that~\eqref{inductionclaim} holds for $j = j_0$, and apply~\eqref{detblockid} to arrive at
\begin{align*}
\tilde{E}_n(\lambda) = h_{j_0,n}(\lambda) \det\left(\Pi_{M-j_0-1,n}(\lambda)\right) \det\left(\begin{pmatrix} \tilde{B}_{j_0+1,n}(\lambda) & -\tilde{A}_{j_0+1} \\ \tilde{D}_{j_0+1,n}(\lambda) & \tilde{C}_{j_0+1,n}(\lambda)\end{pmatrix} + \tilde{H}_{j_0+1,n}(\lambda)\right),
\end{align*}
with
\begin{align*}
\tilde{B}_{j_0+1,n}(\lambda) = \begin{pmatrix} \tilde{B}_{j_0,n}(\lambda) & -\tilde{A}_{j_0}\\ 0_{2^{j_0} l_0 \times 2^{j_0} l_0} & \hat{C}_{j_0+1,n}(\lambda)\end{pmatrix}
\end{align*}
and
\begin{align*}
\tilde{H}_{j_0+1,n}(\lambda) = \begin{pmatrix} \hat{H}_{j_0+1,n}(\lambda) & \check{H}_{j_0+1,n}(\lambda) \\ 
\tilde{H}_{j_0,n}(\lambda) & 0_{2^{j_0}l_0 \times 2^{j_0} l_0}\end{pmatrix},
\end{align*}
where we denote
\begin{align*}
\hat{H}_{j_0+1,n}(\lambda) = \begin{pmatrix} 0_{2^{j_0-1}l_0 \times 2^{j_0-1} l_0} & 0_{2^{j_0-1}l_0 \times 2^{j_0-1} l_0} \\ \hat{D}_{j_0+1,n}(\lambda) & 0_{2^{j_0-1}l_0 \times 2^{j_0-1} l_0}\end{pmatrix}
\end{align*}
and
\begin{align*}
\check{H}_{j_0+1,n}(\lambda) = \begin{pmatrix} 0_{(2^{j_0}-1)l_0 \times (2^{j_0}-1) l_0} & 0_{(2^{j_0}-1)l_0 \times l_0} \\ 0_{l_0 \times (2^{j_0}-1) l_0} & -\Theta_{M-j_0-1,n}(\lambda)\end{pmatrix}.
\end{align*}
Moreover, recalling that $\mathcal{B}_{M-j_0-1,n}(\lambda)$ is a basis of $\mathrm{ran}(\mathcal{Q}_{M-j_0-1,\circ,n}((M-j_0-1)nT;\lambda))$, employing the bound~\eqref{projestmulti}, and using that~\eqref{inductionbound} holds for $j = j_0$ and $\ell = 1,\ldots,2^{j_0-1}$, we establish $\lambda$- and $n$-independent constants $M_7,\mu_7 > 0$ such that, provided $n \in \N$ is sufficiently large, we have
\begin{align*}
\left\|H_{j_0+1,\ell,n}(\lambda)\right\|, \left\|\tilde{H}_{j_0+1,n}(\lambda)\right\| \leq M_7 \eu^{-\mu_7 nT}
\end{align*}
for $\ell = 1,\ldots,2^{j_0}$ and $\lambda \in \overline{B}_{\lambda_0}(\varrho_0)$. Finally, the function $h_{j_0 + 1,n} \colon B_{\lambda_0}(\varrho_1) \to \C$ given by $h_{j_0 + 1,n}(\lambda) = h_{j_0,n}(\lambda) \det(\Pi_{M-j_0-1,n}(\lambda))$ is analytic and does not vanish on $\overline{B}_{\lambda_0}(\varrho_0)$, since $h_{j_0,n}$ and $\Pi_{M-j_0-1,n}$ are analytic, $h_{j_0,n}(\lambda)$ is nonzero and $\Pi_{M-j_0-1,n}(\lambda)$ is invertible for all $\lambda \in \overline{B}_{\lambda_0}(\varrho_0)$. Therefore, our claim holds for $j = j_0 + 1$.

Inductively, we have thus established our claim for $j = 1,\ldots,M-1$ as desired. In particular, applying our claim with $j = M-1$ we find, provided $n \in \N$ is sufficiently large, that
\begin{align} \label{inductionoutcome}
\tilde{E}_n(\lambda) = h_{M-1,n}(\lambda) \hat{E}_n(\lambda) 
\end{align}
for $\lambda \in \overline{B}_{\lambda_0}(\varrho_0)$ with $\hat{E}_n \colon \overline{B}_{\lambda_0}(\varrho_0) \to \C$ given by
\begin{align*}
\hat{E}_n(\lambda) = \det\left(A_{\mathrm{main},n}(\lambda) + A_{\mathrm{res},n}(\lambda)\right),
\end{align*}
where we denote
\begin{align*}
A_{\mathrm{res},n}(\lambda) = \begin{pmatrix} 0_{2^{M-2} l_0 \times 2^{M-2} l_0} & 0_{2^{M-2} l_0 \times 2^{M-2} l_0} \\ \tilde{D}_{M-1,n}(\lambda)  & 0_{2^{M-2} l_0 \times 2^{M-2} l_0}\end{pmatrix} + \tilde{H}_{M-1,n}(\lambda)
\end{align*}
and where
\begin{align*}
A_{\mathrm{main},n}(\lambda) = \begin{pmatrix} \tilde{B}_{M-1,n}(\lambda) & -\tilde{A}_{M-1} \\ 0_{2^{M-2} l_0 \times 2^{M-2} l_0} & \tilde{C}_{M-1,n}(\lambda)\end{pmatrix} \in \C^{2^{M-1}l_0 \times 2^{M-1} l_0}
\end{align*}
is an upper triangular $(l_0 \times l_0)$-block matrix, whose blocks above the diagonal are equal to $- I_{l_0 \times l_0}$ or to $0_{l_0 \times l_0}$, and whose diagonal contains $(l_0 \times l_0)$-identity matrices and precisely one copy of each of the blocks $\Xi_{M-1,n}(\lambda)^\top \mathcal{B}_{M,n}(\lambda), \ldots, \Xi_{0,n}(\lambda)^\top \mathcal{B}_{1,n}(\lambda)$. Hence, by estimates~\eqref{projestmulti} and~\eqref{inductionbound} there exist $\lambda$- and $n$-independent constants $M_8,\mu_8 > 0$ such that, provided $n \in \N$ is sufficiently large, we have
\begin{align*}
\left\|A_{\mathrm{main},n}(\lambda) - A_0(\lambda)\right\| \leq M_8 \left(\eu^{-\mu_8 nT} + \delta_n\right), \qquad \left\|A_0(\lambda)\right\| \leq M_8, \qquad \left\|A_{\mathrm{res},n}(\lambda)\right\| \leq M_8 \eu^{-\mu_8 nT}
\end{align*}
for $\lambda \in \overline{B}_{\lambda_0}(\varrho_0)$, where $A_0(\lambda)$ is the upper triangular block matrix arising by substituting the blocks $\Xi_{M-1,n}(\lambda)^\top \mathcal{B}_{M,n}(\lambda), \ldots, \Xi_{0,n}(\lambda)^\top \mathcal{B}_{1,n}(\lambda)$ in $A_{\mathrm{main},n}(\lambda)$ by the blocks $\Phi_{M}^\top B_{M,\mathrm{s}}(\lambda), \ldots, \Phi_{1}^\top B_{1,\mathrm{s}}(\lambda)$, respectively. Therefore, we obtain $\lambda$- and $n$-independent constants $M_9,\mu_9 > 0$ such that, provided $n \in \N$ is sufficiently large, we have
\begin{align} \label{rouche}
\left|\hat{E}_n(\lambda) - \tilde{\mathcal{E}}(\lambda)\right| \leq M_9 \left(\eu^{-\mu_9 n T} + \delta_n\right)
\end{align}
for $\lambda \in \overline{B}_{\lambda_0}(\varrho_0)$, where we denote
\begin{align*}
\tilde{\mathcal{E}} =  \tilde{\mathcal{E}}_1(\lambda)\ldots\tilde{\mathcal{E}}_M(\lambda).
\end{align*}
Moreover, since $h_{M-1,n}$ and $\tilde{E}_n$ are analytic and $h_{M-1,n}$ does not vanish on $\overline{B}_{\lambda_0}(\varrho_0)$, we find by identity~\eqref{inductionoutcome} that $\smash{\hat{E}}_n$ is analytic on $B_{\lambda_0}(\varrho_0)$ and has the same zeros (including multiplicities) in $B_{\lambda_0}(\varrho_0)$ as $\tilde{E}_n$.

\paragraph*{Application of Rouch\'e's theorem.} Let $\varrho \in (0,\varrho_0)$. Since the Evans function $\mathcal{E}_j$ has the same roots (including multiplicities) as $\smash{\tilde{\mathcal{E}}}_j$ in $B_{\lambda_0}(\varrho_1)$ for $j = 1, \ldots,M$ and $\mathcal{E}$ does not vanish on $\partial B_{\lambda_0}(\varrho)$, we find that $\tilde{\mathcal{E}}$ is also nonzero on $\partial B_{\lambda_0}(\varrho)$. So, the bound~\eqref{rouche} yields, provided $n \in \N$ is sufficiently large, that
\begin{align*}
    \left|\hat{E}_n(\lambda) - \tilde{\mathcal{E}}(\lambda)\right| < \left|\tilde{\mathcal{E}}(\lambda)\right|
\end{align*}
for all $\lambda \in \partial B_{\lambda_0}(\varrho)$. Therefore, noting that $\tilde{\mathcal{E}}$ has only one root in $B_{\lambda_0}(\varrho)$ having multiplicity $m_0$, Rouch\'e's theorem implies that $\smash{\hat{E}_n}$ possesses precisely $m_0$ zeros in $B_{\lambda_0}(\varrho)$ (counting multiplicities). Since the zeros of $\smash{\hat{E}_n}$, $\smash{\tilde{E}_n}$ and $E_n$ in $B_{\lambda_0}(\varrho)$ and their multiplicities coincide, the first assertion follows. The second assertion is a direct consequence of the first by Proposition~\ref{prop:Evans1front}. 
\end{proof}

\section{Spectral analysis of periodic pulse solutions}\label{sec:stability_periodic}

In this section, we study the spectral stability of stationary periodic pulse solutions to~\eqref{e:sys_intro}. We consider an $nT$-periodic pulse solution $u_n$ of the form~\eqref{ansatz_periodic}. That is, we have $u_n = w_n + a_n$, where $w_n$ is the formal $nT$-periodic extension of a primary pulse $Z = z+v \in H^k(\R) \oplus H^k_\per(0,T)$, and $a_n$ is an error term converging to $0$ in $H_\per^k(0,nT)$. Our goal is to show that spectral (in)stability properties of the primary pulse $Z$ transfer to the periodic pulse solution $u_n$.

We begin by observing that Lemma~\ref{lem:invertibility_periodic} implies that, if $\mathcal{K}$ is a compact subset of the resolvent set $\rho(\El(Z))$, then $\mathcal{K}$ is also contained in $\rho(\El(u_n))$ for $n \in \N$ sufficiently large. Hence, if a-priori bounds preclude unstable spectrum outside of the compact region $\mathcal{K}$, then this leads to the following analogue of Corollary~\ref{cor:stability_multifront}, which asserts that strong spectral stability of the primary pulse is inherited by the associated periodic pulse solutions.

\begin{Corollary}\label{cor:stability_periodic}
Assume~{\upshape \ref{assH4}} and~{\upshape \ref{assH5}}. Suppose that the pulse solution $z + v$ is strongly spectrally stable. Moreover, assume that there exist a compact set $\mathcal{K} \subset \C$ and $N \in \N$ such that
\begin{align*}
    \sigma(\El_n) \cap \{z \in \C : \Re(z) \geq 0\} \subset \mathcal{K}
\end{align*}
for $n \geq N$, where $u_n$ is the periodic pulse solution established in Theorem~\ref{t:existence_periodic}, and $\El_n$ is the operator $\El(u_n)$ or $\El_\per(u_n)$. Then, there exists $N_1 \in \N$ with $N_1 \geq N$ such that for all $n \in \N$ with $n \geq N_1$ the spectrum of the operator $\El_n$ is confined to the open left-half plane.
\end{Corollary}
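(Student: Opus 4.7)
The strategy is to reduce the claim to a direct application of Lemma~\ref{lem:invertibility_periodic}, in close parallel with how Corollary~\ref{cor:stability_multifront} presumably follows from Lemma~\ref{lem:invertibility_multifront}. The key observation is that strong spectral stability of the primary pulse $Z = z+v$ furnishes a $\varrho > 0$ with $\sigma(\El(Z)) \subset \{\lambda \in \C : \Re(\lambda) \leq -\varrho\}$, so the closed right-half plane $\{\Re(\lambda) \geq 0\}$ lies entirely in the resolvent set $\rho(\El(Z))$. In particular, the set $\mathcal{K}_+ := \mathcal{K} \cap \{\lambda \in \C : \Re(\lambda) \geq 0\}$, being closed in the compact set $\mathcal{K}$, is a compact subset of $\rho(\El(Z))$, i.e.\ $\El(Z) - \lambda$ is invertible for every $\lambda \in \mathcal{K}_+$.

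Next I would recast the periodic pulse solution provided by Theorem~\ref{t:existence_periodic} in the form prescribed by Lemma~\ref{lem:invertibility_periodic}. Letting $z_n \in H^k_\per(0,nT)$ denote the $nT$-periodic extension of $\chi_n z$ on $[-\tfrac{n}{2}T,\tfrac{n}{2}T)$, we have $u_n = z_n + v + a_n$ with $a_n \in H^k_\per(0,nT)$ and $\|a_n\|_{H^k_\per(0,nT)} \to 0$ as $n \to \infty$. Applying Lemma~\ref{lem:invertibility_periodic} with the compact set $\mathcal{K}_+$ therefore yields an $N_2 \in \N$ such that for every $n \geq N_2$ and every $\lambda \in \mathcal{K}_+$, the operators $\El(u_n) - \lambda$ and $\El_\per(u_n) - \lambda$ are both invertible; equivalently, $\mathcal{K}_+ \subset \rho(\El_n)$ for $n \geq N_2$, where $\El_n$ stands for either $\El(u_n)$ or $\El_\per(u_n)$.

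To conclude, set $N_1 := \max\{N, N_2\}$. For $n \geq N_1$, the standing hypothesis gives $\sigma(\El_n) \cap \{\lambda \in \C : \Re(\lambda) \geq 0\} \subset \mathcal{K}$, and intersecting this inclusion with the closed right-half plane yields
\begin{align*}
\sigma(\El_n) \cap \{\lambda \in \C : \Re(\lambda) \geq 0\} \subset \mathcal{K}_+ \subset \rho(\El_n).
\end{align*}
The left-hand side is therefore empty, so $\sigma(\El_n) \subset \{\lambda \in \C : \Re(\lambda) < 0\}$ as claimed. I do not anticipate any genuine obstacle here: the corollary is essentially an immediate consequence of the resolvent estimate in Lemma~\ref{lem:invertibility_periodic} combined with the spectral gap afforded by strong spectral stability of $Z$; the only point that requires a moment's care is intersecting $\mathcal{K}$ with the closed right-half plane so that the resulting compact set avoids the spectrum of $\El(Z)$ and thereby lies in the domain of applicability of Lemma~\ref{lem:invertibility_periodic}.
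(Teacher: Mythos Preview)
Your argument is correct and matches the paper's approach: the corollary is stated as an immediate consequence of Lemma~\ref{lem:invertibility_periodic}, with the paper noting just before the statement that a compact subset of $\rho(\El(Z))$ is absorbed into $\rho(\El(u_n))$ for large $n$. Your additional care in passing to $\mathcal{K}_+ = \mathcal{K} \cap \{\Re\lambda \geq 0\}$ is exactly the right step to ensure the compact set lies in $\rho(\El(Z))$, which the paper leaves implicit.
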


In the remainder of this section, we analyze the spectra of the operators $\El(u_n)$ and $\El_{\per}(u_n)$ in more detail. Our approach relies on comparing the Evans function $\mathcal{E}$ associated with $\El(Z)$, as constructed in Proposition~\ref{prop:Evans1front}, with an Evans function for the first-order formulation 
\begin{align} \label{variationalsysfullper2}
U' = \mathcal{A}(x,u_n(x);\lambda) U
\end{align}
of the eigenvalue problem along the periodic pulse solution $u_n$.  If $\mathcal{T}_n(x,y;\lambda)$ is the evolution of system~\eqref{variationalsysfullper2}, then this analytic Evans function $E_{n,\gamma} \colon \C \to \C$ is given by 
$$
    E_{n,\gamma}(\lambda) = \det\left(\mathcal{T}_n(0,-\tfrac{n}{2} T;\lambda) - \gamma \mathcal{T}_n(0,\tfrac{n}{2} T;\lambda)\right)
$$
for $\gamma$ lying in the unit circle $S^1 \subset \C$, cf.~\cite{Gardner93,Sandstede2000Absolute}. Clearly, it holds $E_{n,\gamma}(\lambda_0) = 0$ for some $\lambda_0\in \C$ if and only if  system~\eqref{variationalsysfullper2} possesses a nontrivial solution $U(x)$ satisfying the boundary condition $U(-\tfrac{n}{2}T) = \gamma U(\tfrac{n}{2}T)$. Hence, $\lambda_0$ lies in the spectrum of the Bloch operator $\El_{\xi,\per}(u_n)$ if and only $\lambda_0$ is a zero of $E_{n,\eu^{\iu \xi nT}}$, cf.~\S\ref{sec:periodic_diff_operators}. In fact, it was shown in~\cite{Gardner93}, see also~\cite[Section~8.4]{KapitulaPromislow2013}, that the algebraic multiplicity of $\lambda_0$ as an eigenvalue of $\El_{\xi,\per}(u_n)$ equals the multiplicity of $\lambda_0$ as a root of the Evans function $E_{n,\eu^{\iu \xi nT}}$. We conclude that $\lambda_0$ lies in the spectrum of $\El_\per(u_n)$ if and only if $\lambda_0 \in \C$ is a zero of $E_{n,0}$. Moreover, we have $\lambda_0\in\sigma(\El(u_n))$ if and only there exists $\gamma \in S^1$ such that $E_{n,\gamma}(\lambda_0) = 0$. 

The main result of this section establishes that isolated zeros of the Evans function $\mathcal{E}$ associated with the primary pulse perturb into zeros of the Evans function $E_{n,\gamma}$ of the periodic pulse solution, thereby preserving the total multiplicity. That is, isolated eigenvalues (including their algebraic multiplicities) of the linearization $\El(Z)$ about the primary pulse persist as eigenvalues of the Bloch operator $\El_{\xi,\per}(u_n)$ for each $\xi \in [-\frac{\pi}{nT},\frac{\pi}{nT})$.

\begin{Theorem} \label{thm:instability_periodic}
Let $z+v \in H^k(\R) \oplus H_\per^k(0,T)$. Suppose there exists $N \in \N$ such that, for each $n \in \N$ with $n \geq N$, there exists a periodic pulse solution $u_n \in H_\per^k(0,nT)$ of the form $u_n = z_n + v + a_n$, where $\{a_n\}_n$ is a sequence with $a_n \in H_\per^k(0,nT)$ satisfying $\|a_n\|_{H_\per^k(0,nT)} \to 0$ as $n \to \infty$, and $z_n \in H_\per^k(0,nT)$ is the $nT$-periodic extension of the function $\chi_n z$ on $\left[-\tfrac{n}{2} T, \tfrac{n}{2}T\right)$ and $\chi_n$ is the cut-off function from Theorem~\ref{t:existence_periodic}.

Let $\Omega$ be a connected component of $\C \setminus \sigma_{\mathrm{ess}}(\El(v))$. Let $\mathcal{E} \colon \Omega \to \C$ be the Evans function associated with the first-order system~\eqref{variationalsysper}, as constructed in Proposition~\ref{prop:Evans1front}. 

Suppose that $\lambda_0 \in \Omega$ is a root of $\mathcal{E}$ of multiplicity $m_0 \in \N$. Then, there exists $\varrho_0 > 0$ such that for each $\varrho \in (0,\varrho_0)$ there exists $N_\varrho \in \N$ such that for all $n \in \N$ with $n \geq N_\varrho$ the following assertions hold true.
\begin{itemize}
    \item[1.] For each $\gamma \in S^1$ the Evans function $E_{n,\gamma}$ possesses precisely $m_0$ roots in the disk $B_{\lambda_0}(\varrho)$ (counting multiplicities). 
    \item[2.] For each $\xi \in [-\frac{\pi}{nT},\frac{\pi}{nT})$ the Bloch operator $\El_{\xi,\per}(u_n)$ has precisely $m_0$ eigenvalues in $B_{\lambda_0}(\varrho)$ (counting algebraic multiplicities).
    \item[3.] The operator $\El_\per(u_n)$ has precisely $m_0$ eigenvalues in $B_{\lambda_0}(\varrho)$ (counting algebraic multiplicities).
    \item[4.] The operator $\El(u_n)$ has spectrum in $B_{\lambda_0}(\varrho)$.
\end{itemize}
\end{Theorem}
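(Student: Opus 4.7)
The strategy mirrors that of Theorem~\ref{thm:instability_multifront}, but is substantially simpler since only a single primary pulse is involved. The plan is to establish exponential dichotomies along $u_n$, derive a leading-order factorization of the Evans function $E_{n,\gamma}$ in terms of the primary Evans function $\mathcal{E}$, and then apply Rouch\'e's theorem uniformly in $\gamma\in S^1$.

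I would first invoke Proposition~\ref{prop:Evans1front} for the primary pulse $Z=z+v$, whose matching end states force equal Morse indices $l_{0,+}=l_{0,-}=:l_0$. This yields, on a closed disk $\overline{B}_{\lambda_0}(\varrho_2)\subset\Omega$ on which $\lambda_0$ is the only zero of $\mathcal{E}$, exponential dichotomies on $\R_\pm$ for the system~\eqref{variationalsysper} with $\lambda$-independent constants, rank-$l_0$ projections $P_\pm(\pm x;\lambda), x\geq 0$, and analytic bases $B_{\mathrm{s}},B_{\mathrm{u}}$ such that $\mathcal{E}(\lambda)=\det(B_{\mathrm{u}}(\lambda)\mid B_{\mathrm{s}}(\lambda))$. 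Then, by repeating the roughness-and-pasting argument from the proof of Lemma~\ref{lem:invertibility_periodic}, using the analytic roughness result Lemma~\ref{l:projanarough} in place of the one of~\cite{Coppel1978}, I would obtain on a smaller disk $B_{\lambda_0}(\varrho_1)$ exponential dichotomies for~\eqref{variationalsysfullper2} on $[-\tfrac{n}{2}T,\tfrac{5n}{6}T]$ and $[\tfrac{n}{2}T,\tfrac{3n}{2}T]$ with $\lambda$- and $n$-independent constants, and with analytic projections $\mathcal{Q}_{\pm,n}(x;\lambda)$ whose values at $x=0$ and $x=nT$ converge to the corresponding primary pulse projections. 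A Kato-type construction (Lemma~\ref{l:proj_ana}) then produces analytic bases $\mathcal{B}_{\mathrm{s},n},\mathcal{B}_{\mathrm{u},n}$ of the stable and unstable subspaces of~\eqref{variationalsysfullper2} at $x=0$, converging uniformly to $B_{\mathrm{s}},B_{\mathrm{u}}$ on any closed subdisk $\overline{B}_{\lambda_0}(\varrho_0)\subset B_{\lambda_0}(\varrho_1)$.

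With these dichotomies in hand, I would derive a leading-order factorization of $E_{n,\gamma}$. Splitting each evolution as
\begin{align*}
\mathcal{T}_n(0,\pm\tfrac{n}{2}T;\lambda) &= \mathcal{T}_n(0,\pm\tfrac{n}{2}T;\lambda)\mathcal{Q}_{\pm,n}(\pm\tfrac{n}{2}T;\lambda) \\
&\quad + \mathcal{T}_n(0,\pm\tfrac{n}{2}T;\lambda)\bigl(I-\mathcal{Q}_{\pm,n}(\pm\tfrac{n}{2}T;\lambda)\bigr),
\end{align*}
representing the pieces preserving the stable direction forward (respectively the unstable direction backward) through the analytic bases $\mathcal{B}_{\mathrm{s},n},\mathcal{B}_{\mathrm{u},n}$ at $x=0$ as in~\eqref{e:projids}, and exploiting that the exponential decay along the dichotomies renders the associated cross-terms exponentially small in $n$, an application of the block determinant identity~\eqref{detblockid} yields a representation
\begin{align*}
E_{n,\gamma}(\lambda) = h_n(\lambda,\gamma)\bigl(\mathcal{E}(\lambda)+r_n(\lambda,\gamma)\bigr),
\end{align*}
where $h_n(\cdot,\gamma)$ is analytic in $\lambda$ and nonvanishing uniformly in $\gamma\in S^1$, and $|r_n(\lambda,\gamma)|\leq C\eu^{-\mu nT}$ uniformly for $(\lambda,\gamma)\in\overline{B}_{\lambda_0}(\varrho_0)\times S^1$ with $C,\mu>0$ independent of $n$ and $\gamma$.

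Finally, I would apply Rouch\'e's theorem. Since $\mathcal{E}$ has $\lambda_0$ as its only zero in $\overline{B}_{\lambda_0}(\varrho_0)$, with multiplicity $m_0$, it is bounded below by a positive constant on $\partial B_{\lambda_0}(\varrho)$ for any $\varrho\in(0,\varrho_0)$, so for $n$ sufficiently large $|r_n(\lambda,\gamma)|<|\mathcal{E}(\lambda)|$ on $\partial B_{\lambda_0}(\varrho)$ uniformly in $\gamma\in S^1$, and Rouch\'e's theorem yields assertion~(1). Assertion~(2) is the translation of~(1) via the correspondence between zeros of $E_{n,\eu^{\iu\xi nT}}$ and eigenvalues of $\El_{\xi,\per}(u_n)$ recalled before the theorem statement; assertion~(3) is the special case $\xi=0$; and~(4) is immediate from~(2) and the Bloch decomposition $\sigma(\El(u_n))=\bigcup_{\xi}\sigma(\El_{\xi,\per}(u_n))$ of~\S\ref{sec:periodic_diff_operators}. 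The principal technical obstacle is the uniformity of the factorization in $\gamma\in S^1$: fortunately $\gamma$ enters $E_{n,\gamma}$ only linearly, the dichotomy constants obtained in the second step are $\gamma$-independent, and only a single application of~\eqref{detblockid} is needed here (as opposed to the inductive chain of $M-1$ applications required for Theorem~\ref{thm:instability_multifront}), so the bookkeeping is considerably lighter.
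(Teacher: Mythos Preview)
Your overall strategy and the derivation of (2)–(4) from (1) match the paper, but the execution of the factorization step differs. You propose pasting dichotomies on the intervals $[-\tfrac{n}{2}T,\tfrac{5n}{6}T]$ and $[\tfrac{n}{2}T,\tfrac{3n}{2}T]$ as in Lemma~\ref{lem:invertibility_periodic}, and then invoking the block determinant identity~\eqref{detblockid} as in Theorem~\ref{thm:instability_multifront}. The paper takes a more direct route on both counts. First, it observes that the cut-off system~\eqref{variationalpercutoff} is a small perturbation of~\eqref{variationalsysper} on \emph{all} of $\R$, so a single application of Lemma~\ref{l:projanarough} gives analytic dichotomies on $\R_\pm$ with projections $\mathcal{Q}_{\pm,n}$ --- no pasting is needed, and since~\eqref{variationalpercutoff} coincides with~\eqref{variationalsysfullper2} on $[-\tfrac{n}{2}T,\tfrac{n}{2}T]$ this suffices. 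Second, rather than forcing the $km\times km$ difference $\mathcal{T}_n(0,-\tfrac{n}{2}T)-\gamma\mathcal{T}_n(0,\tfrac{n}{2}T)$ into the form $CA^{-1}B+D$ required by~\eqref{detblockid} (which is not straightforward, as the multifront Evans function was an $l_0\times l_0$ determinant of a single long evolution, whereas here one has a full-size difference of two evolutions), the paper constructs an explicit invertible matrix $\mathcal{H}_{n,\gamma}(\lambda)$ whose columns are bases of $\ker(\mathcal{Q}_{+,n}(\tfrac{n}{2}T))$ and $\mathrm{ran}(\mathcal{Q}_{-,n}(-\tfrac{n}{2}T))$ built from $\mathcal{B}_{\mathrm{u},n},\mathcal{B}_{\mathrm{s},n}$, and shows by a direct computation using only the dichotomy decay bounds that $(T_n(0,-\tfrac{n}{2}T)-\gamma T_n(0,\tfrac{n}{2}T))\mathcal{H}_{n,\gamma}(\lambda)$ is uniformly close to $(B_{\mathrm{u}}(\lambda)\mid B_{\mathrm{s}}(\lambda))$. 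Taking determinants gives $|E_{n,\gamma}(\lambda)\det\mathcal{H}_{n,\gamma}(\lambda)-\mathcal{E}(\lambda)|\to 0$ uniformly in $(\lambda,\gamma)$, and Rouch\'e applies. This right-multiplication trick (adapted from~\cite{Sandstede2000Absolute}) is considerably lighter than the~\eqref{detblockid} machinery and handles the $\gamma$-dependence transparently since $\gamma$ enters only as a scalar prefactor on one block of columns of $\mathcal{H}_{n,\gamma}$.
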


\begin{Remark}
Theorem~\ref{thm:instability_periodic} establishes the convergence of the point spectrum of the Bloch operators $\El_{\xi,\per}(u_n)$ within $n$-independent compact sets $\mathcal{K} \subset \C\setminus \sigma_{\mathrm{ess}}(\El(Z))$ to the point spectrum of $\El(Z)$ in $\mathcal{K}$ as $n \to \infty$. This naturally raises the question of whether spectrum of $\El_{\xi,\per}(u_n)$ converges to the essential spectrum $\sigma_{\mathrm{ess}}(\El(Z))$ as $n \to \infty$. In the case of constant coefficients, this question has been answered affirmative. Specifically, it was shown in~\cite{Sandstede2000Absolute} that eigenvalues of $\El_{\xi,\per}(u_n)$ accumulate onto each point of the essential spectrum $\sigma_{\mathrm{ess}}(\El(Z))$ as $n \to \infty$. Consequently, on compact subsets, the spectra of both $\El(u_n)$ and $\El_{\per}(u_n)$ converge to $\sigma(\El(Z))$ in Hausdorff distance as $n \to \infty$. We strongly expect that, using the techniques developed in~\cite{Sandstede2000Absolute}, a similar result can be obtained in the spatially periodic setting considered here. 
\end{Remark}

Theorem~\ref{thm:instability_periodic} implies that spectral instability of the primary pulse is inherited by the periodic pulse solution. Furthermore, it serves as an important tool in spectral (in)stability arguments based on Krein index counting theory. In particular, we employ Theorem~\ref{thm:instability_periodic} in~\S\ref{sec:applications} to demonstrate the spectral and orbital stability of periodic pulse solutions to the Gross-Pitaevskii equation with a periodic potential.

As mentioned in~\S\ref{sec:intro}, Theorem~\ref{thm:instability_periodic} was established in the constant-coefficient case in~\cite{Gardner1997Spectral}, using geometric dynamical systems techniques and topological arguments based on Chern numbers. The result was subsequently refined in~\cite{Sandstede2000Absolute} by showing that there exists an $n$-independent constant $\mu > 0$ such that the roots of $E_{n,\gamma}$ in $B_{\lambda_0}(\varrho)$ remain $\mathcal{O}(\eu^{-\mu n})$-close to $\lambda_0$.

Our proof of Theorem~\ref{thm:instability_periodic} builds upon the approach of~\cite[Theorem~2]{Sandstede2000Absolute}. Specifically, we employ roughness techniques to transfer exponential dichotomies on $\R_\pm$ for the eigenvalue problem~\eqref{variationalsysper} along the primary pulse to the system
\begin{align} \label{variationalpercutoff}
     U' = \mathcal{A}(x,\chi_n(x) z(x) + v(x) + a_n(x);\lambda) U.
\end{align}
System~\eqref{variationalpercutoff} coincides with the eigenvalue problem~\eqref{variationalsysfullper2} along the periodic pulse $u_n$ on a single periodicity interval $[-\frac{n}{2} T,\frac{n}{2} T]$. Denoting the projections of the exponential dichotomies of~\eqref{variationalpercutoff} on $\R_\pm$ by $\mathcal{Q}_{\pm,n}(x;\lambda)$, we construct analytic bases of $\ker(\mathcal{Q}_{-,n}(-\frac{n}{2} T;\lambda))$ and $\mathrm{ran}(\mathcal{Q}_{+,n}(\frac{n}{2} T;\lambda))$. Multiplying $E_{n,\gamma}(\lambda)$ with the nonzero determinant of the matrix formed by these basis vectors yields an approximation of the Evans function $\mathcal{E}$ associated with the primary pulse. The conclusion then follows from an application of Rouch\'e's theorem. 

\begin{proof}[Proof of Theorem~\ref{thm:instability_periodic}]
We start by collecting some facts from Proposition~\ref{prop:Evans1front}. First, system~\eqref{variationalsysper} possesses for each $\lambda \in \Omega$ exponential dichotomies on $\R_\pm$ with projections $P_\pm(\pm x;\lambda), x \geq 0$ of some fixed rank $l_0$, which is independent of $\lambda$ and $x$. Moreover, there exist analytic functions $B_{\mathrm{s}} \colon \Omega \to \C^{km \times l_0}$ and $B_{\mathrm{u}}  \colon \Omega \to \C^{km \times (km-l_0)}$ such that $B_{\mathrm{s}}(\lambda)$ is a basis of $\mathrm{ran}(P_+(0;\lambda))$ and $B_{\mathrm{u}}(\lambda)$ is a basis of $\ker(P_-(0;\lambda))$ for each $\lambda \in \Omega$. The associated Evans function $\mathcal{E} \colon \Omega \to \C$ is given by
\begin{align*}\mathcal{E}(\lambda) = \det(B_{\mathrm{u}}(\lambda) \mid B_{\mathrm{s}}(\lambda)).\end{align*}
Because $\mathcal{E}$ is analytic and $\lambda_0$ is a root of $\mathcal{E}$ of finite multiplicity, there exists a closed disk $\smash{\overline{B}_{\lambda_0}(\varrho_1)} \subset \Omega$ of some radius $\varrho_1 > 0$ such that $\lambda_0$ is the only root of $\mathcal{E}$ in $\smash{\overline{B}_{\lambda_0}(\varrho_1)}$. Finally, there exist constants $K_0,\mu_0,\tau_0 > 0$ such that system~\eqref{variationalsysper} admits for each $\lambda \in \smash{\overline{B}_{\lambda_0}(\varrho_1)}$ exponential dichotomies on $\R_\pm$ with constants $K_0,\mu_0 > 0$ and projections $\smash{\tilde P_\pm(\pm x;\lambda)}, x \geq 0$ satisfying~\eqref{projest2} for each $x \geq \tau_0$, where $Q(\cdot;\lambda)$ is $T$-periodic. By uniqueness of exponential dichotomies, cf.~\cite[p.~19]{Coppel1978}, $B_{\mathrm{s}}(\lambda)$ is a basis of $\mathrm{ran}(P_+(0;\lambda)) = \mathrm{ran}(\smash{\tilde P_+}(0;\lambda))$ and $B_{\mathrm{u}}(\lambda)$ is a basis of $\ker(P_-(0;\lambda)) = \ker(\smash{\tilde P_-}(0;\lambda))$ for each $\lambda \in \smash{\overline{B}_{\lambda_0}(\varrho_1)}$.

Since $\partial_u \mathcal{A}$ is continuous, $\overline{B}_{\lambda_0}(\varrho_1)$ is compact, it holds $z \in H^1(\R)$ and $v,a_n \in H_\per^1(0,nT)$, and $H^1(\R)$ and $H^1_\per(0,nT)$ embed continuously into $L^\infty(\R)$ with $n$-independent constant, we obtain by the mean value theorem a $\lambda$- and $n$-independent constant $R > 0$ such that
\begin{align*}
\begin{split}
&\left\|\A\left(x,\chi_n z(x) + v(x) + a_n(x);\lambda \right) - \A\left(x,z(x) + v(x);\lambda \right)\right\| \leq R \delta_n
\end{split}
\end{align*}
for $x \in \R$ and $\lambda \in \overline{B}_{\lambda_0}(\varrho_1)$, where
\begin{align*}
\delta_n := \sup_{x \in \R} \left(\|(1-\chi_n(x)) z(x)\| + \|a_n(x)\|\right)
\end{align*}
converges to $0$ as $n \to \infty$. So, by Lemma~\ref{l:projanarough} there exist constants $M_1,\mu > 0$ and $\varrho_0 \in (0,\varrho_1)$ such that system~\eqref{variationalpercutoff} admits, provided $n \in \mathbb{N}$ is sufficiently large, exponential dichotomies on $\R_\pm$ with $\lambda$- and $n$-independent constants and projections $\mathcal{Q}_{\pm,n}(\pm x;\lambda), x \geq 0$ for each $\lambda \in \overline{B}_{\lambda_0}(\varrho_0)$. Here, the maps $\mathcal{Q}_{\pm,n}(\pm x;\cdot) \colon B_{\lambda_0}(\varrho_0) \to \C^{km \times km}$ are analytic for each $x \geq 0$ and the estimates
\begin{align} \label{projest3}
\begin{split}
    \|\mathcal{Q}_{\pm,n}(\pm \tfrac{n}{2} T;\lambda)-\tilde{P}_\pm(\pm \tfrac{n}{2} T;\lambda)\| &\leq M_1\left(\delta_n + \eu^{-\mu \tfrac{n}{2} T}\right),\\
    \left\|\mathcal{Q}_{\pm,n}(0;\lambda) - Q_\pm(\lambda)\right\| &\leq M_1\delta_n
\end{split}
\end{align}
hold for each $\lambda \in \overline{B}_{\lambda_0}(\varrho_0)$, where $Q_+(\lambda)$ is the projection onto $\mathrm{ran}(P_+(0;\lambda))$ along $\mathrm{ran}(P_+(0;\lambda_0))^\top$ and $Q_-(\lambda)$ is the projection onto $\mathrm{ker}(P_-(0;\lambda_0))^\top$ along $\mathrm{ker}(P_-(0;\lambda))$.  

Now set $\mathcal{B}_{\mathrm{s},n}(\lambda) = \mathcal{Q}_{+,n}(0;\lambda) B_{\mathrm{s}}(\lambda)$ and $\mathcal{B}_{\mathrm{u},n}(\lambda) = (I-\mathcal{Q}_{-,n}(0;\lambda)) B_{\mathrm{u}}(\lambda)$. Then, $\mathcal{B}_{\mathrm{s},n}(\lambda)$ and $\mathcal{B}_{\mathrm{u},n}(\lambda)$ are analytic in $\lambda$ on $B_{\lambda_0}(\varrho_0)$. Moreover, the fact that the analytic maps $B_{\mathrm{s}}$ and $B_{\mathrm{u}}$ are bounded on the compact set $\overline{B}_{\lambda_0}(\varrho_0)$ in combination with estimate~\eqref{projest3} affords a $\lambda$- and $n$-independent constant $M_2 > 0$ such that
\begin{align} \label{projestanb}
\left\|B_{\mathrm{s}}(\lambda) - \mathcal{B}_{\mathrm{s},n}(\lambda)\right\|, \left\|B_{\mathrm{u}}(\lambda) - \mathcal{B}_{\mathrm{u},n}(\lambda)\right\| \leq M_2 \delta_n, \qquad \left\|\mathcal{B}_{\mathrm{s},n}(\lambda)\right\|, \left\|\mathcal{B}_{\mathrm{u},n}(\lambda)\right\| \leq M_2, 
\end{align}
for all $\lambda \in \overline{B}_{\lambda_0}(\varrho_0)$. So, provided $n \in \mathbb{N}$ is sufficiently large, $\mathcal{B}_{\mathrm{s},n}(\lambda)$ is a basis of $\mathrm{ran}(\mathcal{Q}_{+,n}(0;\lambda))$ and $\mathcal{B}_{\mathrm{u},n}(\lambda)$ is a basis of $\ker(\mathcal{Q}_{-,n}(0;\lambda))$ for each $\lambda \in \overline{B}_{\lambda_0}(\varrho_0)$.

Since $Q(\cdot;\lambda)$ is $T$-periodic, estimates~\eqref{projest2} and~\eqref{projest3} and Lemma~\ref{l:projest2} imply that, provided $n \in \mathbb{N}$ is sufficiently large, the subspaces $\mathrm{ran}(\mathcal{Q}_{-,n}(-\frac{n}{2} T;\lambda))$ and $\ker(\mathcal{Q}_{+,n}(\frac{n}{2} T;\lambda))$ are complementary and there exists a $\lambda$- and $n$-independent constant $M_3 > 0$ such that the projection $\smash{\check{\mathcal{P}}_{n}(\lambda)}$ onto $\mathrm{ran}(\mathcal{Q}_{-,n}(-\frac{n}{2} T;\lambda))$ along $\ker(\mathcal{Q}_{+,n}(\frac{n}{2} T;\lambda))$ obeys
\begin{align} \label{projest4}
\left\|\check{\mathcal{P}}_{n}(\lambda)\right\| \leq M_3
\end{align}
for all $\lambda \in \overline{B}_{\lambda_0}(\varrho_0)$. In addition, since the functions $\mathcal{Q}_{\pm,n}(\pm \frac{n}{2} T;\cdot) \colon B_{\lambda_0}(\varrho_0) \to \C^{km \times km}$ are analytic, Lemma~\ref{l:proj_ana} yields analytic maps $\mathcal{B}_{1,n}, \mathcal{B}_{2,n} \colon B_{\lambda_0}(\varrho_0) \to \C^{km \times l_0}$ with the property that $\mathrm{ran}(\mathcal{Q}_{-,n}(-\frac{n}{2} T;\lambda)) = \mathrm{ran}(\mathcal{B}_{1,n}(\lambda))$ and $\ker(\mathcal{Q}_{+,n}(\frac{n}{2} T;\lambda)) = \{u \in \C^{km} : z^\top u = 0 \text{ for all } z \in \mathrm{ran}(\mathcal{B}_{2,n}(\lambda))\}$. So, it follows, again by Lemma~\ref{l:proj_ana}, that $\smash{\check{\mathcal{P}}_{n}} \colon B_{\lambda_0}(\varrho_0) \to \C^{km \times km}$ is analytic. 

Since the evolution $\mathcal{T}_n(x,y;\lambda)$ of system~\eqref{variationalsysfullper2} depends analytically on $\lambda$ by~\cite[Lemma~2.1.4]{KapitulaPromislow2013}, the Evans function $E_{n,\gamma}$ is analytic for each $\gamma \in S^1$. Because system~\eqref{variationalsysfullper2} coincides with system~\eqref{variationalpercutoff} on $[-\frac{n}{2} T,\frac{n}{2} T]$, it holds
\begin{align*}
    E_{n,\gamma}(\lambda) = \det\left(T_n(0,-\tfrac{n}{2} T;\lambda) - \gamma T_n(0,\tfrac{n}{2} T;\lambda)\right)
\end{align*}
for all $\gamma \in S^1$, where $T_n(x,y;\lambda)$ denotes the evolution of~\eqref{variationalpercutoff}, which depends analytically on $\lambda$ by~\cite[Lemma~2.1.4]{KapitulaPromislow2013}. Define $\mathcal{H}_{n,\gamma} \colon \overline{B}_{\lambda_0}(\varrho_0) \to \C^{km \times km}$ by
\begin{align*}
\mathcal{H}_{n,\gamma}(\lambda) = \left(\left(I-\check{\mathcal{P}}_n(\lambda)\right)T_n\left(-\tfrac{n}{2}T,0;\lambda\right)\mathcal{B}_{\mathrm{u},n}(\lambda) \mid -\gamma^{-1} \check{\mathcal{P}}_n(\lambda) T_n\left(\tfrac{n}{2}T,0;\lambda\right)\mathcal{B}_{\mathrm{s},n}(\lambda)\right)
\end{align*}
for $\gamma \in S^1$. We note that $\mathcal{H}_{n,\gamma}$ is analytic on $B_{\lambda_0}(\varrho_0)$. Moreover, $T_n\left(-\tfrac{n}{2}T,0;\lambda\right)\mathcal{B}_{\mathrm{u},n}(\lambda)$ constitutes a basis of $\ker(\mathcal{Q}_{-,n}(-\frac{n}{2}T;\lambda))$, whereas $I-\smash{\check{\mathcal{P}}_n(\lambda)}$ projects along the complementary subspace $\mathrm{ran}(\mathcal{Q}_{-,n}(-\frac{n}{2}T;\lambda))$. Hence, the first $km - l_0$ columns of $\mathcal{H}_{n,\gamma}(\lambda)$ form a basis of $\mathrm{ran}(I-\smash{\check{\mathcal{P}}_n(\lambda)}) = \ker(\mathcal{Q}_{+,n}(\frac{n}{2} T;\lambda))$. Similarly, the last $l_0$ columns of $\mathcal{H}_{n,\gamma}(\lambda)$ constitute a basis of the complementary subspace $\mathrm{ran}(\mathcal{Q}_{-,n}(-\frac{n}{2}T;\lambda))$. Therefore, provided $n \in \mathbb{N}$ is sufficiently large, $\mathcal{H}_{n,\gamma}(\lambda)$ is invertible for each $\lambda \in \smash{\overline{B}_{\lambda_0}(\varrho_0)}$ and $\gamma \in S^1$. 

Recall that~\eqref{variationalpercutoff} possesses exponential dichotomies on $\R_\pm$ with projections $\mathcal{Q}_{\pm, n}(\pm x;\lambda), x \geq 0$ and $\lambda$- and $n$-independent constants, which we denote by $C_1,\mu_1 > 0$. Combining the latter with~\eqref{projestanb} and~\eqref{projest4}, we obtain a $\lambda$- and $n$-independent constant $M_4 > 0$ such that
\begin{align} \begin{split} \left\|T_n(0,\tfrac{n}{2} T;\lambda)\left(I-\check{\mathcal{P}}_n(\lambda)\right)\right\|, \left\|T_n(0,-\tfrac{n}{2} T;\lambda)\check{\mathcal{P}}_n(\lambda)\right\| \leq  M_4 \eu^{-\mu_1 \frac{n}{2} T}, \\
\left\|T_n(-\tfrac{n}{2} T,0;\lambda)\mathcal{B}_{\mathrm{u},n}(\lambda)\right\|, \left\|T_n(\tfrac{n}{2} T,0;\lambda)\mathcal{B}_{\mathrm{s},n}(\lambda)\right\| \leq  M_4 \eu^{-\mu_1 \frac{n}{2} T}, 
\end{split}
\label{basesest}
\end{align}
for each $\lambda \in \overline{B}_{\lambda_0}(\varrho_0)$. Therefore, using the estimates~\eqref{projestanb} and~\eqref{basesest}, we find an $n$-, $\gamma$- and $\lambda$-independent constant $M_5 > 0$ such that, provided $n \in \mathbb{N}$ is sufficiently large, it holds
\begin{align*}
\begin{split}
\left\|\left(T_n(0,-\tfrac{n}{2} T;\lambda) - \gamma T_n(0,\tfrac{n}{2} T;\lambda)\right)\mathcal{H}_{n,\gamma}(\lambda) - \left(B_u(\lambda) \mid B_s(\lambda)\right)\right\| &\leq M_5\left(\delta_n + \eu^{-\mu_1 n T}\right), \\
\left\|\left(T_n(0,-\tfrac{n}{2} T;\lambda) - \gamma T_n(0,\tfrac{n}{2} T;\lambda)\right)\mathcal{H}_{n,\gamma}(\lambda)\right\| &\leq M_5, 
\end{split}
\end{align*}
for each $\lambda \in \overline{B}_{\lambda_0}(\varrho_0)$ and $\gamma \in S^1$. So, taking determinants, we establish an $n$-, $\gamma$- and $\lambda$-independent constant $M_6 > 0$ such that 
\begin{align*} \left|E_{n,\gamma}(\lambda) \det(\mathcal{H}_{n,\gamma}(\lambda)) - \mathcal{E}(\lambda)\right| \leq M_6\left(\delta_n + \eu^{-\mu_1 n T}\right), \end{align*}
for each $\lambda \in \overline{B}_{\lambda_0}(\varrho_0)$ and $\gamma \in S^1$. 

Let $\varrho \in (0,\varrho_0)$. Since $\mathcal{E}$ does not vanish on $\partial B_{\lambda_0}(\varrho)$, the latter estimate yields, provided $n \in \mathbb{N}$ is sufficiently large, that
\begin{align*} \left|E_{n,\gamma}(\lambda) \det(\mathcal{H}_{n,\gamma}(\lambda)) - \mathcal{E}(\lambda)\right| 
< \left|\mathcal{E}(\lambda)\right| \end{align*}
for each $\lambda \in \partial B_{\lambda_0}(\varrho)$ and $\gamma \in S^1$. We recall that $\det(\mathcal{H}_{n,\gamma}(\cdot))$ is nonzero and the functions $\mathcal{E}$, $\det(\mathcal{H}_{n,\gamma}(\cdot))$ and $E_{n,\gamma}$ are analytic on the open disk $B_{\lambda_0}(\varrho_0) \subset \Omega$, which contains $\partial B_{\lambda_0}(\varrho)$. So, applying Rouch\'e's theorem to the latter inequality and noting that $\lambda_0$ is the only root of $\mathcal{E}$ in $B_{\lambda_0}(\varrho_0)$, which has multiplicity $m_0$, we find that the Evans function $E_{n,\gamma}$ has precisely $m_0$ zeros in $B_{\lambda_0}(\varrho)$ (counting with multiplicities) for each $\gamma \in S^1$. This proves the first assertion.

The second assertion immediately follows from the first assertion by taking $\gamma = \eu^{\iu \xi nT}$ and applying~\cite[Proposition~2.5]{Gardner93}, see also~\cite[Lemmas~8.4.1 and~8.4.2]{KapitulaPromislow2013}. Since $\El_{0,\per}(u_n) = \El_\per(u_n)$, the third assertion is a direct consequence of the second. Finally, the third implies the fourth assertion by evoking~\eqref{Bloch_inclusion}.
\end{proof}

\section{Applications}\label{sec:applications}

In this section, we employ our methods to construct multifronts and periodic pulse solutions in specific prototype models and analyze their stability. To illustrate the applicability of our theory in a simple setting, we first consider a reaction-diffusion toy model. We then focus on a Klausmeier reaction-diffusion-advection system which describes the dynamics of vegetation patterns on periodic topographies~\cite{Bastiaansen2020}. Finally, we consider the Gross-Pitaevskii equation with periodic potential, which arises in the study of Bose-Einstein condensates in optical lattices~\cite{Pelinovsky2011}. Our findings are supported by numerical simulations performed with the MATLAB package \texttt{pde2path}~\cite{pde2path}.

\subsection{A reaction-diffusion model problem}

We consider the scalar reaction-diffusion equation
\begin{align}\label{eq:RDE_toy}
    \partial_tu = \partial_{x}^2u + \eps V(x) u - \sin(u), \qquad u(x,t) \in \R, \, x \in \R, \, t \geq 0,
\end{align}
where $V \in C^1(\R)$ is a given real-valued potential of period $T > 0$. Here, the parameter $\eps \geq 0$ measures the strength of the potential $V$ and will serve as a bifurcation parameter.

We are interested in the existence and spectral stability of stationary multifronts and periodic pulse solutions to~\eqref{eq:RDE_toy}. Stationary solutions to~\eqref{eq:RDE_toy} solve the ODE
\begin{align}\label{eq:RDE_toy_stationary}
    \partial_{x}^2u + \eps V(x) u - \sin(u) = 0,
\end{align}
which is of the form~\eqref{existence_problem}. 

Let $\unu \in L^\infty(\R)$. For the upcoming spectral stability analysis, we define the closed differential operator $L_\eps(\unu)\colon D(L_\eps(\unu)) \subset L^2(\R) \to L^2(\R)$ with dense domain $D(L_\eps(\unu)) = H^2(\R)$ by
$$
    L_\eps(\unu)= \partial_x^2 + \eps V - \cos(\unu).
$$
Since the operator $L_\eps(\unu)$ is self-adjoint, its spectrum must be confined to the numerical range, leading to the following spectral a-priori bound.

\begin{Lemma} \label{lem:a_priori_toy}
Let $\varrho > 0$. Then, the spectrum of the operator $L(\unu) \colon D(L(\unu)) \subset L^2(\R) \to L^2(\R)$ with $D(L(\unu)) = H^2(\R)$, given by
\begin{align*}
L(\unu) = \partial_x^2 + \unu,
\end{align*}
satisfies $\sigma(L(\unu)) \subset (-\infty,\varrho]$ for all real-valued $\unu \in L^\infty(\R)$ with $\|\unu\|_{L^\infty} \leq \varrho$.
\end{Lemma}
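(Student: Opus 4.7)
The plan is to exploit the self-adjointness of $L(\unu)$ on $L^2(\R)$ and reduce the spectral inclusion to a direct numerical-range estimate. Since $\unu$ is real-valued and bounded, multiplication by $\unu$ is a bounded self-adjoint operator on $L^2(\R)$, and $\partial_x^2$ is self-adjoint on $H^2(\R)$; the Kato-Rellich theorem then yields that $L(\unu) = \partial_x^2 + \unu$ is self-adjoint on $D(L(\unu)) = H^2(\R)$. In particular, $\sigma(L(\unu)) \subset \R$.

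Next, I would compute the quadratic form associated with $L(\unu)$. For $u \in H^2(\R)$, integration by parts (valid since $u, \partial_x u \in L^2(\R)$ and both decay at infinity in the appropriate weak sense) gives
\begin{align*}
\langle L(\unu) u, u\rangle_{L^2} = -\|\partial_x u\|_{L^2}^2 + \int_\R \unu(x) |u(x)|^2 \, \de x \leq \varrho \|u\|_{L^2}^2,
\end{align*}
using the hypothesis $\|\unu\|_{L^\infty} \leq \varrho$ and dropping the nonpositive term $-\|\partial_x u\|_{L^2}^2$. Hence the numerical range of $L(\unu)$ is contained in $(-\infty,\varrho]$.

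Finally, for any $\lambda > \varrho$, the Cauchy-Schwarz inequality combined with the above yields
\begin{align*}
\|(L(\unu) - \lambda) u\|_{L^2} \|u\|_{L^2} \geq |\langle (L(\unu)-\lambda) u, u\rangle_{L^2}| \geq (\lambda - \varrho) \|u\|_{L^2}^2,
\end{align*}
so $L(\unu) - \lambda$ is bounded below, hence injective with closed range. Self-adjointness together with $\lambda \in \R$ gives $\mathrm{ran}(L(\unu) - \lambda) = \ker(L(\unu) - \lambda)^\perp = \{0\}^\perp = L^2(\R)$, showing $L(\unu) - \lambda$ is bijective with bounded inverse. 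Thus $\lambda \in \rho(L(\unu))$, and the spectral inclusion $\sigma(L(\unu)) \subset (-\infty, \varrho]$ follows. There is no serious obstacle here: the result is an immediate application of the standard spectral theory of semibounded self-adjoint operators, and the only minor care needed is to justify the integration by parts on $H^2(\R)$, which is standard.
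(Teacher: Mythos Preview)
Your proof is correct and follows essentially the same approach as the paper: the paper's proof simply states that $L(\unu)$ is self-adjoint and hence its spectrum is contained in the numerical range, which is confined to $(-\infty,\varrho]$. You have merely supplied the details behind this one-line argument.
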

\begin{proof}
Because $L(\unu)$ is self-adjoint, its spectrum must be contained in the numerical range, which is confined to $(-\infty,\varrho]$. 
\end{proof}

\subsubsection{Existence and spectral stability of fronts for \texorpdfstring{$\eps=0$}{epsilon=0}} \label{sec:RDE_toy_0}

Stationary front solutions of~\eqref{eq:RDE_toy_stationary} for $\eps = 0$ correspond to heteroclinic solutions 
to the autonomous system
\begin{align}\label{eq:RDE_toy_stationary_0}
    \partial_{x}^2u - \sin(u) = 0.
\end{align}
We introduce the coordinates $(u,v)^\top = (u,\partial_x u)^\top$ and write~\eqref{eq:RDE_toy_stationary_0} as the first-order system
\begin{align}\label{eq:RDE_toy_Hamiltonian}
        \partial_x
        \begin{pmatrix}
            u \\ v
        \end{pmatrix} = J \nabla H(u,v), 
        \qquad
        J :=
        \begin{pmatrix}
            0 & 1 \\ -1 & 0
        \end{pmatrix},
\end{align}
with Hamiltonian 
$$H(u,v) = \frac{1}{2} v^2 + \cos(u).$$
Solutions to~\eqref{eq:RDE_toy_Hamiltonian} lie on the level sets of $H$, see Figure~\ref{fig:phase_portrait_RDE}. Thus, we find infinitely many heteroclinics in~\eqref{eq:RDE_toy_Hamiltonian}, connecting the fixed points $(2\pi k,0)^\top$ to $(2\pi (k\pm 1),0)^\top$ for all $k \in \Z$. The associated front solutions to~\eqref{eq:RDE_toy_stationary_0} admit the explicit formula
\begin{align}\label{eq:RDE_front_0}
    u_{0,k,\pm 1} (x) =  4 \arctan\left(\eu^{\pm x} \right) + 2 \pi \min\{k,k\pm 1\}, \qquad k \in \Z.
\end{align}

\begin{figure}[t]
    \centering
    \includegraphics[width=0.35\textwidth]{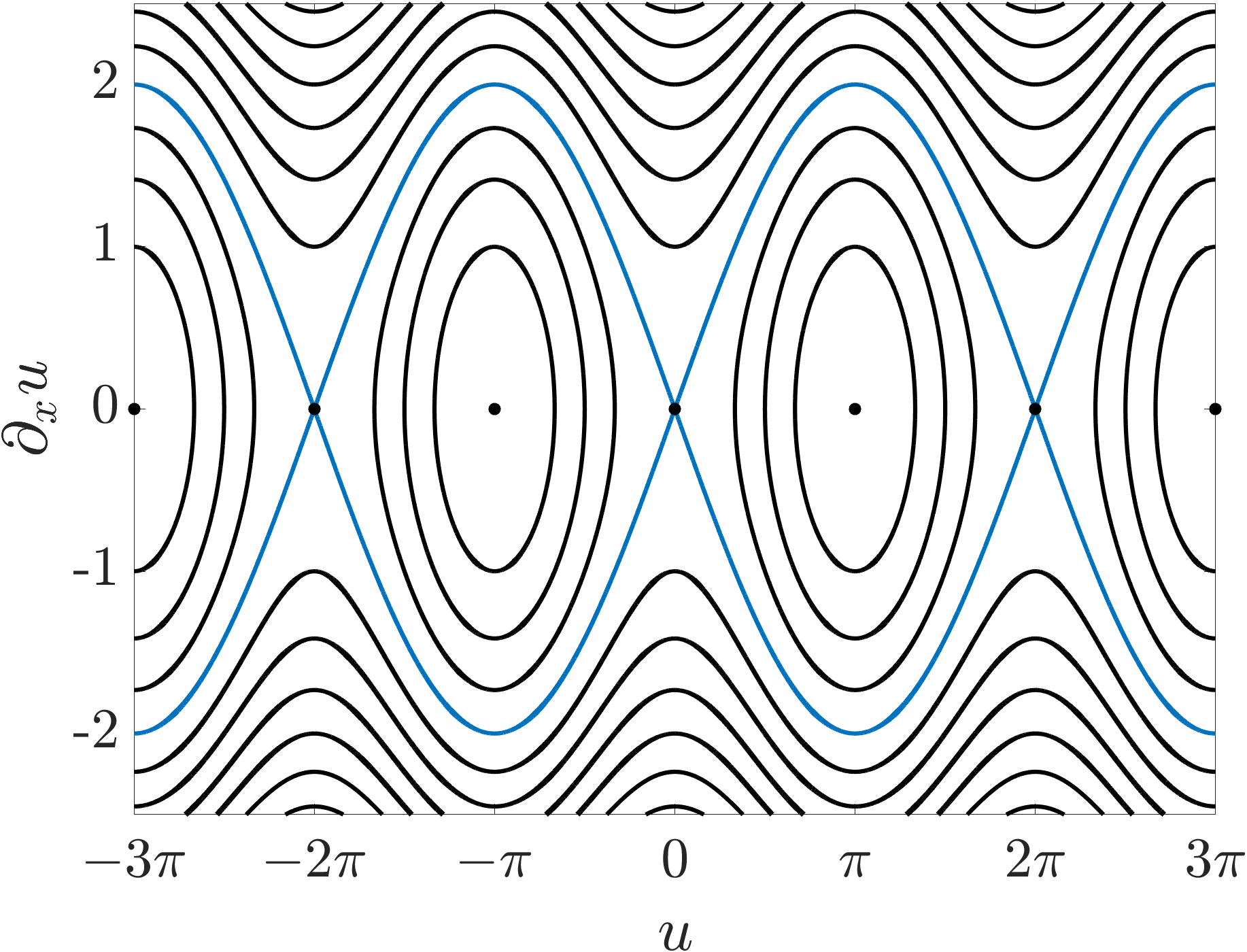} \hspace{2em}
    \includegraphics[width=0.4\textwidth]{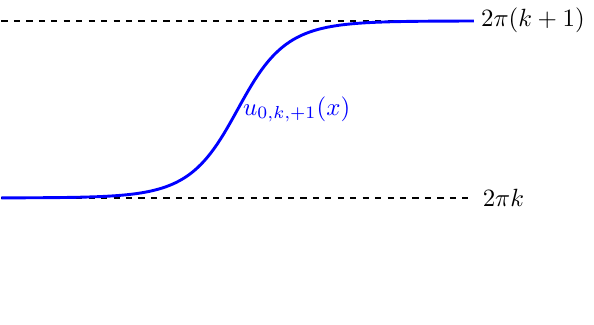}
    \caption{Left: phase portrait of the Hamiltonian system~\eqref{eq:RDE_toy_Hamiltonian}. Blue curves correspond to heteroclinic orbits connecting the fixed points $2\pi k$ to $2\pi (k\pm1)$ for each $k \in \Z$. Black dots correspond to equilibria of the system. Right: plot of the associated front solution $u_{0,k,+1}$ to~\eqref{eq:RDE_toy_stationary_0}.}
    \label{fig:phase_portrait_RDE}
\end{figure}

Fix $k \in \Z$. We examine the spectrum of the linearization $L_0(u_{0,k,\pm 1})$ about the front solution $u_{0,k,\pm 1}$ of~\eqref{eq:RDE_toy} at $\eps = 0$. A simple calculation reveals $\sigma(L_0(2\pi \ell)) = (-\infty,-1]$ for $\ell \in \Z$. Hence, Proposition~\ref{prop:essential_spec1front} yields $\sigma_\text{ess}(L_0(u_{0,k,\pm 1})) = (-\infty,-1]$. Moreover, by translational symmetry of~\eqref{eq:RDE_toy_stationary_0}, $0$ is a simple eigenfunction of $L_0(u_{0,k,\pm 1})$ with eigenfunction $u_{0,k,\pm1}'$. Since $u_{0,k,\pm1}'$ has no zeros, Sturm-Liouville theory, cf.~\cite[Theorem 2.3.3]{KapitulaPromislow2013}, yields that the front $u_{0,k,\pm1}$ is spectrally stable with simple eigenvalue $\lambda=0$, cf.~Definition~\ref{def:spectral_stability}.

\subsubsection{Existence and spectral stability of fronts for \texorpdfstring{$\eps>0$}{epsilon>0}}

In the following, we prove that the front solutions, obtained in~\S\ref{sec:RDE_toy_0}, persist for small values of $\eps>0$ under a generic assumption on the periodic potential $V$, which can be checked analytically or numerically. The fronts connect $T$-periodic end states $v_-(\eps)$ to $v_+(\eps)$, see Figure~\ref{fig:RDE_toy_1fronts}. Moreover, we establish the front's spectral (in)stability and nondegeneracy.

The existence and spectral analysis of the front solutions to~\eqref{eq:RDE_toy_stationary} consists of three steps. First, we construct their periodic end states by bifurcating from the fixed points $2\pi k$, $k \in \Z$ of~\eqref{eq:RDE_toy_stationary_0}. Second, we prove that the shifted front $u_{0,k,\pm1,\varsigma} := u_{0,k,\pm1}(\cdot-\varsigma)$ perturbs into a nondenegerate front solution of~\eqref{eq:RDE_toy_stationary} for small $\eps \neq 0$, provided that $\varsigma_0 \in \R$ is a simple zero of the \emph{effective potential}
\begin{align}\label{effective_potential}
    V_\text{eff}(\varsigma) = \int_\R V(x+\varsigma) u_{0,k,\pm1}(x)u_{0,k,\pm1}'(x) \de x.
\end{align}
In the third step, we derive a stability criterion saying that the fronts are strongly spectrally stable if $\eps V_\text{eff}'(\varsigma_0)>0$ and that they are spectrally unstable if $\eps V_\text{eff}'(\varsigma_0)<0$.

\begin{Theorem}\label{thm:RDE_1front}
Let $k\in \Z$, $l \in \{\pm 1\}$, and $T > 0$. Let $V \in C^1(\R)$ be $T$-periodic. Assume that there exists $\varsigma_0 \in \R$ such that the effective potential $V_\textup{eff} \colon \R \to \R$, given by~\eqref{effective_potential}, has a simple zero at $\varsigma_0$. Then, there exist $C,\eps_0,\varrho>0$ such that for all $\eps\in (-\eps_0,\eps_0) \setminus \{0\}$ there exists a nondegenerate solution $u(\eps)$ to~\eqref{eq:RDE_toy_stationary}, satisfying
\begin{align} \label{bounds_front_toy}
    \|u(\eps)-u_{0,k,l}(\cdot - \varsigma_0)\|_{L^\infty} \leq C \eps, \qquad
    \chi_\pm\left(u(\eps)-v_{\pm}(\eps)\right) \in H^2(\R),
\end{align}
where $\chi_\pm\colon \R \to [0,1]$ is a smooth partition of unity such that $\chi_+$ is supported on $(-1,\infty)$ and $\chi_-$ is supported on $(-\infty,1)$, $u_{0,k,l}$ is the front given by~\eqref{eq:RDE_front_0}, and $v_{\pm}(\eps) \in H_\per^2(0,T)$ are periodic solutions to~\eqref{eq:RDE_toy_stationary} with
\begin{align}\label{eq:toy_RDE_estimates_periodic}
\|v_{-}(\eps)-2\pi k\|_{H_\per^2(0,T)},\|v_{+}(\eps)-2\pi (k+l)\|_{H_\per^2(0,T)} \leq C\eps.
\end{align}
Finally, it holds
$$
    \sigma(L_\eps(u(\eps))) \subset (-\infty,-\varrho] \cup \{\lambda_0(\eps)\}
$$
for $\eps \in (-\eps_0,\eps_0)$, where $\lambda_0(\eps)$ is a real simple eigenvalue of $L_\eps(u(\eps))$ obeying the expansion 
$$\lambda_0(\eps) = -\eps \frac{V_\textup{eff}'(\varsigma_0)}{\|u_{0,k,l}'\|_{L^2}^2} + \mathcal{O}(\eps^2).$$
\end{Theorem}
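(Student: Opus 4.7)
The plan is a Lyapunov-Schmidt reduction centered on the shifted heteroclinic $u_{0,k,l}(\cdot - \varsigma)$, with $\varsigma$ serving as the Lyapunov-Schmidt coordinate, followed by a standard first-order perturbation expansion for the simple eigenvalue at $\lambda = 0$. For the end states, the operator $L_0(2\pi k) = \partial_x^2 - 1$ on $H_\per^2(0,T)$ is invertible (its Floquet exponents are $\pm 1$, avoiding the imaginary axis); the implicit function theorem applied to~\eqref{eq:RDE_toy_stationary} in $H_\per^2(0,T)$ near $v \equiv 2\pi k$ produces a unique branch $v_-(\eps)$ satisfying~\eqref{eq:toy_RDE_estimates_periodic}, and analogously $v_+(\eps)$ near $2\pi(k+l)$.

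\emph{Reduction and bifurcation equation.} Use the ansatz $u = w_\varsigma(\eps) + a$ with
$w_\varsigma(\eps) := \chi_-(v_-(\eps) - 2\pi k) + \chi_+(v_+(\eps) - 2\pi (k+l)) + u_{0,k,l}(\cdot - \varsigma)$, requiring $a \in H^2(\R)$ to be $L^2$-orthogonal to $u_{0,k,l}'(\cdot - \varsigma)$. The self-adjoint Fredholm operator $L_0(u_{0,k,l}(\cdot - \varsigma))$ has essential spectrum $(-\infty, -1]$ and one-dimensional kernel spanned by $u_{0,k,l}'(\cdot - \varsigma)$ (the analysis of~\S\ref{sec:RDE_toy_0}), so its restriction to the orthogonal complement is boundedly invertible. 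The IFT applied to the projected residual yields $a = a(\eps, \varsigma) \in H^2(\R)$ with $a(0, \varsigma) = 0$ and $\|a\|_{H^2} = O(\eps)$. Projection onto the kernel produces the scalar bifurcation equation $M(\eps, \varsigma) = 0$, which vanishes at $\eps = 0$; computing $\partial_\eps M(0, \varsigma)$ and using self-adjointness of $L_0$ to annihilate the contribution of $\partial_\eps w_\varsigma(0)$ gives $\partial_\eps M(0, \varsigma) = V_\textup{eff}(\varsigma)$ up to a nonzero $\varsigma$-independent factor. Since $V_\textup{eff}(\varsigma_0) = 0 \neq V_\textup{eff}'(\varsigma_0)$, the IFT supplies $\varsigma(\eps) = \varsigma_0 + O(\eps)$ solving $M = 0$, producing the front $u(\eps)$ with the bound~\eqref{bounds_front_toy}.

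\emph{Spectrum.} By Proposition~\ref{prop:essential_spec1front}, $\sigma_\textup{ess}(L_\eps(u(\eps)))$ is governed by the $T$-periodic operators $L_\eps(v_\pm(\eps))$. At $\eps = 0$ each equals $\partial_x^2 - 1$, whose Floquet exponents stay off $i\R$; by continuous dependence of Floquet exponents on $\eps$, one obtains $\sigma_\textup{ess}(L_\eps(u(\eps))) \subset (-\infty, -1 + C\eps]$ for $\eps$ small, supplying the $\varrho$ in the statement. Combining self-adjointness, Lemma~\ref{lem:a_priori_toy}, and Kato's analytic perturbation theory applied to the analytic self-adjoint family $\eps \mapsto L_\eps(u(\eps))$, the simple eigenvalue $0$ of $L_0(u_{0,k,l}(\cdot - \varsigma_0))$ perturbs into a single simple real eigenvalue $\lambda_0(\eps)$; nondegeneracy of $u(\eps)$ for $\eps \neq 0$ is then equivalent to $\lambda_0(\eps) \neq 0$, which follows from the expansion below. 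Standard first-order perturbation theory gives $\lambda_0'(0) = \langle L_1\phi_0, \phi_0\rangle/\|\phi_0\|_{L^2}^2$ with $\phi_0 = u_{0,k,l}'(\cdot - \varsigma_0)$ and $L_1 = V + u_1\sin(u_{0,k,l}(\cdot - \varsigma_0))$, where $u_1 := \partial_\eps u(0)$ obeys the range equation $L_0(u_{0,k,l}(\cdot - \varsigma_0)) u_1 = -V u_{0,k,l}(\cdot - \varsigma_0)$ (solvable because $V_\textup{eff}(\varsigma_0) = 0$).

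\emph{Main obstacle.} The crux is the algebraic identification of $\lambda_0'(0)$ with $-V_\textup{eff}'(\varsigma_0)/\|u_{0,k,l}'\|_{L^2}^2$, made delicate by the fact that $u_1$ does not decay at $\pm\infty$ (it tends to the $T$-periodic corrections $\partial_\eps v_\pm(0)$). Writing $\sin(u_{0,k,l})u_{0,k,l}' = -(\cos u_{0,k,l})'$, integrating by parts twice against $u_1 u_{0,k,l}'$ (boundary terms vanish since $\sin(u_{0,k,l})$ decays exponentially as $u_{0,k,l}$ approaches $2\pi k$ or $2\pi(k+l)$), and substituting $u_1'' = \cos(u_{0,k,l})u_1 - V u_{0,k,l}$ from the range equation, the mixed terms cancel and yield $\int u_1 \sin(u_{0,k,l})(u_{0,k,l}')^2\,\de x = \int V u_{0,k,l} \sin(u_{0,k,l})\,\de x$ (shifts suppressed). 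Adding $\int V(u_{0,k,l}')^2\,\de x$ and comparing with the direct computation $V_\textup{eff}'(\varsigma_0) = -\int V[(u_{0,k,l}'(\cdot-\varsigma_0))^2 + u_{0,k,l}(\cdot-\varsigma_0)\sin(u_{0,k,l}(\cdot-\varsigma_0))]\,\de x$ delivers the stated leading-order formula.
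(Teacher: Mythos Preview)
Your proposal is correct and follows the paper's approach closely for the existence part: construct $v_\pm(\eps)$ via IFT on $H^2_\per(0,T)$, perform Lyapunov--Schmidt reduction around the shifted front with $\varsigma$ as the reduction coordinate, and solve the desingularized bifurcation equation $\eps^{-1}M(\eps,\varsigma)=0$ at $\varsigma_0$ using $V_{\textup{eff}}'(\varsigma_0)\neq 0$.

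The one methodological difference is in the computation of $\lambda_0'(0)$. You compute the first-order perturbation formula $\lambda_0'(0)\|\phi_0\|^2=\langle (V+u_1\sin u_0)\phi_0,\phi_0\rangle$ and then reduce the $u_1$-term by integration by parts, exploiting the range equation $L_0(u_0)u_1=-Vu_0$ to obtain $\int u_1\sin(u_0)(u_0')^2\,\de x=\int V u_0\sin(u_0)\,\de x$. The paper instead differentiates the eigenvalue equation and the existence equation each once in $\eps$, then \emph{subtracts} the resulting identities: the bilinear term $\sin(u_0)u_1 u_0'$ cancels automatically, leaving $L_0(u_0)(\partial_\eps z(0)-\partial_\eps\partial_x u(0)) - V' u_0 = \lambda_0'(0)u_0'$, which after pairing with $u_0'$ gives $\lambda_0'(0)\|u_0'\|^2=-\int V' u_0 u_0'\,\de x=-V_{\textup{eff}}'(\varsigma_0)$ directly. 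The paper's trick avoids explicit handling of the non-decaying $u_1$ and the integration-by-parts bookkeeping; your route is more hands-on but equally valid. One minor point: you invoke ``Kato's analytic perturbation theory,'' but since $V$ is only $C^1$ the family is merely $C^1$; the paper correspondingly cites a $C^1$ eigenvalue perturbation result (Kielh\"ofer), which is what you actually need.
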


\begin{figure}[t]
    \centering
    \includegraphics[width=0.3\textwidth]{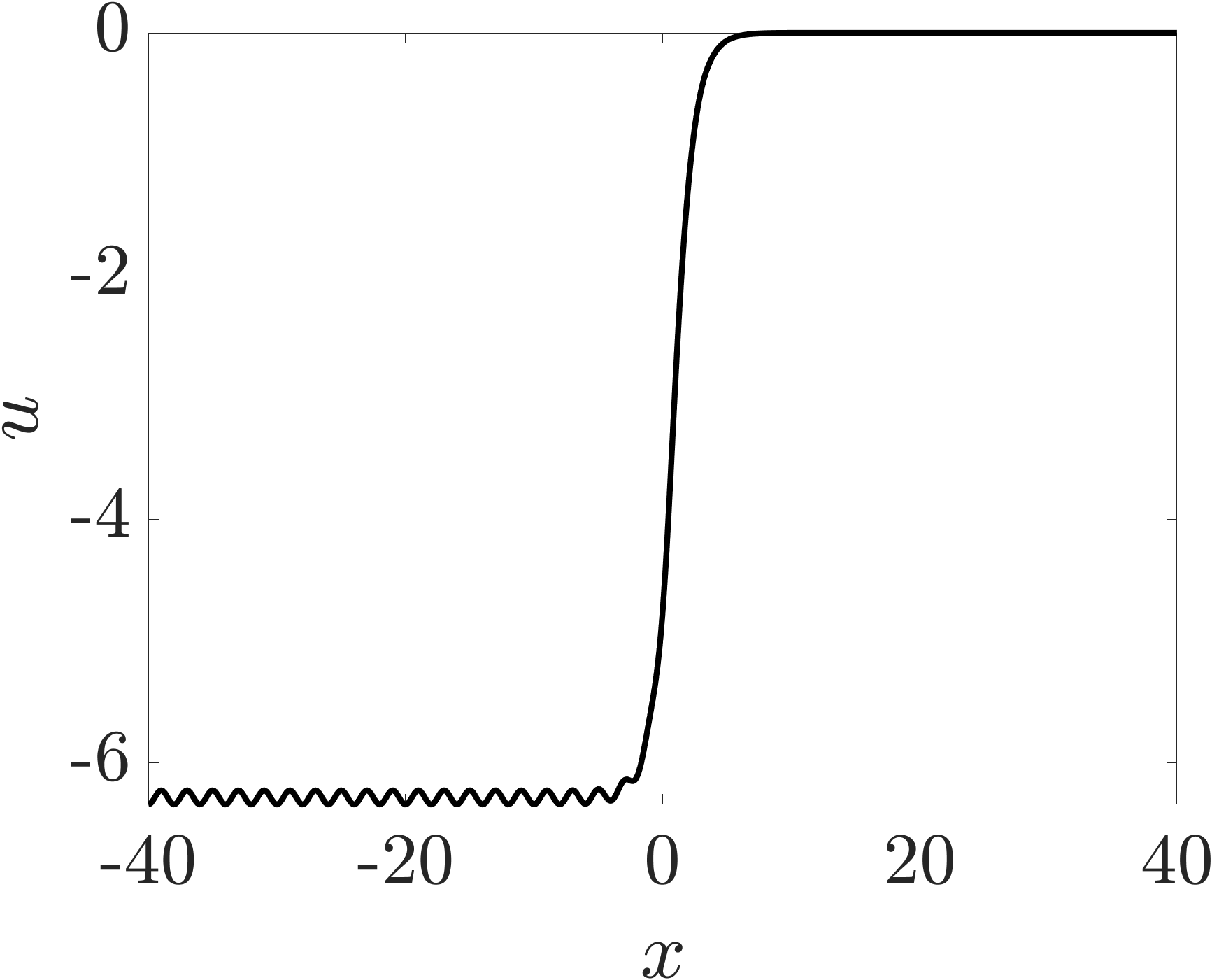} \hspace{1em}
    \includegraphics[width=0.3\textwidth]{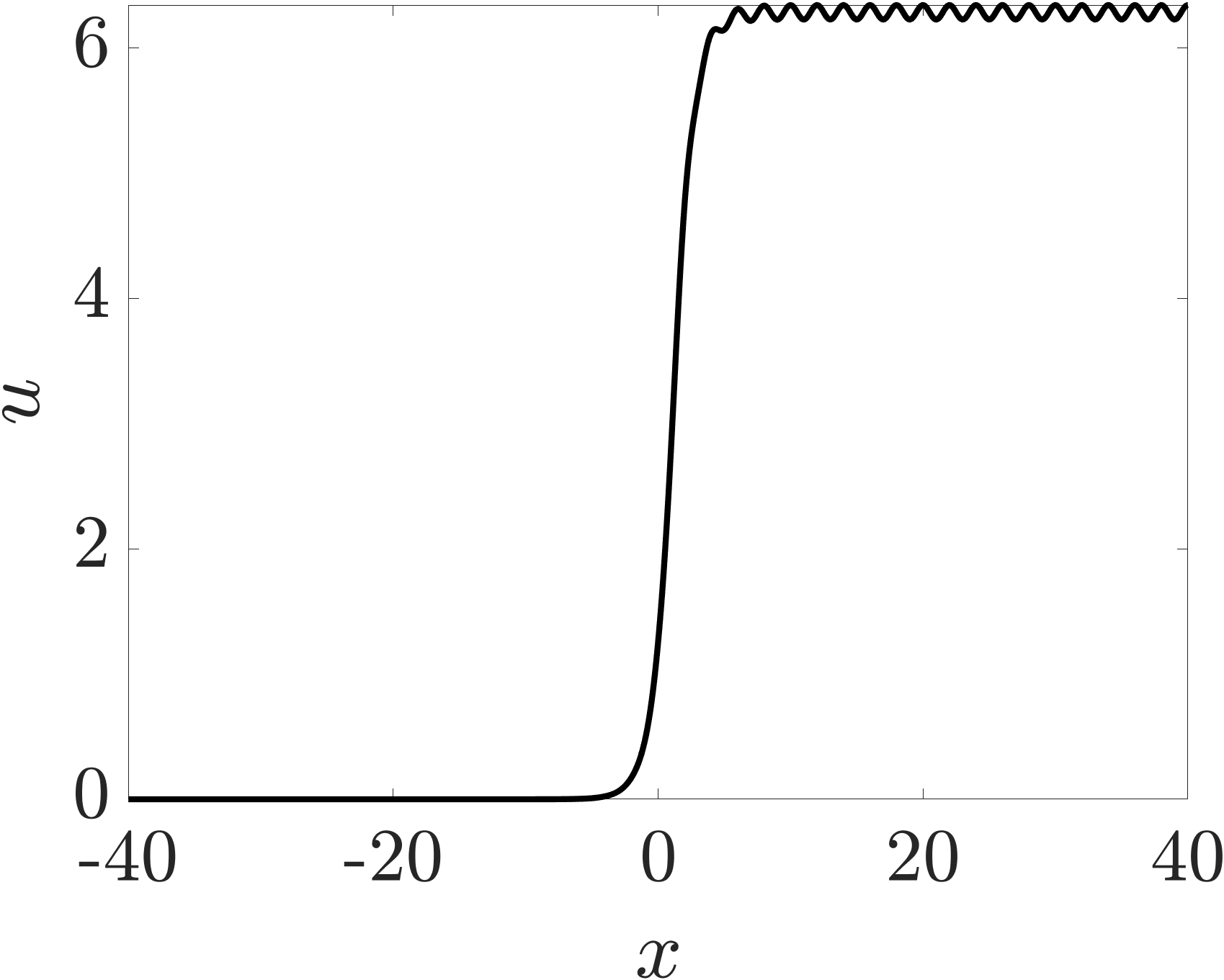}\hspace{1em}
    \includegraphics[width=0.3\textwidth]{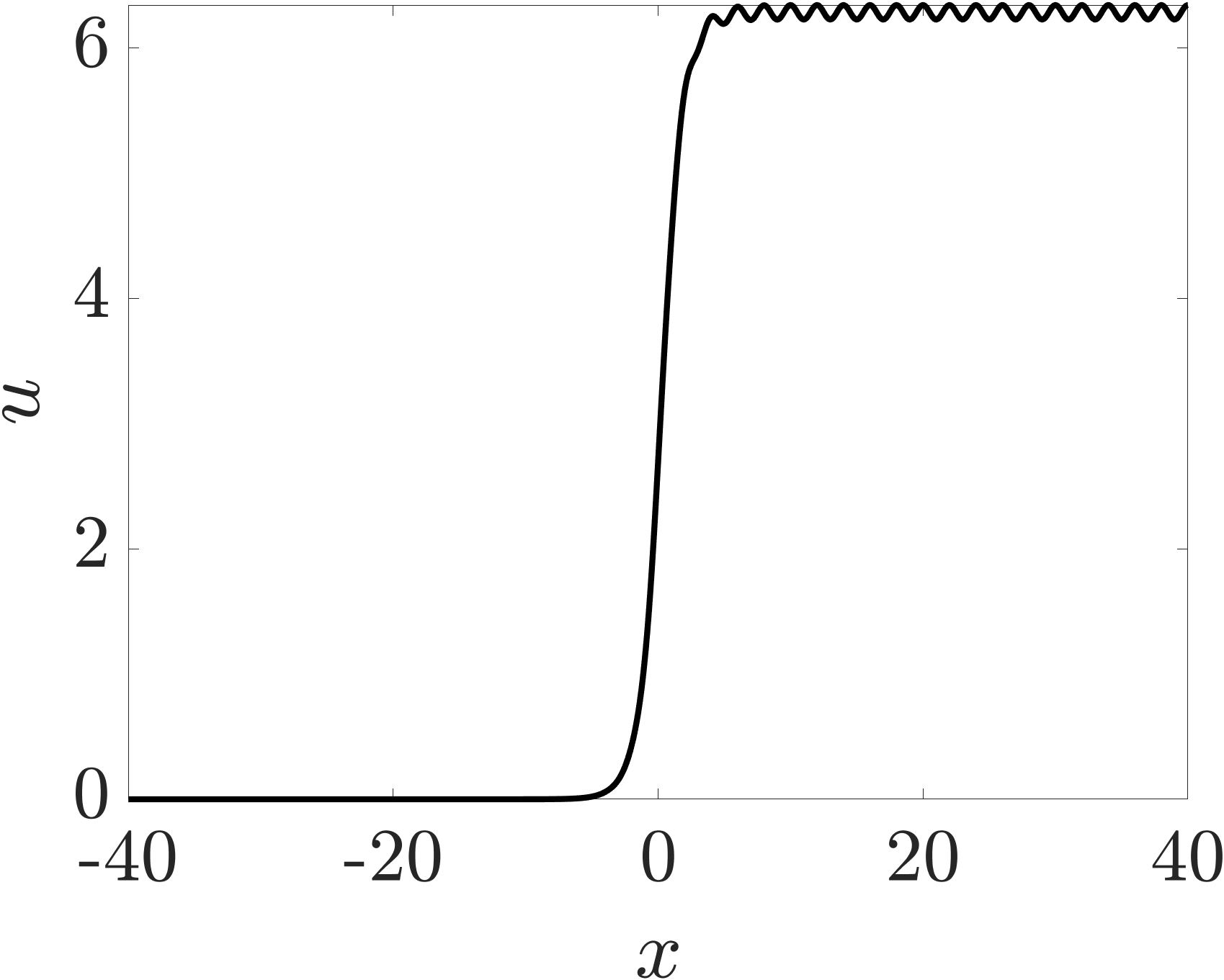}\\\vspace{1em}
    \includegraphics[width=0.3\textwidth]{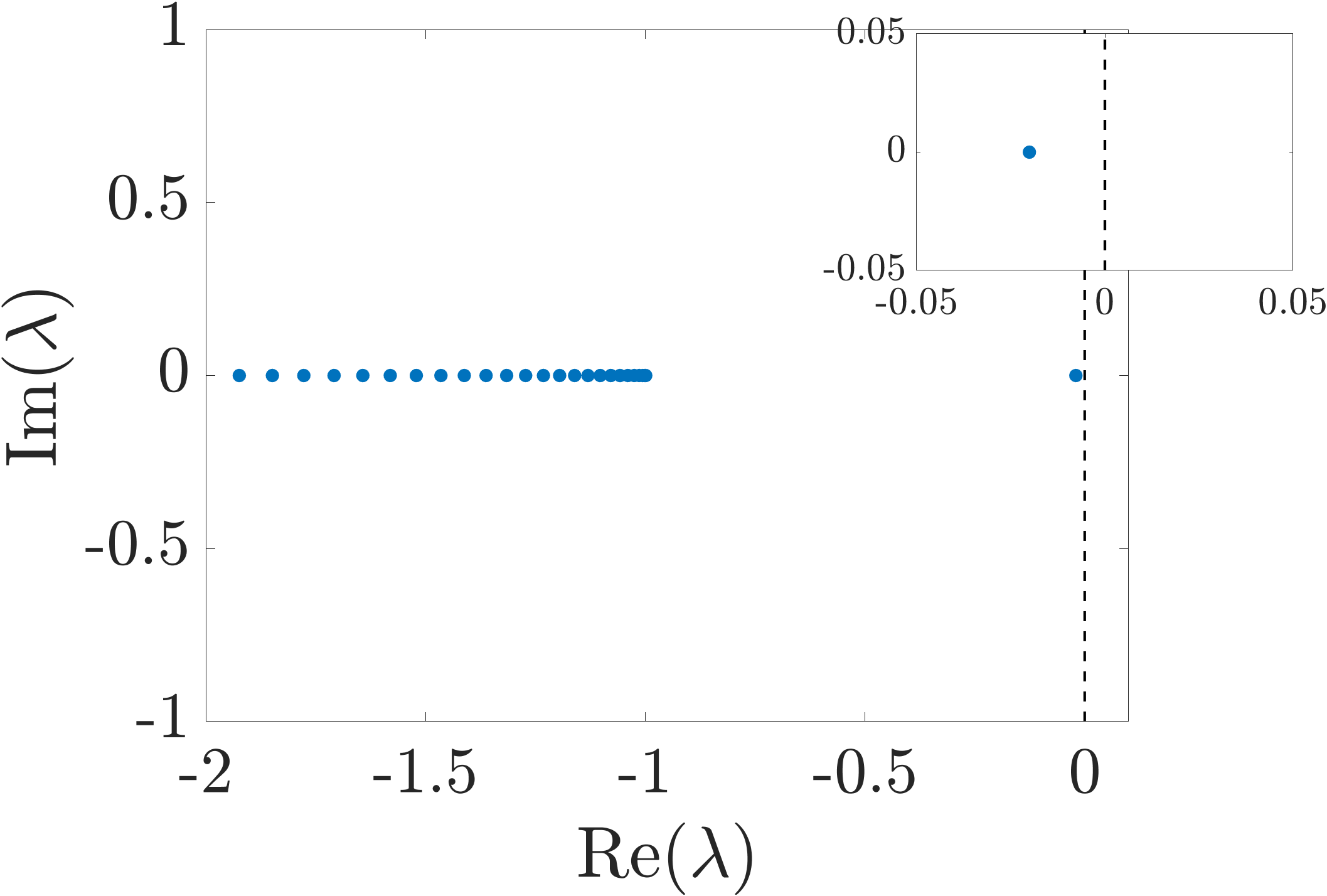} \hspace{1em}
    \includegraphics[width=0.3\textwidth]{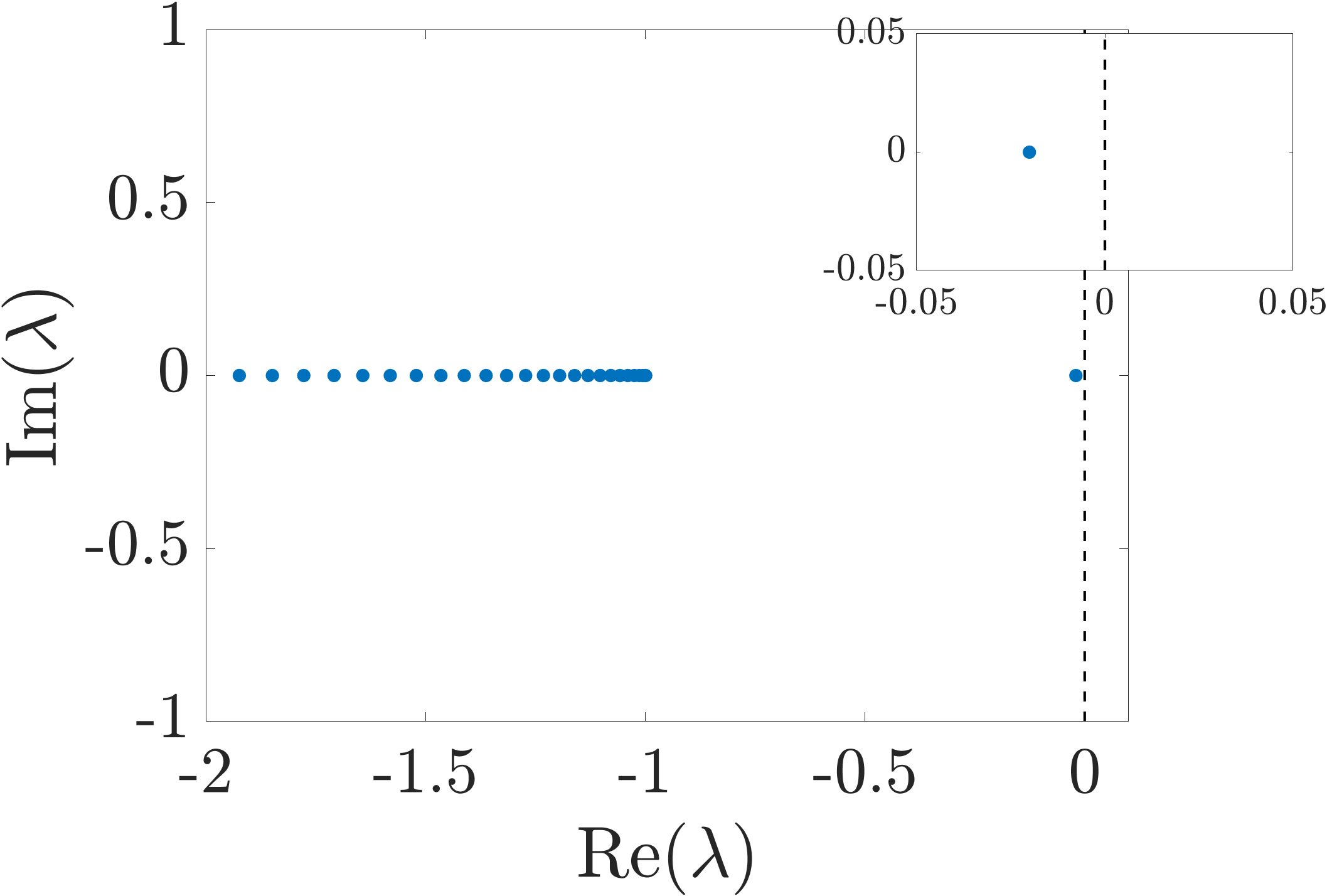}\hspace{1em}
    \includegraphics[width=0.3\textwidth]{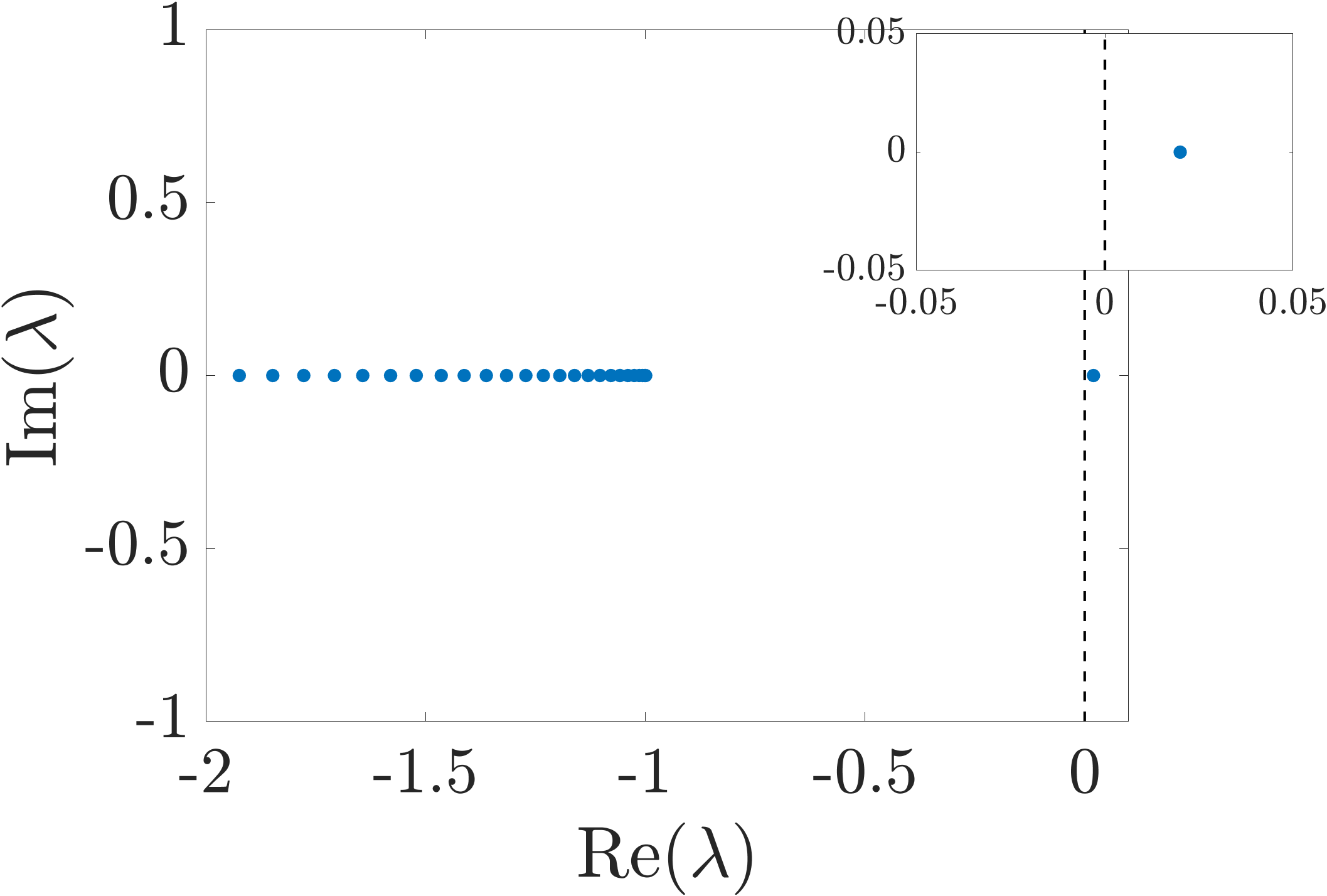}
    \caption{Approximations of stationary 1-front solutions to~\eqref{eq:RDE_toy}, along with their spectra, for system coefficients $\eps = 0.1$ and $V(x) = \cos(\pi x)$. The insets provide a closer view of the small eigenvalues near zero. The left and middle panels depict strongly spectrally stable 1-front solutions that connect the periodic state near $-2\pi$ to $0$ and $0$ to the periodic state near $2\pi$, respectively. The right panel depicts a spectrally unstable front solution connecting $0$ to the periodic state near $2\pi$. The $1$-front solutions are obtained through numerical continuation with the MATLAB package \texttt{pde2path}~\cite{pde2path} by starting from the explicit 1-front solutions $u_{0,k,+1,\varsigma}$ for $k \in \{-1,0\}$ and $\varsigma \in \R$.}
    \label{fig:RDE_toy_1fronts}
\end{figure}

\begin{proof}
We begin with the construction of the period end states $v_\pm(\eps)$. To this end, we consider the nonlinear map $\mathcal{F}_\per \colon H_\per^2(0,T) \times \R \to L_\per^2(0,T)$ given by
$$
    \mathcal{F}_\per(v,\eps) = v'' + \eps V v - \sin(v).
$$
We observe that $\mathcal{F}_\per$ is well-defined and smooth. Fix $j \in \Z$. Then, we have $\mathcal{F}_\per(2\pi j,0) = 0$ and
$$
    \partial_v \mathcal{F}_\per(2\pi j,0)  = \partial_x^2 - 1
$$
is invertible. Therefore, the implicit function theorem implies that there exist $\eps_1 > 0$ and a locally unique smooth map $v \colon (-\eps_1,\eps_1) \to H^2_\per(0,T)$ with
$$
v(0) = 2\pi j, \qquad    \mathcal{F}_\per(v(\eps),\eps) = 0
$$
for all $\eps \in (-\eps_1,\eps_1)$. We denote by $v_+(\eps)$ the locally unique periodic solution bifurcating from $2\pi(k+l)$ and by $v_-(\eps)$ the periodic solution bifurcating from $2\pi k$ for $\eps \in (-\eps_1,\eps_1)$. The bound~\eqref{eq:toy_RDE_estimates_periodic} is a direct consequence of the smoothness of $v_\pm$. 

In the next step, we construct the interface connecting the state $v_-(\eps)$ to $v_+(\eps)$. Accounting for the fact that the potential breaks the translational symmetry of~\eqref{eq:RDE_toy_stationary}, we impose the ansatz
\begin{align} \label{ansatz_toy}
    u = v_-(\eps) \chi_- + v_+(\eps) \chi_+ + u_{0,k,l,\varsigma} - 2\pi k\chi_- -2\pi (k+l)\chi_+ + w
\end{align}
for the desired front solution to~\eqref{eq:RDE_toy_stationary_0}, where $w \in H^2(\R)$ is a small correction term and $u_{0,k,l,\varsigma} = u_{0,k,l}(\cdot-\varsigma)$ is the shifted front solution to~\eqref{eq:RDE_toy_stationary_0}. Abbreviating $A= \partial_x^2$, $\mathcal{N}(u) =-\sin(u)$, we write the existence problem~\eqref{eq:RDE_toy_stationary} as
\begin{align} \label{existence_toy}
    A u + \eps V u + \mathcal{N}(u) = 0. 
\end{align}
Inserting the ansatz~\eqref{ansatz_toy} into~\eqref{existence_toy} leads to an equation for the correction $w$ and the shift parameter $\varsigma$ of the form 
\begin{align}\label{eq:existence_proof_RDE_front}
    \mathcal{F}(w,\eps,\varsigma) =0
\end{align}
with
\begin{align*}
    \mathcal{F}(w,\eps,\varsigma) = L_\eps(u_{0,k,l,\varsigma}+ \chi_- \tilde v_-(\eps)+ \chi_+ \tilde v_+(\eps)) w + R(\eps,\varsigma) + \check{\mathcal{N}}(w,\eps,\varsigma),
\end{align*}
where we denote $\tilde v_-(\eps) = v_-(\eps) -2\pi k $, $\tilde v_+(\eps) = v_+(\eps) -2\pi (k+l)$ and
\begin{align*}
    R(\eps,\varsigma) &= A (u_{0,k,l,\varsigma} + \chi_- \tilde v_-(\eps) + \chi_+ \tilde v_+(\eps)) + \eps V  (u_{0,k,l,\varsigma} + \chi_- \tilde v_-(\eps) + \chi_+ \tilde v_+(\eps)) \\
    &\quad + \mathcal{N} (u_{0,k,l,\varsigma} + \chi_- \tilde v_-(\eps) + \chi_+ \tilde v_+(\eps)) , \\
    \check{\mathcal{N}}(w,\eps,\varsigma) &= \mathcal{N}(u_{0,k,l,\varsigma} + \chi_- \tilde v_-(\eps) + \chi_+ \tilde v_+(\eps)+w)-\mathcal{N}(u_{0,k,l,\varsigma} + \chi_- \tilde v_-(\eps) + \chi_+ \tilde v_+(\eps))\\
    & \quad-\mathcal{N}'(u_{0,k,l,\varsigma} + \chi_- \tilde v_-(\eps) + \chi_+ \tilde v_+(\eps)) w,
\end{align*}
for $\eps \in (-\eps_1,\eps_1)$, $\varsigma\in\R$ and $w \in H^2(\R)$. We have $\mathcal{F}(0,0,\varsigma_0) = 0$ and $\partial_w \mathcal{F}(0,0,\varsigma_0) = L_0(u_{0,k,l,\varsigma_0})$ for all $\varsigma_0 \in \R$. We recall from~\S\ref{sec:RDE_toy_0} that the kernel of $L_0(u_{0,k,l,\varsigma_0})$ is spanned by $u_{0,k,l,\varsigma_0}'$. In particular, $L_0(u_{0,k,l,\varsigma_0})$ is not invertible. To address this, we employ Lyapunov-Schmidt reduction to solve~\eqref{eq:existence_proof_RDE_front}. We note that, since $L_0(u_{0,k,l,\varsigma_0})$ is self-adjoint, its range is given by the orthogonal complement $\{u_{0,k,l,\varsigma_0}\}^\perp$. 

We proceed by obtaining bounds on the residual $R$ and the nonlinearity $\check{\mathcal{N}}$. We employ estimate~\eqref{eq:toy_RDE_estimates_periodic}, rely on the continuous embedding $H^1_\per(0,T) \hookrightarrow L^\infty(\R)$, apply the mean value theorem twice, and use the identities $A(u_{0,k,l,\varsigma}) = -\mathcal{N}(u_{0,k,l,\varsigma})$ and $A v_-(\eps) + \eps V v_-(\eps) = -\mathcal{N}(v_-(\eps))$ to establish $\eps$- and $\varsigma$-independent constants $C_{1,2} > 0$ such that the pointwise estimate 
\begin{align*}
    |R(\eps,\varsigma)(x)| &= \left|\eps V(x)  \left(u_{0,k,l,\varsigma}(x) - 2\pi k\right) + \mathcal{N} \left(u_{0,k,l,\varsigma}(x) + v_-(\eps)(x) - 2\pi k\right) - \mathcal{N}(u_{0,k,l,\varsigma}(x))\right.\\ 
    &\qquad \left.- \, \left(\mathcal{N}(v_-(\eps)(x)) - \mathcal{N}(2\pi k)\right)\right| \\
    &\leq \eps \|V\|_{L^\infty}|u_{0,k,l,\varsigma}(x) - 2\pi k|\\ &\qquad + \, \sup_{|z| \leq \|u_{0,k,l,\varsigma} - 2\pi k\|_{L^\infty}} \left|\mathcal{N}'(v_-(\eps)(x)+z) - \mathcal{N}'(2\pi k + z)\right| \left|u_{0,k,l,\varsigma}(x) - 2 \pi k\right| \\
    & \leq \eps \|V\|_{L^\infty} |u_{0,k,l,\varsigma}(x) - 2\pi k| + C_1 \|v_-(\eps)- 2\pi k\|_{L^\infty} |u_{0,k,l,\varsigma}(x) - 2 \pi k| \\
    &\leq \eps C_2 |u_{0,k,l,\varsigma}(x) - 2 \pi k|
\end{align*}
holds for $x \leq -1$, $\eps \in (-\eps_1,\eps_1)$ and $\varsigma \in \R$. Similarly, we find an $\eps$- and $\varsigma$-independent constant $C_3 > 0$ such that
\begin{align*}
    |R(\eps,\varsigma)(x)| \leq \eps C_3 |u_{0,k,l,\varsigma}(x) - 2 \pi (k+l)|
\end{align*}
holds for $x \geq 1$, $\eps \in (-\eps_1,\eps_1)$ and $\varsigma \in \R$. Moreover, using estimate~\eqref{eq:toy_RDE_estimates_periodic}, the continuous embedding $H^1_\per(0,T) \hookrightarrow L^\infty(\R)$, the mean value theorem, and the identities $A(u_{0,k,l,\varsigma}) = -\mathcal{N}(u_{0,k,l,\varsigma})$ and $A v_\pm(\eps) = - \eps V v_\pm(\eps) -\mathcal{N}(v_\pm(\eps))$, we obtain an $\eps$- and $\varsigma$-independent constant $C_4 > 0$ such that
\begin{align*}
     |R(\eps,\varsigma)(x)| &\leq \|A (\chi_- \tilde v_-(\eps) + \chi_+ \tilde v_+(\eps))\|_{L^\infty} + \eps \|V\|_{L^\infty}  \|u_{0,k,l,\varsigma} + \chi_- \tilde v_-(\eps) + \chi_+ \tilde v_+(\eps)\|_{L^\infty}  \\& \quad\quad+ \|\mathcal{N} (u_{0,k,l,\varsigma} + \chi_- \tilde v_-(\eps) + \chi_+ \tilde v_+(\eps))- \mathcal{N}(u_{0,k,l,\varsigma})\|_{L^\infty}  \leq \eps C_4
\end{align*}
holds for $x \in [-1,1]$, $\eps \in (-\eps_1,\eps_1)$ and $\varsigma \in \R$. Combining the latter three estimates, we establish for each compact $\eps$-independent subset $\mathcal{K} \subset \R$, an $\eps$- and $\varsigma$-independent constant $C_0>0$ such that 
\begin{align} \label{resesttoy}
    \|R(\eps,\varsigma)\|_{L^2} \leq C_0 |\eps|
\end{align}
for $\eps \in (-\eps_1,\eps_1)$ and $\varsigma \in \mathcal{K}$. On the other hand, it follows from Taylor theorem, the estimate~\eqref{eq:toy_RDE_estimates_periodic}, and the continuous embedding $H^1_\per(0,T) \hookrightarrow L^\infty(\R)$, that there exists an $\eps$- and $\varsigma$-independent constant $C> 0$ such that
\begin{align} \label{nonlesttoy}
    \|\check{\mathcal{N}}(w,\eps,\varsigma)\|_{L^2} \leq C \|w\|_{H^2}^2
\end{align}
for $\eps \in (-\eps_1,\eps_1)$, $\varsigma\in \R$, and $w \in H^2(\R)$ with $\|w\|_{H^2}\leq 1$. 

Our next step is to use Lyapunov-Schmidt reduction to solve equation~\eqref{eq:existence_proof_RDE_front}. The reduction relies on a parameter-dependent decomposition of the spaces $H^2(\R)$ and $L^2(\R)$, which is induced by the orthogonal projections $P_\varsigma\colon H^\ell(\R) \to H^\ell(\R)$ and $P_\varsigma^\perp := I -P_\varsigma$ given by
$$ 
    P_\varsigma w =\frac{\langle u_{0,k,l,\varsigma}',w\rangle_{L^2}}{\|u_{0,k,l,\varsigma}'\|_{L^2}^2} u_{0,k,l,\varsigma}'
$$
for $\ell \in \N_0$ and $\varsigma \in \R$. We decompose the problem~\eqref{eq:existence_proof_RDE_front} into a regular and singular part which we complement with a phase condition, which leads to the equivalent problem
\begin{align}
    P_\varsigma^\perp L_\eps(u_{0,k,l,\varsigma}+ \chi_- \tilde v_-(\eps)+ \chi_+ \tilde v_+(\eps))P_\varsigma^\perp w + P_\varsigma^\perp \left( R(\eps,\varsigma) + \check{\mathcal{N}}(w,\eps,\varsigma) \right)&= 0, \label{eq:LS_RDE_1} \\
    P_\varsigma L_\eps(u_{0,k,l,\varsigma}+ \chi_- \tilde v_-(\eps)+ \chi_+ \tilde v_+(\eps))P_\varsigma^\perp w + P_\varsigma\left( R(\eps,\varsigma) + \check{\mathcal{N}}(w,\eps,\varsigma) \right)&= 0,\label{eq:LS_RDE_2}\\
    P_\varsigma w &= 0. \label{eq:LS_RDE_3}
\end{align} 
First, we solve the equations~\eqref{eq:LS_RDE_1} and~\eqref{eq:LS_RDE_3}, corresponding to the regular part of the system. To this end, we define the smooth nonlinear operator $\mathcal{G}\colon H^2(\R)\times (-\eps_1,\eps_1)\times \R \to L^2(\R)$ by
$$
    \mathcal{G}(w,\eps,\varsigma)= P_\varsigma^\perp L_\eps(u_{0,k,l,\varsigma}+ \chi_- \tilde v_-(\eps)+ \chi_+ \tilde v_+(\eps))P_\varsigma^\perp w + P_\varsigma^\perp \left( R(\eps,\varsigma) + \check{\mathcal{N}}(w,\eps,\varsigma) \right)+P_\varsigma w.
$$
We observe that $\mathcal{G}(w,\eps,\varsigma) = 0$ is equivalent to the equations~\eqref{eq:LS_RDE_1} and~\eqref{eq:LS_RDE_3}. We compute
$$
    \mathcal{G}(0,0,\varsigma) = 0, \qquad
    \partial_w\mathcal{G}(0,0,\varsigma) = P_\varsigma^\perp L_0(u_{0,k,l,\varsigma})P_\varsigma^\perp +P_\varsigma
$$
for $\varsigma \in \R$. The derivative $\partial_w\mathcal{G}(0,0,\varsigma)$ is invertible in the space of bounded linear operators from $H^2(\R)$ to $L^2(\R)$ for $\varsigma \in \R$. Therefore, the implicit function theorem yields $\eps_2 \in (0,\eps_1)$, an open neighborhood $W\subset H^2(\R)$ of $0$, and a smooth map $w \colon (-\eps_2,\eps_2) \times \R \to W$ such that the triple $(w,\eps,\varsigma) \in U \times (-\eps_2,\eps_2)\times \R$ solves
$$
    \mathcal{G}(w,\eps,\varsigma) = 0
$$
if and only if $(w,\eps,\varsigma) = (w(\eps,\varsigma),\eps,\varsigma)$ for $(\eps,\varsigma) \in (-\eps_2,\eps_2) \times \R$. Moreover, we have $w(0,\varsigma) = 0$ for $\varsigma \in \R$. We plug the solution of equations~\eqref{eq:LS_RDE_1} and~\eqref{eq:LS_RDE_3} into~\eqref{eq:LS_RDE_2} to arrive at the reduced problem
$$
    g(\eps,\varsigma) = 0,
$$
where $g \colon (-\eps_2,\eps_2) \times \R \to \R$ is given by
\begin{align*}
g(\eps,\varsigma) = \left\langle u_{0,k,l,\varsigma}', L_\eps\left(u_{0,k,l,\varsigma} + \chi_- \tilde v_-(\eps) + \chi_+ \tilde v_+(\eps)\right) P_\varsigma^\perp w(\eps,\varsigma) + R(\eps,\varsigma) + \check{\mathcal{N}}(w(\eps,\varsigma),\eps,\varsigma)\right\rangle_{L^2}.
\end{align*}
We solve the equation by desingularizing the smooth function $g$. To this end, we define the function $\tilde{g} \colon (-\eps_2,\eps_2) \times \R \to \R$ by
$$
    \tilde{g}(\eps,\varsigma)=
    \left\{ 
    \begin{array}{ll}
    \frac{g(\eps,\varsigma)}{\eps}, & \eps \neq 0,  \\
    \partial_\eps g(0,\varsigma), & \eps = 0.
    \end{array}
    \right.
$$
We observe that $\tilde{g}$ is smooth and obeys $\tilde{g}(0,\varsigma_0) = \partial_\eps g(0,\varsigma_0) = V_\text{eff}(\varsigma_0) = 0$ and $\partial_\varsigma\tilde{g}(0,\varsigma_0) = \partial_\varsigma\partial_\eps g(0,\varsigma_0) = V_\text{eff}'(\varsigma_0) \neq 0$. The implicit function theorem yields $\eps_3\in (0,\eps_2)$ and a smooth function $\varsigma \colon (-\eps_3,\eps_3) \to \R$ such that $\varsigma(0) = \varsigma_0$ and 
$$
    \tilde{g}(\eps,\varsigma(\eps)) = 0
$$
for all $\eps \in (-\eps_3,\eps_3)$. Consequently, the triple $(w(\eps,\varsigma(\eps)),\eps,\varsigma(\eps))$ solves the system~\eqref{eq:LS_RDE_1}-\eqref{eq:LS_RDE_3} for all $\eps \in (-\eps_3,\eps_3)$. We conclude that $u \colon (-\eps_3,\eps_3) \to L^\infty(\R)$ given by
$$
    u(\eps) =v_-(\eps) \chi_- + v_+(\eps) \chi_+ + u_{0,k,l,\varsigma(\eps)} - 2\pi k\chi_- -2\pi (k+l)\chi_+ + w(\eps,\varsigma(\eps)),
$$
is the desired front solution to~\eqref{eq:RDE_toy}, which satisfies~\eqref{bounds_front_toy}.

It remains to prove the assertions on the spectrum of the linearization operator $L_\eps(u(\eps))$. First, we recall from~\S\ref{sec:RDE_toy_0} that there exists $\varrho>0$ such that
\begin{align} \label{spectral_bound_toy}
    \sigma(L_0(u(0))) \subset (-\infty,-\varrho) \cup \{0\},
\end{align}
where $0\in \sigma(L_0(u(0)))$ is a simple eigenvalue. On the other hand, estimate~\eqref{bounds_front_toy} implies that $\|u(\eps)\|_{L^\infty}$ is bounded by an $\eps$-independent constant for $\eps \in (-\eps_3,\eps_3)$. Consequently, there exists by Lemma~\ref{lem:a_priori_toy} an $\eps$-independent constant $\varrho_1 > 0$ such that  $\sigma(L_\eps(u(\eps))) \subset (-\infty,\varrho_1]$. Combining the latter with the fact that $L_\eps(u(\eps)) - L_0(u(0))\colon L^2(\R) \to L^2(\R)$ is a bounded operator, we infer by~\cite[Theorem~IV.3.18]{Kato1995} and estimate~\eqref{spectral_bound_toy} that there exists $\eps_4 \in (0,\eps_3)$ such that
$$
\sigma(L_\eps(u(\eps))) \subset (-\infty,-\varrho) \cup \{\lambda_0(\eps)\}
$$
for all $\eps \in (-\eps_4,\eps_4)$, where $\lambda_0(\eps) \in \R$ is again a simple eigenvalue of $L_\eps(u(\eps))$. By~\cite[Proposition~I.7.2]{Kielhoefer2012} there exist $\eps_5\in (0,\eps_4)$ and $C^1$-curves $\lambda_0 \colon (-\eps_5,\eps_5) \to \R$ and $z \colon (-\eps_5,\eps_5) \to H^2(\R)$ with $z(0)=0$ and $\lambda_0(0)=0$ solving the eigenvalue problem
\begin{align*}
    L_\eps(u(\eps))(u_{0,k,l,\varsigma_0}' + z(\eps))
    =\lambda_0(\eps)(u_{0,k,l,\varsigma_0}' + z(\eps))
\end{align*}
for $\eps \in (-\eps_5,\eps_5)$. Taking the derivative on both sides with respect to $\eps$ and evaluating at $\eps=0$ yields
\begin{align}\label{proof_RDE1}
    L_0(u_{0,k,l,\varsigma_0}) \partial_\eps z(0)
    +V u_{0,k,l,\varsigma_0}'
    + \mathcal{N}''(u_{0,k,l,\varsigma_0})[\partial_\eps u(0),u_{0,k,l,\varsigma_0}']
    = \lambda_0'(0) u_{0,k,l,\varsigma_0}'.
\end{align}
On the other hand, differentiating the equation
\begin{align*}
    A u(\eps) + \eps V u(\eps) + \mathcal{N}(u(\eps)) = 0
\end{align*}
with respect to $x$ and $\eps$ and subsequently setting $\eps = 0$, we obtain
\begin{align}\label{proof_RDE2}
    L_0(u_{0,k,l,\varsigma_0}) \partial_\eps\partial_x u(0) + V'u_{0,k,l,\varsigma_0}+ V u_{0,k,l,\varsigma_0}'+\mathcal{N}''(u_{0,k,l,\varsigma_0})[\partial_\eps u(0),u_{0,k,l,\varsigma_0}'] 
    = 0.
\end{align}
Subtracting~\eqref{proof_RDE2} from~\eqref{proof_RDE1}, we arrive at
\begin{align*}
    L_0(u_{0,k,l,\varsigma_0})(\partial_\eps z(0) - \partial_\eps\partial_x u(0)) - V'u_{0,k,l,\varsigma_0} 
    = \lambda_0'(0) u_{0,k,l,\varsigma_0}'.
\end{align*}
Taking the $L^2$-scalar product of the last equation with $u_{0,k,l,\varsigma_0}' \in \ker(L_0(u_{0,k,l,\varsigma_0}))$, we establish
\begin{align*}
    \lambda_0'(0) \|u_{0,k,l,\varsigma_0}'\|_{L^2}^2 
    = -\int_\R V'(x) u_{0,k,l,\varsigma_0}(x) u_{0,k,l,\varsigma_0}'(x) \de x = - V_\text{eff}'(\varsigma_0),
\end{align*}
which finishes the proof.
\end{proof}

We observe that the front solutions, established in Theorem~\ref{thm:RDE_1front}, obey the assumptions~\ref{assH1} and~\ref{assH3}. Therefore, given any collection of $M$ such fronts with matching asymptotic end states, Theorems~\ref{t:existence_multifront} and~\ref{thm:instability_multifront} and Corollary~\ref{cor:stability_multifront} yield the existence and spectral stability of multifront solutions bifurcating from the formal concatenation of these $M$ fronts, see Figure~\ref{fig:RDE_toy_2fronts}.

\begin{Corollary}\label{cor:RDE_toy_existence_multifront}
Let $M \in \N$. Let $\{u_j\}_{j=1}^M$ be a sequence of front solutions to~\eqref{eq:RDE_toy_stationary}, established in Theorem~\ref{thm:RDE_1front}, with end states $v_{j,\pm}(\eps)$. Assume that it holds $v_{j,+}(\eps) = v_{j+1,-}(\eps)$ for $j = 1,\ldots,M-1$. Then, there exists $N \in \N$ such that for any $n \in \N$ with $n \geq N$ there exists a nondegenerate stationary multifront solution $\tilde{u}_n$ to~\eqref{eq:RDE_toy} of the form 
\begin{align*} \tilde{u}_n = a_n + \sum_{j = 1}^M \chi_{j,n} u_j(\cdot - jnT),
\end{align*}
where $\chi_{j,n}, j = 1,\ldots,M$ is the smooth partition of unity defined in~\S\ref{sec:m-front}, and $\{a_n\}_n$ is a sequence in $H^2(\R)$ converging to $0$ as $n \to \infty$. If the fronts $u_j$ are strongly spectrally stable for $j = 1,\dots,M$, then so is the multifront $\tilde{u}_n$. Moreover, if there exists $j_0 \in \{1,\dots,M\}$ such that $u_{j_0}$ is spectrally unstable, then so is $\tilde{u}_n$.
\end{Corollary}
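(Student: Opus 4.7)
The plan is to reduce the corollary to the abstract results in Sections~\ref{sec:m-front} and~\ref{sec:stability_multifront} after verifying that the primary fronts satisfy~\ref{assH1}--\ref{assH3}. Hypothesis~\ref{assH1} is an immediate consequence of the decomposition $\chi_\pm(u_j - v_{j,\pm}) \in H^2(\R)$ recorded in~\eqref{bounds_front_toy};~\ref{assH2} is part of the statement of the corollary; and~\ref{assH3} is the nondegeneracy asserted in Theorem~\ref{thm:RDE_1front}. Theorem~\ref{t:existence_multifront} then yields the multifront $\tilde u_n$ with $\|a_n\|_{H^2(\R)} \to 0$, and its nondegeneracy follows from Lemma~\ref{lem:invertibility_multifront} applied with the singleton $\mathcal{K}=\{0\}$, which lies in every resolvent set $\rho(\El(u_j))$ by the nondegeneracy of $u_j$.

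For both stability and instability, I would first establish an $n$-independent a-priori bound on the unstable spectrum of the multifront. Writing $L_\eps(\tilde u_n) = \partial_x^2 + \beta_n$ with $\beta_n := \eps V - \cos(\tilde u_n)$, and noting that $\tilde u_n = w_n + a_n$ is uniformly bounded in $L^\infty(\R)$, Lemma~\ref{lem:a_priori_toy} yields $\sigma(L_\eps(\tilde u_n)) \subset (-\infty,\varrho]$ for some $n$-independent $\varrho>0$. Hence the unstable spectrum is confined to the compact set $\mathcal{K} := [0,\varrho]$. Under the strong stability assumption, Corollary~\ref{cor:stability_multifront} applied with this $\mathcal{K}$ immediately concludes strong spectral stability of $\tilde u_n$ for large $n$.

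For the instability assertion, let $\lambda_0 > 0$ be the real simple eigenvalue of $\El(u_{j_0})$ provided by Theorem~\ref{thm:RDE_1front}. By the same theorem, each essential spectrum $\sigma_{\mathrm{ess}}(\El(u_j))$ is contained in a half-line $(-\infty,-\varrho_j]$, so $\lambda_0 \notin \bigcup_{j=1}^M \sigma_{\mathrm{ess}}(\El(u_j))$. The remark following Theorem~\ref{thm:instability_multifront} then permits the choice $\eta_1 = \ldots = \eta_{M-1} = 0$; since $\lambda_0$ is an isolated simple eigenvalue of $\El(u_{j_0})$ and the Evans function $\mathcal{E}_{j_0}$ cannot vanish identically on its component, Proposition~\ref{prop:Evans1front} yields that $\lambda_0$ is a simple root of $\mathcal{E}_{j_0}$, hence a root of the product $\mathcal{E} = \mathcal{E}_1\cdots\mathcal{E}_M$ of multiplicity $m_0 \geq 1$. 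Theorem~\ref{thm:instability_multifront} then produces at least $m_0$ eigenvalues of $\El(\tilde u_n)$ in any sufficiently small disk $B_{\lambda_0}(\varrho')$; choosing $\varrho' < \lambda_0$ places them all in $\{\Re>0\}$, establishing spectral instability.

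No step poses a substantial obstacle: the argument funnels the conclusions of Theorem~\ref{thm:RDE_1front} into the abstract framework, held together by the self-adjoint a-priori bound of Lemma~\ref{lem:a_priori_toy}. The only point that merits care is verifying that the essential-spectrum cutoffs $\varrho_j$ and the isolated eigenvalue $\lambda_0$ are well-separated uniformly for small $\eps$, both of which are clear from the spectral description in Theorem~\ref{thm:RDE_1front}.
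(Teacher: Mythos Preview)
Your proposal is correct and follows essentially the same route as the paper: verify~\ref{assH1}--\ref{assH3} from Theorem~\ref{thm:RDE_1front}, invoke Theorem~\ref{t:existence_multifront} for existence, use the self-adjoint a-priori bound of Lemma~\ref{lem:a_priori_toy} to confine unstable spectrum to a compact set, and conclude stability and instability via Corollary~\ref{cor:stability_multifront} and Theorem~\ref{thm:instability_multifront}, respectively. The only minor deviation is that you obtain nondegeneracy of $\tilde u_n$ directly from Lemma~\ref{lem:invertibility_multifront} with $\mathcal{K}=\{0\}$, whereas the paper cites Theorem~\ref{thm:instability_multifront}; your route is slightly more direct but otherwise equivalent.
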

\begin{proof}
Theorems~\ref{t:existence_multifront} and~\ref{thm:RDE_1front} yield the existence of the multifronts $\tilde{u}_n$. Since the primary fronts $u_j$ are nondegenerate for $j = 1 ,\ldots,M$, Theorem~\ref{thm:instability_multifront} yields that the multifront $\tilde{u}_n$ is also nondegenerate. Due to the continuous embedding $H^1(\R) \hookrightarrow L^\infty(\R)$, $\|\tilde{u}_n\|_{L^\infty}$ is bounded by an $n$-independent constant. Therefore, the statements about the spectral (in)stability of the multifront $\tilde{u}_n$ follow from Corollary~\ref{cor:stability_multifront}, Theorem~\ref{thm:instability_multifront}, and Lemma~\ref{lem:a_priori_toy}. 
\end{proof}

Applying Theorem~\ref{t:existence_periodic} to the nondegenerate multifront solutions established in Corollary~\ref{cor:RDE_toy_existence_multifront}, we find that multifronts connecting to the same periodic end state at $\pm \infty$ are accompanied by large wavelength periodic multipulse solutions. Their spectral stability follows from Corollary~\ref{cor:stability_periodic}, Theorem~\ref{thm:instability_periodic}, and Lemma~\ref{lem:a_priori_toy}.

\begin{Corollary}
Let $u$ be a multifront solution to~\eqref{eq:RDE_toy_stationary}, as established in Corollary~\ref{cor:RDE_toy_existence_multifront}. Assume that $u$ connects to the same periodic end state $v \in H_\per^2(0,T)$ as $x \to \pm \infty$. Then, there exists $N \in \N$ such that for all $n \in \N$ with $n \geq N$ there exists a stationary $nT$-periodic solution $u_n$ to~\eqref{eq:RDE_toy} given by
\begin{align*}
u_n(x) = \chi_n(x) u(x) + (1 - \chi_n(x)) v(x) + a_n(x), \qquad x \in \left[-\tfrac{n}{2}T,\tfrac{n}{2} T\right),
\end{align*}
where $\chi_n$ is the cut-off function from Theorem~\ref{t:existence_periodic}, and $\{a_n\}_n$ is a sequence with $a_n \in H_\per^2(0,nT)$ satisfying $\|a_n\|_{H_\per^2(0,nT)} \to 0$ as $n \to \infty$. Moreover, if $u$ is strongly spectrally stable, then so is $u_n$. Finally, if $u$ is spectrally unstable, then so is $u_n$.
\end{Corollary}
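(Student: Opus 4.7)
The plan is to apply, in order, Theorem~\ref{t:existence_periodic} for existence, Corollary~\ref{cor:stability_periodic} together with Lemma~\ref{lem:a_priori_toy} for the strong-stability transfer, and Theorem~\ref{thm:instability_periodic} for the instability transfer, in strict parallel with the proof of Corollary~\ref{cor:RDE_toy_existence_multifront}.

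To deploy Theorem~\ref{t:existence_periodic}, I would first set $z:=u-v$ and verify~\ref{assH4}, namely $z\in H^2(\R)$. Writing $u = a + \sum_{j=1}^M \chi_{j,n'} u_j(\cdot - jn'T)$ with $a\in H^2(\R)$ as in Corollary~\ref{cor:RDE_toy_existence_multifront}, the matching hypothesis forces $v_{1,-}=v_{M,+}=v$, so Theorem~\ref{thm:RDE_1front} gives $H^2$-decay of $z$ at both infinities; on the bounded region containing the front interfaces $z$ is merely bounded. Hypothesis~\ref{assH5} is inherited directly from Corollary~\ref{cor:RDE_toy_existence_multifront}. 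Theorem~\ref{t:existence_periodic} then supplies $u_n$ of the stated form, and the decay $\|z\|_{H^2(\R\setminus(-\tfrac{n}{6}T,\tfrac{n}{6}T))}\to 0$ delivers $\|a_n\|_{H_\per^2(0,nT)}\to 0$.

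For the strong-stability claim, the task is to isolate $\sigma(L_\eps(u_n))\cap\{\Re(\lambda)\geq 0\}$ in an $n$-independent compact set and then invoke Corollary~\ref{cor:stability_periodic}. The embedding $H_\per^2(0,nT)\hookrightarrow L^\infty(\R)$ with $n$-independent constant, combined with $\|a_n\|_{H_\per^2(0,nT)}\to 0$, yields a uniform $L^\infty$-bound on $u_n$; Lemma~\ref{lem:a_priori_toy} applied to $L_\eps(u_n)=\partial_x^2+(\eps V-\cos(u_n))$ then furnishes the desired compact set.

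For the instability claim, self-adjointness of $L_\eps(u)$ places any unstable spectral point $\lambda_0$ on $(0,\infty)$, and Proposition~\ref{prop:essential_spec1front} together with the matching condition $v_\pm=v$ identifies $\sigma_{\mathrm{ess}}(L_\eps(u))$ with $\sigma(L_\eps(v))$. The step requiring the most care --- and the only one I expect to be delicate --- is confining $\sigma(L_\eps(v))$ to the open left half plane for small $|\eps|$: since the numerical range of the self-adjoint operator $L_\eps(v)$ lies in $(-\infty,\sup(\eps V-\cos(v))]$ and $\cos(v)\geq 1-C\eps^2$ by~\eqref{eq:toy_RDE_estimates_periodic} together with $H_\per^2\hookrightarrow L^\infty$, one obtains $\sigma(L_\eps(v))\subset(-\infty,-\tfrac12]$ for $|\eps|$ small. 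Hence $\lambda_0$ is an isolated root of the Evans function of~\eqref{variationalsysper} of some finite multiplicity $m_0\in\N$; Theorem~\ref{thm:instability_periodic} then places eigenvalues of $L_{\eps,\per}(u_n)$ in a small disc around $\lambda_0$ for large $n$, and the Bloch inclusion~\eqref{Bloch_inclusion} concludes the spectral instability of $u_n$.
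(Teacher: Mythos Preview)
Your proposal is correct and follows essentially the same approach as the paper, which merely cites Theorem~\ref{t:existence_periodic}, Corollary~\ref{cor:stability_periodic}, Theorem~\ref{thm:instability_periodic}, and Lemma~\ref{lem:a_priori_toy} without spelling out the details. Two minor remarks: first, in verifying~\ref{assH4} you should note that on the bounded region $z=u-v$ is not just bounded but $W^{2,\infty}$ (since $u$ solves a second-order ODE with bounded right-hand side and the cut-offs obey $\|\chi_{j,n'}\|_{W^{k,\infty}}\leq 1$), which is what yields $z\in H^2$ there; second, the final appeal to the Bloch inclusion is redundant, as assertion~4 of Theorem~\ref{thm:instability_periodic} already places spectrum of $L_\eps(u_n)$ itself in $B_{\lambda_0}(\varrho)$.
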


\begin{figure}[t]
    \centering
    \includegraphics[width=0.3\textwidth]{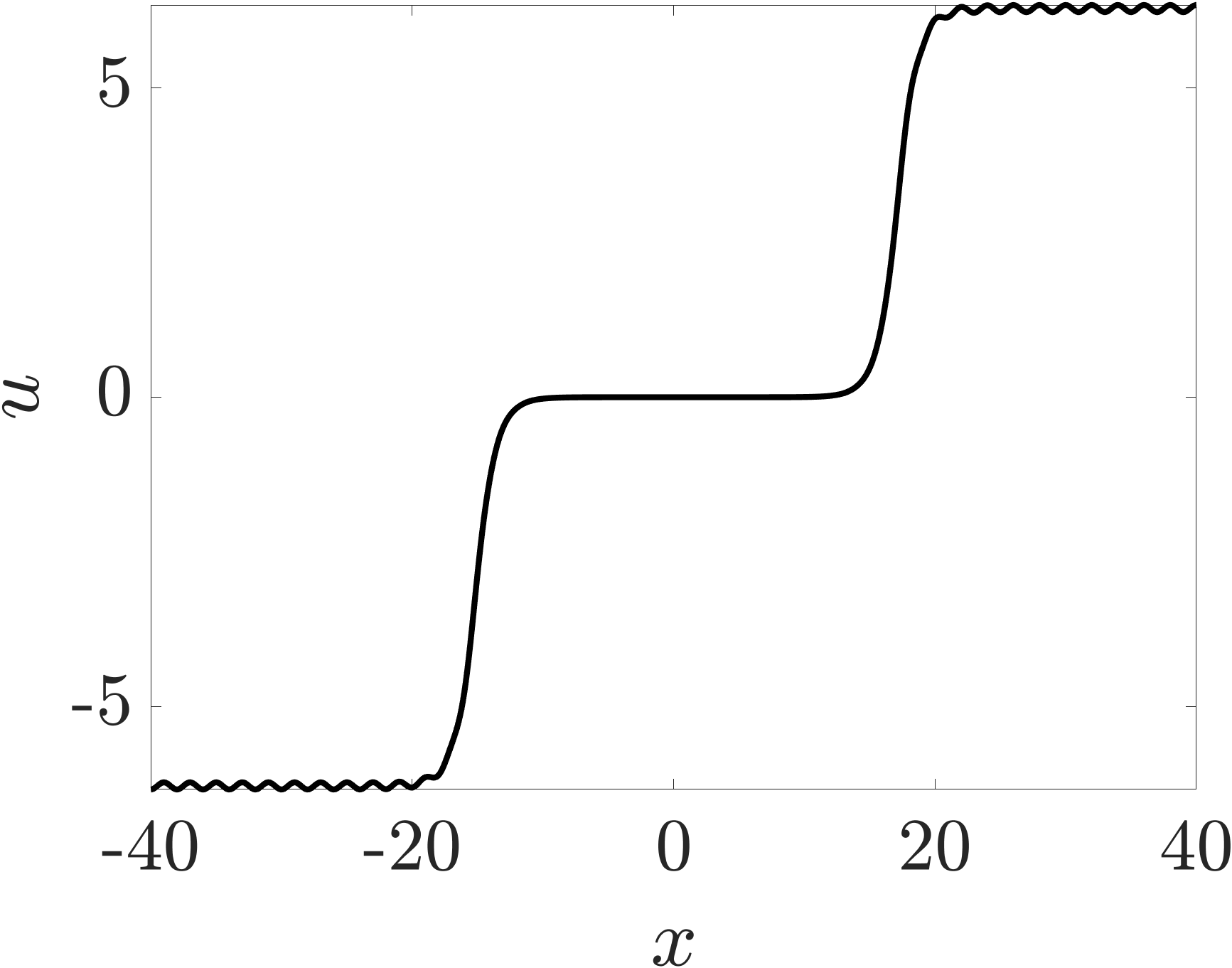} \hspace{1em}
    \includegraphics[width=0.3\textwidth]{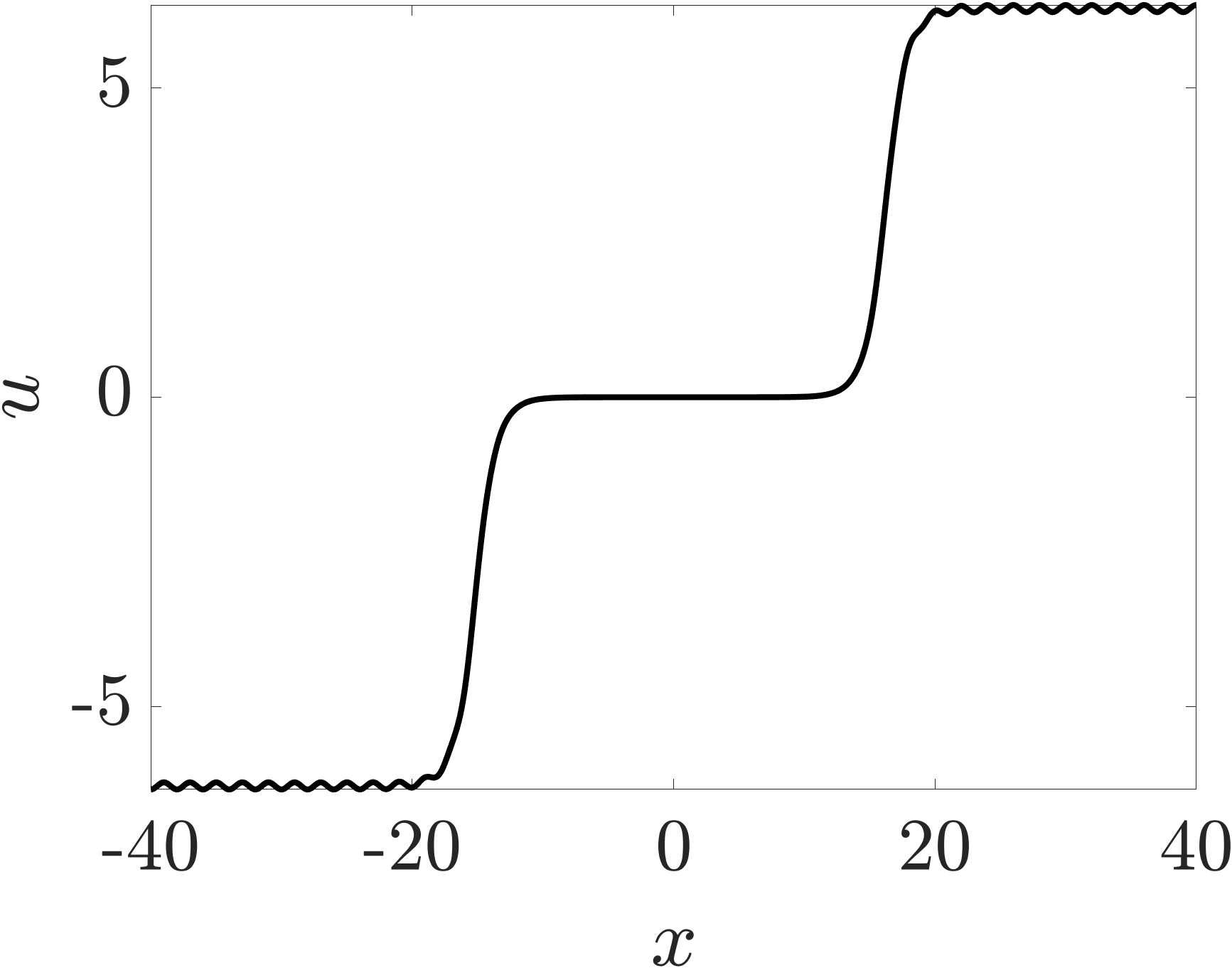}\\\vspace{1em}
    \includegraphics[width=0.3\textwidth]{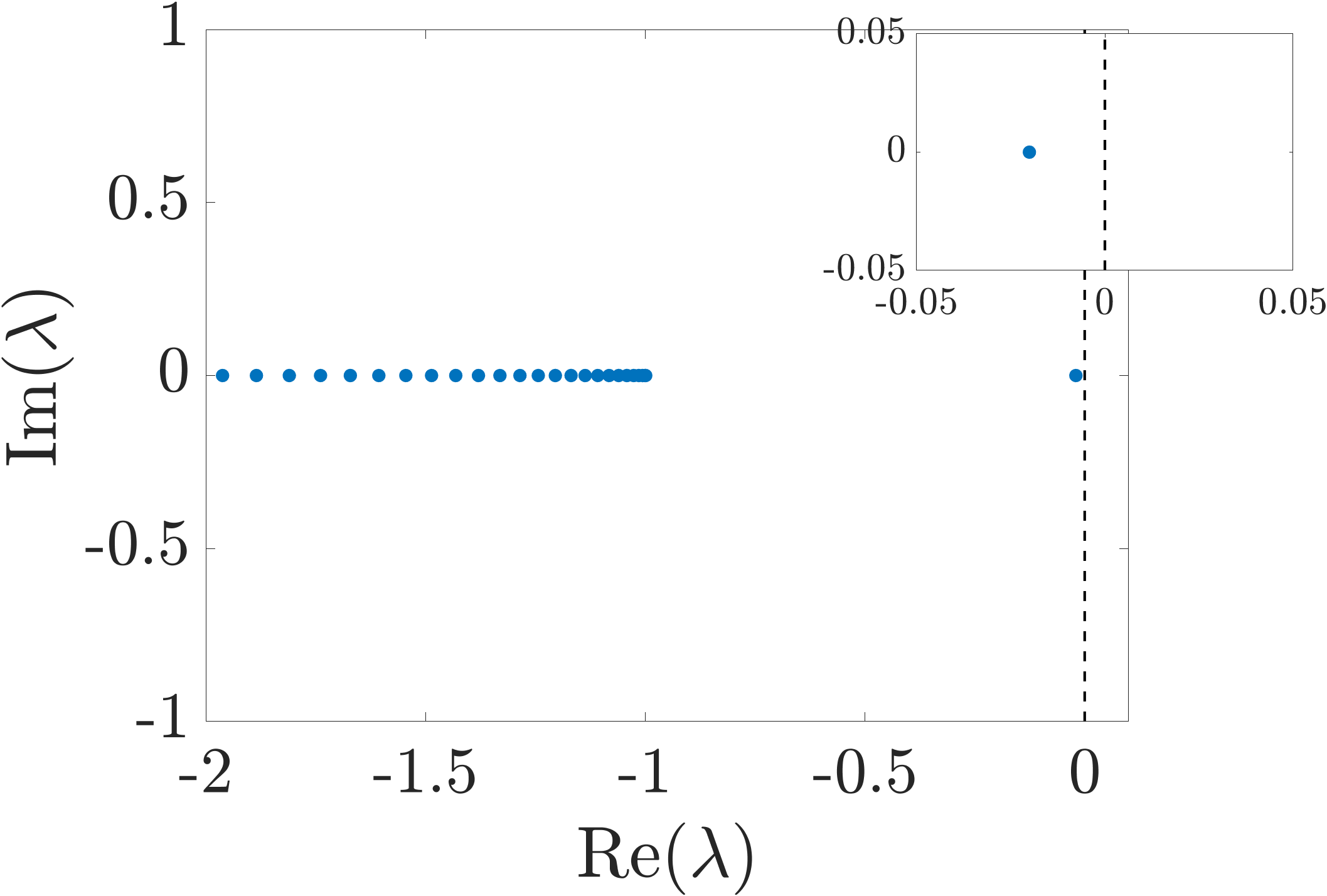} \hspace{1em}
    \includegraphics[width=0.3\textwidth]{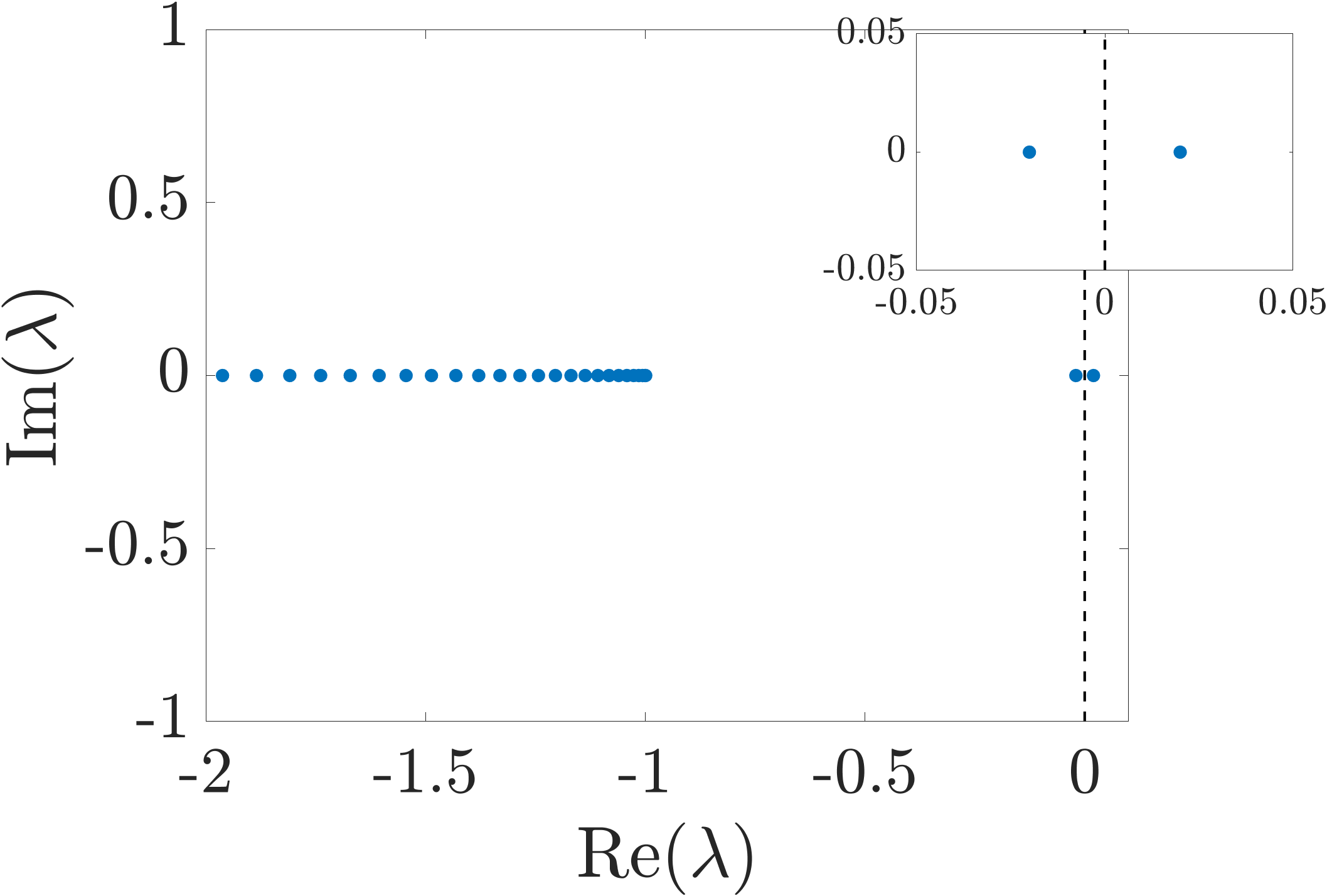}\hspace{1em}
    \caption{Approximations of stationary 2-front solutions to~\eqref{eq:RDE_toy}, along with their spectra, for system coefficients $\eps = 0.1$ and $V(x) = \cos(\pi x)$. The insets provide a closer view of the small eigenvalues near zero. Left: a strongly spectrally stable 2-front solution obtained through numerical continuation by starting from the formal concatenation of the strongly spectrally stable front solutions depicted in the left and middle panels of Figure~\ref{fig:RDE_toy_1fronts}. Right: a spectrally unstable 2-front obtained through numerical continuation by starting from the formal concatenation of a strongly spectrally stable and a spectrally unstable 1-front solution (left and right panels of Figure~\ref{fig:RDE_toy_1fronts}).    
    }
    \label{fig:RDE_toy_2fronts}
\end{figure}

\subsection{A Klausmeier reaction-diffusion-advection system}
We consider a Klausmeier-type model with spatially periodic coefficients as an example for a 2-component reaction-diffusion-advection system to which our theory applies. Using our methods, we rigorously establish the existence of strongly spectrally stable stationary multipulse solutions and corresponding periodic pulse solutions. These results extend the recent findings in~\cite{Bastiaansen2020}, where stationary $1$-pulse solutions to this Klausmeier model were constructed using singular perturbation theory and their stability was analyzed. The system of equations reads
\begin{align*}
    \begin{split}
    \partial_t w & = \partial_x^2w +\eps \big(f(x) \partial_x w + g(x) w \big) - w - w p^2 + a, \\
    \partial_t p &= d^2 \partial_x^2 p - mp + wp^2,
    \end{split} \qquad \begin{pmatrix} w(x,t)\\ p(x,t)\end{pmatrix} \in \R^2, \, x \in \R, \, t \geq 0
\end{align*}
with parameters $d,a,m,\eps>0$ and real-valued functions $f,g \in C^1(\R)$ with period $T>0$. This model is employed in ecology to describe the dynamics of vegetation patterns resulting from the interaction between water $w$ and plants $p$ across a spatially heterogeneous terrain with periodic topography modeled by the functions $f$ and $g$. Here, $d>0$ is a diffusion coefficient, $a$ models the amount of rain fall, $1/m$ is a quantity corresponding to the life time of plants, and $\eps>0$ measures the influence of the terrain on the vegetation dynamics. For more background on the model, including the role of the individual parameters and the functions $f$ and $g$, we refer to~\cite{Bastiaansen2020} and references therein. 

To fit the system in our framework, we set $\ub = (w,p)^\top$ and write the equations as
\begin{align}\label{klausmeier_system}
    \partial_t \ub = A_\eps \ub + \mathcal{N}_\eps(\ub,x)
\end{align}
with
\begin{align*}
    A_\eps =
    \begin{pmatrix}
        \partial_x^2 +\eps f \partial_x & 0 \\
        0 & d^2\partial_x^2 
    \end{pmatrix}, \qquad
    \mathcal{N}_\eps\left(\begin{pmatrix}
        w\\p
    \end{pmatrix},x\right) =
    \begin{pmatrix}
        \eps g(x)w - w - wp^2 + a\\-mp + wp^2 
    \end{pmatrix}.
\end{align*}
The existence problem for stationary solutions is then given by
\begin{align}\label{eq:stationary_Klausmeier}
    A_\eps \ub + \mathcal{N}_\eps(\ub,\cdot) = 0,
\end{align}
which is of the form~\eqref{existence_problem}. The associated linearization operator $L_\eps(\unub) \colon D(L_\eps(\unub)) \subset L^2(\R) \to L^2(\R)$ with dense domain $D(L_\eps(\unub)) = H^2(\R)$ is given by
\begin{align*}
    L_\eps(\unub)= A_\eps + \partial_u \mathcal{N}_\eps(\unub,\cdot)
\end{align*}
for $\unub \in L^\infty(\R)$. Given that the $\eps$-independent principal part $A_0$ of the operator $L_\eps(\unub)$ is sectorial, and the remainder $L_\eps(\unub) - A_0$ is relatively $A_0$-bounded, cf.~\cite[Definition~III.2.1]{EngelNagel2000}, we obtain the following spectral a-priori bound. 

\begin{Lemma} \label{lem:apriori_Klausmeier}
Let $f,g \in L^\infty(\R)$ and $C,d,a,m,\eps_0 > 0$. 
Then, there exists a constant $\varrho > 0$, depending only on $\|f\|_{L^\infty},\|g\|_{L^\infty},C,d,a,m$ and $\eps_0$, such that we have
\begin{align*}
\sigma(L_\eps(\unub)) \cap \{\lambda \in \C : \Re(\lambda) \geq -1\} \subset \overline{B}_0(\varrho)
\end{align*}
for all $\eps \in (-\eps_0,\eps_0)$ and each $\unub \in L^\infty(\R)$ with $\|\unub\|_{L^\infty} \leq C$. Here, $\overline{B}_0(\varrho)$ is the closed ball of radius $\varrho$ centered at the origin. 
\end{Lemma}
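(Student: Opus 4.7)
The plan is to treat $L_\eps(\unub)$ as a perturbation of its $\eps$- and $\unub$-independent principal part $A_0 := \mathrm{diag}(\partial_x^2, d^2 \partial_x^2)$ with domain $H^2(\R)$. Since $A_0$ is self-adjoint and non-positive, one has $\sigma(A_0) = (-\infty, 0]$ and $\|(A_0 - \lambda)^{-1}\|_{L^2 \to L^2} = 1/\mathrm{dist}(\lambda, (-\infty, 0])$ for $\lambda \in \rho(A_0)$. Denoting $B_\eps(\unub) := L_\eps(\unub) - A_0 = (\eps f \partial_x) \oplus 0 + \partial_u \mathcal{N}_\eps(\unub, \cdot)$, I factor
\begin{align*}
    L_\eps(\unub) - \lambda = \bigl(I + B_\eps(\unub)(A_0 - \lambda)^{-1}\bigr)(A_0 - \lambda)
\end{align*}
as maps $H^2(\R) \to L^2(\R)$. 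For $\Re(\lambda) \geq -1$ with $|\lambda| \geq 2$, the interval $[-1,0]$ is excluded, so $A_0 - \lambda$ is boundedly invertible, and the task reduces to showing $\|B_\eps(\unub)(A_0 - \lambda)^{-1}\|_{L^2 \to L^2} < 1$ for such $\lambda$ with $|\lambda|$ exceeding a suitable radius $\varrho$ depending only on the stated parameters.

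The matrix-valued multiplication operator $\partial_u \mathcal{N}_\eps(\unub, \cdot)$ has $L^2 \to L^2$ norm bounded by a constant $K_1$ depending only on $\|g\|_{L^\infty}$, $m$, $C$, and $\eps_0$, so its contribution to $\|B_\eps(\unub)(A_0 - \lambda)^{-1}\|$ is at most $K_1 / \mathrm{dist}(\lambda, (-\infty, 0])$, which is $\mathcal{O}(1/|\lambda|)$ on our region. The delicate piece is the unbounded first-order part $(\eps f \partial_x) \oplus 0$: since $f$ is only assumed to be in $L^\infty$, integrating by parts to transfer $\partial_x$ off $w$ is unavailable. Instead I plan to use Plancherel: the Fourier symbol of $\partial_x(\partial_x^2 - \lambda)^{-1}$ is $\iu \xi / (-\xi^2 - \lambda)$, giving
\begin{align*}
    \left\|\eps f \partial_x (\partial_x^2 - \lambda)^{-1}\right\|_{L^2 \to L^2} \leq \eps_0 \|f\|_{L^\infty} \sup_{\xi \in \R} \frac{|\xi|}{|\xi^2 + \lambda|}.
\end{align*}
Maximizing in $t = \xi^2$ locates the critical point at $t = |\lambda|$, and the identity $(|\lambda| + \Re\lambda)^2 + (\Im\lambda)^2 = 2|\lambda|(|\lambda| + \Re\lambda)$ together with $|\lambda| + \Re\lambda \geq |\lambda|/2$ for $|\lambda| \geq 2$, $\Re(\lambda) \geq -1$ yields $\sup_\xi |\xi|/|\xi^2 + \lambda| \leq 1/\sqrt{|\lambda|}$.

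Combining these estimates gives $\|B_\eps(\unub)(A_0 - \lambda)^{-1}\|_{L^2 \to L^2} \leq 2K_1/|\lambda| + \eps_0 \|f\|_{L^\infty}/\sqrt{|\lambda|}$, uniformly in $\eps \in (-\eps_0, \eps_0)$ and $\unub \in L^\infty(\R)$ with $\|\unub\|_{L^\infty} \leq C$. Choosing $\varrho$ large enough (depending only on the stated parameters) to force the right-hand side below $\tfrac{1}{2}$, a Neumann-series argument inverts $I + B_\eps(\unub)(A_0 - \lambda)^{-1}$, so $L_\eps(\unub) - \lambda \colon H^2(\R) \to L^2(\R)$ is boundedly invertible and the claimed inclusion follows. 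The main obstacle I anticipate is this Fourier estimate for the first-order piece: the $L^\infty$-only regularity on $f$ rules out integration by parts, so one must extract the improved $1/\sqrt{|\lambda|}$ decay directly from the symbol, and this slower rate—compared to $1/|\lambda|$ for the zeroth-order term—is what ultimately dictates the size of $\varrho$.
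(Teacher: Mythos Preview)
Your proof is correct and follows the same strategy as the paper: treat $L_\eps(\unub)$ as a relatively bounded perturbation of the sectorial principal part $A_0 = \mathrm{diag}(\partial_x^2, d^2\partial_x^2)$ and invert $L_\eps(\unub) - \lambda$ via a Neumann series once $|\lambda|$ is large. The paper dispatches this by citing the abstract perturbation result~\cite[Lemma~III.2.6]{EngelNagel2000}, whereas your explicit Fourier estimate $\sup_{\xi}|\xi|/|\xi^2+\lambda| \leq 1/\sqrt{|\lambda|}$ is precisely a hands-on verification that the first-order piece $\eps f\partial_x$ has $A_0$-bound zero, which is the key hypothesis behind that citation.
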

\begin{proof}
On the one hand, it is well-known that the diagonal diffusion operator $A_0$ generates a bounded analytic semigroup, cf.~\cite[Example~II.4.10]{EngelNagel2000}. On the other hand, the exposition in~\cite[Example~III.2.2]{EngelNagel2000} demonstrates that the residual $L_\eps(\unub)-A_0$ is relatively $A_0$-bounded with $A_0$-bound $0$. The result then follows directly from~\cite[Lemma~III.2.6]{EngelNagel2000} and its proof.   
\end{proof}

The main goal of this section is to show that~\eqref{klausmeier_system} admits stationary (periodic) multipulse solutions corresponding to (periodic sequences of) localized vegetation patches. The fundamental building blocks of these multiple pulse solutions are nondegenerate $1$-pulse solutions. The existence and spectral stability of stationary $1$-pulse solutions to~\eqref{klausmeier_system} for small $\eps>0$ were established in~\cite{Bastiaansen2020} for a broad class of heterogeneities $f$ and $g$. However, the spectral analysis in~\cite{Bastiaansen2020} assumes that $f$ and $g$ are localized, leaving the spectral stability and nondegeneracy of the $1$-pulse solutions unaddressed for periodic $f$ and $g$.

To bridge this gap, we begin by establishing the existence and spectral stability of nondegenerate stationary $1$-pulse solutions to~\eqref{klausmeier_system} for small $\eps > 0$ and periodic $f$ and $g$. This is achieved by bifurcating from even $1$-pulse solutions to the unperturbed problem
\begin{align}\label{eq:stationary_Klausmeier_eps0}
    A_0 \ub + \mathcal{N}_0(\ub,\cdot) = 0,
\end{align} 
which were constructed in~\cite[Theorem~2.20]{Bastiaansen2020} using geometric singular perturbation theory in the regime $\nu := a/m \ll 1$, $d\nu^2 m^{1/2}, d /\nu^{2} \leq C$ for some $\nu$-independent constant $C>0$. Since these solutions correspond to homoclinics to a hyperbolic equilibrium in~\eqref{eq:stationary_Klausmeier_eps0}, they are exponentially localized, see Figure~\ref{fig:pulses_Klausmeier}. Moreover, it follows from~\cite[Theorem~3.2]{Bastiaansen2020} that they are spectrally stable with simple eigenvalue $\lambda = 0$ as in  Definition~\ref{def:spectral_stability}. With the aid of the implicit function theorem, we prove that these $1$-pulse solutions persist for $\eps > 0$ in case $f$ is odd and $g$ is even. Moreover, we show that their spectral stability is determined by the sign of a Melnikov-type integral.

\begin{Theorem}\label{thm:existence_klausmeier}
Let $T, d,a,m>0$. Let $f \in C^1(\R)$ be $T$-periodic and odd. Let $g \in C^1(\R)$ be $T$-periodic and even. Let $\ub_0 = \mathbf{z}_0+\mathbf{v}_0 \in H^2(\R) \oplus \R^2$ be an even solution to~\eqref{klausmeier_system} for $\eps=0$, which is spectrally stable with simple eigenvalue $\lambda=0$. Assume that the Melnikov integral
\begin{align*}
\mathcal{M} = \int_\R \big( f'(x) \partial_x\ub_{01}(x) + g'(x) \ub_{01}(x) \big) \Psi_{\mathrm{ad}1}(x) \de x
\end{align*}
does not vanish, where $\Psi_\textup{ad} \in H^2(\R)$ spans the kernel of the adjoint operator $L_0(\ub_0)^*$ and satisfies $\langle \partial_x\ub_0,\Psi_\textup{ad} \rangle_{L^2} = 1$.

Then, there exist constants $C,\eps_0,\eta>0$ such that for all $\eps \in(-\eps_0,\eps_0) \setminus \{0\}$ there exists a nondegenerate even solution $\ub(\eps) = \mathbf{z}(\eps) + \mathbf{v}(\eps) \in H^2(\R) \oplus H^2_\per(0,T)$ to~\eqref{klausmeier_system} satisfying
\begin{align} \label{bounds_Klausmeier}
    \|\ub(\eps)-\ub_0\|_{L^\infty} \leq C\eps, \qquad \|\mathbf{v}(\eps)-\mathbf{v}_0\|_{H_\per^2(0,T)} \leq C \eps.
\end{align}
Furthermore, we have
\begin{align*}
    \sigma(L_\eps(\ub(\eps)) \subset \{\lambda \in \C : \Re(\lambda) \leq- \eta\} \cup \{\lambda_0(\eps)\}
\end{align*}
for $\eps \in (-\eps_0,\eps_0)$, where $\lambda_0(\eps) \in \R$ is a real simple eigenvalue of $L_\eps(\ub(\eps))$ obeying the expansion 
$$\lambda_0(\eps) = -\eps \mathcal{M} + \mathcal{O}(\eps^2).$$
\end{Theorem}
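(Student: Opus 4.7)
The approach parallels that of Theorem~\ref{thm:RDE_1front}, but here the reflection symmetry $R\colon \ub(\cdot) \mapsto \ub(-\cdot)$, inherited from $f$ being odd and $g$ even, replaces the translational phase condition. A direct check using $R\partial_x R = -\partial_x$ shows that $R$ commutes with $A_\eps$ and with $\mathcal{N}_\eps(\cdot,\cdot)$, so $R$-even functions form an invariant subspace for the problem, which removes the degeneracy of $L_0(\ub_0)$ caused by the kernel element $\partial_x\ub_0$.

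First, I would construct the periodic background by applying the implicit function theorem to $\mathcal{F}_\per \colon H^2_\per(0,T) \times \R \to L^2_\per(0,T)$, $\mathcal{F}_\per(\mathbf{v},\eps) = A_\eps \mathbf{v} + \mathcal{N}_\eps(\mathbf{v},\cdot)$. The derivative $\partial_\mathbf{v}\mathcal{F}_\per(\mathbf{v}_0,0) = L_0(\mathbf{v}_0)|_{L^2_\per(0,T)}$ is invertible because the spectral stability of $\ub_0$ forces $\lambda=0$ to lie outside the essential spectrum of $L_0(\ub_0)$; Proposition~\ref{prop:essential_spec1front} then implies that $L_0(\mathbf{v}_0)$ is invertible on $L^2(\R)$ at $\lambda=0$, and hence on $L^2_\per(0,T)$. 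Uniqueness in IFT combined with $R$-invariance ensures $\mathbf{v}(\eps)$ is even. Next I would impose the ansatz $\ub(\eps) = \ub_0 + (\mathbf{v}(\eps) - \mathbf{v}_0) + \mathbf{w}$ with $\mathbf{w} \in H^2_{\mathrm{even}}(\R)$ and apply IFT to the corresponding residual map $\mathcal{F}\colon H^2_{\mathrm{even}}(\R) \times (-\eps_1,\eps_1) \to L^2_{\mathrm{even}}(\R)$. The derivative $\partial_\mathbf{w}\mathcal{F}(0,0) = L_0(\ub_0)$ restricts to a bijection between these reflection-symmetric subspaces: its kernel on $L^2(\R)$ is spanned by the odd function $\partial_x \ub_0$, and since $L_0(\ub_0)$ is Fredholm of index zero and commutes with $R$, the adjoint kernel is also odd, so the cokernel on the even subspace is trivial. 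IFT yields a smooth $\mathbf{w}(\eps) \in H^2_{\mathrm{even}}(\R)$ with $\mathbf{w}(0) = 0$ and $\|\mathbf{w}(\eps)\|_{H^2} = O(\eps)$, producing $\ub(\eps) = \mathbf{z}(\eps) + \mathbf{v}(\eps)$ with $\mathbf{z}(\eps) = \mathbf{z}_0 + \mathbf{w}(\eps) \in H^2(\R)$ and obeying~\eqref{bounds_Klausmeier}.

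For the spectrum of $L_\eps(\ub(\eps))$, I would combine the a-priori bound from Lemma~\ref{lem:apriori_Klausmeier}, which confines $\sigma(L_\eps(\ub(\eps))) \cap \{\Re\lambda \geq -1\}$ to a fixed compact disk $\overline{B}_0(\varrho)$, with Kato's analytic perturbation theory applied to the $\eps$-analytic family $L_\eps(\ub(\eps))$ (a bounded multiplication perturbation of $L_0(\ub_0)$ together with the relatively $A_0$-bounded first-order term $\eps f\partial_x$). Since $\lambda = 0$ is a simple, isolated eigenvalue of $L_0(\ub_0)$ separated from the rest of the spectrum, \cite[Ch.~VII]{Kato1995} produces a unique analytic family $\lambda_0(\eps) \in \R$ of simple eigenvalues near zero and pushes the remaining spectrum in the compact region to $\{\Re\lambda \leq -\eta\}$. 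To pin down $\lambda_0'(0)$, I would differentiate the stationary equation in $x$; because $A_\eps$ and $\mathcal{N}_\eps$ depend explicitly on $x$, $\partial_x$ no longer commutes with them and one obtains the identity
\begin{align*}
L_\eps(\ub(\eps))\partial_x\ub(\eps) = -\eps\begin{pmatrix} f'(x)\partial_x w(\eps) + g'(x) w(\eps) \\ 0\end{pmatrix}.
\end{align*}
Differentiating this identity in $\eps$ at $\eps = 0$ and pairing with $\Psi_\textup{ad}$ annihilates the $L_0(\ub_0)$-term via $L_0(\ub_0)^*\Psi_\textup{ad} = 0$, leaving $\langle \partial_\eps[L_\eps(\ub(\eps))]|_{\eps=0} \partial_x\ub_0, \Psi_\textup{ad}\rangle = -\mathcal{M}$, which by the Kato formula for the derivative of a simple eigenvalue equals $\lambda_0'(0)$. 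Since $\mathcal{M}\neq 0$, this simultaneously gives the expansion and the nondegeneracy of $\ub(\eps)$ for $\eps\neq 0$ small. The main technical hurdle is verifying the analyticity hypotheses required by Kato's theory in the presence of the unbounded first-order perturbation $\eps f\partial_x$, which is handled by the relative $A_0$-boundedness already exploited in Lemma~\ref{lem:apriori_Klausmeier}.
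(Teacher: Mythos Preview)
Your proposal is correct and follows essentially the same route as the paper: construct $\mathbf{v}(\eps)$ and then $\mathbf{z}(\eps)$ via the implicit function theorem on the even subspace (exploiting that $\partial_x\ub_0$ is odd), and then track the simple eigenvalue near zero by perturbation theory combined with the a-priori bound of Lemma~\ref{lem:apriori_Klausmeier}. Your computation of $\lambda_0'(0)$ via the identity $L_\eps(\ub(\eps))\partial_x\ub(\eps) = -\eps(f'\partial_x \ub_{1}(\eps) + g'\ub_{1}(\eps),0)^\top$ followed by a single $\eps$-differentiation and the Hellmann--Feynman formula is a slightly cleaner packaging of the paper's two-differentiation-and-subtract argument, and you are more explicit than the paper about why the restriction $L_0(\ub_0)|_{H^2_{\mathrm{even}}}$ is surjective (the adjoint kernel element $\Psi_{\mathrm{ad}}$ must be odd because $\langle\partial_x\ub_0,\Psi_{\mathrm{ad}}\rangle\neq 0$).
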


\begin{proof}
We start with the construction of the periodic background wave $\mathbf{v}(\eps)$ by perturbing from the rest state $\mathbf{v}_0$ in~\eqref{eq:stationary_Klausmeier_eps0}. The smooth function $\mathcal{F}\colon H_\mathrm{even,per}^2(0,T) \times \R \to L_\mathrm{even,per}^2(0,T)$ given by
\begin{align*}
\mathcal{F}(\mathbf{v},\eps)= A_\eps\mathbf{v} + \mathcal{N}_\eps(\mathbf{v},\cdot)
\end{align*}
obeys $\mathcal{F}(\mathbf{v}_0,0) = 0$ and $\partial_{\mathbf{v}}\mathcal{F}(\mathbf{v}_0,0) = L_0(\mathbf{v}_0)$. Since $\ub_0$ is spectrally stable with simple eigenvalue $\lambda =0$, the essential spectrum of $L_0(\ub_0)$ is confined to the open left-half plane, which, by Proposition~\ref{prop:essential_spec1front}, is given by $\sigma_{\mathrm{ess}}(L_0(\ub_0)) = \sigma(L_0(\mathbf{v}_0))$. Therefore, $\partial_\mathbf{v}\mathcal{F}(\mathbf{v}_0,0)$ is invertible as an operator from $H_{\mathrm{even},\per}^2(0,T)$ into $L_{\mathrm{even},\per}^2(0,T)$. An application of the implicit function theorem yields $\eps_1>0$ and a smooth map $\mathbf{v} \colon (-\eps_1,\eps_1) \to H_\mathrm{even,per}^2(0,T)$ with $\mathbf{v}(0) = \mathbf{v}_0$ such that 
\begin{align*}
    \mathcal{F}(\mathbf{v}(\eps),\eps) = 0 
\end{align*}
for all $\eps \in (-\eps_1,\eps_1)$.

Substituting the ansatz $\ub = \mathbf{z} + \mathbf{v}(\eps)$ with $\mathbf{z} \in H_\mathrm{even}^2(\R)$ into~\eqref{eq:stationary_Klausmeier}, we arrive at
\begin{align*}
0 = A_\eps (\mathbf{z}+\mathbf{v}(\eps)) + \mathcal{N}_\eps(\mathbf{z}+\mathbf{v}(\eps),\cdot) &= A_\eps\mathbf{z} + \mathcal{N}_\eps(\mathbf{z}+\mathbf{v}(\eps),\cdot) - \mathcal{N}_\eps(\mathbf{v}(\eps),\cdot). 
\end{align*}
Clearly, the smooth nonlinear operator $\mathcal{G}\colon H_\mathrm{even}^2(\R) \times (-\eps_1,\eps_1) \to L_\mathrm{even}^2(\R)$ given by
\begin{align*}
    \mathcal{G}(\mathbf{z},\eps) =A_\eps\mathbf{z} + \mathcal{N}_\eps(\mathbf{z}+\mathbf{v}(\eps),\cdot) - \mathcal{N}_\eps(\mathbf{v}(\eps),\cdot)
\end{align*}
is well-defined and satisfies $\mathcal{G}(\mathbf{z}_0,0) = 0$ and $\partial_\mathbf{z} \mathcal{G}(\mathbf{z}_0,0) = L_0(\ub_0)|_{H_\mathrm{even}^2}$, where $L_0(\ub_0)|_{H_\mathrm{even}^2}$ is the restriction of the linear operator $L_0(\ub_0) \colon H^2(\R) \to L^2(\R)$ to the subspace $H_{\mathrm{even}}^2(\R) \subset H^2(\R)$ of even functions. Since $\ub_0$ is spectrally stable with simple eigenvalue $\lambda=0$, the kernel of $L_0(\ub_0)$ is spanned by the odd function $\partial_x \ub_0$. Therefore, $\partial_\mathbf{z} \mathcal{G}(\mathbf{z}_0,0) = L_0(\ub_0)|_{H_\mathrm{even}^2}$ is invertible. Hence, the implicit function theorem affords $\eps_2 \in (0,\eps_1)$ and a smooth map $\mathbf{z} \colon (-\eps_2,\eps_2) \to H_\mathrm{even}^2(\R)$ with $\mathbf{z}(0) = \mathbf{z}_0$ such that
\begin{align*}
    \mathcal{G}(\mathbf{z}(\eps),\eps) = 0
\end{align*}
for all $\eps \in (-\eps_2,\eps_2)$. In particular, we obtain a smooth map $\mathbf{u} \colon (-\eps_2,\eps_2) \to H^2(\R) \oplus H^2_\per(0,T)$ such that $\mathbf{u}(\eps) = \mathbf{z}(\eps) + \mathbf{v}(\eps)$ is an even solution to~\eqref{eq:stationary_Klausmeier}. The bounds~\eqref{bounds_Klausmeier} follows by the smoothness of $\mathbf{u}$ and $\mathbf{v}$ and the continuous embeddings $H^1(\R) \hookrightarrow L^\infty(\R)$ and $H^1_\per(0,T) \hookrightarrow L^\infty(\R)$. 

Next, we establish the nondegeneracy of $\mathbf{u}(\eps)$ as well as its spectral (in)stability. To this end, we begin by tracing the simple eigenvalue of $L_\eps(\ub(\eps))$ converging to $0$ as $\eps \to 0$. Since $L_0(\ub_0)$ is spectrally stable with simple eigenvalue $\lambda = 0$, it follows from~\cite[Proposition I.7.2]{Kielhoefer2012} that there exist $\eps_3 \in (0,\eps_2)$ and $C^1$-curves $\lambda_0 \colon (-\eps_3,\eps_3) \to \R$ and $\mathbf{w} \colon (-\eps_3,\eps_3) \to H^2(\R)$ with $\mathbf{w}(0)=0$ and $\lambda_0(0)=0$ solving the eigenvalue problem
\begin{align*}
    L_\eps(\ub(\eps))(\partial_x\ub_0 + \mathbf{w}(\eps)) 
    =\lambda_0(\eps)(\partial_x\ub_0 + \mathbf{w}(\eps)).
\end{align*}
Taking the derivative on both sides with respect to $\eps$ and evaluating at $\eps=0$ yields
\begin{align}\label{proof_Klausmeier1}
    L_0(\ub_0) \partial_\eps \mathbf{w}(0)
    +(f\partial_x + g) \begin{pmatrix} \partial_x \ub_{01}\\0\end{pmatrix} 
    + \partial_{\ub\ub} \mathcal{N}_0(\ub_0,\cdot)[\partial_\eps \ub(0), \partial_x \ub_0]
    = \lambda_0'(0) \partial_x \ub_0,
\end{align}
where we denote $\ub_0 = (\ub_{01},\ub_{02})^\top$.
On the other hand, differentiating the equation
\begin{align*}
    A_\eps \ub(\eps) + \mathcal{N}_\eps(\ub(\eps),\cdot) = 0
\end{align*}
with respect to $x$ and $\eps$ and subsequently setting $\eps = 0$, we obtain
\begin{align}\label{proof_Klausmeier2}
 L_0(\ub_0) \partial_\eps\partial_x \ub_0  + (f\partial_x + g) \begin{pmatrix} \partial_x \ub_{01}\\0\end{pmatrix} 
    +\partial_{\ub\ub} \mathcal{N}_0(\ub_0,\cdot)[\partial_\eps\ub(0),\partial_x \ub_0] 
    + (f'\partial_x + g') \begin{pmatrix} \ub_{01}\\0\end{pmatrix} = 0.
\end{align}
Subtracting~\eqref{proof_Klausmeier2} from~\eqref{proof_Klausmeier1}, we find
\begin{align*}
    L_0(\ub_0) \big(\partial_\eps \mathbf{w}(0)-  \partial_\eps\partial_x \ub(0)\big) - (f'\partial_x + g') \begin{pmatrix} \ub_{01}\\0\end{pmatrix}   
    = \lambda_0'(0) \partial_x \ub_0.
\end{align*}
Taking the $L^2$-scalar product of the last equation with $\Psi_\mathrm{ad} \in H^2(\R)$, we establish
\begin{align*}
    \lambda_0'(0) = -\langle(f'\partial_x + g') \ub_{01}, \Psi_{\mathrm{ad}1} \rangle_{L^2}
    = -\int_\R \big( f'(x) \partial_x\ub_{01}(x) + g'(x) \ub_{01}(x) \big) \Psi_{\mathrm{ad}1}(x) \de x =-\mathcal{M} \neq 0.
\end{align*}

Finally, we prove that the remaining part of the spectrum of $L_\eps(\ub(\eps))$ lies in the open left-half plane. Since $\ub_0$ is spectrally stable with simple eigenvalue $\lambda = 0$, there exists a constant $\varrho > 0$ such that
\begin{align} \label{spectral_bound_Klausmeier}
\sigma(\El_0(\ub_0)) \cap \{\lambda \in \C : \Re(\lambda) \geq -\varrho\} = \{0\}.
\end{align}
On the other hand, since $\|\ub(\eps)\|_{L^\infty}$ is bounded by an $\eps$-independent constant by estimate~\eqref{bounds_Klausmeier}, there exists by Lemma~\ref{lem:apriori_Klausmeier} an $\eps$-independent constant $\varrho_1 > 0$ such that
\begin{align*}
\sigma(\El_\eps(\ub(\eps))) \cap \{\lambda \in \C : \Re(\lambda) \geq -1\} \subset \overline{B}_0(\varrho_1)
\end{align*}
for $\eps \in (-\eps_3,\eps_3)$. Combining the latter with the fact that $L_\eps(\ub(\eps)) - L_0(\ub_0)$ is a bounded operator on $L^2(\R)$,~\cite[Theorem~IV.3.18]{Kato1995} and~\eqref{spectral_bound_Klausmeier}
yield $\eps_4 \in (0,\eps_3)$ such that
$$\sigma(L_\eps(\ub(\eps)) \cap \left\{\lambda \in \C : \Re(\lambda) \geq -\tfrac{1}{2} \varrho\right\}$$
contains exactly one algebraically simple eigenvalue of $L_\eps(\ub(\eps))$ for $\eps \in (-\eps_4,\eps_4)$, which is then necessarily given by $\lambda(\eps) \in \R$. This completes the proof.
\end{proof}

\begin{Remark}
We emphasize that the Melnikov integral $\mathcal{M}$ in Theorem~\ref{thm:existence_klausmeier} is generically nonvanishing, since the integrand is an even function of $x$. 
\end{Remark}

\begin{figure}[t]
    \centering
    \includegraphics[width=0.3\textwidth]{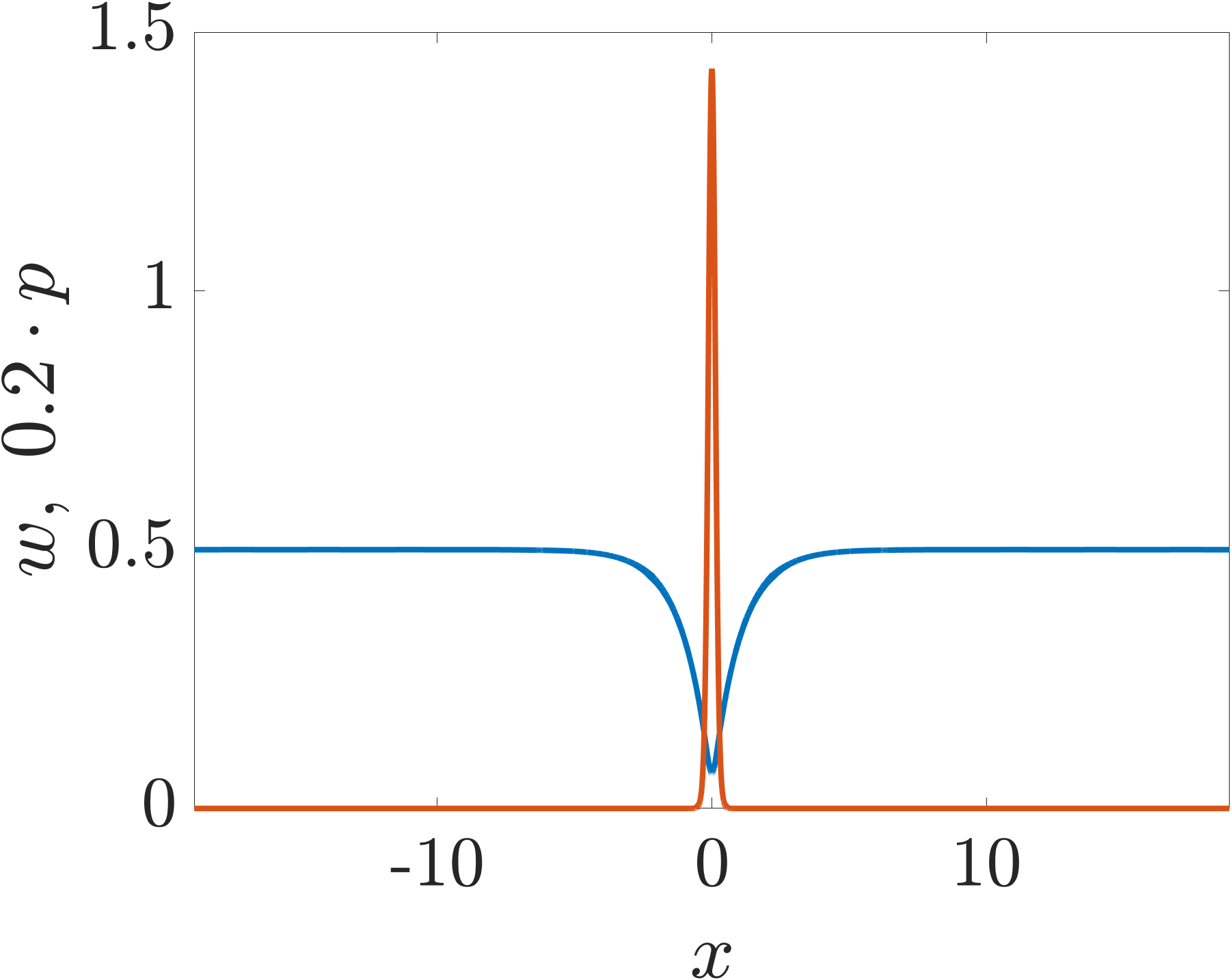} \hspace{1em}
    \includegraphics[width=0.31\textwidth]{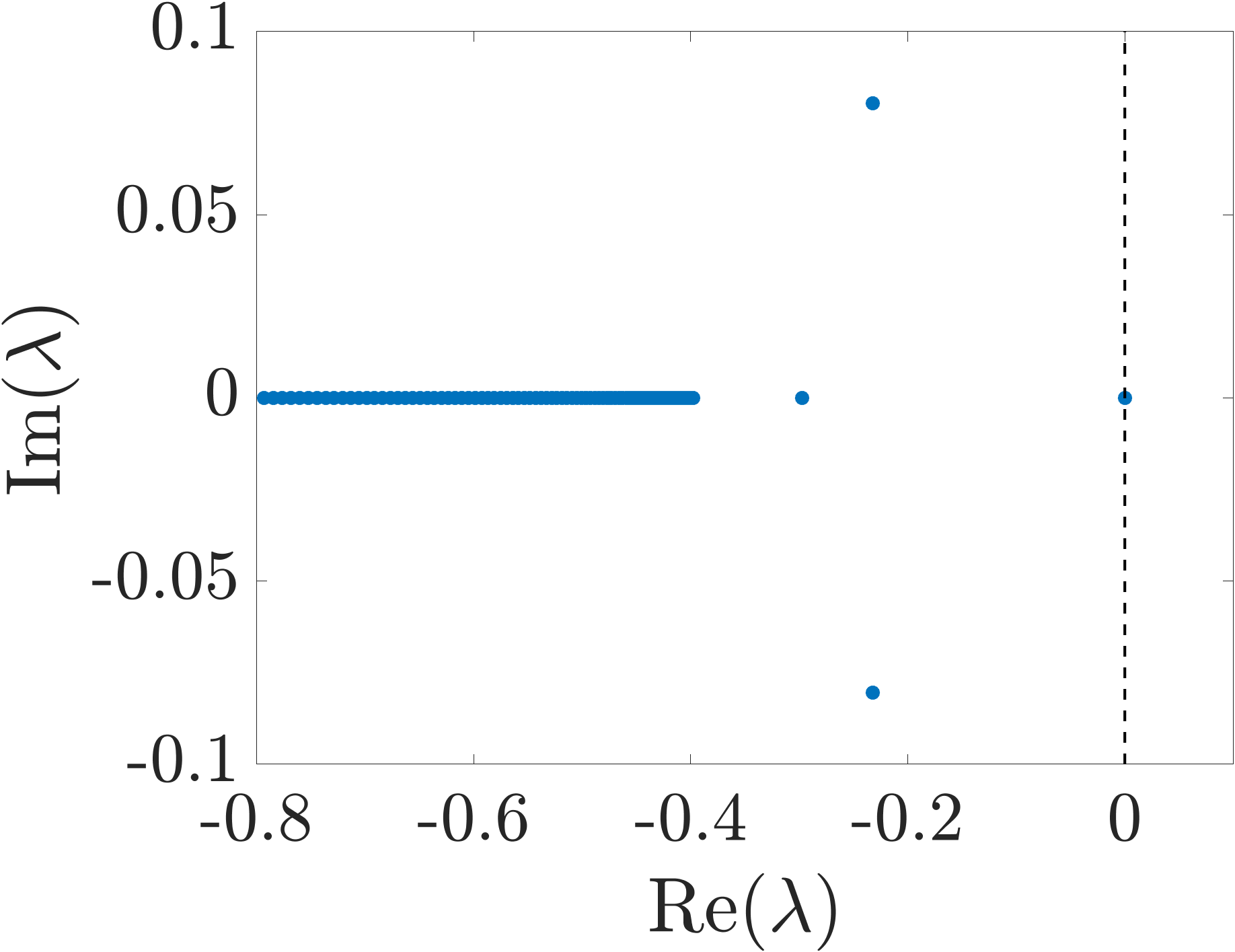}
    \hspace{1em}
    \includegraphics[width=0.3\textwidth]{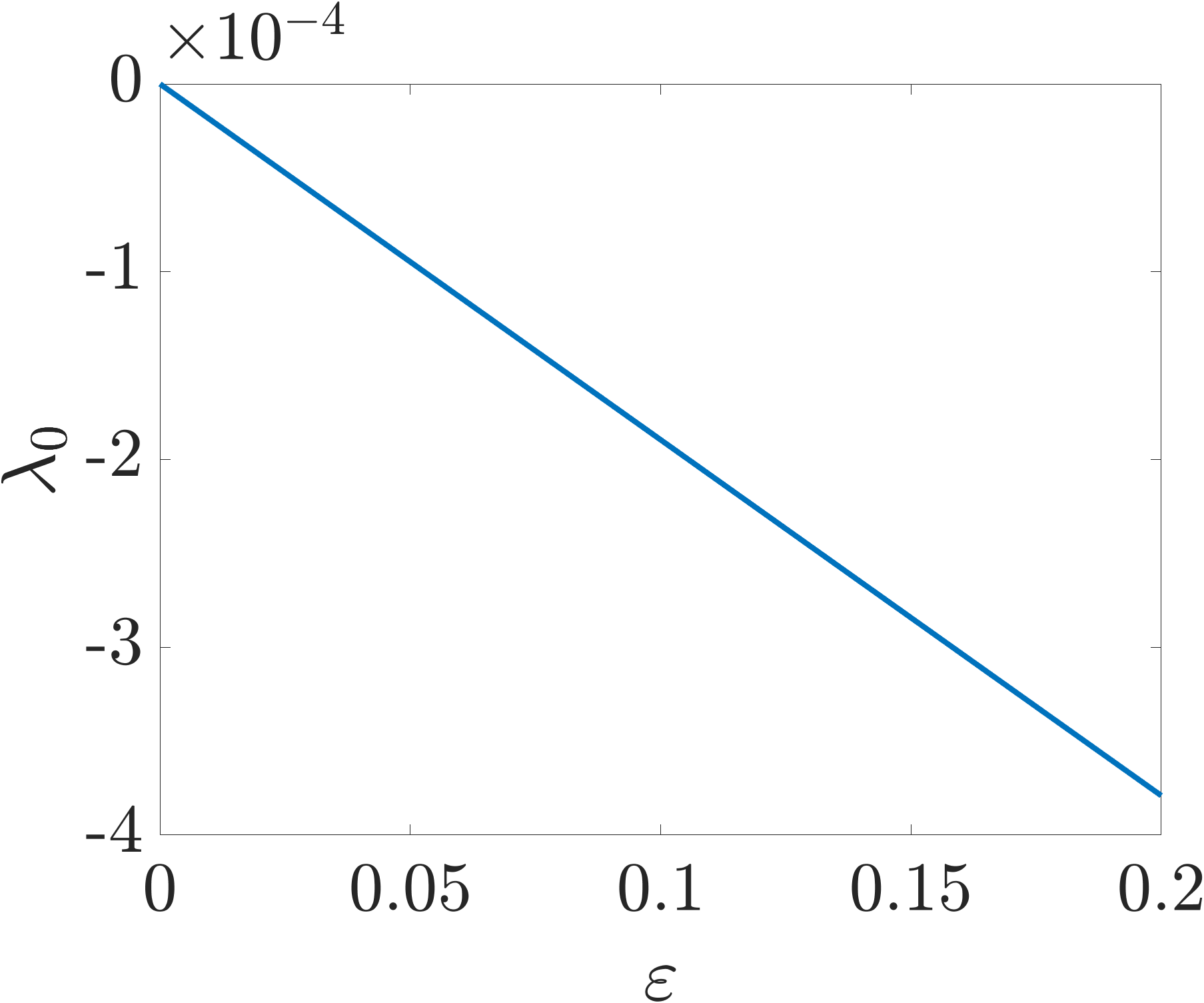} \\\vspace{1em}
    \includegraphics[width=0.3\textwidth]{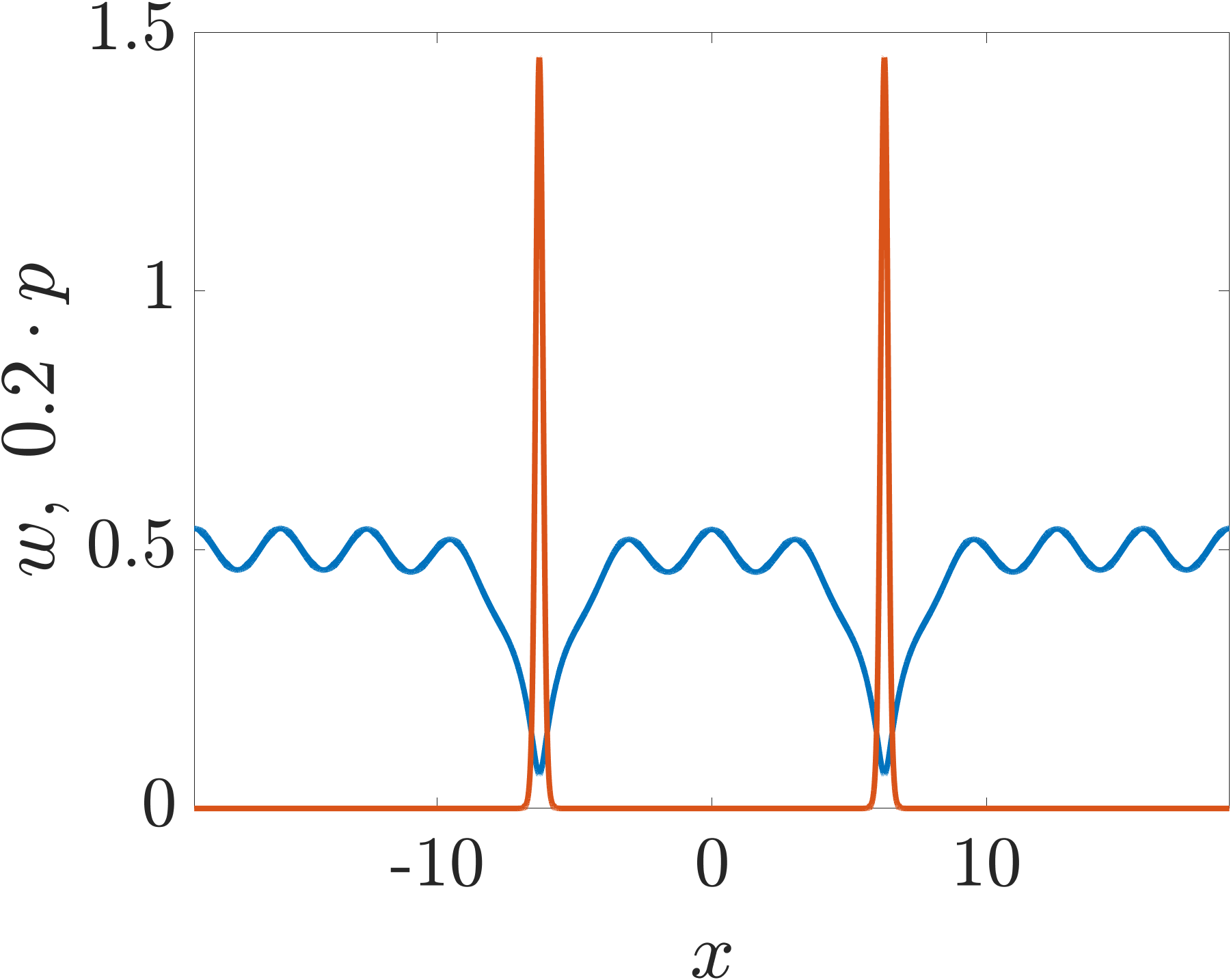} \hspace{1em}
    \includegraphics[width=0.35\textwidth]{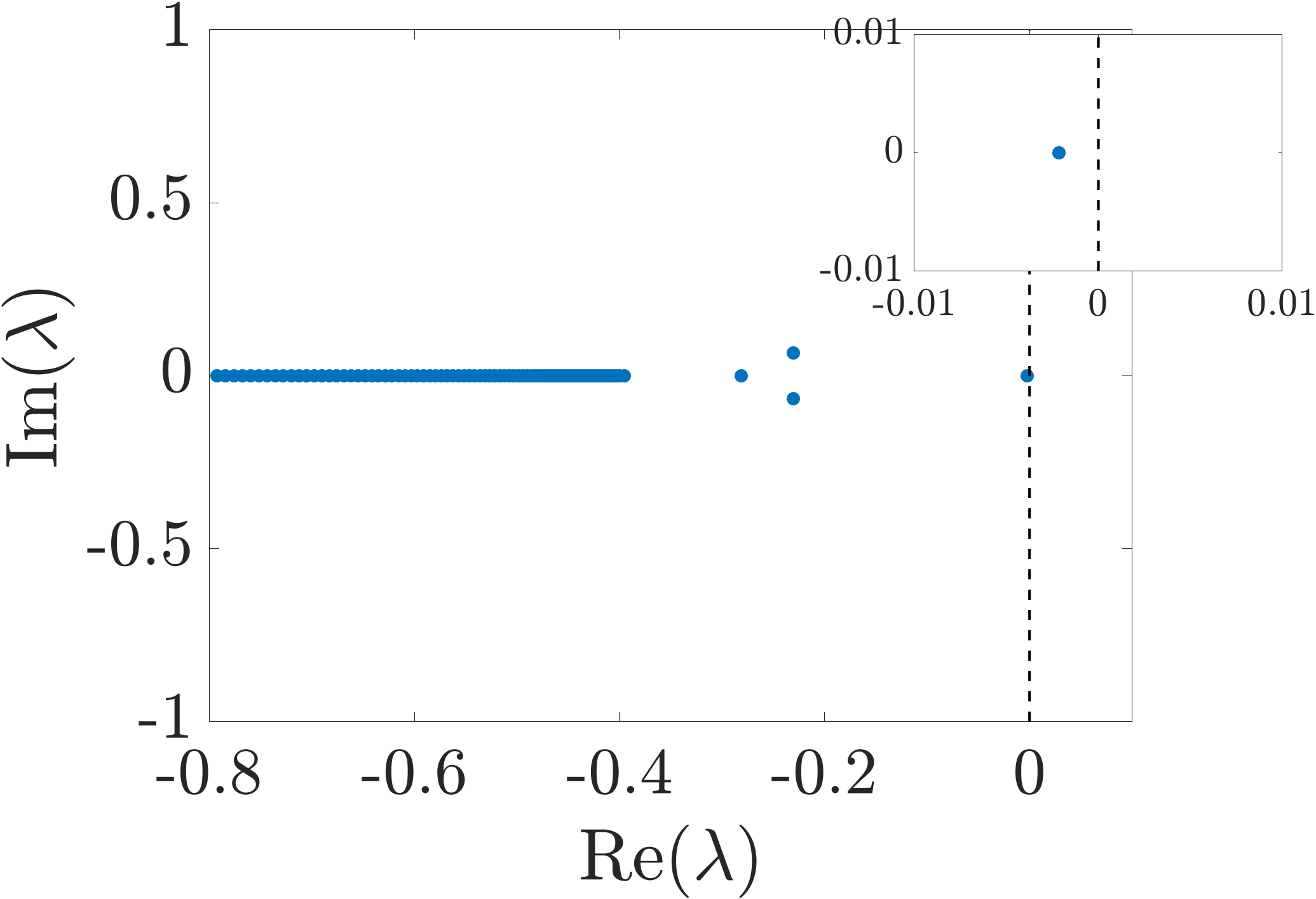}
    \caption{A spectrally stable stationary $1$-pulse solution to~\eqref{klausmeier_system} for $\eps = 0$ (top left), along with its spectrum (top middle) with simple eigenvalue $\lambda=0$. The $p$-component in the left panel is scaled by a factor $0.2$ to improve visibility. We continued this $1$-pulse solution in $\eps$ using the MATLAB package \texttt{pde2path}~\cite{pde2path} and plotted its critical eigenvalue $\lambda_0(\eps)$ as a function of $\eps$ (top right). One observes that the curve $\lambda_0(\eps)$ is to leading order linear, which is in agreement with the expansion of $\lambda_0(\eps)$ provided in Theorem~\ref{thm:existence_klausmeier}. The bottom row depicts a strongly spectrally stable stationary $2$-pulse solution to~\eqref{klausmeier_system} for $\eps = 1$ (bottom left), along with its spectrum (bottom right). The inset provides a closer view of the small eigenvalues near zero. The $2$-pulse is obtained through numerical continuation starting from the superposition of two spectrally stable $1$-pulse solutions to~\eqref{klausmeier_system}. The system coefficients are $d= 0.04, a = 0.5, m = 0.4, f(x) = 0.2 \sin(2x)$ and $g(x) = 0.4 \cos(2x)$.}
    \label{fig:pulses_Klausmeier}
\end{figure}

Since the $1$-pulse solutions, established in Theorem~\ref{thm:existence_klausmeier}, satisfy the assumptions~\ref{assH1}-\ref{assH3}, Theorems~\ref{t:existence_multifront} and~\ref{thm:instability_multifront}, Corollary~\ref{cor:stability_multifront}, and Lemma~\ref{lem:apriori_Klausmeier} yield the existence and spectral stability of bifurcating multipulse solutions.

\begin{Corollary}\label{cor:Klausmeier_existence_multipulse}
Let $M \in \N$. Let $\ub \in H^{2}(\R) \oplus H_\per^2(0,T)$ be a pulse solution to~\eqref{eq:stationary_Klausmeier} as established in Theorem~\ref{thm:existence_klausmeier}. Then, there exists $N \in \N$ such that for any $n \in \N$ with $n \geq N$ there exists a nondegenerate stationary multipulse solution $\tilde\ub_n$ to~\eqref{klausmeier_system} of the form
\begin{align*}
\tilde\ub_n = \mathbf{a}_n + \sum_{j = 1}^M \chi_{j,n} \ub(\cdot - jnT), 
\end{align*}
where $\chi_{j,n}, j = 1,\ldots,M$ is the smooth partition of unity defined in~\S\ref{sec:m-front}, and $\{\mathbf{a}_n\}_n$ is a sequence in $H^2(\R)$ converging to $0$ as $n \to \infty$. If the pulse $\ub$ is strongly spectrally stable, then so is the multipulse $\tilde\ub_n$. Moreover, if $\ub$ is spectrally unstable, then the same holds for $\tilde\ub_n$.
\end{Corollary}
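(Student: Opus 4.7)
The plan is to apply the abstract theorems of Sections~\ref{sec:m-front} and~\ref{sec:stability_multifront} with all $M$ primary fronts taken equal to the pulse $\ub = \mathbf{z}(\eps) + \mathbf{v}(\eps)$ furnished by Theorem~\ref{thm:existence_klausmeier}. First I would verify the hypotheses~\ref{assH1}-\ref{assH3}: the decomposition $\ub - \mathbf{v}(\eps) = \mathbf{z}(\eps) \in H^2(\R)$ gives~\ref{assH1} with common end state $v_{j,\pm} = \mathbf{v}(\eps)$ for $j = 1,\ldots,M$; the matching condition~\ref{assH2} is then trivial; and~\ref{assH3} is precisely the nondegeneracy of $\ub$ already asserted in Theorem~\ref{thm:existence_klausmeier}. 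A direct application of Theorem~\ref{t:existence_multifront} then produces the multipulse $\tilde\ub_n$ of the stated form with $\|\mathbf{a}_n\|_{H^2} \to 0$, and the nondegeneracy of $\tilde\ub_n$ follows from Lemma~\ref{lem:invertibility_multifront} applied with $\mathcal{K} = \{0\}$.

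For the spectral (in)stability statements, the key ingredient is a uniform a priori bound. Since $\|\mathbf{a}_n\|_{H^2} \to 0$ and $H^1(\R) \hookrightarrow L^\infty(\R)$ continuously, $\|\tilde\ub_n\|_{L^\infty}$ is bounded by an $n$-independent constant. Lemma~\ref{lem:apriori_Klausmeier} therefore provides an $n$-independent compact set $\mathcal{K} \subset \C$ containing $\sigma(L_\eps(\tilde\ub_n)) \cap \{\lambda \in \C : \Re(\lambda) \geq 0\}$ for all sufficiently large $n$. If $\ub$ is strongly spectrally stable, Corollary~\ref{cor:stability_multifront} then immediately yields strong spectral stability of $\tilde\ub_n$.

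For the instability direction, I would first observe that by Theorem~\ref{thm:existence_klausmeier} and Proposition~\ref{prop:essential_spec1front} the essential spectrum $\sigma_\textup{ess}(L_\eps(\ub)) = \sigma(L_\eps(\mathbf{v}(\eps)))$ is contained in the open left-half plane, so any spectrum of $L_\eps(\ub)$ with nonnegative real part is an isolated eigenvalue $\lambda_\ast \in \C$ of finite algebraic multiplicity. If $\ub$ is spectrally unstable, then $\Re(\lambda_\ast) > 0$, and an application of Theorem~\ref{thm:instability_multifront} at $\lambda_0 = \lambda_\ast$ with all weights $\eta_0 = \ldots = \eta_M = 0$ yields eigenvalues of $L_\eps(\tilde\ub_n)$ arbitrarily close to $\lambda_\ast$ for large $n$, establishing spectral instability of $\tilde\ub_n$. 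I do not anticipate any serious technical obstacle: the analytical work has already been done in the abstract theorems of Sections~\ref{sec:m-front}-\ref{sec:stability_multifront}, and the proof reduces to a careful verification of hypotheses and the combination of these ingredients, much as in Corollary~\ref{cor:RDE_toy_existence_multifront}.
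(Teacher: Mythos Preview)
Your proposal is correct and follows essentially the same approach as the paper's proof. The only minor difference is that you invoke Lemma~\ref{lem:invertibility_multifront} with $\mathcal{K}=\{0\}$ for the nondegeneracy of $\tilde\ub_n$, whereas the paper cites Theorem~\ref{thm:instability_multifront} (with $m_0=0$ at $\lambda_0=0$); both routes are valid and your choice is arguably the more direct one.
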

\begin{proof}
The existence of the multipulse $\tilde\ub_n$ is a direct consequence of Theorems~\ref{t:existence_multifront} and~\ref{thm:existence_klausmeier}. Since the primary pulse $\ub$ is nondegenerate, Theorem~\ref{thm:instability_multifront} implies that the multipulse $\tilde\ub_n$ is also nondegenerate. Thanks to the continuous embedding $H^1(\R) \hookrightarrow L^\infty(\R)$, $\|\tilde\ub_n\|_{L^\infty}$ is bounded by an $n$-independent constant. So, the assertions about the spectral (in)stability of $\tilde \ub_n$ follow from Corollary~\ref{cor:stability_multifront}, Theorem~\ref{thm:instability_multifront}, and Lemma~\ref{lem:apriori_Klausmeier}. 
\end{proof}

The nondegenerate multipulse solutions, established in Corollary~\ref{cor:Klausmeier_existence_multipulse}, are accompanied by large wavelength periodic multipulse solutions. Their existence and spectral (in)stability are derived from Theorems~\ref{t:existence_periodic} and~\ref{thm:instability_periodic}, Corollary~\ref{cor:stability_periodic}, and Lemma~\ref{lem:apriori_Klausmeier}.

\begin{Corollary}
Let $\ub = \mathbf{z} + \mathbf{v} \in H^2(\R) \oplus H^2_\per(0,T)$ be a multipulse solution to~\eqref{eq:stationary_Klausmeier}, as established in Corollary~\ref{cor:Klausmeier_existence_multipulse}. Then, there exists $N \in \N$ such that for all $n \in \N$ with $n \geq N$ there exists a stationary $nT$-periodic solution $\ub_n$ to~\eqref{klausmeier_system} given by
\begin{align*}
\ub_n(x) = \chi_n(x) \ub(x) + (1-\chi_n(x)) \mathbf{v}(x) + \mathbf{a}_n(x), \qquad x \in \left[-\tfrac{n}{2}T,\tfrac{n}{2} T\right),
\end{align*}
where $\chi_n$ is the cut-off function from Theorem~\ref{t:existence_periodic}, and $\{\mathbf{a}_n\}_n$ is a sequence with $\mathbf{a}_n \in H_\per^2(0,nT)$ satisfying $\|\mathbf{a}_n\|_{H_\per^2(0,nT)} \to 0$ as $n \to \infty$. If $\ub$ is strongly spectrally stable, then so is $\ub_n$. Moreover, if $\ub$ is spectrally unstable, then the same holds for $\ub_n$.
\end{Corollary}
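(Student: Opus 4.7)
The plan is to apply Theorem~\ref{t:existence_periodic} directly with primary pulse $Z = \ub = \mathbf{z} + \mathbf{v}$ and background $v = \mathbf{v}$. Hypothesis~\ref{assH4} holds because $\mathbf{v}$ is the $T$-periodic background state provided by the construction in Theorem~\ref{thm:existence_klausmeier} (which is preserved under the multipulse construction of Corollary~\ref{cor:Klausmeier_existence_multipulse}), and $\ub$ itself is a solution to~\eqref{eq:stationary_Klausmeier}. Hypothesis~\ref{assH5}, the nondegeneracy of $\ub$, is explicitly asserted in Corollary~\ref{cor:Klausmeier_existence_multipulse}. This yields, for all sufficiently large $n$, the periodic pulse $\ub_n$ with the desired form and with $\|\mathbf{a}_n\|_{H^2_\per(0,nT)} \to 0$.

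For the strong-stability statement, I would first observe that strong spectral stability of $\ub$ gives a constant $\eta > 0$ with $\sigma(\El(\ub)) \subset \{\Re \lambda \leq -\eta\}$. Next, I would apply Lemma~\ref{lem:apriori_Klausmeier}, noting that $\|\ub_n\|_{L^\infty}$ is bounded by an $n$-independent constant (thanks to the uniform convergence $\ub_n \to \ub$ via the embedding $H^1_\per(0,nT) \hookrightarrow L^\infty(\R)$ with $n$-independent constant). This confines $\sigma(\El_\per(\ub_n)) \cap \{\Re \lambda \geq -\tfrac{\eta}{2}\}$ to a single $n$-independent compact set $\mathcal{K}$. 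Corollary~\ref{cor:stability_periodic} then rules out any spectrum in $\mathcal{K}$ provided $n$ is large enough, and combined with the a-priori bound yields strong spectral stability of $\ub_n$ with constant $\tfrac{\eta}{2}$. The same reasoning applies to $\El_\per(\ub_n)$ via~\eqref{Bloch_inclusion}.

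For the instability statement, suppose $\ub$ is spectrally unstable, so there exists $\lambda_0 \in \sigma(\El(\ub))$ with $\Re(\lambda_0) > 0$. By Lemma~\ref{lem:apriori_Klausmeier} applied to $\ub$, all such spectrum is confined to a compact subset of the right-half plane, so Proposition~\ref{prop:essential_spec1front} combined with the fact that $\mathbf{v}$ is spectrally stable (inherited from Theorem~\ref{thm:existence_klausmeier}) shows that $\lambda_0$ must be a point-spectrum element of $\El(\ub)$ in a connected component $\Omega$ of $\C \setminus \sigma_\text{ess}(\El(\mathbf{v}))$. Hence, by Proposition~\ref{prop:Evans1front}, $\lambda_0$ is a zero of the Evans function $\mathcal{E}$ of finite multiplicity $m_0 \geq 1$. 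Theorem~\ref{thm:instability_periodic} then guarantees, for $n$ large enough and $\varrho > 0$ small enough so that $\overline{B}_{\lambda_0}(\varrho) \subset \{\Re \lambda > 0\}$, the existence of $m_0$ eigenvalues of $\El(\ub_n)$ in $B_{\lambda_0}(\varrho)$ (counted with multiplicity). Thus $\ub_n$ is spectrally unstable.

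The main technical subtlety to verify is that the spectral content of $\ub$ at $\lambda_0$ truly reduces to the point spectrum regime covered by Theorem~\ref{thm:instability_periodic}; this requires checking that the unstable part of $\sigma(\El(\ub))$ lies outside $\sigma_\text{ess}(\El(\mathbf{v}))$, which should follow from Theorem~\ref{thm:existence_klausmeier} showing $\mathbf{v}$ is strongly stable (essential spectrum in the open left-half plane), so any right-half plane spectrum of $\ub$ is automatically point spectrum.
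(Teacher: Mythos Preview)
Your proposal is correct and follows essentially the same route as the paper, which simply records (without a displayed proof) that the result is derived from Theorem~\ref{t:existence_periodic}, Theorem~\ref{thm:instability_periodic}, Corollary~\ref{cor:stability_periodic}, and Lemma~\ref{lem:apriori_Klausmeier}. Your write-up supplies precisely the missing verifications: checking~\ref{assH4}--\ref{assH5}, invoking the $n$-uniform $L^\infty$ bound for the a-priori estimate, and arguing that any unstable spectrum of $\ub$ lies in the point spectrum (since $\sigma_{\mathrm{ess}}(\El(\ub)) = \sigma(\El(\mathbf{v}))$ is confined to the open left-half plane) so that Theorem~\ref{thm:instability_periodic} applies.
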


\subsection{The Gross-Pitaevskii equation}

Let $T > 0$. We consider a nonlinear Schr\"odinger (NLS) equation with an external potential, which is commonly referred to as Gross-Pitaevskii (GP) equation and arises, for instance, as a mean-field approximation in the study of Bose-Einstein condensates in optical lattices, see~\cite{Branzhnyi2004,Kevrekidis2008Basic,Pitaevskii2003} and references therein. The Gross-Pitaevskii equation is given by
\begin{align}\label{GP_periodic_potential}
    \iu \partial_t u = - \partial_x^2 u + \mu V(x) u + \kappa |u|^2 u, \qquad u(x,t) \in \C, \, x \in \R, \, t \geq 0
\end{align}
with parameters $\kappa \in \{\pm1\}$ and $\mu \in \R$, and real-valued potential $V \in C^1(\R)$. In the context of Bose-Einstein condensation, $u(x,t)$ represents a macroscopic wave function, $|u(x,t)|^2$ is its atomic density, $V(x)$ is the external potential created by the optical lattice, $\mu$ measures the strength of the potential, and the sign of $\kappa$ determines whether the nonlinear interaction of the Bose-Einstein condensates is \emph{attractive} ($\kappa = -1$) or \emph{repulse} ($\kappa = 1$). We say that the nonlinearity in~\eqref{GP_periodic_potential} is \emph{defocusing} if $\kappa = 1$ and \emph{focusing} if $\kappa=-1$. Here, we are interested in \emph{periodic} optical trapping lattices formed by the interference of laser beams. Thus, we consider $T$-periodic potentials $V$.

We search for time-harmonic solutions to~\eqref{GP_periodic_potential}. Thus, we insert $u(x,t) = \eu^{\iu \omega t}\psi(x,t)$ with $\omega \in \R$ into~\eqref{GP_periodic_potential} to obtain
\begin{align}\label{GP_time_harmonic}
    \iu \partial_t \psi = - \partial_x^2 \psi + \mu V(x) \psi + \omega \psi + \kappa |\psi|^2 \psi.
\end{align}
Real-valued stationary solutions of~\eqref{GP_time_harmonic} then satisfy the ordinary differential equation
\begin{align*}
     - \partial_x^2 \psi + \mu V(x) \psi + \omega \psi + \kappa \psi^3 = 0,
\end{align*}
which is of the from~\eqref{existence_problem}.

In order to study the stability of stationary solutions to~\eqref{GP_time_harmonic}, we write the equation as a system
\begin{align}\label{GP_system}
    \partial_t\boldsymbol{\psi} = J \left(-\partial_x^2 \boldsymbol{\psi} +\mu V(x) \boldsymbol{\psi} + \omega \boldsymbol{\psi} + \kappa |\boldsymbol{\psi}|^2 \boldsymbol{\psi} \right)
\end{align}
in $\boldsymbol{\psi} = (\Re(\psi), \Im(\psi))^\top$, where
\begin{align*}
    J= 
    \begin{pmatrix}
		0 & 1 \\ -1  & 0
	\end{pmatrix}
\end{align*}
is skew-symmetric. System~\eqref{GP_system} exhibits a rotational invariance, i.e., the map $\boldsymbol{\psi} \mapsto R(\gamma) \boldsymbol{\psi}$ with
\begin{align*}
    R(\gamma) =
    \begin{pmatrix}
        \cos(\gamma) & \sin(\gamma) \\
        -\sin(\gamma) & \cos(\gamma)
    \end{pmatrix}, \qquad \gamma \in \R
\end{align*}
maps solutions of~\eqref{GP_system} to solutions. The advantage of the formulation~\eqref{GP_system} over~\eqref{GP_time_harmonic} is that the nonlinearity is differentiable, which allows for linearization about a real-valued stationary solution $\boldsymbol{\psi} = (\psi,0)^\top$. The associated linearization operator $L_\mu(\underline{\psi}) \colon D(L_\mu(\underline{\psi})) \subset L^2(\R) \to L^2(\R)$ with dense domain $D(L_\mu(\underline{\psi})) = H^2(\R)$ is given by
\begin{align*}
	L_\mu(\underline{\psi}) =J 
    \begin{pmatrix}
        L_{+,\mu}(\underline{\psi}) & 0 \\ 0 & L_{-,\mu}(\underline{\psi})
    \end{pmatrix} 
\end{align*}
for $\underline{\psi} \in L^\infty(\R)$, where $L_{\pm,\mu}(\underline{\psi}) \colon D(L_{\pm,\mu}(\underline{\psi})) \subset L^2(\R) \to L^2(\R)$ with $D(L_{\pm,\mu}(\underline{\psi})) = H^2(\R)$ are defined by
\begin{align*}
     L_{-,\mu}(\underline{\psi}) \phi = - \phi'' + \mu V \phi + \omega\phi + \kappa\underline{\psi}^2 \phi, \qquad L_{+,\mu}(\underline{\psi})\phi = - \phi'' + \mu V \phi + \omega \phi + 3\kappa \underline{\psi}^2 \phi.
\end{align*}
One observes that the second-order operators $L_{\pm,\mu}(\underline{\psi})$ are self-adjoint and bounded from below.
Moreover, the spectrum of $L_\mu(\underline{\psi})$ possesses the Hamiltonian symmetry
\begin{align*}
    \lambda \in \sigma(L_\mu(\underline{\psi})) \quad\Rightarrow\quad
    - \lambda, \overline{\lambda} \in \sigma(L_\mu(\underline{\psi})).
\end{align*}
Therefore, a real-valued stationary solution $\psi$ to~\eqref{GP_time_harmonic} can only be spectrally stable if the spectrum of $L_\mu(\psi)$ is confined to the imaginary axis.

Let $n \in \N$. If $\psi \in H_\per^2(0,nT)$ is a real-valued $nT$-periodic stationary solution to~\eqref{GP_time_harmonic}, then $L_\mu(\psi)$ has $nT$-periodic coefficients and its action on the space $L^2_\per(0,nT)$ is well-defined. Thus, to analyze spectral stability against co-periodic perturbations, we introduce the differential operator $L_{\mu,\per}(\underline{\psi}) \colon D(L_{\mu,\per}(\underline{\psi})) \subset L_\per^2(0,nT) \to L_\per^2(0,nT)$ with dense domain $D(L_{\mu,\per}(\underline{\psi})) = H_\per^2(0,nT)$ by
\begin{align*}
    L_{\mu,\per}(\underline{\psi}) = J 
    \begin{pmatrix}
        L_{+,\mu,\per}(\underline{\psi}) & 0 \\
        0 & L_{-,\mu,\per}(\underline{\psi})
    \end{pmatrix}
\end{align*}
for $\underline{\psi} \in H^1(0,nT)$, where the operators $L_{\pm,\mu,\per}(\underline{\psi}) \colon D(L_{\pm,\mu,\per}(\underline{\psi})) \subset L_\per^2(0,nT) \to L_\per^2(0,nT)$ with $D(L_{\pm,\mu,\per}(\underline{\psi})) = H_\per^2(0,nT)$ are defined by 
$$
     L_{-,\mu,\per}(\underline{\psi}) \phi = - \phi'' + \mu V \phi + \omega\phi + \kappa\underline{\psi}^2 \phi, \qquad L_{+,\mu,\per}(\underline{\psi})\phi = - \phi'' + \mu V \phi + \omega \phi + 3\kappa \underline{\psi}^2 \phi.
$$
We recall from~\S\ref{sec:periodic_diff_operators} that the spectrum of $L_{\mu,\per}(\underline{\psi})$ consists of isolated eigenvalues of finite algebraic multiplicity only. Moreover, due to Hamiltonian symmetry, we find that a real-valued $nT$-periodic stationary solution $\psi$ to~\eqref{GP_time_harmonic} can only be spectrally stable against co-periodic perturbations if the spectrum of $L_{\mu,\per}(\psi)$ is confined to the imaginary axis.

In the following, we divide our analysis in two parts. In the first part, we consider the defocusing Gross-Pitaevskii equation~\eqref{GP_time_harmonic} with $\kappa=1$. Here, we prove the existence of nondegenerate stationary $1$-front solutions connecting periodic end states. These $1$-fronts correspond to so-called \emph{dark solitons}, established
in~\cite{Alfimov2013,Yan2015Dark}. We apply Theorem~\ref{t:existence_multifront} to obtain stationary multifront solutions to~\eqref{GP_time_harmonic} lying near formal concatenations of these primary $1$-fronts. Theorem~\ref{t:existence_periodic} then yields the existence of periodic solutions bifurcating from the formal periodic extension of these multifronts  in case the multifront connects to the same periodic end state at $\pm\infty$. Since $0$ lies in the essential spectrum of linearization operator, the spectral stability of these solutions is a subtle and unresolved issue, beyond the scope of this application section. We refer to~\cite{Pelinovky2008Dark} for a stability analysis of dark solitons in case of a localized potential.

In the second part, we consider the focusing case $\kappa=-1$. We first recall existence results from~\cite{Pelinovsky2011} and references therein, yielding nondegenerate stationary $1$-pulse solutions to~\eqref{GP_time_harmonic}. Taking these so-called \emph{gap solitons} as building blocks, we then employ Theorems~\ref{t:existence_multifront} and~\ref{t:existence_periodic} to construct multipulse solutions as well as periodic pulse solutions. Subsequently, we combine Theorem~\ref{thm:instability_periodic} with Krein index counting theory~\cite{Kapitula2004,AddendumKapitulaKevrekidis2004} to establish spectral stability of the periodic pulse solutions. Our spectral analysis yields orbital stability against co-periodic perturbations through the stability theorem of Grillakis, Shatah and Strauss~\cite{GrillakisShatahStraussII}. Finally, we combine Theorem~\ref{thm:instability_multifront} with Sturm-Liouville theory~\cite{Zettl2021Recent} and Krein index counting arguments to derive instability conditions for the multipulse solutions.

\subsubsection{Fronts in the defocusing Gross-Pitaevskii equation}

We consider the defocusing case $\kappa=1$. Moreover, we assume $\omega<0$. We search for real-valued stationary multifront solutions to~\eqref{GP_time_harmonic} connecting periodic states at $\pm\infty$. Real-valued stationary solutions to~\eqref{GP_time_harmonic} solve the ordinary differential equation
\begin{align}\label{defocusing_GP}
    -\psi'' + \mu V(x) \psi + \omega \psi + \psi^3 = 0.
\end{align}
We aim to employ Theorem~\ref{t:existence_multifront} to construct multifront solutions to~\eqref{defocusing_GP} lying near formal concatenations of front solutions with a single interface. These $1$-front solutions, known as dark solitons, have been rigorously constructed in~\cite{Yan2015Dark} by leveraging a comparison principle, see also~\cite{Torres2008} for the case of a cubic-quintic nonlinearity. However, dark solitons are degenerate solutions to~\eqref{GP_time_harmonic}, obstructing an application of Theorem~\ref{t:existence_multifront}. The reason is that a dark soliton $\psi(\mu)$ to~\eqref{defocusing_GP} necessarily connects to nonconstant periodic end states $v_\pm(\mu) \in H^2_\per(0,T)$, which obey $L_{-,\per,\mu}(v_{\pm}(\mu)) v_\pm(\mu) = 0$, since $v_\pm(\mu)$ must solve~\eqref{defocusing_GP}. Hence, we arrive at $0 \in \sigma(L_{-,\mu}(v_\pm(\mu))) \subset \sigma_{\mathrm{ess}}(L_{-,\mu}(\psi(\mu))) \subset \sigma(L_\mu(\psi(\mu)))$ by~\eqref{Bloch_inclusion} and  Proposition~\ref{prop:essential_spec1front}. 

Here, we circumvent this issue by regarding dark solitons as nondegenerate stationary $1$-front solutions to the real-valued reaction-diffusion problem
\begin{align}\label{defocusing_GP_real}
\partial_t \psi = -\partial_x^2 \psi + \mu V(x) \psi + \omega \psi + \psi^3, \qquad \psi(x,t) \in \R, \, x \in \R, \,  t \geq 0,
\end{align}
which obviously admits the same stationary solutions as~\eqref{GP_time_harmonic}. In the following result, we construct nondegenerate odd $1$-front solutions to~\eqref{defocusing_GP_real} in the case of a small potential by perturbing from the black NLS soliton
$$
    \psi_0(x) = \sqrt{-\omega} \tanh\left(\sqrt{\frac{-\omega}{2}}x\right),
$$
which solves~\eqref{defocusing_GP} at $\mu = 0$.

\begin{Theorem}\label{thm:existence_front_GP}
Let $\omega < 0$ and $T > 0$. Let $V \in C^1(\R)$ be even, $T$-periodic, and real-valued. Let $\chi_\pm \colon \R \to [0,1]$ be a smooth partition of unity such that $\chi_+$ is supported on $(-1,\infty)$, $\chi_-$ is supported on $(-\infty,1)$, and we have $\chi_+(x)= \chi_-(-x)$ for all $x\in \R$. Assume 
\begin{align} \label{effective_potential_GP_defocusing}
    \int_\R V'(x) \psi_0(x) \psi_0'(x) \de x \neq 0.
\end{align}
Then, there exist constants $C,\mu_0>0$ such that for all $\mu \in (0,\mu_0)$ there exists a nondegenerate stationary odd solution $\psi(\mu)$ to~\eqref{defocusing_GP_real} satisfying 
\begin{align} \label{GPfrontbound1}
    \|\psi(\mu)-\psi_0\|_{L^\infty} \leq C \mu, \qquad \chi_\pm(\psi(\mu)- v_{\pm}(\mu)) \in H^2(\R),
\end{align}
where $v_\pm (\mu) \in H_\per^2(0,T)$ are even periodic solutions to~\eqref{defocusing_GP} obeying the bound
\begin{align} \label{GPfrontbound2}
\|v_{\pm}(\mu) \mp \sqrt{-\omega}\|_{H_\per^2(0,T)} \leq C \mu.
\end{align}
\end{Theorem}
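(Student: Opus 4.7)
The proof closely parallels Theorem~\ref{thm:RDE_1front}, but the parity structure of the problem (the hypothesis that $V$ is even and the reference profile $\psi_0$ is odd) naturally aligns the front with the potential and thereby eliminates both the shift parameter and the associated Lyapunov--Schmidt reduction.

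First, I would construct the periodic end states by applying the implicit function theorem to the smooth map
$$
\mathcal F_\per(v,\mu) = -v'' + \mu V v + \omega v + v^3 \colon H^2_{\mathrm{even},\per}(0,T) \times \R \to L^2_{\mathrm{even},\per}(0,T).
$$
The constant $\sqrt{-\omega}$ solves $\mathcal F_\per(\sqrt{-\omega},0)=0$, and $\partial_v \mathcal F_\per(\sqrt{-\omega},0) = -\partial_x^2 - 2\omega$ is invertible on $H^2_\per(0,T)$ since its eigenvalues $(2\pi k/T)^2 - 2\omega$, $k \in \Z$, are strictly positive. This yields a smooth branch $v_+(\mu) \in H^2_{\mathrm{even},\per}(0,T)$ with $v_+(0)=\sqrt{-\omega}$ satisfying~\eqref{GPfrontbound2}. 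Since~\eqref{defocusing_GP} is invariant under $\psi \mapsto -\psi$, setting $v_-(\mu):= -v_+(\mu)$ produces the even periodic solution bifurcating from $-\sqrt{-\omega}$.

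Second, I would construct the front via an implicit function argument in odd functions. Substituting the ansatz
$$
\psi = \psi_0 + w + \chi_+\bigl(v_+(\mu)-\sqrt{-\omega}\bigr) + \chi_-\bigl(v_-(\mu)+\sqrt{-\omega}\bigr), \qquad w \in H^2_\mathrm{odd}(\R),
$$
into~\eqref{defocusing_GP} yields an equation $\mathcal G(w,\mu)=0$ for a smooth operator $\mathcal G\colon H^2_\mathrm{odd}(\R) \times \R \to L^2_\mathrm{odd}(\R)$; the claim that the image lies in $L^2_\mathrm{odd}$ uses the symmetry $\chi_+(-x)=\chi_-(x)$, the evenness of $V$ and $v_\pm(\mu)$, the relation $v_-(\mu) = -v_+(\mu)$, and the oddness of $\psi_0$ and $w$. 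One verifies $\mathcal G(0,0)=0$ and $\partial_w \mathcal G(0,0) = L_{+,0}(\psi_0)$. The key observation is that, via the P\"oschl--Teller identity $3\psi_0^2 = -3\omega + 3\omega\sech^2(\sqrt{-\omega/2}\,\cdot)$, the self-adjoint operator $L_{+,0}(\psi_0) = -\partial_x^2 + \omega + 3\psi_0^2$ has spectrum $\{0, -3\omega/2\} \cup [-2\omega, \infty)$, with kernel spanned by the even function $\psi_0'$; its restriction to odd functions therefore has trivial kernel and spectrum bounded below by $-3\omega/2 > 0$, so it is an isomorphism $H^2_\mathrm{odd}(\R) \to L^2_\mathrm{odd}(\R)$. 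The implicit function theorem yields a smooth branch $w(\mu) \in H^2_\mathrm{odd}(\R)$ with $w(0)=0$, and setting $\psi(\mu)$ accordingly furnishes the odd front; the bound~\eqref{GPfrontbound1} follows from the smoothness of the branch.

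Third, I would establish nondegeneracy, i.e., invertibility of $L_{+,\mu}(\psi(\mu))$. Since $L_{+,\mu}(\psi(\mu))$ is self-adjoint, it suffices to preclude $0$ from its spectrum. The essential spectrum is governed by the periodic backgrounds $v_\pm(\mu)$ through Proposition~\ref{prop:essential_spec1front}; continuity of the Floquet spectrum under the $\mathcal O(\mu)$-perturbation of the asymptotic operator $-\partial_x^2 - 2\omega$ shows it remains contained in $[-2\omega + \mathcal O(\mu),\infty)$. Combined with the spectral splitting at $\mu=0$ and~\cite[Theorem~IV.3.18]{Kato1995}, this confines all spectrum of $L_{+,\mu}(\psi(\mu))$ outside a small neighborhood of $0$ to $[c,\infty)$ for a $\mu$-independent $c>0$, while inside this neighborhood analytic perturbation theory~\cite[Proposition~I.7.2]{Kielhoefer2012} produces a $C^1$ eigenvalue branch $\lambda_0(\mu)$ with $\lambda_0(0)=0$. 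Differentiating the eigenvalue relation $L_{+,\mu}(\psi(\mu))(\psi_0'+z(\mu)) = \lambda_0(\mu)(\psi_0'+z(\mu))$ at $\mu=0$ and combining with the $x$- and $\mu$-derivatives of the existence equation $\mathcal G(w(\mu),\mu)=0$, exactly as at the end of the proof of Theorem~\ref{thm:RDE_1front}, yields
$$
\lambda_0'(0)\,\|\psi_0'\|_{L^2}^2 = -\int_\R V'(x)\psi_0(x)\psi_0'(x)\,\de x,
$$
which is nonzero by~\eqref{effective_potential_GP_defocusing}. Hence $\lambda_0(\mu)\neq 0$ for $\mu \in (0,\mu_0)$, completing the proof. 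The most delicate point of the argument is tracking the essential spectrum along the family $L_{+,\mu}(\psi(\mu))$: because Weyl's theorem is not directly applicable (the perturbation is $L^\infty$-small but not relatively compact), one must use Floquet continuity as indicated, but this follows routinely from the analyticity and roughness results for exponential dichotomies established in Section~\ref{sec:stability_1fronts}.
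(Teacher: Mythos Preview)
Your proposal is correct and follows essentially the same approach as the paper: construct even periodic end states $v_\pm(\mu)$ via the implicit function theorem, insert the same odd-correction ansatz, exploit that $\ker L_{+,0}(\psi_0)$ is spanned by the even function $\psi_0'$ so that $L_{+,0}(\psi_0)|_{H^2_{\mathrm{odd}}}$ is invertible, and then derive nondegeneracy from~\eqref{effective_potential_GP_defocusing} by the Melnikov-type computation of Theorem~\ref{thm:RDE_1front}. The only cosmetic differences are that the paper obtains evenness of $v_+(\mu)$ a posteriori by uniqueness (rather than working in $H^2_{\mathrm{even},\per}$ from the outset), invokes Sturm-Liouville theory (rather than the explicit P\"oschl--Teller spectrum) to identify $\ker L_{+,0}(\psi_0)$, and omits the nondegeneracy details entirely, referring back to Theorem~\ref{thm:RDE_1front}.
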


\begin{proof}
The proof proceeds along the lines of the proof of Theorem~\ref{thm:RDE_1front} and is divided into two steps. In the first step, we construct small-amplitude periodic solutions $v_{\pm}(\mu)$ to~\eqref{focusing_GP} by bifurcating from the equilibria $\pm \sqrt{-\omega}$ at $\mu = 0$. In the second step, we connect these periodic solutions by an interface, which arises as a localized perturbation of the black soliton $\psi_0$.

Define the smooth nonlinear operator $\mathcal{F}_\per \colon H_\per^2(0,T) \times \R \to L_\per^2(0,T)$ by
$$
    \mathcal{F}_\per(v,\mu) = - v'' + (v^2 + \omega) v + \mu V v.
$$
Since $\mathcal{F}_\per(\sqrt{-\omega},0) = 0$ and 
$$
    \partial_v \mathcal{F}_\per(\sqrt{-\omega},0)  = -\partial_x^2 - 2 \omega
$$
is invertible, an application of the implicit function theorem yields that there exist $\mu_1 > 0$ and a locally unique smooth function $v_+ \colon (-\mu_1,\mu_1) \to H^2_\per(0,T)$ with $v_+(0) = \sqrt{-\omega}$ satisfying
$$
    \mathcal{F}_\per(v_+(\mu),\mu) = 0
$$
for all $\mu \in (-\mu_1,\mu_1)$. Symmetry of the equation~\eqref{defocusing_GP} yields that $x \mapsto v_+(-x;\mu)$ is also a solution. So, by uniqueness we find $v_+(x;\mu) = v_+(-x;\mu)$ for $x \in \R$ and $\mu \in (-\mu_1,\mu_1)$, after shrinking $\mu_1 > 0$ if necessary. Setting $v_-(\mu) = - v_+(\mu)$, we deduce that $v_\pm(\mu) \in H^2_\per(0,T)$ are even periodic solutions to~\eqref{defocusing_GP} obeying~\eqref{GPfrontbound2} for $\mu \in (-\mu_1,\mu_1)$.

We proceed by constructing the interface connecting $v_-(\mu)$ and $v_+(\mu)$. For convenience, we abbreviate $A = - \partial_x^2 + \omega$ and $\mathcal{N}(\psi) = \psi^3$, and write~\eqref{defocusing_GP} in the abstract form
\begin{align} \label{GP_abstract}
    A \psi + \mathcal{N}(\psi) + \mu V \psi = 0. 
\end{align}
Inserting the ansatz
$$
    \psi = v_-(\mu) \chi_- + v_+(\mu) \chi_+ + \psi_0 + \sqrt{-\omega} \chi_- - \sqrt{-\omega} \chi_+ + \varphi
$$
with error term $\varphi \in H^2_\text{odd}(\R)$ into~\eqref{GP_abstract}, we arrive at the equation 
\begin{align*}
   \mathcal{F}(\varphi,\mu) = 0,
\end{align*}
where $\mathcal{F}\colon H_\text{odd}^2(\R)\times (-\mu_1,\mu_1) \to L_\text{odd}^2(\R)$ is the smooth nonlinear operator given by
\begin{align*}
\mathcal{F}(\varphi,\mu) = L_{+,\mu}(\chi_-\tilde v_-(\mu) +\chi_+\tilde  v_+(\mu) + \psi_0) \varphi
    + R(\mu) + \check{\mathcal{N}}(\varphi,\mu)
\end{align*}
with $\tilde{v}_\pm(\mu) = v_\pm(\mu) \mp \sqrt{-\omega}$ and
\begin{align*}
    R(\mu) &= A (\psi_0 + \chi_- \tilde v_-(\mu) + \chi_+ \tilde v_+(\mu)) + \mu V  (\psi_0 + \chi_- \tilde v_-(\mu) + \chi_+ \tilde v_+(\mu)) \\
    &\qquad + \, \mathcal{N} (\psi_0 + \chi_- \tilde v_-(\mu) + \chi_+ \tilde v_+(\mu)) , \\
    \check{\mathcal{N}}(\varphi,\mu) &= \mathcal{N}(\psi_0 + \chi_- \tilde v_-(\mu) + \chi_+ \tilde v_+(\mu)+\varphi)-\mathcal{N}(\psi_0 + \chi_- \tilde v_-(\mu) + \chi_+ \tilde v_+(\mu))\\
    & \qquad-\,\mathcal{N}'(\psi_0 + \chi_- \tilde v_-(\mu) + \chi_+ \tilde v_+(\mu)) \varphi.
\end{align*}
We emphasize that $\mathcal{F}$ is well-defined, because $\chi_+\tilde v_+(\mu) + \chi_- \tilde v_-(\mu)$ and $\psi_0$ are odd functions. 

By Sturm-Liouville theory, cf.~\cite[Theorem 2.3.3]{KapitulaPromislow2013}, the kernel of the second-order operator $L_{+,0}(\psi_0)$ is spanned by the even function $\psi_0' \in H^2(\R)$. Hence, using that $L_{+,0}(\psi_0)$ maps odd functions to odd functions, its restriction $L_{+,0}(\psi_0)|_{H^2_\mathrm{odd}}$ to the subspace of odd functions is well-defined and invertible. In addition, by employing analogous arguments as for the estimates~\eqref{resesttoy} and~\eqref{nonlesttoy} in the proof of Theorem~\ref{thm:RDE_1front}, we find a $\mu$-independent constant $C>0$ such that 
$$
    \|R(\mu)\|_{L^2} \leq C |\mu|, \qquad
    \|\check{\mathcal{N}}(\varphi,\mu)\|_{L^2} \leq C \|\varphi\|_{H^2}^2
$$
for all $\mu \in (-\mu_1,\mu_1)$ and $\varphi \in H_\text{odd}^2(\R)$ with $\|\varphi\|_{H^2}\leq 1$. We conclude that $\mathcal{F}(0,0) = 0$ and $\partial_\varphi \mathcal{F}(0,0) = L_{+,0}(\psi_0)|_{H^2_\mathrm{odd}}$ is invertible. Therefore, the implicit function theorem yields $\mu_2 \in (0,\mu_1)$ and a smooth function $\varphi \colon (-\mu_2,\mu_2) \to H_\text{odd}^2(\R)$ with $\varphi(0) = 0$ satisfying
$$
    \mathcal{F}(\varphi(\mu),\mu) = 0
$$
for $\mu \in (-\mu_2,\mu_2)$. The $1$-front
$$
    \psi(\mu) =v_-(\mu) \chi_- + v_+(\mu) \chi_+ + \psi_0 + \sqrt{-\omega} \chi_- - \sqrt{-\omega} \chi_+ + \varphi(\mu),
$$
is an odd stationary solution to~\eqref{defocusing_GP} for $\mu \in (-\mu_2,\mu_2)$, which obeys~\eqref{GPfrontbound1} by smoothness of $v_\pm$ and $\varphi$, and by exponential localization of $\chi_\pm (\psi_0 \mp \sqrt{-\omega})$ and its derivatives. The nondegeneracy of $\psi(\mu)$ as a stationary solution to~\eqref{defocusing_GP_real} follows from~\eqref{effective_potential_GP_defocusing} using analogous arguments as in the proof of Theorem~\ref{thm:RDE_1front}, and is therefore omitted here.
\end{proof}

\begin{Remark} \label{rem:symmetry_GP}
Let $\psi(\mu)$ be the nondegenerate stationary $1$-front solution to~\eqref{defocusing_GP_real}, established in Theorem~\ref{thm:existence_front_GP}. Thanks to the reflection symmetry of~\eqref{defocusing_GP_real}, we find that $-\psi(\mu)$ is also a nondegenerate $1$-front solution. It connects $v_+(\mu)$ to $v_-(\mu)$.
\end{Remark}

The nondegeneracy of the $1$-front solutions $\pm \psi(\mu)$ to~\eqref{defocusing_GP_real}, established in Theorem~\ref{thm:existence_front_GP} and Remark~\ref{rem:symmetry_GP}, permits the application of Theorems~\ref{t:existence_multifront} and~\ref{thm:instability_multifront}, yielding the existence of nondegenerate stationary multifront solutions to~\eqref{defocusing_GP_real}, see Figure~\ref{fig:GP_multifronts}.

\begin{Corollary}\label{cor:GP_defocus_multifront} 
Let $M \in \N$. Let $\{\psi_j\}_{j=1}^M$ be a sequence of front solutions to~\eqref{defocusing_GP}, established in Theorem~\ref{thm:existence_front_GP} and Remark~\ref{rem:symmetry_GP}, with end states $v_{j,\pm}(\mu)$. Assume that it holds $v_{j,+}(\mu) = v_{j+1,-}(\mu)$ for $j = 1,\ldots,M-1$. Then, there exists $N \in \N$ such that for any $n \in \N$ with $n \geq N$ there exists a nondegenerate stationary multifront solution $\tilde \psi_n$ to~\eqref{defocusing_GP_real} of the form
\begin{align*} \tilde{\psi}_n = a_n + \sum_{j = 1}^M \chi_{j,n} \psi_j(\cdot - jnT),
\end{align*}
where $\chi_{j,n}, j = 1,\ldots,M$ is the smooth partition of unity defined in~\S\ref{sec:m-front}, and $\{a_n\}_n$ is a sequence in $H^2(\R)$ converging to $0$ as $n \to \infty$. 
\end{Corollary}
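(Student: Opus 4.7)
The plan is to verify the three existence hypotheses \ref{assH1}, \ref{assH2}, \ref{assH3} and then apply Theorem~\ref{t:existence_multifront} and Lemma~\ref{lem:invertibility_multifront} in a straightforward way. Since stationary solutions of the reaction–diffusion equation~\eqref{defocusing_GP_real} are precisely solutions of the ODE~\eqref{defocusing_GP}, which is of the form~\eqref{existence_problem} with $k=2$, the framework of Section~\ref{sec:m-front} applies with $\El(\cdot)=L_{+,\mu}(\cdot)$.

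First I would verify \ref{assH1}. Each front $\psi_j$ produced by Theorem~\ref{thm:existence_front_GP} (or Remark~\ref{rem:symmetry_GP}) lies in $L^\infty(\R)$, has even periodic end states $v_{j,\pm}(\mu)\in H^2_\per(0,T)$, and satisfies $\chi_\pm(\psi_j-v_{j,\pm}(\mu))\in H^2(\R)$. Assumption \ref{assH2} is precisely the matching condition $v_{j,+}(\mu)=v_{j+1,-}(\mu)$ that is imposed in the statement. For \ref{assH3}, the nondegeneracy of each $\psi_j$ as a stationary solution of~\eqref{defocusing_GP_real} is exactly the invertibility of $L_{+,\mu}(\psi_j)$ asserted in Theorem~\ref{thm:existence_front_GP}; the reflection symmetry used in Remark~\ref{rem:symmetry_GP} maps this invertibility to the fronts $-\psi(\mu)$.

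With the three hypotheses in hand, Theorem~\ref{t:existence_multifront} provides constants $C>0$ and $N\in\N$ such that for each $n\ge N$ there exists $a_n\in H^2(\R)$ with
\begin{align*}
\tilde{\psi}_n \,=\, a_n + \sum_{j=1}^M \chi_{j,n}\,\psi_j(\cdot - jnT)
\end{align*}
solving~\eqref{defocusing_GP}, together with the stated decay $\|a_n\|_{H^2}\to 0$ as $n\to\infty$. To obtain nondegeneracy of $\tilde{\psi}_n$, I would apply Lemma~\ref{lem:invertibility_multifront} with the compact set $\mathcal{K}=\{0\}$ and the sequence $\{a_n\}_n$: the hypothesis of the lemma is that $L_{+,\mu}(\psi_j)-0=L_{+,\mu}(\psi_j)$ is invertible for $j=1,\ldots,M$, which is precisely \ref{assH3}. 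The lemma then ensures that, possibly after enlarging $N$, the operator $L_{+,\mu}(\tilde{\psi}_n)$ is invertible for every $n\ge N$, which is the nondegeneracy of the multifront as a stationary solution of~\eqref{defocusing_GP_real}.

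There is no real obstacle here; the only care required is to recognize that the relevant linearization for Lemma~\ref{lem:invertibility_multifront} is $L_{+,\mu}$, the linearization of the scalar reaction–diffusion formulation~\eqref{defocusing_GP_real}, and not the full operator $L_\mu$ associated with the Hamiltonian system~\eqref{GP_system}. This is why we work with~\eqref{defocusing_GP_real} rather than~\eqref{GP_system} at this stage: the latter has $0\in\sigma_{\mathrm{ess}}(L_\mu(\psi_j))$ (since $L_{-,\mu}(v_{j,\pm}(\mu))v_{j,\pm}(\mu)=0$), so the primary fronts are degenerate as solutions of~\eqref{GP_system} and the existence theory of Section~\ref{sec:m-front} does not apply directly to that system.
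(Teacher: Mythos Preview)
Your proposal is correct and follows essentially the same approach as the paper: verify \ref{assH1}--\ref{assH3} and apply Theorem~\ref{t:existence_multifront} for existence, then deduce nondegeneracy from the invertibility of $L_{+,\mu}(\psi_j)$. The only minor difference is that the paper cites Theorem~\ref{thm:instability_multifront} (with $m_0=0$ at $\lambda_0=0$) for the nondegeneracy of $\tilde\psi_n$, whereas you invoke Lemma~\ref{lem:invertibility_multifront} with $\mathcal{K}=\{0\}$; both routes are valid, and yours is arguably the more direct one for this particular conclusion.
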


If the nondegenerate multifronts, obtained in Corollary~\ref{cor:GP_defocus_multifront}, connect to the same end state at $\pm\infty$, then Theorem~\ref{t:existence_periodic} yields large wavelength periodic pulse solutions approximating a formal periodic extension of the multifront.

\begin{Corollary}\label{cor:GP_defocus_periodic} 
Let $\psi$ be a multifront solution to~\eqref{defocusing_GP}, as established Corollary~\ref{cor:GP_defocus_multifront}. Assume that $\psi$ connects to the same periodic end state $v \in H^2_\per(0,T)$ as $x \to \pm \infty$. Then, there exists $N \in \N$ such that for all $n \in \N$ with $n \geq N$ there exists a stationary $nT$-periodic solution $\psi_n$ to~\eqref{defocusing_GP} given by
\begin{align*}
\psi_n(x) = \chi_n(x) \psi(x) + (1 - \chi_n(x)) v(x) + a_n(x), \qquad x \in \left[-\tfrac{n}{2}T,\tfrac{n}{2} T\right),
\end{align*}
where $\chi_n$ is the cut-off function from Theorem~\ref{t:existence_periodic}, and $\{a_n\}_n$ is a sequence with $a_n \in H_\per^2(0,nT)$ satisfying $\|a_n\|_{H_\per^2(0,nT)} \to 0$ as $n \to \infty$.
\end{Corollary}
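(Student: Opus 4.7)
The plan is to apply Theorem~\ref{t:existence_periodic} to the primary pulse $\psi$, so the task reduces to verifying that $\psi$ satisfies assumptions~\ref{assH4} and~\ref{assH5} when written in the form $\psi = z + v$ with $v \in H_\per^2(0,T)$ and $z \in H^2(\R)$.

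First I would set $z := \psi - v$ and check assumption~\ref{assH4}. Since $\psi$ is a stationary solution to~\eqref{defocusing_GP_real} and, by Theorem~\ref{thm:existence_front_GP}, $v \in H^2_\per(0,T)$ is a periodic stationary solution to~\eqref{defocusing_GP}, both $z+v$ and $v$ satisfy~\eqref{existence_problem}. The non-trivial part is the $H^2(\R)$-regularity of $z$. By Corollary~\ref{cor:GP_defocus_multifront}, $\psi$ has the form
\begin{align*}
\psi = a + \sum_{j=1}^M \chi_{j,n_0} \psi_j(\cdot - j n_0 T)
\end{align*}
for some $n_0 \in \N$, with $a \in H^2(\R)$ and each $\psi_j$ a front provided by Theorem~\ref{thm:existence_front_GP} (or its reflected counterpart from Remark~\ref{rem:symmetry_GP}), so that $\chi_\pm(\psi_j - v_{j,\pm}) \in H^2(\R)$. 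The matching hypothesis $v_{j,+} = v_{j+1,-}$, combined with the standing assumption that $\psi$ connects to the same end state $v$ at both $\pm\infty$, forces $v_{1,-} = v_{M,+} = v$. Splitting $\R = (-\infty,\frac32 n_0 T] \cup [\frac32 n_0 T,(M-\frac12)n_0 T] \cup [(M-\frac12)n_0 T,\infty)$ and using the cut-off support conditions, one computes directly from the decomposition of $\psi$ that $\psi - v$ coincides on the two outer pieces with $\chi_- (\psi_1 - v_{1,-}) + a$ (up to a shift) and $\chi_+(\psi_M - v_{M,+}) + a$, both of which lie in $H^2(\R)$; on the bounded middle piece, $\psi - v$ is clearly $H^2$. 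Hence $z = \psi - v \in H^2(\R)$ and~\ref{assH4} holds.

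Next, assumption~\ref{assH5} is the nondegeneracy of the pulse $z+v=\psi$, which is exactly part of the conclusion of Corollary~\ref{cor:GP_defocus_multifront}. With~\ref{assH4} and~\ref{assH5} in hand, Theorem~\ref{t:existence_periodic} produces, for every sufficiently large $n\in\N$, a stationary $nT$-periodic solution of the form
\begin{align*}
\psi_n(x) = \chi_n(x) z(x) + v(x) + a_n(x), \qquad x \in \left[-\tfrac{n}{2}T,\tfrac{n}{2}T\right),
\end{align*}
with $a_n \in H^2_\per(0,nT)$ and $\|a_n\|_{H^2_\per(0,nT)} \to 0$. Rewriting $\chi_n z + v = \chi_n \psi + (1-\chi_n) v$ yields the stated expression for $\psi_n$.

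The only genuinely delicate step is the verification that $z = \psi - v$ lies in $H^2(\R)$; everything else is a direct translation of Theorem~\ref{t:existence_periodic} into the Gross-Pitaevskii setting. The delicacy there is purely bookkeeping: one must keep track of the partition of unity $\chi_{j,n_0}$, the fact that the outermost cut-offs $\chi_{1,n_0}$ and $\chi_{M,n_0}$ equal $1$ on the relevant tails, and that all the intermediate end states cancel pairwise thanks to the matching condition, so that only $v_{1,-}$ and $v_{M,+}$ survive at $\pm\infty$ and they equal the prescribed common end state $v$.
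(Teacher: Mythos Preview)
Your proposal is correct and follows exactly the approach the paper intends: the corollary is stated without an explicit proof because it is a direct application of Theorem~\ref{t:existence_periodic}, and your argument spells out precisely the verification of hypotheses~\ref{assH4} and~\ref{assH5} needed to invoke that theorem. The bookkeeping you flag for showing $z=\psi-v\in H^2(\R)$ is indeed the only point requiring care, and your outline of it is sound.
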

\begin{figure}[t]
    \centering
    \includegraphics[width=0.3\textwidth]{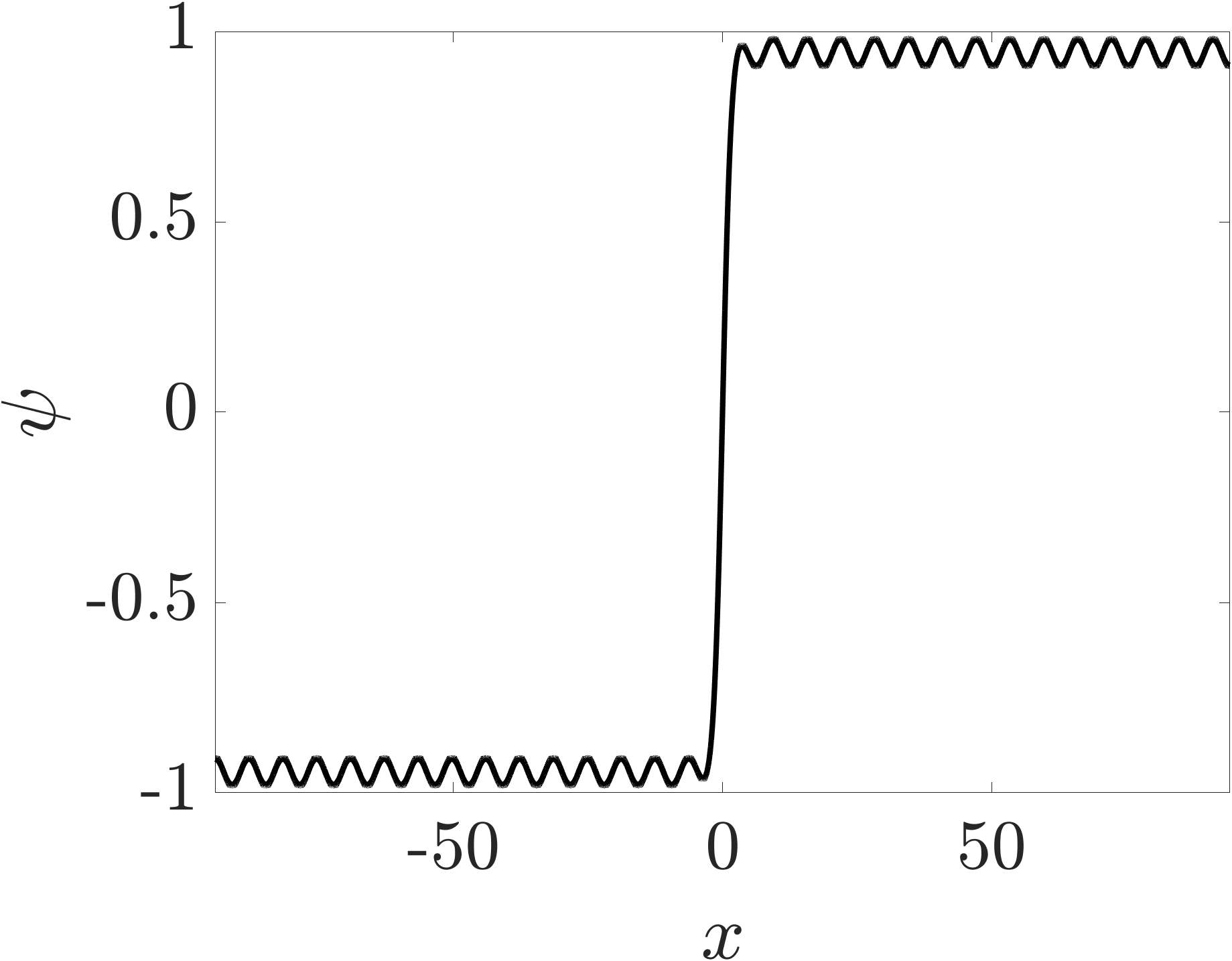} \hspace{1em}
    \includegraphics[width=0.3\textwidth]{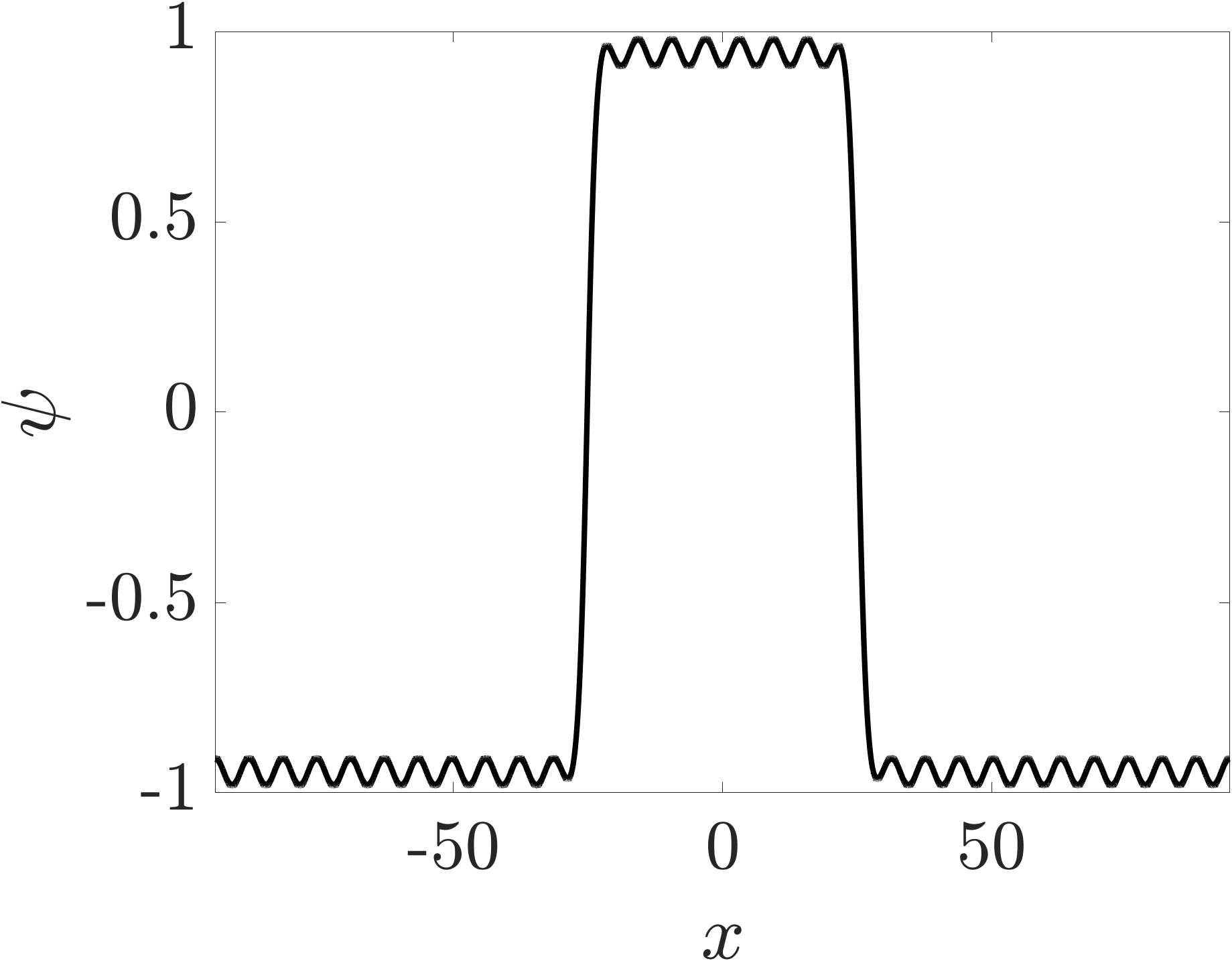}\hspace{1em}
    \includegraphics[width=0.3\textwidth]{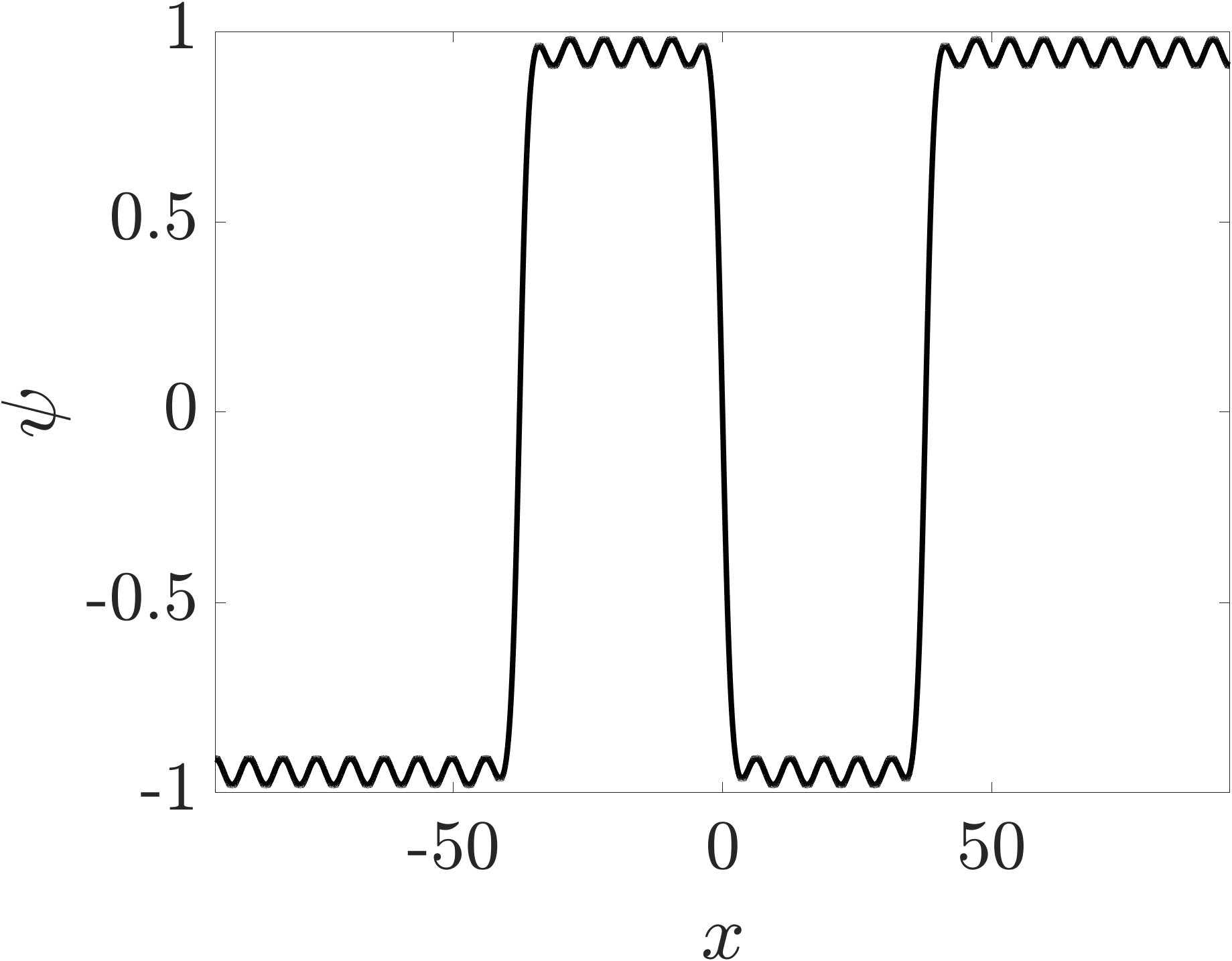}
    \caption{Approximations of stationary real-valued $1$-, $2$-, and $3$-front solutions to the Gross-Pitaevskii equation~\eqref{GP_time_harmonic} for system coefficients $\kappa = 1,\omega = -1, V(x) = 0.2\cos^2(x/2)$, and $\mu=1$. The solutions are obtained through numerical continuation by starting from a formal concatenation of shifted black solitons $\pm \psi_0(\cdot - \varsigma)$, $\varsigma \in \R$, which solve~\eqref{defocusing_GP} at $\mu = 0$.}
    \label{fig:GP_multifronts}
\end{figure}

\begin{Remark}
Under some Conley-Moser-type conditions, all bounded solutions to~\eqref{defocusing_GP} can be characterized using symbolic dynamics~\cite{Alfimov2013}. Specifically, there exists a homeomorphism between the set of all real bounded solutions of~\eqref{defocusing_GP} and the set of bi-infinite sequences of numbers $1,\ldots, N$ for some integer $N \in \N$. As explained in~\cite{Alfimov2013}, this symbolic identification yields the existence of multifronts in~\eqref{defocusing_GP}, as well as periodic solutions featuring multiple front interfaces on a single periodicity interval. The Conley-Moser-type conditions are verified numerically in~\cite{Alfimov2013} in case of the periodic potential $V(x) = \cos(2x)$ in~\eqref{defocusing_GP}. Notably, since we have 
\begin{align*}
\int_\R V'(x) \psi_0(x) \psi_0'(x) \de x = -\frac{8\pi}{\sinh\left(\sqrt{\frac{2}{-\omega}}\, \pi\right)} \neq 0
\end{align*}
for $\omega < 0$, Theorem~\ref{thm:existence_front_GP} and Corollaries~\ref{cor:GP_defocus_multifront} and~\ref{cor:GP_defocus_periodic} rigorously establish the existence of multifronts and periodic pulse solutions to the defocusing Gross-Pitaevskii equation~\eqref{defocusing_GP} with potential $V(x) = \cos(2x)$, provided $\mu > 0$ is sufficiently small.
\end{Remark}

\subsubsection{Pulses in the focusing Gross-Pitaevskii equation}

We now turn to the focusing case $\kappa=-1$. We are interested in the existence and stability of real-valued stationary multipulses and periodic pulse solutions to~\eqref{GP_time_harmonic}. Real-valued stationary solutions to~\eqref{GP_time_harmonic} obey the ordinary differential equation
\begin{align}\label{focusing_GP}
    -\psi'' + \mu V(x) \psi + \omega \psi - \psi^3 = 0.
\end{align}

We first discuss the existence of $1$-pulse solutions to~\eqref{focusing_GP}. These so-called gap solitons will serve as building blocks for the construction of (periodic) multipulse solutions. We emphasize that any real-valued stationary pulse solution $\psi \in H^2(\R) \setminus \{0\}$ to~\eqref{GP_time_harmonic} is degenerate, since $(0,\psi)^\top$ lies in the kernel of the operator $L_{-,\mu}(\psi)$. Therefore, we proceed as in the defocusing case and consider the associated real-valued reaction-diffusion problem
\begin{align} \label{focusing_GP_real}
  \partial_t \psi = -\partial_x^2 \psi + \mu V(x) \psi + \omega \psi - \psi^3, \qquad \psi(x,t) \in \R, \, x \in \R, \,  t \geq 0.
\end{align}

Existence of nondegenerate stationary $1$-pulse solutions to~\eqref{focusing_GP_real} has been shown in different regimes for the parameters $\omega,\mu$ and the potential $V$. For instance, Lyapunov-Schmidt reduction was employed in~\cite{Pelinovsky2011} to find bifurcating $1$-pulse solutions from the family of bright NLS solitons
\begin{align*}
\phi_0(x;\varsigma,\omega) = \sqrt{2\omega} \sech\left(\sqrt{\omega}(x-\varsigma)\right),
\end{align*}
which solve~\eqref{focusing_GP} at $\mu = 0$ and satisfy
\begin{align*} 
\left\langle \partial_\omega \phi_0(\varsigma,\omega),\phi_0(\varsigma,\omega)\right\rangle_{L^2} = \frac{1}{\sqrt{\omega}}\end{align*}
for each $\varsigma\in\R$ and $\omega > 0$. Specifically, the analysis in~\cite[Section~3.2.3]{Pelinovsky2011} yields the following result.

\begin{Theorem} \label{thm:GP_focusing_1_soliton}
Let $\omega_0,T > 0$. Let $V \in C^2(\R)$ be $T$-periodic and real-valued. Let $\varsigma_0 \in \R$ be a simple zero of the derivative of the effective potential $V_{\textup{eff}} \colon \R \to \R$ given by
\begin{align*}
    V_\textup{eff}(\varsigma) = \int_\R V(x+\varsigma) \phi_0(x;0,\omega_0)^2 \de x.
\end{align*}
Then, there exist $\mu_0 > 0$ and a smooth map $\phi \colon (-\mu_0,\mu_0) \times (\omega_0-\mu_0,\omega_0+\mu_0) \to H^2(\R)$ with $\phi(0,\omega_0) = \phi_0(\varsigma_0,\omega_0)$ such that for each $\mu \in (-\mu_0,\mu_0) \setminus \{0\}$ and $\omega \in (\omega_0-\mu_0,\omega_0+\mu_0)$ we have that $\phi(\mu,\omega)$ is a nondegenerate stationary solution to~\eqref{focusing_GP_real} satisfying
\begin{align*}
 \left\langle \partial_\omega \phi(\mu,\omega),\phi(\mu,\omega)\right\rangle_{L^2} > 0.
\end{align*}
In addition, $L_{+,\mu}(\phi(\mu,\omega))$ has precisely one negative eigenvalue in case $\mu V_\textup{eff}''(\varsigma_0) > 0$, whereas it has precisely two negative eigenvalues in case $\mu V_\textup{eff}''(\varsigma_0) < 0$ (counting algebraic multiplicies). Finally, $L_{-,\mu}(\phi(\mu,\omega))$ possesses no negative eigenvalues. 
\end{Theorem}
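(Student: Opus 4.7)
The plan is to mirror the proof of Theorem~\ref{thm:RDE_1front}: perform a Lyapunov--Schmidt reduction using the shift parameter $\varsigma$ of the bright NLS soliton. At $\mu=0$, translational invariance of $-\partial_x^2+\omega-\psi^3$ in $x$ makes $\phi_0(\cdot;\varsigma,\omega)$ a solution of~\eqref{focusing_GP} for every $(\varsigma,\omega)$, and the self-adjoint operator $L_{+,0}(\phi_0(\varsigma,\omega))$ has a simple kernel spanned by $\partial_x\phi_0(\cdot;\varsigma,\omega)$ (by Sturm--Liouville). Writing the sought solution as $\phi=\phi_0(\varsigma,\omega)+\varphi$ with $\varphi\perp\partial_x\phi_0(\varsigma,\omega)$, I would invert the restriction of $L_{+,0}(\phi_0(\varsigma,\omega))$ to the orthogonal complement of its kernel and apply the implicit function theorem to solve the ``range'' equation for $\varphi=\varphi(\mu,\omega,\varsigma)$ with $\varphi(0,\omega,\varsigma)=0$. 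Substituting into the ``kernel'' equation yields a scalar bifurcation function $g(\mu,\omega,\varsigma)$ vanishing at $\mu=0$; using that $\partial_\mu g(0,\omega,\varsigma)$ is (up to a nonvanishing factor) proportional to $\langle V\phi_0(\varsigma,\omega),\partial_x\phi_0(\varsigma,\omega)\rangle_{L^2}=-\tfrac12 V_{\mathrm{eff}}'(\varsigma)$, I desingularize as in the proof of Theorem~\ref{thm:RDE_1front} and apply the implicit function theorem using $V_{\mathrm{eff}}'(\varsigma_0)=0$ and $V_{\mathrm{eff}}''(\varsigma_0)\neq 0$ to extract a smooth curve $\varsigma(\mu,\omega)$ with $\varsigma(0,\omega_0)=\varsigma_0$. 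The desired family is $\phi(\mu,\omega)=\phi_0(\varsigma(\mu,\omega),\omega)+\varphi(\mu,\omega,\varsigma(\mu,\omega))$, which depends smoothly on $(\mu,\omega)$.

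The two easier spectral statements follow from Sturm--Liouville and continuity arguments. Since $\phi(\mu,\omega)$ solves~\eqref{focusing_GP}, it lies in $\ker L_{-,\mu}(\phi(\mu,\omega))$. The limit $\phi_0(\varsigma_0,\omega_0)$ is strictly positive, so for $\mu$ small the continuous dependence on $\mu$ (and the exponential decay of $\phi_0$) imply $\phi(\mu,\omega)>0$ on $\R$. A sign-definite eigenfunction of a Schr\"odinger-type operator is necessarily a ground state, so $0=\min\sigma(L_{-,\mu}(\phi(\mu,\omega)))$ and $L_{-,\mu}(\phi(\mu,\omega))$ has no negative eigenvalue. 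Similarly, the identity $\langle\partial_\omega\phi_0(\varsigma_0,\omega_0),\phi_0(\varsigma_0,\omega_0)\rangle_{L^2}=1/\sqrt{\omega_0}>0$ and smoothness of $\phi(\mu,\omega)$ yield $\langle\partial_\omega\phi(\mu,\omega),\phi(\mu,\omega)\rangle_{L^2}>0$ for $\mu$ sufficiently small.

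The hard part is the negative eigenvalue count of $L_{+,\mu}(\phi(\mu,\omega))$. At $\mu=0$, explicit analysis of the bright soliton gives $L_{+,0}(\phi_0(\varsigma_0,\omega))$ exactly one simple negative eigenvalue, a simple eigenvalue at $0$ with eigenfunction $\partial_x\phi_0(\varsigma_0,\omega)$, and essential spectrum $[\omega,\infty)$. For $\mu\ne 0$ small, the negative eigenvalue persists by standard analytic perturbation theory, and Proposition~\ref{prop:essential_spec1front} together with continuity of the Floquet exponents keeps the essential spectrum bounded away from $0$. The crucial and technical step is to track the simple eigenvalue sitting at $0$ at $\mu=0$: I expect a standard Rayleigh--Schr\"odinger expansion, exploiting that $\phi(\mu,\omega)=\phi_0(\varsigma_0,\omega)+\mu\tilde{\phi}_1+\mathcal{O}(\mu^2)$ with $\tilde{\phi}_1\perp\partial_x\phi_0(\varsigma_0,\omega)$ solving $L_{+,0}(\phi_0(\varsigma_0,\omega))\tilde{\phi}_1=-V\phi_0(\varsigma_0,\omega)$ (solvable precisely because $V_{\mathrm{eff}}'(\varsigma_0)=0$), to show that the bifurcating eigenvalue satisfies
\begin{equation*}
\lambda(\mu)=c\,\mu\, V_{\mathrm{eff}}''(\varsigma_0)+\mathcal{O}(\mu^2)
\end{equation*}
for an explicit constant $c>0$ depending only on $\|\partial_x\phi_0(\varsigma_0,\omega)\|_{L^2}$. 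Once this sign is pinned down, the eigenvalue count follows: if $\mu V_{\mathrm{eff}}''(\varsigma_0)>0$ the eigenvalue becomes positive and the total negative count remains $1$, while if $\mu V_{\mathrm{eff}}''(\varsigma_0)<0$ the eigenvalue becomes negative and the total count is $2$. In either case $0\notin\sigma(L_{+,\mu}(\phi(\mu,\omega)))$, which combined with the simplicity of $0\in\sigma(L_{-,\mu}(\phi(\mu,\omega)))$ and the computation $\ker L_{-,\mu}(\phi(\mu,\omega))=\mathrm{span}\{\phi(\mu,\omega)\}$ yields nondegeneracy of $\phi(\mu,\omega)$ as a stationary solution to~\eqref{focusing_GP_real} (whose linearization equals $-L_{+,\mu}(\phi(\mu,\omega))$). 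The main obstacle is this Rayleigh--Schr\"odinger calculation: it requires careful bookkeeping of the three simultaneous effects (the explicit $\mu V$ term, the shift $\varsigma(\mu,\omega)-\varsigma_0=\mathcal{O}(\mu)$, and the correction $\tilde{\phi}_1$) and integration-by-parts identities rewriting the resulting integrals in terms of $V_{\mathrm{eff}}''(\varsigma_0)$.
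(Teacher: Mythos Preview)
The paper does not give its own proof of this theorem; the statement is attributed to \cite[Section~3.2.3]{Pelinovsky2011}, where it is obtained via exactly the Lyapunov--Schmidt reduction you sketch, in direct analogy with the paper's proof of Theorem~\ref{thm:RDE_1front}. Your outline is sound, with one harmless slip: the linearization of~\eqref{focusing_GP_real} about $\phi(\mu,\omega)$ is $L_{+,\mu}(\phi(\mu,\omega))$, not its negative (the right-hand side of~\eqref{focusing_GP_real} already carries $-\partial_x^2$), though this does not affect the nondegeneracy argument.
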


On the other hand, if $\omega,\mu \in \R$ and $V \in C^1(\R)$ are chosen such that $\omega$ lies in the so-called \emph{semi-infinite gap} $(s_0,\infty)$ where $s_0$ is the spectral bound of the periodic differential operator $\partial_x^2 - \mu V$ acting on $L^2(\R)$, one can use variational methods~\cite{Ackerman2019Unstable,Pelinovsky2011,Lions1984Concentration,Stuart1995Bifurcation} to prove the existence of nontrivial nondegenerate stationary $H^2$-solutions to~\eqref{focusing_GP_real} arising as critical points of the Hamiltonian $\mathcal{H} \colon H^1(\R) \to \R$ given by
\begin{align*}
\mathcal{H}(\psi) = \frac{1}{2}\int_\R \left(|\partial_x\psi(x)|^2 + \left(\omega + \mu V(x)\right) |\psi(x)|^2 - \frac{1}{2} |\psi(x)|^4 \right)\de x.
\end{align*}

We refer to~\cite{Pelinovsky2011} and references therein for further details on the existence of gap-soliton solutions to~\eqref{focusing_GP}. In the remaining part of this section, we assume that $\omega,\mu \in \R$ and $V \in C^1(\R)$ are such that a nondegenerate stationary $1$-pulse solution to~\eqref{focusing_GP_real} exists. Theorems~\ref{t:existence_multifront},~\ref{t:existence_periodic} and~\ref{thm:instability_multifront} then readily yield the existence of associated multifronts and periodic pulse solutions to~\eqref{focusing_GP}, see Figure~\ref{fig:GPpulses}.

\begin{Corollary}\label{cor:GP_existence}
Let $T > 0$ and $\omega,\mu \in \R$. Let $V \in C^1(\R)$ be $T$-periodic and real-valued. Let $M \in \N$ and $\theta \in \{\pm 1\}^M$. Let $\psi_0 \in H^2(\R)$ be a nondegenerate stationary solution to~\eqref{focusing_GP_real}. Then, there exists $N\in \N$ such that for each $n \in \N$ with $n \geq N$ the following assertions hold true.
\begin{enumerate}
    \item[1.] There exists a nondegenerate stationary $M$-pulse solution to~\eqref{focusing_GP_real} of the form
    \begin{align} \label{form_multipulse_GP}
        \psi_n = \sum_{j=1}^M \theta_j\psi_0(\cdot-jnT) + a_n,
    \end{align}
    where $\{a_n\}_n$ is a sequence in $H^2(\R)$ converging to $0$ as $n \to \infty$.
    \item[2.] There exists an $nT$-periodic solution $\psi_{\per,n}$ to~\eqref{focusing_GP} given by
    \begin{align*}
        \psi_{\per,n}(x) = \chi_n(x) \psi_0(x) + a_n(x), \qquad x \in [-\tfrac{n}{2}T,\tfrac{n}{2}T),
    \end{align*}
    where $\chi_n$ is the cut-off function from Theorem~\ref{t:existence_periodic}, and $\{a_n\}_n$ is a sequence with $a_n \in H_\per^2(0,nT)$ satisfying $\|a_n\|_{H_\per^2(0,nT)} \to 0$ as $n \to \infty$. 
\end{enumerate}
\end{Corollary}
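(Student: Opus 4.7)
The plan is a direct verification of the hypotheses of the two existence theorems from~\S\ref{sec:m-front} and~\S\ref{sec:periodic_pulse}, followed by a minor rewriting step to match the form of the multipulse ansatz in~\eqref{form_multipulse_GP}.

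For the second assertion I would invoke Theorem~\ref{t:existence_periodic} with the choices $v = 0 \in H_\per^2(0,T)$ and $z = \psi_0 \in H^2(\R)$. Both $v$ and $z+v = \psi_0$ solve~\eqref{focusing_GP}, so hypothesis~\ref{assH4} holds, and hypothesis~\ref{assH5} is precisely the assumed nondegeneracy of $\psi_0$. The conclusion of Theorem~\ref{t:existence_periodic} then delivers a periodic solution of exactly the stated form~\eqref{ansatz_periodic} with $v \equiv 0$.

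For the first assertion I would apply Theorem~\ref{t:existence_multifront} to the $M$ primary fronts $Z_j := \theta_j\psi_0$, each with matching periodic end states $v_{j,\pm} = 0$. Two observations reduce the hypothesis check to a routine matter. First, equation~\eqref{focusing_GP_real} is invariant under $\psi \mapsto -\psi$, so each $\theta_j\psi_0 \in H^2(\R)$ is again an exponentially localized stationary solution; hypothesis~\ref{assH1} is immediate, and~\ref{assH2} is trivial because all end states vanish. Second, the linearization of~\eqref{focusing_GP_real} at $\theta_j\psi_0$ reads $-\partial_x^2 + \mu V + \omega - 3(\theta_j\psi_0)^2 = -\partial_x^2 + \mu V + \omega - 3\psi_0^2$, which coincides with the linearization at $\psi_0$ because the cubic interaction depends only on $\psi_0^2$; hence the nondegeneracy of $\psi_0$ transfers verbatim to every $Z_j$, verifying~\ref{assH3}. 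Theorem~\ref{t:existence_multifront} then yields, for $n$ sufficiently large, a stationary solution
\begin{align*}
\tilde\psi_n = \sum_{j=1}^M \chi_{j,n}\, \theta_j\psi_0(\cdot - jnT) + \tilde a_n, \qquad \|\tilde a_n\|_{H^2} \to 0.
\end{align*}

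To bring $\tilde\psi_n$ into the form~\eqref{form_multipulse_GP}, I would absorb the partition-of-unity cut-offs into the error. The discrepancy
\begin{align*}
b_n := \sum_{j=1}^M (\chi_{j,n} - 1)\, \theta_j\psi_0(\cdot - jnT)
\end{align*}
is supported on intervals at distance at least $\tfrac{1}{2}nT - 1$ from the corresponding pulse centers $jnT$; since $\psi_0$ is exponentially localized as a nondegenerate $H^2$-pulse with trivial end state (the asymptotic $T$-periodic system at $\lambda=0$ has no neutral Floquet exponents by Proposition~\ref{prop:essential_spec1front}, giving exponential dichotomies on both half-lines), this forces $\|b_n\|_{H^2} \to 0$. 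Setting $a_n := \tilde a_n + b_n$ yields the desired representation $\psi_n = \sum_j \theta_j\psi_0(\cdot - jnT) + a_n$. The nondegeneracy of $\psi_n$ is then a direct consequence of Lemma~\ref{lem:invertibility_multifront} with the compact set $\mathcal{K} = \{0\}$, since $0 \in \rho(\El(\theta_j\psi_0)) = \rho(\El(\psi_0))$ for every $j$. The only technical point worth flagging is the exponential localization of $\psi_0$ used in bounding $b_n$; everything else is pure bookkeeping on top of the existence toolbox already developed.
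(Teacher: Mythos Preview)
Your proposal is correct and matches the paper's approach, which simply cites Theorems~\ref{t:existence_multifront}, \ref{t:existence_periodic} and~\ref{thm:instability_multifront} (the latter in place of your Lemma~\ref{lem:invertibility_multifront} for nondegeneracy, though both work). One minor simplification: the bound $\|b_n\|_{H^2} \to 0$ follows directly from $\psi_0 \in H^2(\R)$ and the support properties of $1-\chi_{j,n}$, so the detour through exponential localization via Floquet exponents, while correct, is unnecessary.
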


\begin{Remark}
It is also possible to construct multipulse solutions lying near the formal concatenation of \emph{different} nondegenerate stationary pulse solutions $\psi_1, \psi_2 \in H^2(\R)$ to~\eqref{focusing_GP_real} using Theorem~\ref{t:existence_multifront}.
\end{Remark}

\begin{Remark}
The existence of multipulse solutions to~\eqref{focusing_GP} is also addressed in~\cite{Alama1992Multibump,Pelinovsky2011}. Corollary~\ref{cor:GP_existence} reveals that pulse solutions are accompanied by a family of periodic solutions with large spatial period. As far as the authors are aware, existence of these so-called \emph{soliton trains} has so far only been rigorously established in the case of the explicit periodic potential
\begin{align} \label{explicit_potential_cnoidal}
V(x) = \mathrm{cn}^2\left(\frac{1}{\sqrt{2}} x;k\right) - 1,
\end{align}
where $\mathrm{cn}(x;k)$ is the Jacobi cosine function (cnoidal wave) with elliptic modulus $k \in (0,1)$, cf.~\cite{Bronski2001}. 

For such a potential, periodic waves exist for specific values of $\omega$ and have the form
\begin{align*}
\psi(x) = \sqrt{\mu + k^2}\, \mathrm{cn}\left(\frac{1}{\sqrt{2}} x;k\right)
\end{align*}
for $\omega = \mu + k^2 - \frac{1}{2}$ and $\mu \geq -k^2$, and
\begin{align*}
\psi(x) = \frac{\sqrt{\mu + k^2}}{k}\, \mathrm{dn}\left(\frac{1}{\sqrt{2}} x;k\right)
\end{align*}
for $\omega = 1+ \mu k^{-2} - \frac{1}{2}k^2$ and $\mu \geq -k^2$. In the homoclinic limit $k \uparrow 1$, the period tends to infinity, the potential approaches $x \mapsto -\mu \tanh^2(x/\sqrt{2})$, and the periodic waves approximate the pulse $x \mapsto \smash{\sqrt{\mu^2 + 1}} \sech(x/\sqrt{2})$ on a single periodicity interval. Thus, these periodic waves resemble periodic pulse solutions, or soliton trains, for $0 \ll k < 1$.
\end{Remark}

\begin{figure}[t]
    \centering
    \includegraphics[width=0.3\textwidth]{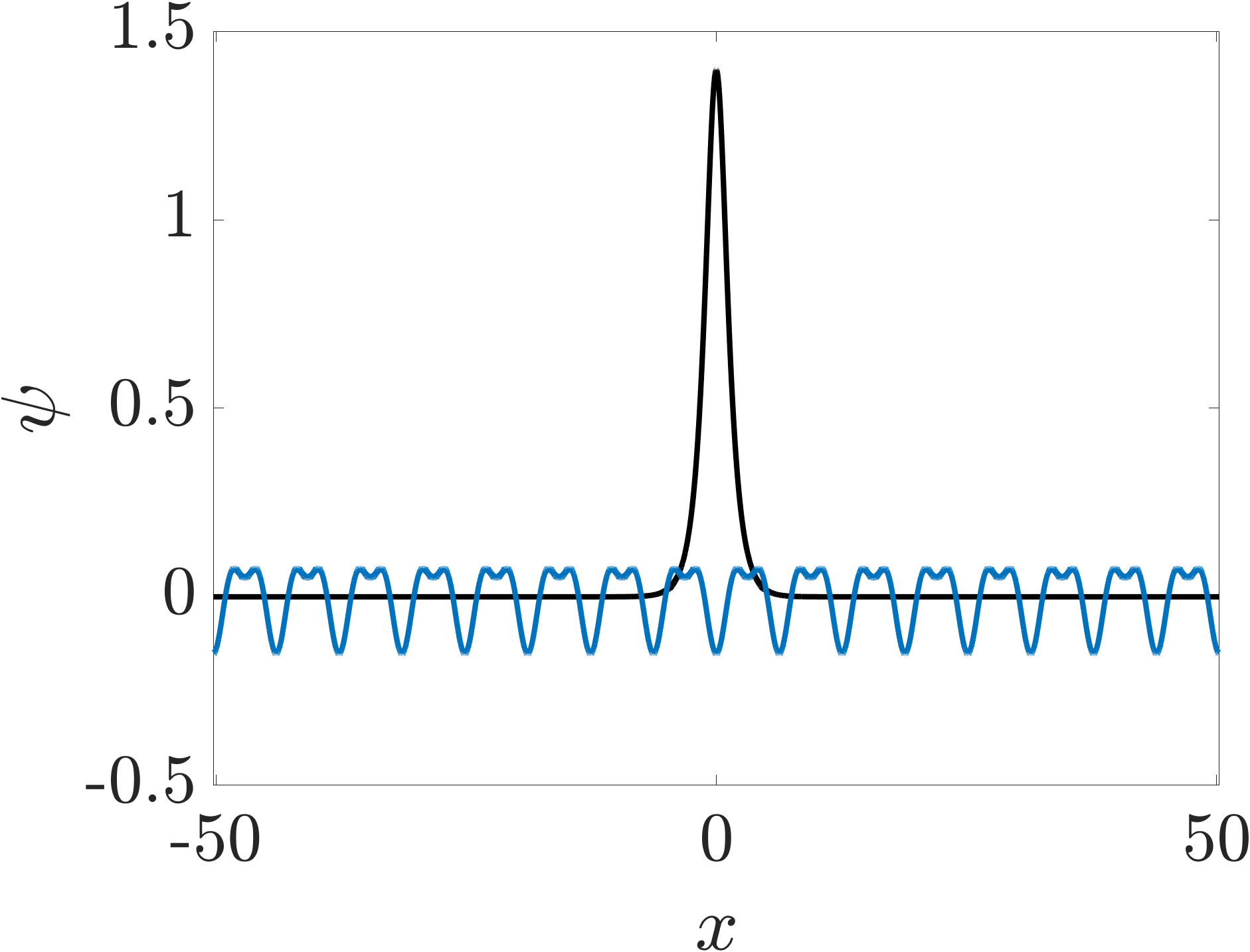} \hspace{1em}
    \includegraphics[width=0.3\textwidth]{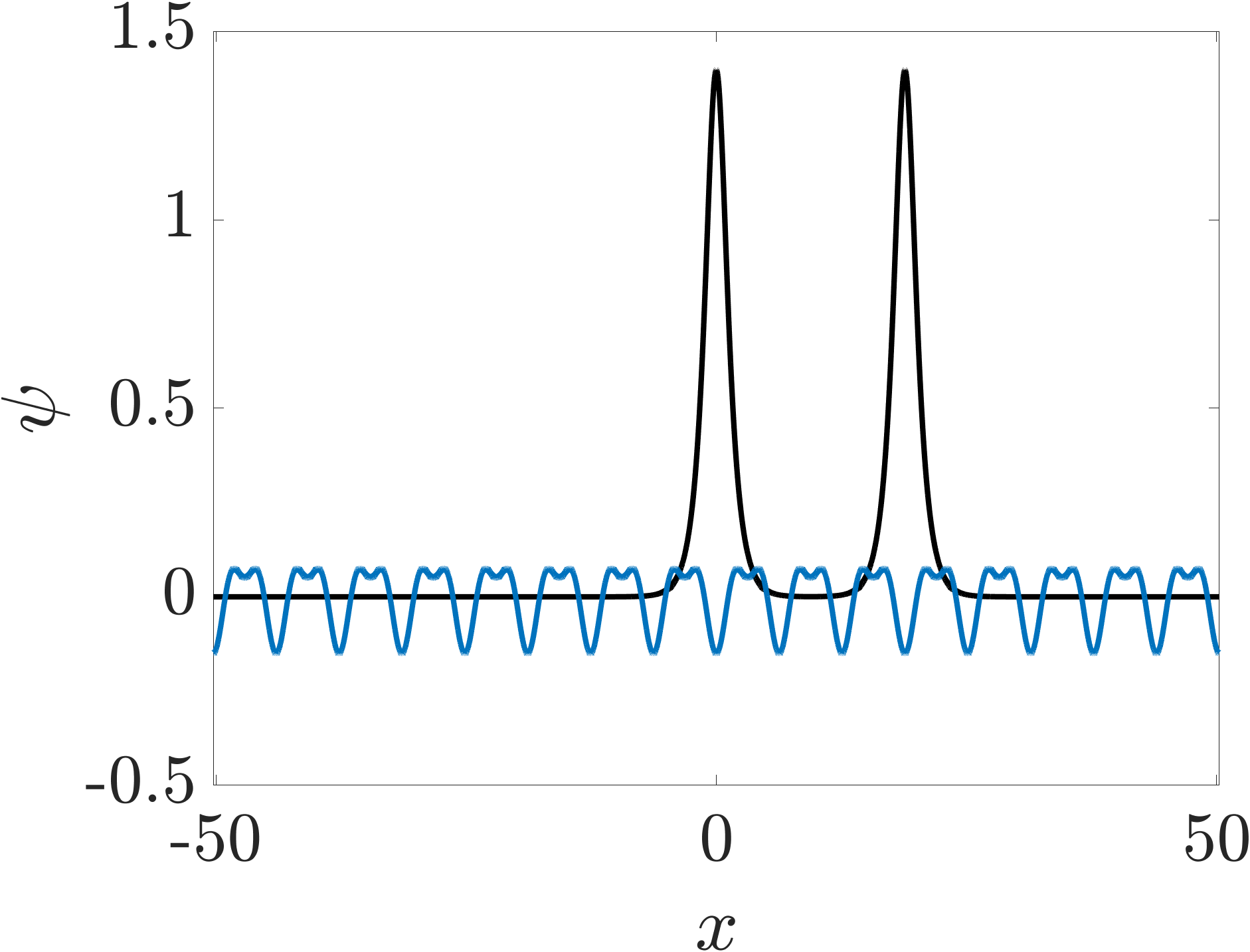} \hspace{1em}
    \includegraphics[width=0.2865\textwidth]{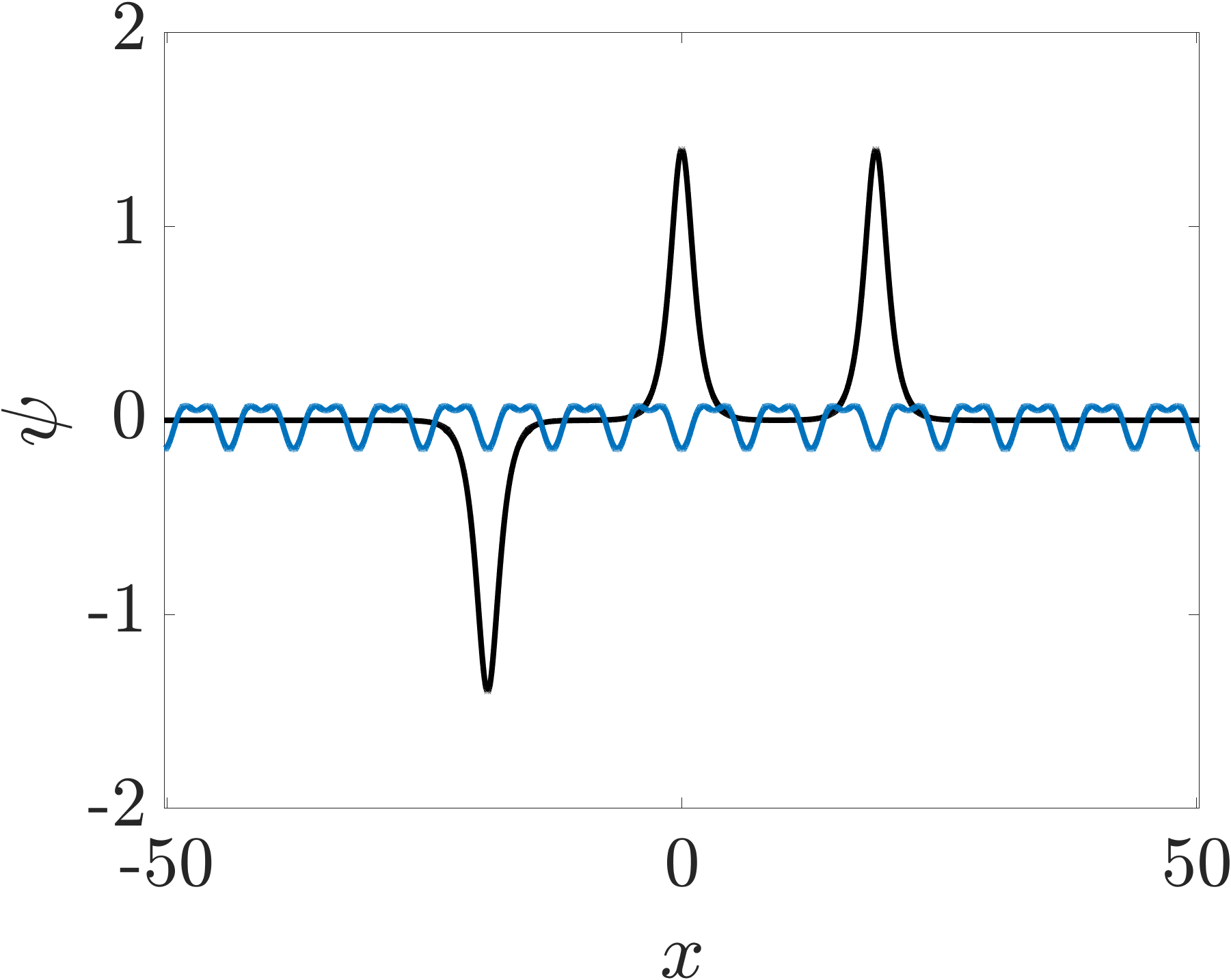}
    \caption{Approximations of $1$-, $2$-, and $3$-pulse solutions to~\eqref{focusing_GP} (black) pinned to minima of the periodic potential $V(x) = -0.1 \cos(x)-0.05\cos(2x)$ (blue) for the system coefficients $\omega = 1$ and $\mu = 0.5$. The solutions are obtained through numerical continuation by starting from a formal concatenation of bright NLS solitons $\pm \phi_0(\varsigma,\omega)$ for various $\varsigma \in \R$, which solve~\eqref{focusing_GP} at $\mu = 0$.}
    \label{fig:GPpulses}
\end{figure}

We proceed with analyzing the spectral stability of the stationary multipulses and periodic pulse solutions to~\eqref{GP_time_harmonic}, constructed in Corollary~\ref{cor:GP_existence}. To this end, we fix a nondegenerate primary pulse solution $\psi_0 \in H^2(\R)$ to~\eqref{focusing_GP_real} for a frequency $\omega = \omega_0$. Since $L_{+,\mu}(\psi_0)$ is invertible, it follows directly from the implicit function theorem that $\psi_0$ may be continued in $\omega$.

\begin{Lemma} \label{lem:cont_GP_omega}
Let $T>0$ and $\mu,\omega_0 \in \R$. Let $V \in C^1(\R)$ be $T$-periodic and real-valued. Let $\psi_0 \in H^2(\R)$ be a nondegenerate stationary solution to~\eqref{focusing_GP_real} at $\omega = \omega_0$. There exist $\nu > 0$ and a locally unique smooth map $\tilde\psi \colon (\omega_0-\nu,\omega_0 + \nu) \to H^2(\R)$ with $\tilde\psi(\omega_0) = \psi_0$ such that $\tilde\psi(\omega)$ is a solution to~\eqref{focusing_GP} for all $\omega \in (\omega_0 - \nu,\omega_0+\nu)$.
\end{Lemma}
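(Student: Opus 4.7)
The plan is a direct application of the implicit function theorem in Banach spaces to the stationary equation~\eqref{focusing_GP}. I introduce the smooth map $F \colon H^2(\R) \times \R \to L^2(\R)$ defined by
\begin{align*}
F(\psi,\omega) = -\psi'' + \mu V(\cdot)\psi + \omega\psi - \psi^3.
\end{align*}
The linear terms are clearly bounded (the potential $V$ is continuous and $T$-periodic, hence in $L^\infty(\R)$), and the cubic term $\psi\mapsto \psi^3$ is a smooth map $H^2(\R)\to H^2(\R)\hookrightarrow L^2(\R)$ because $H^2(\R)$ is a Banach algebra in one space dimension. Thus $F$ is smooth, and $F(\psi_0,\omega_0) = 0$ is just the assumption that $\psi_0$ solves~\eqref{focusing_GP} at $\omega = \omega_0$.

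The key point is to identify the Fr\'echet derivative in the first argument. Differentiating pointwise,
\begin{align*}
\partial_\psi F(\psi_0,\omega_0) = -\partial_x^2 + \mu V + \omega_0 - 3\psi_0^2 = L_{+,\mu}(\psi_0).
\end{align*}
Comparing with the setup in Section~\ref{sec:Abstract_ex_stab}, the evolution equation~\eqref{focusing_GP_real} has $Au = -\partial_x^2 u$ and $\mathcal{N}(u,x) = \mu V(x)u + \omega_0 u - u^3$, so $\El(\psi_0) = L_{+,\mu}(\psi_0)$ on $H^2(\R)$. The hypothesis that $\psi_0$ is nondegenerate (Definition~\ref{def:spectral_stability}\textup{(i)}) is therefore precisely the statement that $\partial_\psi F(\psi_0,\omega_0) \colon H^2(\R)\to L^2(\R)$ is a bounded bijection with bounded inverse.

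The implicit function theorem now yields $\nu > 0$, an open neighborhood $\mathcal{U}\subset H^2(\R)$ of $\psi_0$, and a locally unique smooth curve $\tilde\psi \colon (\omega_0-\nu,\omega_0+\nu) \to \mathcal{U}$ with $\tilde\psi(\omega_0) = \psi_0$ satisfying $F(\tilde\psi(\omega),\omega) = 0$ for all $\omega$ in that interval. This is exactly the claimed continuation. There is no serious obstacle in this argument; the only substantive input is the nondegeneracy hypothesis, which makes the linearization invertible and removes any need for a Lyapunov--Schmidt reduction (in sharp contrast with the translation-invariant case $\mu = 0$, where the family $\phi_0(\cdot;\varsigma,\omega)$ produces a one-dimensional kernel spanned by $\partial_x\phi_0$).
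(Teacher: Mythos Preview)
Your proposal is correct and matches the paper's own argument, which simply notes that the result follows directly from the implicit function theorem using the invertibility of $L_{+,\mu}(\psi_0)$. You have filled in the details accurately, including the identification $\El(\psi_0) = L_{+,\mu}(\psi_0)$ for the scalar system~\eqref{focusing_GP_real}.
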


We impose the following assumption on the existence and spectral properties of the primary pulse.

\begin{itemize}
    \item[\namedlabel{assGP}{\textbf{(GP)}}] There exist $T > 0$, $\mu,\omega_0 \in \R$, real-valued $T$-periodic $V \in C^1(\R)$, and a nondegenerate stationary solution $\psi_0 \in H^2(\R)$ to~\eqref{focusing_GP_real} at $\omega = \omega_0$ satisfying:
    \begin{itemize}
        \item[1.] $L_{+,\mu}(\psi_0)$ has precisely one negative eigenvalue (counting algebraic multiplicities).
        \item[2.] $L_{-,\mu}(\psi_0)$ has no negative eigenvalues.
        \item[3.] It holds $\langle \partial_\omega \tilde\psi(\omega_0),\psi_0 \rangle_{L^2} \neq 0$, where $\tilde\psi$ is the continuation of $\psi_0$ with respect to $\omega$, established in Lemma~\ref{lem:cont_GP_omega}.
    \end{itemize}
\end{itemize}

The spectral conditions in~\ref{assGP} are typically imposed in the stability analysis of stationary pulse solutions to the focusing Gross-Pitaevskii equation, see, for instance,~\cite[Section~4]{Pelinovsky2011} and references therein. We observe by Sturm-Liouville theory~\cite{Zettl2021Recent} that the second assertion in~\ref{assGP} holds if and only if $\psi_0$ has no zeros. Moreover, Theorem~\ref{thm:GP_focusing_1_soliton} shows that, as long as the derivative of the effective potential $V_\text{eff}$ has a simple zero, there exist nondegenerate pulse solutions $\psi_0 \in H^2(\R) \setminus \{0\}$ to~\eqref{focusing_GP_real}, obeying the spectral conditions in~\ref{assGP}, see also the forthcoming Remark~\ref{rem:GP_ass}.

The following result demonstrates that spectral stability of the periodic pulse solution $\psi_{\per,n}$, obtained in Corollary~\ref{cor:GP_existence}, is inherited from the constituting primary pulse $\psi_0$. Its proof employs Krein index counting theory~\cite{AddendumKapitulaKevrekidis2004,KapitulaKevrekidis2004} to show that, if the linearization $L_{\mu,\per}(\psi_{\per,n})$ has unstable eigenvalues, then they must be real and are separated from the imaginary axis by an $n$-independent spectral gap. We use Theorem~\ref{thm:instability_periodic} to relate the number of negative eigenvalues of the self-adjoint operators $L_{\pm,\mu,\per}(\psi_{\per,n})$ to those of $L_{\pm,\mu}(\psi_0)$, and to show the existence of an $n$-independent ball centered at the origin, in which $0$ is the only eigenvalue of $L_{\mu,\per}(\psi_n)$. Having established that any unstable eigenvalue of $L_{\mu,\per}(\psi_{\per,n})$ is real and bounded away from the imaginary axis, an application of Lemma~\ref{lem:invertibility_periodic}, together with standard spectral a-priori bounds, rules out the presence of unstable eigenvalues of $L_{\mu,\per}(\psi_{\per,n})$. 

\begin{Theorem}\label{thm:stability_periodic_GP}
Assume~{\upshape \ref{assGP}}. Suppose $\omega_0$ is larger than the spectral bound of the operator $\partial_x^2 - \mu V$ acting on $L^2(\R)$. Then, there exists $N \in \mathbb{N}$ such that for all $n \in \N$ with $n \geq N$ the following statements are equivalent:
\begin{itemize}
    \item[1.] The $1$-pulse $\psi_0 \in H^2(\R)$ is a spectrally stable solution to~\eqref{GP_time_harmonic}. 
    \item[2.] We have $\langle \partial_\omega \tilde\psi(\omega_0),\psi_0 \rangle_{L^2} > 0$.
    \item[3.] The periodic pulse solution $\psi_{\per,n} \in H_\per^2(0,nT)$ to~\eqref{GP_time_harmonic}, established in Corollary~\ref{cor:GP_existence}, is spectrally stable against co-periodic perturbations, i.e., the spectrum of $L_{\mu,\per}(\psi_{\per,n})$ is confined to the imaginary axis.
\end{itemize}
Moreover, if one of these statements holds, then we have the following:
\begin{itemize}
    \item[a.] $L_{-,\mu,\per}(\psi_n)$ has no negative eigenvalues and a simple eigenvalue at $0$.
    \item[b.] $L_{+,\mu,\per}(\psi_n)$ is invertible and has precisely one negative eigenvalue, which is simple.
    \item[c.] There exist $\nu_n > 0$ and a smooth map $\tilde{\psi}_{\per,n} \colon (\omega_0-\nu_n,\omega_0 + \nu_n) \to H^2_\per(0,nT)$ such that $\tilde{\psi}_{\per,n}(\omega_0) = \psi_{\per,n}$ and $\tilde{\psi}_{\per,n}(\omega)$ is a solution to~\eqref{focusing_GP} for all $\omega \in (\omega_0 - \nu_n,\omega_0 + \nu_n)$. It holds
    \begin{align} \label{negKrein_GP}
\left\langle \partial_\omega \tilde{\psi}_{\per,n}(\omega_0),\psi_{\per,n}\right\rangle_{L^2_\per(0,nT)} > 0.
    \end{align}
\end{itemize}
\end{Theorem}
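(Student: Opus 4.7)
The plan is to transfer the spectral data from the primary pulse $\psi_0$ to the periodic solution $\psi_{\per,n}$, thereby establishing assertions (a)--(c), and then to invoke Hamiltonian-Krein index theory in order to reduce each of the three stability conditions to a single sign condition on a pairing that propagates from $\psi_0$ to $\psi_{\per,n}$. First, I observe that $\psi_{\per,n}$ is uniformly bounded in $L^\infty$ (by the Sobolev embedding and $\|a_n\|_{H^2_\per(0,nT)} \to 0$), so the self-adjoint operators $L_{\pm,\mu,\per}(\psi_{\per,n})$ are uniformly bounded below, confining any negative eigenvalue to a fixed compact interval $[-M,0)$. Since $\omega_0 > s_0$, Proposition~\ref{prop:essential_spec1front} places the essential spectrum of each $L_{\pm,\mu}(\psi_0)$ inside $[\omega_0-s_0,\infty)\subset(0,\infty)$. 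Under \ref{assGP}, the only negative point spectrum on $L^2(\R)$ is the simple negative eigenvalue of $L_{+,\mu}(\psi_0)$, and $0$ is a simple eigenvalue of $L_{-,\mu}(\psi_0)$ (spanned by $\psi_0$) that lies in the resolvent set of $L_{+,\mu}(\psi_0)$ by nondegeneracy. Applying Theorem~\ref{thm:instability_periodic} at each isolated eigenvalue and Lemma~\ref{lem:invertibility_periodic} on the complementary compact subsets of $[-M,0]$ lying in the resolvent sets of $L_{\pm,\mu}(\psi_0)$, I obtain (a) and (b); the identity $L_{-,\mu,\per}(\psi_{\per,n})\psi_{\per,n} = 0$ pins down the persisting simple eigenvalue as $0$ itself. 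The existence part of (c) then follows from the implicit function theorem applied to the $\omega$-dependent stationary equation, using the invertibility of $L_{+,\mu,\per}(\psi_{\per,n})$ from (b); differentiation yields $\partial_\omega\tilde\psi_{\per,n}(\omega_0) = -L_{+,\mu,\per}(\psi_{\per,n})^{-1}\psi_{\per,n}$, with the analogous identity for the primary pulse.

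Both $L_\mu(\psi_0)$ and $L_{\mu,\per}(\psi_{\per,n})$ admit the Hamiltonian factorization $JH_\bullet$ with self-adjoint $H_\bullet = \operatorname{diag}(L_{+,\bullet},L_{-,\bullet})$ satisfying $n(H_\bullet) = 1$ and one-dimensional kernel spanned by the phase direction $(0,\psi_\bullet)$. The Hamiltonian-Krein index theory of~\cite{KapitulaKevrekidis2004,AddendumKapitulaKevrekidis2004,KapitulaPromislow2013} then delivers
\begin{align*}
K_{\mathrm{Ham},\bullet} = n(H_\bullet) - n(D_\bullet) = 1 - n(D_\bullet), \qquad D_\bullet = \langle L_{+,\bullet}^{-1}\psi_\bullet,\psi_\bullet\rangle = -\langle\partial_\omega\tilde\psi_\bullet(\omega_0),\psi_\bullet\rangle.
\end{align*}
Since $K_{\mathrm{Ham},\bullet}\in\{0,1\}$ in our setting and $K_{\mathrm{Ham},\bullet} = k_r + 2k_c + 2k_i^-$, the value $K_{\mathrm{Ham},\bullet} = 1$ is only realized by a single real positive eigenvalue, whereas $K_{\mathrm{Ham},\bullet} = 0$ forces the spectrum onto the imaginary axis. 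Hence spectral stability is equivalent to $n(D_\bullet) = 1$, i.e., to $\langle\partial_\omega\tilde\psi_\bullet(\omega_0),\psi_\bullet\rangle > 0$. Applied to the primary pulse this yields (1)$\Leftrightarrow$(2); applied to the periodic solution this yields (3)$\Leftrightarrow$\eqref{negKrein_GP}.

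It remains to show that the two pairings have the same sign for large $n$, which closes the chain of equivalences and simultaneously delivers the inequality in (c) whenever one of (1)--(3) holds. The plan is a perturbation argument: by the exponential localization of $\partial_\omega\tilde\psi(\omega_0) = -L_{+,\mu}(\psi_0)^{-1}\psi_0$, inherited from that of $\psi_0$, together with the uniform resolvent bound of Lemma~\ref{lem:invertibility_periodic} for $L_{+,\mu,\per}(\psi_{\per,n})^{-1}$, one shows that $\partial_\omega\tilde\psi_{\per,n}(\omega_0)$ approximates the periodic extension of $\partial_\omega\tilde\psi(\omega_0)$ on $[-nT/2,nT/2]$. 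Combined with the analogous approximation of $\psi_{\per,n}$ by the periodic extension of $\psi_0$, this yields
\begin{align*}
\langle\partial_\omega\tilde\psi_{\per,n}(\omega_0),\psi_{\per,n}\rangle_{L^2_\per(0,nT)} \longrightarrow \langle\partial_\omega\tilde\psi(\omega_0),\psi_0\rangle_{L^2(\R)} \qquad (n\to\infty),
\end{align*}
whose nonvanishing by \ref{assGP}(3) forces the two signs to agree for all sufficiently large $n$. The principal technical obstacle lies precisely in this step: $\partial_\omega\tilde\psi_{\per,n}(\omega_0)$ is implicitly defined through inversion on the periodic space, so the required comparison must quantitatively relate $L_{+,\mu}(\psi_0)^{-1}$ and $L_{+,\mu,\per}(\psi_{\per,n})^{-1}$ on inputs supported near a single periodicity interval. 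This will involve a careful tracking of the exponential dichotomies and their projections underlying the construction in the proof of Lemma~\ref{lem:invertibility_periodic}, analogous to the Evans-function comparison performed in the proof of Theorem~\ref{thm:instability_periodic}.
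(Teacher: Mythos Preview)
Your approach is correct in spirit and your derivation of (a)--(c) via Theorem~\ref{thm:instability_periodic} and Lemma~\ref{lem:invertibility_periodic} applied to the scalar operators $L_{\pm,\mu}$ matches the paper. The difference lies in how the sign of the periodic pairing $\langle\partial_\omega\tilde\psi_{\per,n}(\omega_0),\psi_{\per,n}\rangle$ is determined.

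You propose a direct convergence argument for this pairing, which would be the periodic analogue of Lemma~\ref{lem:convergence_multipulse}; this is feasible but, as you acknowledge, requires a careful comparison of $L_{+,\mu}(\psi_0)^{-1}$ and $L_{+,\mu,\per}(\psi_{\per,n})^{-1}$ that you do not carry out. The paper avoids this convergence entirely by a two-step indirect argument. First, it applies Theorem~\ref{thm:instability_periodic} not only to the self-adjoint pieces $L_{\pm,\mu}$ but also to the \emph{full} Hamiltonian operator $L_\mu$ at $\lambda=0$: since $0$ is an eigenvalue of $L_\mu(\psi_0)$ of algebraic multiplicity exactly $2$ (using $\langle\partial_\omega\tilde\psi(\omega_0),\psi_0\rangle\neq0$), the total multiplicity of eigenvalues of $L_{\mu,\per}(\psi_{\per,n})$ in a small ball $B_0(\eta_1)$ is also $2$, forcing the algebraic multiplicity of $0$ to be exactly $2$ and hence, by the Fredholm alternative, the periodic pairing to be nonzero. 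Second, the paper determines the \emph{sign} by showing $k_r=0$ directly: an $n$-uniform bound $|\Re\lambda|\leq\rho$ from the skew-adjoint principal part, combined with Lemma~\ref{lem:invertibility_periodic} on $[-\rho,-\eta_1]\cup[\eta_1,\rho]$ (using that statement 2 implies spectral stability of $\psi_0$, so these real segments lie in $\rho(L_\mu(\psi_0))$), rules out any nonzero real eigenvalue of $L_{\mu,\per}(\psi_{\per,n})$. With $k_r=0$ established, the Krein identity then \emph{outputs} the positivity of the periodic pairing rather than requiring it as input.

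Your route buys a more transparent logical structure (both stability statements reduce to one sign condition, and you show the signs agree), at the cost of a new technical lemma. The paper's route buys economy---everything is reused machinery---but the logic is more circuitous: one proves stability first, then reads off the sign.
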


\begin{proof}
The fact that the first two statements are equivalent follows from~\cite[Theorem 4.8]{Pelinovsky2011}. 

We prove that the third implies the second statement by contrapositon. If $\langle \partial_\omega \tilde\psi(\omega_0),\psi_0 \rangle_{L^2} < 0$, then $\psi_0$ is spectrally unstable by~\cite[Theorem 4.8]{Pelinovsky2011} with an element $\lambda \in \sigma(L_\mu(\psi))$ in the point spectrum with $\Re(\lambda)>0$. Upon applying Theorem~\ref{thm:instability_periodic}, we infer spectral instability of $\psi_{\per,n}$, provided $n \in \N$ is sufficiently large. 

Finally, we prove that the second implies the third statement. To this end, we assume that $\langle \partial_\omega \tilde\psi(\omega_0),\psi_0 \rangle_{L^2} > 0$. First, we note that the essential spectrum of $L_{\pm,\mu}(\psi_0)$ is by Proposition~\ref{prop:essential_spec1front} given by $\sigma(L_{\pm,\mu}(0)) = \sigma(-\partial_x^2+\mu V + \omega_0)$, which is, by assumption, confined to the positive half-line. Therefore, $L_{\pm,\mu}(0)$ is invertible and $0$ does not lie in the essential spectra of $L_{\pm,\mu}(\psi_0)$ and $L_\mu(\psi_0)$. On the other hand, $L_{-,\mu}(\psi_0)\psi_0 = 0$ and Sturm-Liouville theory, cf.~\cite[Theorem 2.3.3]{KapitulaPromislow2013}, imply that $0$ is a simple eigenvalue of $L_{-,\mu}(\psi_0)$. Differentiating $L_{-,\mu}(\tilde\psi(\mu)) \tilde\psi(\mu) = 0$ with respect to $\omega$ and setting $\omega = \omega_0$, we obtain $L_{+,\mu}(\psi_0) \partial_\omega \tilde\psi(\omega_0) = -\psi_0$, cf.~Lemma~\ref{lem:cont_GP_omega}. Therefore, $0$ is an eigenvalue of $L_\mu(\psi_0)$ whose algebraic multiplicity is at least $2$. Using that $L_{-\mu}(\psi_0)$ is self-adjoint and Fredholm of index $0$, observing that $0 \notin \sigma_{\mathrm{ess}}(L_\mu(\psi_0))$, and noting $\langle \partial_\omega \tilde\psi(\omega_0),\psi_0 \rangle_{L^2} > 0$, we find that $0$ is an isolated eigenvalue of $L_\mu(\psi_0)$ of algebraic multiplicity $2$. Hence, Theorem~\ref{thm:instability_periodic} yields $\eta_1 > 0$ and $N_1 \in \N$ such that for all $n \in \N$ with $n \geq N_1$ the total algebraic multiplicity of the eigenvalues of $L_{\mu,\per}(\psi_{\per,n})$ in the ball $B_0(\eta_1)$ equals $2$. 

Next, we employ Krein index counting theory~\cite{KapitulaKevrekidis2004,AddendumKapitulaKevrekidis2004} to prove the absence of eigenvalues of $L_{\mu,\per}(\psi_{\per,n})$ of positive real part. We start by counting negative eigenvalues of the self-adjoint operator $L_{+,\mu,\per}(\psi_{\per,n})$. Since $L_{+,\mu}(\psi_0)$ has precisely one negative eigenvalue $\lambda_1 < 0$ (counting algebraic multiplicities) and $\psi_0$ is a nondegenerate solution to~\eqref{focusing_GP_real}, there exists $\eta_2 > 0$ such that $\sigma(L_{+,\mu}(\psi_0)) \subset \{\lambda_1\} \cup (\eta_2,\infty)$. Moreover, since $\|\psi_{\per,n}\|_{L^\infty}$ can be bounded by an $n$-independent constant by Corollary~\ref{cor:GP_existence} and the continuous embedding $H^1(0,nT) \hookrightarrow L^\infty(\R)$ with $n$-independent constant, there exists by Lemma~\ref{lem:a_priori_toy} an $n$-independent constant $\eta_3 > 0$ such that the spectrum of the operator $L_{+,\mu,\per}(\psi_{\per,n})$ is confined to $[-\eta_3,\infty)$. Combining the last two sentences with Lemma~\ref{lem:invertibility_periodic} and Theorem~\ref{thm:instability_periodic}, we find that there exists $N_2 \in \N$ with $N_2 \geq N_1$ such that for all $n \in \N$ with $n \geq N_2$ the operator $L_{+,\mu,\per}(\psi_{\per,n})$ has precisely one eigenvalue in the set $[-\eta_3,\eta_2]$, which is simple and negative. In particular, this implies assertion b. Since $L_{+,\mu,\per}(\psi_{\per,n})$ is invertible, the implicit function theorem yields $\nu_n > 0$ and a smooth map $\smash{\tilde{\psi}_{\per,n}} \colon (\omega_0-\nu_n,\omega_0 + \nu_n) \to H^2_\per(0,nT)$ with $\smash{\tilde{\psi}_{\per,n}(\omega_0)} = \psi_{\per,n}$ such that $\smash{\tilde{\psi}_{\per,n}}(\omega)$ is a solution to~\eqref{focusing_GP} for all $\omega \in (\omega_0 - \nu_n,\omega_0 + \nu_n)$.

Our next step is to count eigenvalues of the operator $L_{-,\mu,\per}(\psi_{\per,n})$. Since $\psi_{\per,n} \in H^2_\per(0,nT)$ is a nontrivial solution to~\eqref{focusing_GP}, we find $L_{-,\mu,\per}(\psi_{\per,n})\psi_{\per,n} = 0$. So, by Sturm-Liouville theory, cf.~\cite[Theorem~6.3.1.(8)(3)]{Zettl2021Recent}, we deduce that $0$ is a simple isolated eigenvalue of $L_{-,\mu,\per}(\psi_{\per,n})$. Therefore, using analogous arguments as for the operator $L_{+,\mu,\per}(\psi_{\per,n})$, we infer that the facts that $L_{-,\mu}(\psi_0)$ has no negative eigenvalues and $0$ is an isolated simple eigenvalue of $L_{-,\mu}(\psi_0)$ imply that there exists $N_3\in \N$ with $N_3\geq N_2$ such that $L_{-,\mu,\per}(\psi_{\per,n})$ has no negative eigenvalues for all $n \in \N$ with $n \geq N_3$. This yields assertion a.

Next, we show that the eigenvalue $0$ of $L_{\mu,\per}(\psi_{\per,n})$ has geometric multiplicity one and algebraic multiplicity two. The fact that $0$ has geometric multiplicity one follows from the analysis of the operators $L_{\pm,\mu,\per}(\psi_{\per,n})$. An associated eigenfunction is given by $(0,\psi_{\per,n})^\top$. Differentiating the equation $L_{-,\mu,\per}(\tilde\psi_{\per,n}(\omega))\tilde\psi_{\per,n}(\omega) = 0$ with respect to $\omega$ and setting $\omega = \omega_0$, we obtain the identities
$$L_{+,\mu,\per}(\psi_{\per,n})\partial_\omega\tilde{\psi}_{\per,n}(\omega_0) = - \psi_{\per,n}, \qquad L_{\mu,\per}(\psi_{\per,n})\begin{pmatrix} \partial_\omega \tilde{\psi}_{\per,n}(\omega_0)\\ 0\end{pmatrix} = -\begin{pmatrix} 0 \\ \psi_{\per,n}\end{pmatrix}.$$
Using that the total algebraic multiplicity of the eigenvalues of $L_{\mu,\per}(\psi_{\per,n})$ in $B_0(\eta_1)$ is $2$, we conlcude that $0$ is an eigenvalue of $L_{\mu,\per}(\psi_{\per,n})$ of algebraic multiplicity $2$. Combining the latter with the Fredholm alternative, we arrive at $\smash{\langle \partial_\omega \tilde{\psi}_{\per,n}(\omega_0),\psi_{\per,n}\rangle_{L^2_\per(0,nT)}} \neq 0$. 

Therefore, we can apply the instability index formula from~\cite{AddendumKapitulaKevrekidis2004} to find $k_r \leq 1$ and $k_c = k_i^- = 0$, where $k_r$ is the number of real unstable eigenvalues of $L_{\mu,\per}(\psi_{\per,n})$, $k_c$ is the number of quadruplets of eigenvalues with non-vanishing real and imaginary parts, and $k_i^-$ is the number of purely imaginary eigenvalues of $L_{\mu,\per}(\psi_{\per,n})$ with negative Krein signature~\cite{AddendumKapitulaKevrekidis2004} (all counting algebraic multiplicities). In the following, we prove $k_r = 0$, which establishes~\eqref{negKrein_GP} by~\cite[Theorem~1]{AddendumKapitulaKevrekidis2004}. To this end, we first show that there exists an $n$-independent constant $\rho >0$ such that $\lambda \in \sigma(L_{\mu,\per}(\psi_{\per,n}))$ implies $|\Re(\lambda)| \leq \rho$. To establish this spectral a-priori bound, we consider the principal part of $L_{\mu,\per}(\psi_{\per,n})$, which is the skew-adjoint operator $A_0 \colon D(A_0) \subset L_\per^2(0,nT) \to L^2(0,nT)$ with dense domain $D(A_0) = H^2_\per(0,nT)$ given by $A_0 \boldsymbol{\psi} = -J\boldsymbol{\psi}''$. By Stone's Theorem, cf.~\cite[Theorem~3.24]{EngelNagel2000}, $A_0$ generates a unitary group on the Hilbert space $L_\per^2(0,nT)$. In particular,~\cite[Theorem~1.10]{EngelNagel2000} yields the resolvent bound
\begin{align} \label{resolventprincipal}
\left\|\left(A_0 - \lambda\right)^{-1} \boldsymbol{\psi}\right\|_{L_\per^2(0,nT)} \leq \frac{\|\boldsymbol{\psi}\|_{L^2_\per(0,nT)}}{|\mathrm{Re}(\lambda)|},
\end{align}
for $\boldsymbol{\psi} \in L^2_\per(0,nT)$ and $\lambda \in \C$ with $|\mathrm{Re}(\lambda)| > 0$. Using that $\|\psi_{\per,n}\|_{L^\infty}$ is bounded by an $n$-independent constant, we obtain an $n$-independent constant $C> 0$ such that the residual $L_{\mu,\per}(\psi_{\per,n}) - A_0$ enjoys the estimate
\begin{align*}
\left\|\left(L_{\mu,\per}(\psi_{\per,n}) - A_0\right) \boldsymbol{\psi} \right\|_{L_\per^2(0,nT)} &\leq C \|\boldsymbol{\psi}\|_{L_\per^2(0,nT)} 
\end{align*}
for $\boldsymbol{\psi} \in L_\per^2(0,nT)$. Combining this estimate with~\eqref{resolventprincipal} and~\cite[Theorem~IV.1.16]{Kato1995} yields an $n$-independent constant $\rho > 0$ such that $L_{\mu,\per}(\psi_{\per,n}) - \lambda = A_0 - \lambda + L_{\mu,\per}(\psi_{\per,n}) - A_0$ is bounded invertible for each $\lambda \in \C$ with $|\Re(\lambda)| \geq \rho$. On the other hand,  Lemma~\ref{lem:invertibility_periodic} and the spectral stability of $\psi_0$ imply that there exists $N_4 \in \N$ with $N_4 \geq N_3$ such that for all $n \in \N$ with $n \geq N_4$ the compact set $[-\rho,-\eta_1] \cup [\eta_1,\rho]$ lies in the resolvent set of $L_{\mu,\per}(\psi_{\per,n})$. Combining the latter with the spectral a-priori bound and the fact that $0$ is the only eigenvalue of $L_{\mu,\per}(\psi_{\per,n})$ in $(-\eta_1,\eta_1)$, we conclude that $k_r = 0$, which establishes the third statement and~\eqref{negKrein_GP}.
\end{proof}

Using that the Gross-Pitaevskii equation~\eqref{GP_system} may be expressed as the Hamiltonian system
\begin{align*}
\partial_t \boldsymbol{\psi} = J \nabla \mathcal{H}_\per(\boldsymbol{\psi})
\end{align*}
on $H^1_\per(0,nT)$ with Hamiltonian $\mathcal{H}_\per \colon H^1_\per(0,nT) \to \R$ given by
\begin{align*}
\mathcal{H}_\per(\boldsymbol{\psi}) = \frac12 \int_0^{nT} \left(\left|\boldsymbol{\psi}_x(x)\right|^2 + \left(\omega + \mu V(x)\right) |\boldsymbol{\psi}(x)|^2 -  \frac12 |\boldsymbol{\psi}|^4 \right) \de x,
\end{align*}
it immediately follows from Theorem~\ref{thm:stability_periodic_GP} and the stability theorem from Grillakis, Shatah, and Strauss~\cite{GrillakisShatahStraussII} that the periodic pulse solutions, established in Corollary~\ref{cor:GP_existence}, are orbitally stable if Assumption~\ref{assGP} holds and we have $\langle \partial_\omega \tilde\psi(\omega_0),\psi_0 \rangle_{L^2} > 0$.  

\begin{Corollary} \label{cor:GP_focus_periodic}
Assume~{\upshape \ref{assGP}} and $\langle \partial_\omega \tilde\psi(\omega_0),\psi_0 \rangle_{L^2} > 0$. Suppose $\omega_0$ is larger than the spectral bound of the operator $\partial_x^2 - \mu V$ acting on $L^2(\R)$.

Then, there exists $N \in \N$ such that for all $n \in \N$ with $n \geq N$ the periodic pulse solution $\boldsymbol{\psi}_{\per,n} = (\psi_{\per,n},0)^\top \in H_\per^2(0,nT)$ to~\eqref{GP_system}, where $\psi_{\per,n}$ is established in Corollary~\eqref{cor:GP_existence}, is orbitally stable. Specifically, for each $\eps > 0$ there exists $\delta > 0$ such that, whenever $v_0 \in H^1_\per(0,nT)$ satisfies $\|v_0\|_{H^1_\per(0,nT)} < \delta$, then there exists a global mild solution $\boldsymbol{\psi} \in C\big([0,\infty),H^1_\per(0,nT)\big)$ to~\eqref{GP_system} with initial condition $\boldsymbol{\psi}(0) = \boldsymbol{\psi}_{\per,n} + v_0$ obeying
\begin{align*}
\inf_{\gamma \in \R} \left\|\boldsymbol{\psi}(t) - R(\gamma)\boldsymbol{\psi}_{\per,n}\right\|_{H^1_\per(0,nT)} < \eps.
\end{align*}
\end{Corollary}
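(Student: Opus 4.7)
The plan is to reduce the claim to a direct application of the Grillakis--Shatah--Strauss stability theorem~\cite{GrillakisShatahStraussII}, using the $S^1$-gauge symmetry $u \mapsto \eu^{\iu\gamma} u$ of the original Gross--Pitaevskii equation~\eqref{GP_periodic_potential}. To this end, I would first undo the time-harmonic ansatz $u(x,t) = \eu^{\iu\omega_0 t}\psi(x,t)$ of~\S7.3, so that the stationary solution $\psi_{\per,n}$ of~\eqref{GP_time_harmonic} from Corollary~\ref{cor:GP_existence} corresponds to the standing wave $u_*(x,t) = \eu^{\iu\omega_0 t}\psi_{\per,n}(x)$ of~\eqref{GP_periodic_potential}. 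Since the $R(\gamma)$-orbit of $\boldsymbol{\psi}_{\per,n}$ in the real $(\Re\psi,\Im\psi)$-coordinates coincides with the $\eu^{\iu\gamma}$-orbit of $\psi_{\per,n}$, orbital stability of $\boldsymbol{\psi}_{\per,n}$ under $R(\gamma)$ in $H^1_\per(0,nT)$ is equivalent to orbital stability of $u_*$ modulo the gauge symmetry.

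In this standard setting, the relevant energy and charge functionals are
\begin{align*}
E(u) = \frac{1}{2}\int_0^{nT} \left(|\partial_x u|^2 + \mu V(x)|u|^2 - \tfrac{1}{2} |u|^4\right) \de x,
\qquad Q(u) = \tfrac{1}{2}\|u\|_{L^2_\per(0,nT)}^2,
\end{align*}
and $\psi_{\per,n}$ is a critical point of $E + \omega_0 Q$ by~\eqref{focusing_GP}. Global well-posedness of~\eqref{GP_periodic_potential} in the phase space $H^1_\per(0,nT)$ follows from a standard semigroup and fixed-point argument: the linear operator $\iu(\partial_x^2 - \mu V)$ generates a unitary group on $L^2_\per(0,nT)$ by Stone's theorem, the cubic nonlinearity is locally Lipschitz on $H^1_\per(0,nT)$ by the Sobolev embedding $H^1_\per(0,nT) \hookrightarrow L^\infty$, and the conservation of $E$ and $Q$ provides the a-priori $H^1$-bound needed to rule out blow-up. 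The smooth branch of bound states $\omega \mapsto \tilde{\psi}_{\per,n}(\omega)$ required by the Grillakis--Shatah--Strauss framework is furnished by assertion~(c) of Theorem~\ref{thm:stability_periodic_GP}.

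It remains to verify the spectral hypotheses of~\cite{GrillakisShatahStraussII}, namely that the Hessian $L_{\mu,\per}(\psi_{\per,n})$, viewed as $E''(\psi_{\per,n}) + \omega_0 Q''(\psi_{\per,n})$ on real-vector-valued functions, has exactly one negative simple eigenvalue, an isolated simple zero eigenvalue whose eigenspace coincides with the tangent line $R'(0)\boldsymbol{\psi}_{\per,n} = (0,-\psi_{\per,n})^\top$ to the gauge orbit, and otherwise positive spectrum bounded below by a positive constant, together with the Vakhitov--Kolokolov slope condition $d''(\omega_0) > 0$ for $d(\omega) := E(\tilde{\psi}_{\per,n}(\omega)) + \omega Q(\tilde{\psi}_{\per,n}(\omega))$. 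All four properties follow directly from Theorem~\ref{thm:stability_periodic_GP}: the block-diagonal structure of $L_{\mu,\per}$, combined with assertions~(a) and~(b), yields precisely one negative eigenvalue (from $L_{+,\mu,\per}$), an isolated simple zero spanned by $(0,\psi_{\per,n})^\top$ (since $L_{+,\mu,\per}$ is invertible and $\ker L_{-,\mu,\per}$ is simple and spanned by $\psi_{\per,n}$), and a positive spectral gap because $L_{\mu,\per}(\psi_{\per,n})$ has compact resolvent on $L^2_\per(0,nT)$; the identity $d'(\omega) = Q(\tilde{\psi}_{\per,n}(\omega))$ then gives
\begin{align*}
d''(\omega_0) = \langle \partial_\omega\tilde{\psi}_{\per,n}(\omega_0),\psi_{\per,n}\rangle_{L^2_\per(0,nT)},
\end{align*}
which is strictly positive by~\eqref{negKrein_GP}. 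Invoking~\cite[Theorem~3]{GrillakisShatahStraussII} delivers the orbital stability claim.

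The main obstacle is essentially bookkeeping: one must carefully match the real Hamiltonian formulation~\eqref{GP_system} used throughout the paper, in which $\mathcal{H}_\per$ already incorporates the parameter $\omega$, with the standard complex-valued formulation of~\cite{GrillakisShatahStraussII}, in which $\omega$ appears explicitly as a Lagrange multiplier separating $E$ and $Q$. Once the dictionary $\mathcal{H}_\per \leftrightarrow E + \omega Q$ is set up and the $R(\gamma)$-symmetry is identified with the gauge action $u \mapsto \eu^{-\iu\gamma}u$, each hypothesis of the Grillakis--Shatah--Strauss theorem translates into a statement already furnished by Theorem~\ref{thm:stability_periodic_GP}.
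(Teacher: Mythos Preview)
Your proposal is correct and follows essentially the same route as the paper: the paper simply states that the result ``immediately follows from Theorem~\ref{thm:stability_periodic_GP} and the stability theorem from Grillakis, Shatah, and Strauss~\cite{GrillakisShatahStraussII}'', and you have spelled out exactly how the spectral assertions (a)--(c) and the slope condition~\eqref{negKrein_GP} of Theorem~\ref{thm:stability_periodic_GP} feed into the hypotheses of that theorem. The only cosmetic difference is that the paper works directly with the real Hamiltonian $\mathcal{H}_\per$ on $H^1_\per(0,nT)$ (which already contains the $\omega$-term), whereas you pass through the complex formulation with separate $E$ and $Q$; as you yourself note, this is just bookkeeping.
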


To the authors' best knowledge, Theorem~\ref{thm:stability_periodic_GP} and Corollary~\ref{cor:GP_focus_periodic} present the fist rigorous spectral and orbital stability result of any periodic wave in the Gross-Pitaevskii equation with periodic potential. In the case of the explicit periodic potential~\eqref{explicit_potential_cnoidal}, rigorous instability results, as well as numerical simulations indicating spectral stability, can be found in~\cite{Bronski2001}.

\begin{Remark} \label{rem:GP_ass}
For fixed $\omega_0, T > 0$, real-valued $T$-periodic $V \in C^2(\R)$, and simple zero $\varsigma_0 \in \R$ of $V_\textup{eff}'$, Lemma~\ref{lem:a_priori_toy} and Theorem~\ref{thm:GP_focusing_1_soliton} yield $\mu \in \R \setminus \{0\}$ such that the spectral bound of the operator $\partial_x^2 - \mu V$, acting on $L^2(\R)$, is smaller than $\omega_0$, and equation~\eqref{focusing_GP_real} possesses a nondegenerate stationary solution $\psi_0 \in H^2(\R)$ at $\omega = \omega_0$ satisfying the spectral conditions in~\ref{assGP}. Since it holds $\langle \partial_\omega \tilde\psi(\omega_0),\psi_0 \rangle_{L^2} > 0$, the associated stationary periodic pulse solution $\psi_{\per,n} \in H^2_\per(0,nT)$ to~\eqref{GP_time_harmonic}, established in Corollary~\ref{cor:GP_existence}, are spectrally and orbitally stable against co-periodic perturbations by Theorem~\ref{thm:stability_periodic_GP} and Corollary~\ref{cor:GP_focus_periodic}, provided $n \in \N$ is sufficiently large.
\end{Remark}

Next, we turn to the spectral analysis of the multipulse solutions, obtained in Corollary~\ref{cor:GP_existence}. For this, we first establish the following lemma.

\begin{Lemma}\label{lem:convergence_multipulse}
Let $T>0$ and $\mu,\omega_0 \in \R$. Let $V \in C^1(\R)$ be $T$-periodic and real-valued. Let $M \in \N$ and $\theta \in \{\pm1\}^M$. Let $\psi_0 \in H^2(\R)$ be a nondegenerate stationary solution to~\eqref{focusing_GP_real} at $\omega = \omega_0$. Then, there exists $N \in \N$ such that for all $n \in \N$ with $n \geq N$ there exist $\nu_n > 0$ and a smooth map $\tilde{\psi}_n \colon (\omega_0-\nu_n,\omega_0+\nu_n) \to H^2(\R)$ such that $\tilde{\psi}_n(\omega)$ is a solution to~\eqref{focusing_GP} for all $\omega \in (\omega_0-\nu_n,\omega_0+\nu_n)$, and we have $\tilde{\psi}_n(\omega_0) = \psi_n$, where $\psi_n \in H^2(\R)$ is the multipulse solution obtained in Corollary~\ref{cor:GP_existence}. Moreover, it holds
    \begin{align} \label{limitkrein_GP}
        \lim_{n \to \infty} \left\langle \partial_\omega \tilde\psi_n(\omega_0) , \psi_n \right\rangle_{L^2} = 
        M \left\langle \partial_\omega \tilde\psi(\omega_0), \psi_0 \right\rangle_{L^2},
    \end{align}
where $\tilde\psi$ is the continuation of $\psi_0$ with respect to $\omega$, established in Lemma~\ref{lem:cont_GP_omega}.
\end{Lemma}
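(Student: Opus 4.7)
The plan is to split the argument into three stages: produce $\tilde\psi_n$ via the implicit function theorem, approximate $\varphi_n := \partial_\omega\tilde\psi_n(\omega_0)$ in $H^2(\R)$ by a formal superposition of shifted copies of $\varphi_0 := \partial_\omega\tilde\psi(\omega_0)$, and then evaluate the limiting inner product. For the first stage, I would apply the implicit function theorem to the smooth nonlinear map $F\colon H^2(\R)\times\R\to L^2(\R)$, $F(\psi,\omega) = -\psi'' + \mu V\psi + \omega\psi - \psi^3$, at the base point $(\psi_n,\omega_0)$. The derivative $\partial_\psi F(\psi_n,\omega_0) = L_{+,\mu}(\psi_n)$ is invertible from $H^2(\R)$ onto $L^2(\R)$ by the nondegeneracy statement in Corollary~\ref{cor:GP_existence}, delivering the curve $\tilde\psi_n$. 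Differentiating the defining identity in $\omega$ at $\omega_0$ yields $L_{+,\mu}(\psi_n)\varphi_n = -\psi_n$ and, analogously, $L_{+,\mu}(\psi_0)\varphi_0 = -\psi_0$.

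For the second stage, I would invoke Lemma~\ref{lem:invertibility_multifront} with primary fronts $Z_j := \theta_j\psi_0$, matching end states $v_{j,\pm} = 0$, and compact set $\mathcal{K}=\{0\}$; its hypotheses are met since $L_{+,\mu}(\theta_j\psi_0) = L_{+,\mu}(\psi_0)$ is invertible by nondegeneracy of $\psi_0$. (The multipulse of Corollary~\ref{cor:GP_existence} fits the format of Lemma~\ref{lem:invertibility_multifront} after absorbing the $H^2$-small discrepancy induced by the partition-of-unity cutoffs into the error term.) This yields an $n$-uniform bound $\|L_{+,\mu}(\psi_n)^{-1}\|_{L^2\to H^2}\leq C$. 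Setting $\check\varphi_n := \sum_{j=1}^M\theta_j\varphi_0(\cdot-jnT)$, and using both the $T$-periodicity of $V$ and the identity $L_{+,\mu}(\theta_j\psi_0(\cdot-jnT))\varphi_0(\cdot-jnT) = -\psi_0(\cdot-jnT)$, a direct calculation leads to
\[
L_{+,\mu}(\psi_n)(\check\varphi_n - \varphi_n) = a_n + 3\sum_{j=1}^M\theta_j\bigl(\psi_0(\cdot-jnT)^2 - \psi_n^2\bigr)\varphi_0(\cdot-jnT).
\]
I would then establish that the right-hand side tends to zero in $L^2(\R)$; combined with the uniform resolvent bound this gives $\|\check\varphi_n - \varphi_n\|_{H^2}\to 0$.

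For the third stage, since $\|\psi_n\|_{L^2}$ is uniformly bounded in $n$ (the cross terms in $\|\sum_j \theta_j\psi_0(\cdot-jnT)\|_{L^2}^2$ vanish as $n\to\infty$) and $\|\check\varphi_n - \varphi_n\|_{L^2}\to 0$, it suffices to analyze $\langle\check\varphi_n,w_n\rangle_{L^2}$ where $w_n := \sum_j \theta_j\psi_0(\cdot-jnT)$. Expanding this inner product and using $\theta_j^2=1$, the $M$ diagonal terms contribute exactly $M\langle\varphi_0,\psi_0\rangle_{L^2}$, while each off-diagonal term $\theta_j\theta_k\langle\varphi_0(\cdot-jnT),\psi_0(\cdot-knT)\rangle_{L^2}$ with $j\neq k$ vanishes as $n\to\infty$ by the standard shift lemma $\langle f,g(\cdot-s)\rangle_{L^2}\to 0$ as $|s|\to\infty$ for $f,g\in L^2(\R)$.

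The main obstacle will be the $L^2$-estimate of the residual arising in the second stage. Expanding $\psi_n^2 - \psi_0(\cdot-jnT)^2$ produces cross-pulse products of the form $\psi_0(\cdot-knT)\psi_0(\cdot-jnT)\varphi_0(\cdot-jnT)$ with $k\neq j$, along with lower-order terms involving $a_n$. The $a_n$-terms decay via $H^2\hookrightarrow L^\infty$ and $\|a_n\|_{H^2}\to 0$. For the cross-pulse products I would use that $\psi_0,\varphi_0\in H^2(\R)$ are uniformly continuous and vanish at infinity, so $\|\psi_0(\cdot-s)\psi_0\varphi_0\|_{L^2}\to 0$ as $|s|\to\infty$ by dominated convergence.
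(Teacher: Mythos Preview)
Your proposal is correct and follows essentially the same approach as the paper: both arguments produce $\tilde\psi_n$ via the implicit function theorem using the nondegeneracy from Corollary~\ref{cor:GP_existence}, approximate $\partial_\omega\tilde\psi_n(\omega_0)$ by the superposition $\sum_j\theta_j\partial_\omega\tilde\psi(\omega_0)(\cdot-jnT)$ with the error controlled through the uniform resolvent bound from Lemma~\ref{lem:invertibility_multifront}, and then expand the inner product so that the diagonal terms contribute $M\langle\partial_\omega\tilde\psi(\omega_0),\psi_0\rangle_{L^2}$ while the off-diagonal and remainder terms vanish as $n\to\infty$. Your residual identity and the paper's agree up to sign convention ($b_n=\varphi_n-\check\varphi_n$ versus your $\check\varphi_n-\varphi_n$), and you are in fact slightly more explicit than the paper about the cross-pulse $L^2$-decay and the absorption of the partition-of-unity discrepancy into $a_n$.
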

\begin{proof}
Since $\psi_n$ is a nondegenerate stationary solution to~\eqref{focusing_GP_real} by Corollary~\ref{cor:GP_existence}, the existence of the map $\tilde{\psi}_n$ follows from the implicit function theorem. So, all that remains is to prove~\eqref{limitkrein_GP}. Differentiating the equations $L_{-,\mu}(\tilde\psi(\omega))\tilde\psi(\omega) = 0$ and $L_{-,\mu}(\tilde\psi_n(\omega))\tilde\psi_n(\omega) = 0$ with respect to $\omega$ and setting $\omega = \omega_0$, we arrive at
\begin{align}\label{eq:omega_derivative_multipulse}
   L_{+,\mu}(\psi_0)\left[ \partial_\omega\tilde\psi(\omega_0)\right] = - \psi_0, \qquad  L_{+,\mu}(\psi_n)\left[ \partial_\omega\tilde\psi_n(\omega_0)\right] = - \psi_n.
\end{align}
We claim that there exists a sequence $\{b_n\}_n$ in $H^2(\R)$, converging to $0$ as $n \to \infty$, with
\begin{align} \label{exp_omega_der_psi_n}
    \partial_\omega \tilde\psi_n(\omega_0) = b_n + \sum_{j=1}^M \theta_j \partial_\omega \tilde\psi(\omega_0)(\cdot-jnT).
\end{align}
Inserting the ansatz for $\partial_\omega \tilde\psi_n(\omega_0)$ into the linearization $L_{+,\mu}(\psi_n)$ and using~\eqref{eq:omega_derivative_multipulse}, we obtain
\begin{align*}
    &L_{+,\mu}(\psi_n) \left(b_n + \sum_{j=1}^M \theta_j \partial_\omega \tilde\psi(\omega_0)(\cdot-jnT)\right)
\\
    &\quad= L_{+,\mu}(\psi_n) b_n- \sum_{j=1}^M \theta_j \psi_0(\cdot-jnT) 
     +\sum_{j=1}^M  \theta_j \left(L_{+,\mu}(\psi_n) - L_{+,\mu}(\psi_0(\cdot - jnT)) \right) \partial_\omega \tilde\psi(\omega_0)(\cdot-jnT).
\end{align*}
Thus, we infer from~\eqref{form_multipulse_GP} and~\eqref{eq:omega_derivative_multipulse} that the correction $b_n$ has to solve the equation
\begin{align*}
    L_{+,\mu}(\psi_n) b_n &=-a_n-\sum_{j=1}^M  \theta_j \left(L_{+,\mu}(\psi_n) - L_{+,\mu}(\psi_0(\cdot - jnT)) \right) \partial_\omega \tilde\psi(\omega_0)(\cdot-jnT) .
\end{align*}
Using that $L_{+,\mu}(\psi_0)$ is invertible, we deduce from Lemma~\ref{lem:invertibility_multifront} that, provided $n \in \N$ is sufficiently large, $L_{+,\mu}(\psi_n)$ is also invertible and there exists an $n$-independent constant $C > 0$ such that
$$
    \left\|L_{+,\mu}(\psi_n)^{-1} \psi\right\|_{L^2} \leq C\|\psi\|_{L^2}
$$
for $\psi \in L^2(\R)$. In particular,
$$
    b_n = -L_{+,\mu}(\psi_n)^{-1} \left(a_n+\sum_{j=1}^M  \theta_j \left(L_{+,\mu}(\psi_n) - L_{+,\mu}(\psi_0(\cdot - jnT)) \right) \partial_\omega \tilde\psi(\omega_0)(\cdot-jnT)  \right)
$$
satisfies~\eqref{exp_omega_der_psi_n} and obeys
\begin{align*}
    &\|b_n\|_{L^2} \leq C \left(\| a_n\|_{L^2} + 3\sum_{j=1}^M
    \left\|\left(\psi_n^2 - \psi_0(\cdot - jnT)^2 \right) \partial_\omega \tilde\psi(\omega_0)(\cdot-jnT)\right\|_{L^2}\right)\to 0
\end{align*}
as $n \to \infty$ by Corollary~\ref{cor:GP_existence} and the continuous embedding $H^1(\R) \hookrightarrow L^\infty(\R)$. Therefore, using~\eqref{form_multipulse_GP} and~\eqref{exp_omega_der_psi_n}, we arrive at
\begin{align*}
    \left\langle \partial_\omega \tilde\psi_n(\omega_0) , \psi_n \right\rangle_{L^2} &= 
    \sum_{j,k=1}^M \left\langle \theta_j\partial_\omega \tilde\psi(\omega_0)(\cdot-jnT) , \theta_k\psi_0(\cdot-knT) \right\rangle_{L^2} +
    \left\langle b_n , a_n \right\rangle_{L^2}\\ 
    &\qquad + \, \sum_{j=1}^M \left\langle \theta_j\partial_\omega \tilde\psi(\omega_0)(\cdot-jnT) , a_n \right\rangle_{L^2} + \sum_{k=1}^M \left\langle b_n , \theta_k\psi_0(\cdot-knT) \right\rangle_{L^2} \\
    &\to  M\left\langle \partial_\omega \tilde\psi(\omega_0) , \psi_0 \right\rangle_{L^2}
\end{align*}
as $n \to \infty$.
\end{proof}

By combining Krein index counting theory with Theorem~\ref{thm:instability_multifront}, we establish spectral instability conditions for the multipulse solutions $\psi_n$, obtained in Corollary~\ref{cor:GP_existence}.  

\begin{Theorem}\label{thm:instability_GP_multipulses}
Let $M \in \N$ with $M \geq 2$. Take $\theta \in \{\pm 1\}^M$. Assume~{\upshape \ref{assGP}}. Suppose $\omega_0$ is larger than the spectral bound of the operator $\partial_x^2 - \mu V$ acting on $L^2(\R)$. Then, there exists $N \in \mathbb{N}$ such that for all $n \in \N$ with $n \geq N$ the following statements holds.
\begin{enumerate}
    \item[1.] If $\langle\partial_\omega \tilde\psi(\omega_0), \psi_0 \rangle_{L^2}$ is negative, then the $M$-pulse $\psi_n$, obtained in Corollary~\ref{cor:GP_existence}, is spectrally unstable.
    \item[2.] If $\langle\partial_\omega \tilde\psi(\omega_0), \psi_0 \rangle_{L^2}$ is positive, then the $M$-pulse $\psi_n$, obtained in Corollary~\ref{cor:GP_existence}, is spectrally unstable in each of the following cases:
    \begin{itemize}
        \item[(i)] $\psi_n$ has no zeros;
        \item[(ii)] There exist $m, \ell \in \N$ such that $M = 2m$ and $\psi_n$ has $2\ell$ zeros;
        \item[(iii)] $\psi_n$ is odd, $V$ is even, and there exist $m \in \N$ and $\ell\in \N_0$ such that $m + \ell$ is even, $M = 2m$, and $\psi_n$ has $2\ell+1$ zeros;
        \item[(iv)] $\psi_n$ and $V$ are even, and there exist $m, \ell \in \N$ such that $m+\ell$ is odd, $M = 2m+1$, and $\psi_n$ has $2\ell$ zeros;
        \item[(v)] There exist $m\in\N$ and $\ell \in \N_0$ such that $M = 2m+1$ and $\psi_n$ has $2\ell+1$ zeros.
    \end{itemize}
\end{enumerate}
\end{Theorem}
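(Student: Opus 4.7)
My plan is to combine the spectral transfer results of Theorem~\ref{thm:instability_multifront} and Lemma~\ref{lem:convergence_multipulse} with Sturm-Liouville oscillation theory and the Hamiltonian-Krein index formula from~\cite{KapitulaKevrekidis2004,AddendumKapitulaKevrekidis2004}. The argument naturally splits according to the sign of $\langle \partial_\omega \tilde\psi(\omega_0), \psi_0\rangle_{L^2}$.

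For the first statement, I would invoke Theorem~4.8 of~\cite{Pelinovsky2011}, which guarantees that $\langle \partial_\omega \tilde\psi(\omega_0), \psi_0\rangle_{L^2} < 0$ forces the block operator $L_\mu(\psi_0)$ to possess a positive real eigenvalue $\lambda_\ast > 0$ outside its essential spectrum. Since $L_\mu(\cdot)$ depends only on the square of its argument, the linearizations about all shifted primary pulses $\pm\psi_0(\cdot-jnT)$ coincide, and each of the Evans-function factors appearing in Theorem~\ref{thm:instability_multifront} equals the Evans function of $L_\mu(\psi_0)$. Applying Theorem~\ref{thm:instability_multifront} at $\lambda_\ast$ then produces eigenvalues of $L_\mu(\psi_n)$ converging to $\lambda_\ast$ with preserved total algebraic multiplicity, yielding spectral instability.

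For the second statement, Lemma~\ref{lem:convergence_multipulse} first gives $\langle \partial_\omega \tilde\psi_n(\omega_0), \psi_n\rangle_{L^2} > 0$ for large $n$, so that $n(D) = 0$. Because $\omega_0$ exceeds the spectral bound of $\partial_x^2 - \mu V$, the essential spectra of $L_{\pm,\mu}(\psi_n)$ are confined to $(0,\infty)$. Combining the assumption that $L_{+,\mu}(\psi_0)$ has exactly one simple negative eigenvalue with Theorem~\ref{thm:instability_multifront} applied near that eigenvalue and Lemma~\ref{lem:invertibility_multifront} applied near the origin, I would deduce $n(L_{+,\mu}(\psi_n)) = M$. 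For $L_{-,\mu}(\psi_n)$, the identity $L_{-,\mu}(\psi_n)\psi_n = 0$ together with Sturm-Liouville oscillation on $\R$, cf.~\cite{Zettl2021Recent}, gives $n(L_{-,\mu}(\psi_n)) = k_n$, where $k_n$ denotes the number of zeros of $\psi_n$. The instability index formula of~\cite{AddendumKapitulaKevrekidis2004} then yields
\begin{align*}
    K_H := k_r + 2 k_c + 2 k_i^- = n(L_+) + n(L_-) - n(D) = M + k_n.
\end{align*}

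In cases (iii) and (iv), as well as in case (i) when $M$ is odd, a direct parity check shows that $M + k_n$ is odd; this forces $k_r$ to be odd and hence at least $1$, giving the desired spectral instability. The harder scenario arises in the remaining cases, where $M + k_n$ is even and the Krein index alone is consistent with $k_r = k_c = 0$ and $k_i^- = K_H/2$, i.e.\ purely neutral spectral stability. The main obstacle is therefore to rule out this purely imaginary configuration. My approach would be to exploit the sign-pattern and reflection-symmetry constraints specific to each of (i), (ii), and (v), together with a spectral perturbation analysis of the $2M$-dimensional cluster of eigenvalues of $L_\mu(\psi_n)$ near $0$ (whose total algebraic multiplicity is controlled by Theorem~\ref{thm:instability_multifront} applied around the two-dimensional generalized kernel of $L_\mu(\psi_0)$), and to compute the Krein signatures of the bifurcating imaginary eigenvalues to force $k_i^- < K_H/2$, thereby yielding $k_r + k_c \geq 1$.
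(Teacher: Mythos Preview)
Your overall strategy---transfer the negative-eigenvalue counts to $L_{\pm,\mu}(\psi_n)$ via Theorem~\ref{thm:instability_multifront} and Lemma~\ref{lem:invertibility_multifront}, then apply the Hamiltonian-Krein index formula---matches the paper's approach for statement~2. However, there is a sign error that derails the argument: in the index formula of~\cite{AddendumKapitulaKevrekidis2004}, the matrix $D$ satisfies $D = \langle L_{+,\mu}(\psi_n)^{-1}\psi_n,\psi_n\rangle_{L^2} = -\langle \partial_\omega\tilde\psi_n(\omega_0),\psi_n\rangle_{L^2}$, so a \emph{positive} value of $\langle \partial_\omega\tilde\psi_n(\omega_0),\psi_n\rangle_{L^2}$ gives $n(D)=1$, not $0$. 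The correct index is therefore
\[
k_r + 2k_c + 2k_i^- = M + k_n - 1,
\]
and the parities are exactly the opposite of what you claim. Cases~(ii) and~(v) give an odd index ($2(m+\ell)-1$ and $2(m+\ell)+1$, respectively) and are the easy ones; cases~(iii) and~(iv) yield the even value $2(m+\ell)$ on the full space and are the hard ones. Your proposed remedy for the even-index cases---tracking Krein signatures of the $2M$ eigenvalues near $0$---is undeveloped and not what the paper does.

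The paper resolves the genuinely even cases~(iii) and~(iv) by a different mechanism you missed: the parity symmetry of $V$ and $\psi_n$ allows one to restrict $L_{\pm,\mu}(\psi_n)$ and $L_\mu(\psi_n)$ to the invariant subspace of \emph{even} functions and apply the index formula there. Sturm-Liouville oscillation theory (the eigenfunctions alternate even/odd, with the ground state even) gives $n(L_{+,\mu}|_{\mathrm{even}}) = m$ or $m+1$ and $n(L_{-,\mu}|_{\mathrm{even}}) = \ell+1$ or $\ell$; the extra hypotheses ``$m+\ell$ even'' in~(iii) and ``$m+\ell$ odd'' in~(iv) are precisely what force the restricted index to be odd. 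You never invoke these parity constraints, which should have signaled that your argument was not using all the hypotheses. Finally, case~(i) is not handled by the index formula at all: since $n(L_{+,\mu}(\psi_n)) = M \geq 2$ and $n(L_{-,\mu}(\psi_n)) = 0$, the paper applies \cite[Theorem~4.8]{Pelinovsky2011} directly to obtain instability.
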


\begin{proof}
In order to prove the first assertion, we observe that Theorem~\ref{thm:stability_periodic_GP} yields that $\psi_0$ is spectrally unstable. Therefore, provided $n \in \N$ is sufficiently large, $\psi_n$ is also spectrally unstable by Proposition~\ref{prop:essential_spec1front} and Theorem~\ref{thm:instability_multifront}.

We proceed with proving the second assertion. For notational convenience, we denote by $n(L) \in \N_0$ the total algebraic multiplicity of all negative eigenvalues of an operator $L \colon D(L) \subset L^2(\R) \to L^2(\R)$ and by $z(L) \in \N_0$ the dimension of its kernel. Moreover, we denote by $k_r(L)$ the number of real unstable eigenvalues of $L$, by $k_c(L)$ the number of quadruplets of eigenvalues with non-vanishing real and imaginary parts, and by $k_i^-(L)$ the number of purely imaginary eigenvalues of $L$ with negative Krein signature (all counting algebraic multiplicities).

To prove instability in case (i), we notice, using similar arguments as in the proof of Theorem~\ref{thm:stability_periodic_GP}, that $n(L_{+,\mu}(\psi_0)) = 1$ and the nondegeneracy of $\psi_0$ imply that $n(L_{+,\mu}(\psi_n)) = M \geq 2$ and $z(L_{+,\mu}(\psi_n)) = 0$. Thus,~\cite[Theorem~4.8]{Pelinovsky2011} applies, which yields spectral instability. 

Next, we consider case~(ii). As in case (i), we find $n(L_{+,\mu}(\psi_n)) = 2m$ and $z(L_{+,\mu}(\psi_n)) = 0$. Moreover, combining $L_{-,\mu}(\psi_n)\psi_n = 0$ with Sturm-Liouville theory, cf.~\cite[Theorem~6.3.1.(8)(3)]{Zettl2021Recent}, we obtain $n(L_{-,\mu}(\psi_n)) = 2\ell$ as well as $z(L_{-,\mu}(\psi_n)) = 1$. Using Lemma~\ref{lem:convergence_multipulse}, we find that the Krein index formula in~\cite[Theorem~1]{AddendumKapitulaKevrekidis2004} yields
$$
    k_r(L_{\mu}(\psi_n)) + 2k_c(L_{\mu}(\psi_n)) + 2 k_i^-(L_{\mu}(\psi_n)) = 2 (m+\ell)-1
$$
Hence, we have $k_r(L_{\mu}(\psi_n)) \geq 1$, which means that $\psi_n$ is spectrally unstable.

We proceed with case (iii). Using similar arguments as before, we obtain $n(L_{+,\mu}(\psi_n)) = 2m$, $z(L_{+,\mu}(\psi_n)) = 0$ and $n(L_{-,\mu}(\psi_n)) = 2\ell+1$, and find that the odd function $\psi_n$ spans the kernel of $L_{-,\mu}(\psi_n)$. Since $\psi_n$ is odd and $V$ is even, the operators $L_{\pm,\mu}(\psi_n)$ and $L_\mu(\psi_n)$ leave the space of even functions invariant. Therefore, we can apply the index formula to these linear operators restricted to even functions, yielding 
\begin{align*}
 k_r(L_{\mu}(\psi_n)|_{\mathrm{even}}) + 2k_c(L_{\mu}(\psi_n)|_{\mathrm{even}}) + 2 k_i^-(L_{\mu}(\psi_n)|_{\mathrm{even}}) &= n(L_{+,\mu}(\psi_n)|_\mathrm{even}) + n(L_{-,\mu}(\psi_n)|_\mathrm{even})\\
 &= m + \ell +1,
\end{align*}
where we used that by~\cite[Theorem~6.3.1.(8)(3)]{Zettl2021Recent} the eigenfunctions of the Sturm-Liouville operators $L_{\pm,\mu}(\psi_n)$ are alternating between even and odd functions and that the principal eigenfunction is even. Since $m+\ell+1$ is odd, the number of real unstable eigenvalues of $L_\mu(\psi_n)$ is greater than $1$, yielding spectral instability.

We turn to case~(iv). Arguing as before, we have $n(L_{+,\mu}(\psi_n)|_\mathrm{even}) = m+1$, $z(L_{+,\mu}(\psi_n)|_\mathrm{even}) = 0$, $n(L_{-,\mu}(\psi_n)|_\mathrm{even}) = \ell$, and the even function $\psi_n$ spans the kernel of $L_{-,\mu}(\psi_n)|_\mathrm{even}$. Using Lemma~\ref{lem:convergence_multipulse} and applying the index formula, this amounts to
$$
    k_r(L_{\mu}(\psi_n)|_{\mathrm{even}}) + 2k_c(L_{\mu}(\psi_n)|_{\mathrm{even}}) + 2 k_i^-(L_{\mu}(\psi_n)|_{\mathrm{even}}) = m+\ell+1-1.
$$
Using that $m+\ell$ is odd, we infer $k_r(L_{\mu}(\psi_n)) \geq 1$, which implies spectral instability.

Finally, the proof of spectral instability in case (v) follows as in case~(ii).
\end{proof}

Although the first instability condition in Theorem~\ref{thm:instability_GP_multipulses} is well-known, cf.~\cite[Theorem~4.8]{Pelinovsky2011} and~\cite[Theorem~6.5]{Grillakis_Stability_1987}, the authors are not aware that the other instability conditions in Theorem~\ref{thm:instability_GP_multipulses} have been derived in the literature before.

\appendix

\section{Projections} \label{app:projections}

In this appendix, we prove some auxiliary results on finite-dimensional projections, which correspond to block matrices $P \in \C^{n \times n}$ satisfying $P^2 = P$.

The first result asserts that, if two projections $P,Q \in \C^{n \times n}$ are close in norm, then their range and kernel are complementary subspaces and the associated projection onto the range of $P$ along the kernel of $Q$ can be bounded in terms of $P$ and $Q$.

\begin{Lemma} \label{l:projest2}
Let $n \in \mathbb{N}$. Let $P,Q \in \C^{n \times n}$ be projections with
\begin{align*}\|P-Q\| < 1.\end{align*}
Then, $\mathrm{ran}(P)$ and $\ker(Q)$ are complementary subspaces and the projection $R$ onto $\mathrm{ran}(P)$ along $\ker(Q)$ obeys the bound
\begin{align} \label{projest}
    \|R\| \leq \frac{\|P\|}{1-\|P-Q\|}.
\end{align}
\end{Lemma}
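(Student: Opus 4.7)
My plan has three steps: first show that $\mathrm{ran}(P) \cap \ker(Q) = \{0\}$, then use a dimension-counting argument to conclude that the two subspaces span $\C^n$, and finally derive the norm bound by exploiting the characterizing algebraic identities for $R$.

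For the first step, I would observe that any $v \in \mathrm{ran}(P) \cap \ker(Q)$ satisfies $Pv = v$ and $Qv = 0$, so $(P-Q)v = v$. The hypothesis $\|P-Q\| < 1$ then forces $v = 0$. For complementarity, I would show $\dim \mathrm{ran}(P) = \dim \mathrm{ran}(Q)$, which combined with the trivial intersection gives $\mathrm{ran}(P) \oplus \ker(Q) = \C^n$. To this end, consider the restriction $Q|_{\mathrm{ran}(P)}\colon \mathrm{ran}(P) \to \mathrm{ran}(Q)$; for $u \in \mathrm{ran}(P)$ the reverse triangle inequality gives
\begin{align*}
\|Qu\| \geq \|u\| - \|u - Qu\| = \|u\| - \|(P-Q)u\| \geq (1-\|P-Q\|)\|u\|,
\end{align*}
so this restriction is injective. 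By the symmetric argument, $P|_{\mathrm{ran}(Q)}$ is injective as well, so $\dim \mathrm{ran}(P) = \dim \mathrm{ran}(Q)$, and the oblique projection $R$ is therefore well-defined.

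The main step is the norm estimate, and the key is to find an algebraic identity relating $R$ to $P$ rather than to $Q$. The characterizing properties $\mathrm{ran}(R) = \mathrm{ran}(P)$ and $\ker(R) = \ker(Q)$ translate into two identities: since $Pv \in \mathrm{ran}(P) = \mathrm{ran}(R)$ we have $RPv = Pv$, hence $RP = P$; and since $(I-Q)v \in \ker(Q) = \ker(R)$ we have $R(I-Q)v = 0$, hence $RQ = R$. Combining these gives
\begin{align*}
R(I + (P-Q)) = R + RP - RQ = R + P - R = P.
\end{align*}
Since $\|P-Q\| < 1$, the operator $I + (P-Q)$ is invertible by a Neumann series with $\|(I+(P-Q))^{-1}\| \leq (1-\|P-Q\|)^{-1}$. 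Therefore $R = P(I+(P-Q))^{-1}$, and taking norms yields the bound~\eqref{projest}.

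The main obstacle I anticipate is identifying the correct algebraic identity in the last step: the naive approach of multiplying $RP = P$ and $QR = Q$ to obtain $(P-Q)R = R - Q$ yields $R = (I-(P-Q))^{-1}Q$, which only gives the weaker bound $\|R\| \leq \|Q\|/(1-\|P-Q\|)$. Recognizing that one should instead combine $RP = P$ with $RQ = R$ (rather than $QR = Q$) is what produces the numerator $\|P\|$ claimed in the statement.
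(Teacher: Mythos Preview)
Your proof is correct and follows essentially the same approach as the paper: both derive the identities $RP = P$ and $RQ = R$ and combine them into $R(I+(P-Q)) = P$ (the paper writes this as $R = R(Q-P) + P$ and bounds directly, while you invert via a Neumann series, but this is the same identity). Your rank-equality argument via injectivity of $Q|_{\mathrm{ran}(P)}$ is a minor variant of the paper's argument via $\mathrm{ran}(Q)\cap\ker(P)=\{0\}$, but both are elementary and equivalent.
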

\begin{proof}
Let $v \in \mathrm{ran}(P) \cap \ker(Q)$. Then,
\begin{align*} \|v\| = \|(P - Q)v\| \leq \|P-Q\|\|v\|\end{align*}
implies $v = 0$ as $\|P - Q\| < 1$. Hence, we infer $\mathrm{ran}(P) \cap \ker(Q) = \{0\}$ and, similarly, $\mathrm{ran}(Q) \cap \ker(P) = \{0\}$. So, we must have $\mathrm{rank}(Q) + \dim \ker(P) \leq n$ and, thus, we arrive at 
\begin{align*}
    \mathrm{rank}(Q) - \mathrm{rank}(P) = \mathrm{rank}(Q) + \dim \ker(P) - \dim \ker(P) - \mathrm{rank}(P) \leq n - n = 0.
\end{align*}
Similarly, $\mathrm{rank}(P) + \dim \ker(Q) \leq n$ yields $\mathrm{rank}(Q) - \mathrm{rank}(P) \geq 0$. We find 
\begin{align*}
    \dim \ker(Q) + \mathrm{rank}(P) = \dim \ker(Q) + \mathrm{rank}(Q) = n
\end{align*}
and conclude that $\mathrm{ran}(P)$ and $\ker(Q)$ are complementary subspaces. Now let $R$ be the projection onto $\mathrm{ran}(P)$ along $\ker(Q)$. Since we have $RP = P$ and $RQ = R$, it holds
\begin{align*} R = RQ - RP + P,\end{align*}
which readily yields the estimate~\eqref{projest}.
\end{proof}

Next, we show that, if two projections have the same kernel and there exist bases of their ranges which are close in norm, then the projections themselves are close in norm. 
\begin{Lemma} \label{l:projest3}
    Let $n \in \mathbb{N}$ and $k \in \{1,\ldots,n\}$. Let $A, B, C \in \C^{n \times k}$. Suppose $C^* A \in \C^{k \times k}$ is invertible and we have
\begin{align} \label{neumann}
\left\|A-B\right\| < \frac{1}{\left\|(C^* A)^{-1}\right\| \left\|C\right\|}.
\end{align}
Then, the projections  
\begin{align} \label{projform1}
P =  A(C^* A)^{-1} C^*
\end{align}
onto $\mathrm{ran}(A)$ along $\mathrm{ran}(C)^\perp$ and 
\begin{align} \label{projform2}
Q = B(C^* B)^{-1} C^*
\end{align}
onto $\mathrm{ran}(B)$ along $\mathrm{ran}(C)^\perp$ are well-defined and satisfy
\begin{align} \label{estprojcore2}
\begin{split}
\|P-Q\| &\leq \|A-B\| \left\|(C^* A)^{-1}\right\| \left\|C\right\|\left(1 + \frac{\left(\|A\| + \|A-B\|\right) \left\|(C^* A)^{-1}\right\|\|C\| }{1-\left\|(C^* A)^{-1}\right\| \left\|C\right\|\|A-B\|}\right), \\
\|P\| &\leq \|A\|\|\left\|(C^* A)^{-1}\right\| \left\|C\right\|.
\end{split}
\end{align}
\end{Lemma}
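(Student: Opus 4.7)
The plan is to verify that $P$ and $Q$ are well-defined projections with the claimed ranges and kernels, and then to derive the estimates~\eqref{estprojcore2} via a resolvent-type identity combined with a Neumann-series bound on $(C^*B)^{-1}$.

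First, I would check directly that $P$ in~\eqref{projform1} satisfies $P^2 = P$, and identify its range and kernel. Since $C^*A \in \C^{k\times k}$ is invertible, the matrix $A$ must have full column rank, so $A$ is injective on $\C^k$. From this, $\mathrm{ran}(P) = \mathrm{ran}(A)$, and $Pv = 0$ if and only if $C^*v = 0$, i.e., $v \in \mathrm{ran}(C)^\perp$. Moreover, the bound $\|P\| \leq \|A\|\|(C^*A)^{-1}\|\|C\|$ is immediate from the definition. This yields the second inequality in~\eqref{estprojcore2}.

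Next, I would verify that $Q$ is well-defined under~\eqref{neumann}. Writing $C^*B = C^*A + C^*(B-A) = C^*A\bigl(I + (C^*A)^{-1}C^*(B-A)\bigr)$ and noting that
\begin{align*}
\left\|(C^*A)^{-1}C^*(B-A)\right\| \leq \left\|(C^*A)^{-1}\right\|\left\|C\right\|\left\|A-B\right\| < 1
\end{align*}
by~\eqref{neumann}, a Neumann-series argument gives invertibility of $C^*B$ together with the bound
\begin{align*}
\left\|(C^*B)^{-1}\right\| \leq \frac{\left\|(C^*A)^{-1}\right\|}{1-\left\|(C^*A)^{-1}\right\|\left\|C\right\|\left\|A-B\right\|}.
\end{align*}
Then $Q$ is a well-defined projection with range $\mathrm{ran}(B)$ and kernel $\mathrm{ran}(C)^\perp$, by the same argument as for $P$.

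The main estimate is obtained by inserting $\pm B(C^*A)^{-1}C^*$ into $P-Q$ and applying the standard resolvent identity $(C^*A)^{-1}-(C^*B)^{-1} = (C^*B)^{-1}C^*(B-A)(C^*A)^{-1}$ to obtain
\begin{align*}
P-Q = (A-B)(C^*A)^{-1}C^* + B(C^*B)^{-1}C^*(B-A)(C^*A)^{-1}C^*.
\end{align*}
Taking norms, using the Neumann-series bound on $\|(C^*B)^{-1}\|$ and $\|B\| \leq \|A\|+\|A-B\|$, directly yields the first inequality in~\eqref{estprojcore2}. No substantial obstacle is expected; the only mildly delicate step is keeping track of the constants when bounding $\|(C^*B)^{-1}\|$ and $\|B\|$ so that the final form matches~\eqref{estprojcore2} exactly.
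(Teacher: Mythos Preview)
Your proposal is correct and follows essentially the same approach as the paper: both use a Neumann-series argument to invert $C^*B$ and then decompose $P-Q$ into the same two terms (the paper writes $P-Q = (A-B)(C^*A)^{-1}C^* + B\bigl((C^*A)^{-1}-(C^*B)^{-1}\bigr)C^*$, which is exactly your expression after applying the resolvent identity). Your additional verification that $P$ and $Q$ are projections with the stated ranges and kernels is a welcome detail that the paper omits.
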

\begin{proof}
Since $C^* A$ is invertible, $\mathrm{ran}(A)$ and $\mathrm{ran}(C)^\perp$ are complementary subspaces and the projection $P$ given by~\eqref{projform1} is well-defined. Upon rewriting
\begin{align*}
C^* B = C^* A \left(I - (C^* A)^{-1} C^* (A - B)\right)
\end{align*}
and recalling~\eqref{neumann}, expansion as a Neumann series yields that $C^* B$ is invertible with
\begin{align} \label{estprojcore}
\left\|(C^* B)^{-1} - (C^* A)^{-1}\right\| \leq \frac{\left\|(C^* A)^{-1}\right\|^2 \|C\| \|A-B\| }{1 - \left\|(C^* A)^{-1}\right\| \|C\| \|A-B\|}.
\end{align}
Hence, the subspaces $\mathrm{ran}(B)$ and $\mathrm{ran}(C)^\perp$ are complementary and the projection $Q$ given by~\eqref{projform2} is well-defined. Finally, writing
\begin{align*}
P - Q = (A-B)(C^* A)^{-1} C^* + (A + B - A)\left( (C^* A)^{-1} - (C^* B)^{-1}\right) C^*
\end{align*}
and applying~\eqref{estprojcore} yields~\eqref{estprojcore2}.
\end{proof}

The following result shows that a projection matrix depends analytically on a parameter $\lambda \in \C$ if and only if its range and the real orthogonal complement of its kernel are analytic subspaces. 

\begin{Lemma} \label{l:proj_ana}
Let $n \in \N$ and $k \in \{1,\ldots,n\}$. Let $\Omega \subset \C$ be open. Let $V(\lambda), U(\lambda) \subset \C^n$ be complementary subspaces with $\dim(V(\lambda)) = k$ for each $\lambda \in \Omega$. Denote by $P(\lambda) \in \C^{n \times n}$ the projection onto $V(\lambda)$ along $U(\lambda)$. Then, the map $P \colon \Omega \to \C^{n \times n}$ is analytic if and only if there exist analytic maps $B_1,B_2 \colon \Omega \to \C^{n \times k}$ such that 
\begin{align} \label{basisids}
V(\lambda) = \mathrm{ran}(B_1(\lambda)), \qquad U(\lambda) = \left\{u \in \C^n : z^\top u = 0 \text{ for all } z \in \mathrm{ran}(B_2(\lambda))\right\}
\end{align}
for all $\lambda \in \Omega$. 
\end{Lemma}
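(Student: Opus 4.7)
The plan is to establish the two implications separately, with the backward direction reducing to a direct algebraic construction of $P(\lambda)$ from $B_1(\lambda)$ and $B_2(\lambda)$, and the forward direction reducing to a standard invocation of Kato's construction of analytic bases for the range of an analytic projection-valued map.

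For the ``if'' direction, I would first observe that both $B_1(\lambda)$ and $B_2(\lambda)$ automatically have full column rank $k$ at each $\lambda \in \Omega$: for $B_1$ this is immediate from $\mathrm{ran}(B_1(\lambda)) = V(\lambda)$ and $\dim V(\lambda) = k$, while for $B_2$ it follows from the fact that the annihilator $U(\lambda)$ of $\mathrm{ran}(B_2(\lambda))$ under the bilinear pairing $(z,u) \mapsto z^\top u$ has dimension $n - \mathrm{rank}(B_2(\lambda))$ and is assumed to equal $n - k$. Complementarity of $V(\lambda)$ and $U(\lambda)$ then forces the $k \times k$ matrix $B_2(\lambda)^\top B_1(\lambda)$ to be invertible, because any $v \in \C^k$ in its kernel yields $B_1(\lambda) v \in V(\lambda) \cap U(\lambda) = \{0\}$, and full rank of $B_1(\lambda)$ gives $v = 0$. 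A direct verification then shows that
\begin{align*}
P(\lambda) = B_1(\lambda)\left(B_2(\lambda)^\top B_1(\lambda)\right)^{-1} B_2(\lambda)^\top
\end{align*}
is idempotent with range $V(\lambda)$ and kernel $U(\lambda)$, hence coincides with the projection in the statement; moreover it is evidently analytic in $\lambda$ as a product of analytic matrices.

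For the ``only if'' direction, I would apply~\cite[Section~II.4.2]{Kato1995} to the analytic projection-valued map $P$ to obtain an analytic map $B_1 \colon \Omega \to \C^{n\times k}$ whose columns span $\mathrm{ran}(P(\lambda)) = V(\lambda)$. Since $P(\lambda)^\top$ is again an analytic projection of rank $k$, the same reference applied to $P(\lambda)^\top$ produces an analytic map $B_2 \colon \Omega \to \C^{n\times k}$ whose columns span $\mathrm{ran}(P(\lambda)^\top)$. The identification of $U(\lambda) = \ker(P(\lambda))$ with the annihilator of $\mathrm{ran}(B_2(\lambda))$ under the pairing $(z,u)\mapsto z^\top u$ then follows from the elementary linear-algebraic fact that $(\ker P(\lambda))^{\perp_b} = \mathrm{ran}(P(\lambda)^\top)$, where $\perp_b$ denotes the bilinear annihilator.

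The main technical content, and the only nontrivial input, is the existence of globally analytic bases in the forward direction; this is precisely what~\cite[Section~II.4.2]{Kato1995} provides for analytic projection-valued maps, and the paper already invokes this construction in the same form elsewhere. A minor but essential point to keep track of throughout is that the pairing used to describe $U(\lambda)$ is the bilinear pairing $z^\top u$ rather than the Hermitian one; this is what guarantees that the resulting formula for $P(\lambda)$ depends holomorphically, rather than antiholomorphically, on $\lambda$.
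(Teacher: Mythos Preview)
Your proposal is correct and follows essentially the same approach as the paper: both directions match, with the forward direction invoking \cite[Section~II.4.2]{Kato1995} applied to $P$ and $P^\top$, and the backward direction constructing $P(\lambda) = B_1(\lambda)(B_2(\lambda)^\top B_1(\lambda))^{-1} B_2(\lambda)^\top$. Your direct argument for the invertibility of $B_2(\lambda)^\top B_1(\lambda)$ is slightly more streamlined than the paper's, which first identifies $\mathrm{ran}(B_2(\lambda)) = \mathrm{ran}(P(\lambda)^\top)$ and then invokes Lemma~\ref{l:projlem}, but the substance is identical.
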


The proof of Lemma~\ref{l:proj_ana} is partly based on~\cite[Lemma~3.3]{Beynetal2014} and requires the following technical lemma.

\begin{Lemma} \label{l:projlem}
Let $n \in \N$ and $k \in \{1,\ldots,n\}$. Let $P \in \C^{n \times n}$ be a projection of rank $k$. If $B_1 \in \C^{n \times k}$ is a basis of $\mathrm{ran}(P)$ and $B_2 \in \C^{n \times k}$ is a basis of $\mathrm{ran}(P^\top)$, then $B_2^\top B_1 \in \C^{k \times k}$ is invertible.
\end{Lemma}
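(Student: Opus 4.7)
The plan is to prove injectivity of the square matrix $B_2^\top B_1 \in \C^{k \times k}$, which is equivalent to its invertibility. The key point to keep in mind is that the paper works throughout with the bilinear pairing $(u,v) \mapsto u^\top v$ rather than the Hermitian one, so that the relevant ``orthogonality'' identity is $(P^\top x)^\top y = x^\top P y$ for all $x,y \in \C^n$.

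The first step is to take $w \in \C^k$ with $B_2^\top B_1 w = 0$ and set $v = B_1 w \in \mathrm{ran}(P)$. Since $B_2$ forms a basis of $\mathrm{ran}(P^\top)$, the identity $B_2^\top v = 0$ amounts to saying that $u^\top v = 0$ for every $u \in \mathrm{ran}(P^\top)$. In particular, for any $x \in \C^n$ we may take $u = P^\top x$ and rewrite
\begin{align*}
0 = (P^\top x)^\top v = x^\top P v = x^\top v,
\end{align*}
where in the last step we used $Pv = v$, which holds because $v \in \mathrm{ran}(P)$ and $P$ is a projection. Since this holds for all $x \in \C^n$, we conclude $v = 0$.

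Finally, since $B_1 \in \C^{n \times k}$ is a basis of $\mathrm{ran}(P)$, its columns are linearly independent, so $B_1 w = 0$ forces $w = 0$. Hence $B_2^\top B_1$ has trivial kernel in $\C^k$, and being a square matrix, it is invertible. There is no real obstacle here; the only subtlety worth underlining is that $\mathrm{ran}(P^\top)$ is \emph{not} the Hermitian orthogonal complement of $\ker(P)$ but rather its annihilator under the bilinear pairing $u^\top v$, and it is this algebraic duality—combined with the projection identity $Pv = v$ on $\mathrm{ran}(P)$—that drives the argument.
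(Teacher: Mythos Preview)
Your proof is correct and follows essentially the same approach as the paper: both argue injectivity of $B_2^\top B_1$ by taking a kernel element, pushing it through $B_1$ into $\mathrm{ran}(P)$, and using the bilinear duality $(P^\top x)^\top v = x^\top P v$ to conclude the image vanishes. The only cosmetic difference is that the paper phrases the conclusion as $u \in \ker(P) \cap \mathrm{ran}(P) = \{0\}$, whereas you invoke $Pv = v$ directly to get $x^\top v = 0$; these are equivalent.
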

\begin{proof}
Assume that $v \in \C^k$ satisfies $B_2^\top B_1 v = 0$. Then, $u := B_1 v \in \mathrm{ran}(P)$ satisfies $z^\top u = 0$ for all $z \in \mathrm{ran}(B_2) = \mathrm{ran}(P^\top)$. Hence, for all $w \in \C^n$ it holds $0 = (P^\top w)^\top u = w^\top P u$  implying $Pu = 0$. We conclude that $u \in \ker(P) \cap \mathrm{ran}(P) = \{0\}$. Since $B_1$ has full rank, we have $v = 0$. 
\end{proof}

\begin{proof}[Proof of Lemma~\ref{l:proj_ana}]
Assume that $P$ is analytic. Then, by~\cite[Section~II.4.2]{Kato1995} there exist analytic maps $B_1,B_2 \colon \Omega \to \C^{n \times k}$ such that $B_1(\lambda)$ is a basis of $V(\lambda)$ and $B_2(\lambda)$ is a basis of $\mathrm{ran}(P(\lambda)^\top)$ for all $\lambda \in \Omega$. Now, it follows
\begin{align*}
u \in \ker(P(\lambda)) = U(\lambda) &\qquad \Leftrightarrow \qquad \forall \, v \in \C^n : 0 = v^\top P(\lambda) u = \left(P(\lambda)^\top v\right)^\top u \\ &\qquad \Leftrightarrow \qquad \forall \, z \in \mathrm{ran}(B_2(\lambda)) : z^\top u = 0
\end{align*}
for all $\lambda \in \Omega$.

Conversely, assume that there exist analytic maps $B_1,B_2 \colon \Omega \to \C^{n \times k}$ such that~\eqref{basisids} holds for all $\lambda \in \Omega$. Let $\lambda \in \Omega$. We have $z \in \mathrm{ran}(B_2(\lambda))$ if and only if $z^\top u = 0$ for all $u \in U(\lambda) = \ker(P(\lambda))$. If $x \in \mathrm{ran}(P(\lambda)^\top)$ we find $v \in \C^n$ such that $x = P(\lambda)^\top v$. This yields $x^\top u = (P(\lambda)^\top v)^\top u = v^\top P(\lambda) u = 0$ for all $u \in U(\lambda)$, which proves the inclusion $\mathrm{ran}(B_2(\lambda)) \supset \mathrm{ran}(P(\lambda)^\top)$. Equality then follows from
\begin{align*}
    \dim \mathrm{ran}(B_2(\lambda)) &= n - \dim \ker(P(\lambda)) = \dim \ker(P(\lambda))^\perp \\
    &= \dim \mathrm{ran}(P(\lambda)^*) = \dim \mathrm{ran} (P(\lambda)^\top).
\end{align*}
Thus, by Lemma~\ref{l:projlem} the matrix $B_2(\lambda)^\top B_1(\lambda) \in \C^{k \times k}$ is invertible. Clearly, 
\begin{align} \label{anaproj}
 \Pi(\lambda) = B_1(\lambda) \left(B_2(\lambda)^\top B_1(\lambda)\right)^{-1} B_2(\lambda)^\top \in \C^{n \times n}
\end{align}
is a projection with $\ker(\Pi(\lambda)) = U(\lambda)$ and $\mathrm{ran}(\Pi(\lambda)) = V(\lambda)$. So, we must have $P(\lambda) = \Pi(\lambda)$. The formula~\eqref{anaproj} readily yields that $P$ is analytic.
\end{proof}

\section{Exponential dichotomies} \label{app:exp_dich}

Exponential dichotomies are powerful tools in the spectral analysis of linear differential operators. By reformulating the associated eigenvalue problem as a first-order nonautonomous system, they can be used to characterize invertibility as well as Fredholm properties~\cite{MasseraSchaefer1966,Palmer1984,Palmer1988}. 

A linear (nonautonomous) ordinary differential equation possesses an exponential dichotomy if it admits a fundamental set of solutions that decay exponentially in either forward or backward time.

\begin{Definition}
Let $n \in \mathbb{N}$, $\mathcal{J} \subset \R$ an interval and $A \in C(\mathcal{J},\C^{n \times n})$. Denote by $T(x,y)$ the evolution operator of the linear system
\begin{align} \phi' = A(x)\phi. \label{linsys}\end{align}
Equation~\eqref{linsys} has \emph{an exponential dichotomy on $\mathcal{J}$ with constants $K,\mu > 0$ and projections $P(x) \in \C^{n \times n}$} if for all $x,y \in \mathcal{J}$ it holds
\begin{itemize}
 \item $P(x)T(x,y) = T(x,y) P(y)$;
 \item $\|T(x,y)P(y)\| \leq K\eu^{-\mu(x-y)}$ for $x \geq y$;
 \item $\|T(x,y)(I-P(y))\| \leq K\eu^{-\mu(y-x)}$ for $y \geq x$.
\end{itemize}
\end{Definition}

In this appendix, we establish several results on exponential dichotomies that are relevant to our analysis. For a comprehensive introduction, we refer the reader to~\cite{Coppel1978}.

We start with an extension of the so-called ``pasting lemma'', which was first established in~\cite[Lemma~B.7]{RijkDoelmanRademacher2016}. The pasting lemma provides a tool for gluing together exponential dichotomies on two adjacent intervals.

\begin{Lemma} \label{l:pastingexpdi}
Let $n \in \mathbb{N}$, $a,b,c \in \R$ with $a < b < c$ and $A \in C([a,c],\C^{n \times n})$. Suppose that equation~\eqref{linsys} has exponential dichotomies on both $[a,b]$ and $[b,c]$ with constants $K, \mu > 0$ and projections $P_1(x), x \in [a,b]$ and $P_2(x), x \in [b,c]$, respectively. If $\ker(P_1(b))$ and $\mathrm{ran}(P_2(b))$ are complementary subspaces, then~\eqref{linsys} has an exponential dichotomy on $[a,c]$ with constants $C,\mu > 0$ and projections $\mathcal{Q}(x) = T(x,b) Q T(b,x), x \in [a,c]$, where $Q$ is the projection onto $\mathrm{ran}(P_2(b))$ along $\ker(P_1(b))$ and $T(x,y)$ denotes the evolution of system~\eqref{linsys}. Moreover, we have $C = K + K^2 \|Q\| + K^3$ and
\begin{align*}
\|P_1(a) - \mathcal{Q}(a)\| \leq CK \eu^{-2\mu (b-a)}, \qquad \|P_2(c) - \mathcal{Q}(c)\| \leq  CK \eu^{-2\mu (c-b)}.
\end{align*}
\end{Lemma}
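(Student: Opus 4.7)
The plan is to verify the three defining properties of an exponential dichotomy for the ansatz $\mathcal{Q}(x) = T(x,b)QT(b,x)$. The intertwining relation $\mathcal{Q}(x)T(x,y) = T(x,y)\mathcal{Q}(y)$ and the fact that each $\mathcal{Q}(x)$ is a projection follow immediately from the definition of $\mathcal{Q}$ together with $Q^2 = Q$. Hence the real work lies in establishing the two exponential estimates and in comparing $\mathcal{Q}(a)$ with $P_1(a)$ and $\mathcal{Q}(c)$ with $P_2(c)$.

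First I would record the algebraic identities forced by the definition of $Q$: since $\mathrm{ran}(Q) = \mathrm{ran}(P_2(b))$ and $\ker(Q) = \ker(P_1(b))$, one obtains $QP_1(b) = Q$, $P_2(b)Q = Q$, and consequently
\[
Q - P_1(b) \;=\; (I-P_1(b))\,(Q-P_1(b))\,P_1(b), \qquad
Q - P_2(b) \;=\; P_2(b)\,(Q-P_2(b))\,(I-P_2(b)),
\]
because $Q - P_i(b)$ annihilates the common kernel (resp.\ leaves the common range invariant). Using $P_i(x) = T(x,b)P_i(b)T(b,x)$, these factorizations yield
\[
\mathcal{Q}(x) - P_1(x) = T(x,b)(I-P_1(b))(Q-P_1(b))P_1(b)T(b,x), \qquad x \in [a,b],
\]
and the analogous expression with $P_2$ on $[b,c]$. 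Applying the exponential dichotomy bounds to the outer factors (unstable direction running backward and stable direction running forward through $b$), together with $\|Q-P_i(b)\| \leq \|Q\|+K$, immediately gives $\|\mathcal{Q}(a)-P_1(a)\| \leq K^2(\|Q\|+K)e^{-2\mu(b-a)}$ and $\|\mathcal{Q}(c)-P_2(c)\| \leq K^2(\|Q\|+K)e^{-2\mu(c-b)}$, which is stronger than the claimed bound with $CK = K^2 + K^3\|Q\|+K^4$.

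Next I would verify $\|T(x,y)\mathcal{Q}(y)\| \leq C e^{-\mu(x-y)}$ for $y \leq x$ by distinguishing three cases. When $y \leq x \leq b$, write $T(x,y)\mathcal{Q}(y) = T(x,y)P_1(y) + T(x,y)(\mathcal{Q}(y)-P_1(y))$ and plug the factorization above into the second summand, so that it becomes $T(x,b)(I-P_1(b))(Q-P_1(b))T(b,y)P_1(y)$. The two dichotomy estimates produce the factor $K^2(\|Q\|+K)e^{-\mu(b-x)}e^{-\mu(b-y)}$, and the elementary inequality $(b-x)+(b-y) \geq x-y$ (valid because $x \leq b$) converts this into the required $e^{-\mu(x-y)}$ decay. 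The case $b \leq y \leq x$ is handled symmetrically via the $P_2$ factorization, and the mixed case $y \leq b \leq x$ is the simplest: using $Q = P_2(b)QP_1(b)$ yields $T(x,y)\mathcal{Q}(y) = T(x,b)P_2(b)\cdot Q\cdot T(b,y)P_1(y)$, which is bounded directly by $K^2\|Q\|e^{-\mu(x-b)}e^{-\mu(b-y)} = K^2\|Q\|e^{-\mu(x-y)}$. The backward estimate $\|T(x,y)(I-\mathcal{Q}(y))\| \leq C e^{-\mu(y-x)}$ for $y \geq x$ follows by an entirely parallel three-case analysis, exchanging the roles of $P_i$ and $I-P_i$.

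The main obstacle I anticipate is purely bookkeeping: making sure the identities $(I-P_1(b))(Q-P_1(b))P_1(b) = Q-P_1(b)$ and its $P_2$-counterpart are used precisely so that at every appearance of an outer factor $T(x,b)$ or $T(b,y)$ one has the correct stable/unstable projection attached to it, so that the exponential dichotomy bounds on $[a,b]$ and $[b,c]$ can be invoked rather than merely the (unbounded) pointwise norm of $T$. Once these identities are in place, the estimates fall out mechanically and the constant $C = K + K^2\|Q\| + K^3$ accounts for the direct dichotomy term plus the cross term $K^2(\|Q\|+K)$ coming from the correction $\mathcal{Q}-P_i$.
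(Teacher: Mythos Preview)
Your proposal is correct and follows essentially the same strategy as the paper's proof. The paper is terser: for the within-interval estimates (your cases $y\leq x\leq b$ and $b\leq y\leq x$) it simply cites Coppel \cite[pp.~16--17]{Coppel1978} rather than writing out the factorization $\mathcal{Q}-P_i$ and the triangle-inequality argument that you sketch, and for the endpoint estimate it first invokes the already-established dichotomy with constant $C$ on $[a,b]$ to bound $\|T(a,b)(I-\mathcal{Q}(b))\|$, then uses $QP_1(b)=Q$ to write $P_1(a)-\mathcal{Q}(a)=T(a,b)(I-\mathcal{Q}(b))\cdot P_1(b)T(b,a)$. Your direct factorization $Q-P_1(b)=(I-P_1(b))(Q-P_1(b))P_1(b)$ gives the same structure more explicitly and yields the sharper constant $K^2(\|Q\|+K)$ instead of $CK$, at the cost of a few more lines.
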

\begin{proof}
Observe that $\mathcal{Q}(x)T(x,y) = T(x,y) \mathcal{Q}(y)$ for $x,y \in [a,c]$. The exposition in~\cite[pp.~16-17]{Coppel1978} shows that~\eqref{linsys} possesses an exponential dichotomy on the intervals $[a,b]$ and $[b,c]$ with constants $C,\mu > 0$ and projections $\mathcal{Q}(x)$. We need to show that the dichotomy estimates persist on the union $[a,c] = [a,b] \cup [b,c]$. Take $x \in [b,c]$ and $y \in [a,b]$. We estimate
\begin{align*} \|T(x,y)\mathcal{Q}(y)\| &\leq \|T(x,b)P_2(b)\| \|Q\| \|P_1(b)T(b,y)\| \leq K^2\|Q\|\eu^{-\mu(x-y)} \leq C \eu^{-\mu(x-y)},\end{align*}
where we use $P_2(b)Q = Q$ and $QP_1(b)=Q$. Similarly, one estimates $\|T(y,x)(I-\mathcal{Q}(x))\| \leq C\eu^{-\mu(x-y)}$ for $x \in [b,c]$ and $y \in [a,b]$. Finally, using $QP_1(b) = Q$ again, we infer
\begin{align*}
\|P_1(a) - \mathcal{Q}(a)\|
&\leq \left\|T(a,b)(I - \mathcal{Q}(b))\right\| \left\|P_1(b) T(b,a)\right\| \leq CK \eu^{-2\mu (b-a)}.
\end{align*}
Similarly, we derive $\|P_2(c) - \mathcal{Q}(c)\| \leq CK \eu^{-2\mu (c-b)}$.
\end{proof}

Next, we obtain an approximation result for the projections of two exponential dichotomies defined on the same interval.

\begin{Lemma}\label{l:projest}
Let $n \in \mathbb{N}$, $a,b \in \R$ with $a < b$ and $A \in C([a,b],\C^{n \times n})$. Suppose equation~\eqref{linsys} admits two exponential dichotomies on $[a,b]$ with constants $K_{1,2}, \mu_{1,2} > 0$ and projections $P_{1,2}(x)$. Then, we have
\begin{align*}
\left\|P_1(x) - P_2(x)\right\| \leq K_1K_2 \left(\eu^{-(\mu_1+\mu_2)(x-a)} + \eu^{-(\mu_1+\mu_2)(b-x)}\right),
\end{align*}
for all $x \in [a,b]$.
\end{Lemma}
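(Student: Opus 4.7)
The plan is to start from the algebraic identity
\begin{align*}
P_1(x) - P_2(x) = P_1(x)\bigl(I - P_2(x)\bigr) - \bigl(I - P_1(x)\bigr) P_2(x),
\end{align*}
and bound each of the two terms separately. The first term will be estimated by transporting it back to the left endpoint $a$ (where the forward-stable part of $P_1$ and the backward-unstable part of $P_2$ both gain exponential factors of the form $\eu^{-\mu_i(x-a)}$), while the second will be estimated by transporting to the right endpoint $b$.

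For the first term I would use the commutation relations $P_1(x) = T(x,a)P_1(a)T(a,x)$ and $T(a,x)(I-P_2(x)) = (I-P_2(a))T(a,x)$ to rewrite
\begin{align*}
P_1(x)(I-P_2(x)) = \bigl[T(x,a)P_1(a)\bigr] \cdot \bigl[(I-P_2(a))T(a,x)\bigr].
\end{align*}
The first bracket is bounded by $K_1\eu^{-\mu_1(x-a)}$ (the forward dichotomy estimate applied to the first exponential dichotomy at $y=a$, $x\geq a$), and the second bracket, rewritten as $T(a,x)(I-P_2(x))$ via the same commutation, is bounded by $K_2\eu^{-\mu_2(x-a)}$ (the backward dichotomy estimate applied to the second exponential dichotomy). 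Multiplying the two bounds yields $\|P_1(x)(I-P_2(x))\| \leq K_1K_2 \eu^{-(\mu_1+\mu_2)(x-a)}$.

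Symmetrically, for the second term I would expand $(I-P_1(x))P_2(x)$ via the evolution from $b$, obtaining
\begin{align*}
(I-P_1(x))P_2(x) = \bigl[T(x,b)(I-P_1(b))\bigr] \cdot \bigl[P_2(b)T(b,x)\bigr],
\end{align*}
after using that $T(b,x)T(x,b)=I$ telescopes out. Now the first bracket is controlled by the backward dichotomy of $P_1$ at $y=b$, $x\leq b$, giving $K_1\eu^{-\mu_1(b-x)}$, and the second bracket, rewritten as $T(b,x)P_2(x)$, is controlled by the forward dichotomy of $P_2$, giving $K_2\eu^{-\mu_2(b-x)}$. Combining the two estimates via the triangle inequality produces the claimed bound.

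There is really no substantial obstacle here; the only care needed is to split each term as a product of two factors that individually carry a clean dichotomy bound, and to ensure that the inequalities $x\geq a$ and $x\leq b$ are respected when invoking the forward/backward estimates. Everything else is just applying the commutation relation $P_i(x)T(x,y) = T(x,y)P_i(y)$ and the definition of an exponential dichotomy.
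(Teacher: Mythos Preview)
Your proposal is correct and matches the paper's proof essentially line for line: the paper also splits $P_1(x)-P_2(x)$ into $P_1(x)(I-P_2(x))$ and $(I-P_1(x))P_2(x)$, inserts $T(x,a)T(a,x)$ into the first and $T(x,b)T(b,x)$ into the second, and applies the forward/backward dichotomy bounds to each factor.
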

\begin{proof}
Let $T(x,y)$ be the evolution of system~\eqref{linsys}. We estimate
\begin{align*}
\left\|P_1(x) - P_2(x)\right\| &\leq \left\|P_1(x)(I - P_2(x))\right\|  + \left\|(I - P_1(x))P_2(x)\right\|\\ 
&\leq \left\|P_1(x)T(x,a)\right\|\left\|T(a,x)(I - P_2(x))\right\| + \left\|(I - P_1(x))T(x,b)\right\|\left\|T(b,x)P_2(x)\right\| \\
&\leq K_1K_2 \left(\eu^{-(\mu_1+\mu_2)(x-a)} + \eu^{-(\mu_1+\mu_2)(b-x)}\right),
\end{align*}
for all $x \in [a,b]$.
\end{proof}

The following result establishes periodicity of the evolution operator, as well as the dichotomy projections, when the underlying system has periodic coefficients.

\begin{Lemma} \label{l:projper}
Let $n \in \mathbb{N}$ and $L > 0$. Let $A \in C(\R,\C^{n \times n})$ be $L$-periodic. Then, the evolution $T(x,y)$ of~\eqref{linsys} satisfies $T(x,y) = T(x-L,y-L)$ for each $x,y \in \R$. Moreover, if~\eqref{linsys} has an exponential dichotomy on $\R$ with projections $P(x)$, then $P(\cdot)$ is also $L$-periodic.
\end{Lemma}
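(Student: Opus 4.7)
The plan is to first establish the $L$-periodicity of the evolution operator by a direct uniqueness argument, and then use this to recognize the shifted projections as those of another exponential dichotomy of the same system.

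For the first claim, I would define $S(x,y) := T(x-L, y-L)$ and check that $S(\cdot,y)$ solves the same initial value problem as $T(\cdot,y)$. Indeed, by the $L$-periodicity of $A$,
\begin{align*}
\partial_x S(x,y) &= A(x-L)\, T(x-L,y-L) = A(x)\, S(x,y),
\end{align*}
and $S(y,y) = T(y-L,y-L) = I$. Uniqueness of solutions to the linear Cauchy problem then yields $S(x,y) = T(x,y)$ for all $x,y \in \R$.

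For the second claim, assume~\eqref{linsys} admits an exponential dichotomy on $\R$ with constants $K,\mu>0$ and projections $P(x)$, and set $\widetilde{P}(x) := P(x+L)$. Using the $L$-periodicity of $T$ established above together with the identities
\begin{align*}
\widetilde{P}(x)\, T(x,y) &= P(x+L)\, T(x+L, y+L) = T(x+L,y+L)\, P(y+L) = T(x,y)\, \widetilde{P}(y),
\end{align*}
and the analogous computation for the dichotomy estimates,
\begin{align*}
\|T(x,y)\widetilde{P}(y)\| = \|T(x+L, y+L) P(y+L)\| \leq K\,\eu^{-\mu(x-y)}, \qquad x\geq y,
\end{align*}
and similarly $\|T(y,x)(I-\widetilde{P}(x))\| \leq K\,\eu^{-\mu(x-y)}$ for $x \geq y$, one finds that $\widetilde P(x)$ furnishes an exponential dichotomy on $\R$ for the same system~\eqref{linsys} with identical constants.

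To conclude, I would invoke the uniqueness of exponential dichotomies on $\R$, cf.~\cite[p.~19]{Coppel1978}: the stable and unstable subspaces $\mathrm{ran}(P(x))$ and $\ker(P(x))$ are characterized intrinsically as the spaces of initial data producing solutions that are bounded in forward and backward time, respectively. Since $P(x)$ and $\widetilde{P}(x)$ have the same range and the same kernel, they coincide, giving $P(x+L) = P(x)$ for every $x \in \R$. I do not anticipate any real obstacle here; the whole argument is a clean application of uniqueness, with the only mild point being the passage from equality of ranges and kernels of two projections to equality of the projections themselves, which is immediate.
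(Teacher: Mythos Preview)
Your argument is correct and, in fact, more elementary than the paper's. The paper invokes Floquet's theorem to write $T(x,y) = Q(x)\,\eu^{B(x-y)}\,Q(y)^{-1}$ with $Q$ an $L$-periodic invertible matrix function and $B$ constant; periodicity of $T$ then drops out of this formula. For the second claim, the paper observes that the existence of an exponential dichotomy on $\R$ forces $B$ to be hyperbolic, and then identifies $P(x)$ with $Q(x)\,\mathcal{P}\,Q(x)^{-1}$, where $\mathcal{P}$ is the spectral projection onto the stable eigenspace of $B$; periodicity of $P$ follows from periodicity of $Q$. Your route avoids Floquet entirely: you obtain the periodicity of $T$ directly from uniqueness for linear ODEs, and then show that $P(\cdot+L)$ is itself a dichotomy for the \emph{same} system, reducing the conclusion to the uniqueness of exponential dichotomies on $\R$ (which the paper also cites). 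What the paper's approach buys is an explicit structural picture of $P(x)$ in terms of the Floquet decomposition; what your approach buys is a shorter, self-contained argument that uses nothing beyond Coppel's uniqueness statement.
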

\begin{proof}
By Floquet's theorem, cf.~\cite[Theorem~2.1.27]{KapitulaPromislow2013}, there exist an $L$-periodic function $Q \colon \R \to \C^{n \times n}$ and a matrix $B \in \C^{n \times n}$ such that $Q(x)$ is invertible for each $x \in \R$ and we have $T(x,y) = Q(x) \eu^{B(x-y)} Q(y)^{-1}$ for each $x,y \in \R$. Hence, we arrive at $T(x,y) = T(x-L,y-L)$ for each $x,y \in \R$. Now suppose that~\eqref{linsys} has an exponential dichotomy on $\R$. Then, the matrix $B$ must be hyperbolic. Let $\mathcal{P}$ be the spectral projection of $B$ onto its stable space. Then, by uniqueness of the exponential dichotomy, cf.~\cite[p.~19]{Coppel1978}, it holds $P(x) = Q(x) \mathcal{P} Q(x)^{-1}$ for $x \in \R$ and, thus, $P$ is $L$-periodic.
\end{proof}

We proceed by showing that the operator $\frac{d}{dx}-A(x)$ is invertible as a map from $H^1(\R)$ to $L^2(\R)$, and, in case of $L$-periodic coefficients of $A$, also as a map from $H_\per^1(0,L)$ to $L_\per^2(0,L)$, provided that~\eqref{linsys} has an exponential dichotomy on $\R$. Moreover, we prove that the inverse operator can be bounded in terms of the dichotomy constants and the supremum norm of $A$. 

\begin{Lemma} \label{l:inhomexpdi}
Let $n \in \mathbb{N}$ and $L > 0$. Let both $g \in C(\R,\C^n)$ and $A \in C(\R,\C^{n \times n})$ be bounded. Suppose that~\eqref{linsys} has an exponential dichotomy on $\R$ with constants $K,\mu > 0$. Then, the inhomogeneous problem
\begin{align} \label{inhomsys}
\phi' = A(x)\phi + g(x),
\end{align}
possesses a unique bounded solution $\phi \in C^1(\R)$. If $g \in L^2(\R)$, then $\phi$ lies in $H^1(\R)$ and obeys the estimate
\begin{align} \label{inhombound}
\|\phi\|_{H^1} \leq \left(\left(1 + \|A\|_{L^\infty}\right) \frac{2K}{\mu} + 1\right)\|g\|_{L^2}.
\end{align}
Moreover, if $A$ is $L$-periodic and $g \in L_\per^2(0,L)$, then $\phi$ lies in $H_\per^1(0,L)$ and satisfies 
\begin{align} \label{inhombound2}
\|\phi\|_{H_\per^1(0,L)} \leq \left(\left(1 + \|A\|_{L^\infty}\right) \frac{2K}{\mu} + 1\right)\|g\|_{L_\per^2(0,L)}.
\end{align}
\end{Lemma}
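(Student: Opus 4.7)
The plan is to construct the bounded solution via the standard variation-of-constants formula with dichotomy projections. Denoting by $T(x,y)$ the evolution of~\eqref{linsys}, I would set
\begin{align*}
\phi(x) := \int_{-\infty}^{x} T(x,y) P(y) g(y) \, \de y \;-\; \int_{x}^{\infty} T(x,y) (I-P(y)) g(y) \, \de y,
\end{align*}
which is well-defined since $\|T(x,y)P(y)\| \leq K\eu^{-\mu(x-y)}$ for $x \geq y$ and $\|T(x,y)(I-P(y))\| \leq K\eu^{-\mu(y-x)}$ for $y \geq x$. Differentiating with the Leibniz rule and using $P(x) + (I-P(x)) = I$ at the boundary shows that $\phi \in C^1(\R)$ solves~\eqref{inhomsys}, and the pointwise bound $\|\phi\|_{L^\infty} \leq (2K/\mu)\|g\|_{L^\infty}$ is immediate. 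Uniqueness is standard: the difference of two bounded $C^1$-solutions solves the homogeneous equation and would have to be bounded on all of $\R$, but the dichotomy on $\R$ forces any bounded homogeneous solution to have initial value in $\mathrm{ran}(P(0)) \cap \ker(P(0)) = \{0\}$.

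For the $H^1$-estimate on $\R$, I would view $\phi$ as a convolution-type integral $\phi(x) = \int_{\R} \mathcal{K}(x,y) g(y)\,\de y$ with kernel satisfying $\|\mathcal{K}(x,y)\| \leq K\eu^{-\mu|x-y|}$, and invoke Young's convolution inequality ($L^1 \ast L^2 \hookrightarrow L^2$) to obtain
\begin{align*}
\|\phi\|_{L^2(\R)} \;\leq\; K \int_{\R} \eu^{-\mu|z|}\,\de z \cdot \|g\|_{L^2(\R)} \;=\; \frac{2K}{\mu}\,\|g\|_{L^2(\R)}.
\end{align*}
Then the equation~\eqref{inhomsys} directly yields $\|\phi'\|_{L^2} \leq \|A\|_{L^\infty}\|\phi\|_{L^2} + \|g\|_{L^2}$, and combining these two bounds produces~\eqref{inhombound}.

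For the periodic case, Lemma~\ref{l:projper} asserts that $P$ is $L$-periodic and $T(x+L,y+L) = T(x,y)$. Substituting $y \mapsto y+L$ in the defining integrals for $\phi(x+L)$ and using $g(y+L) = g(y)$ yields $\phi(x+L) = \phi(x)$, so $\phi$ is automatically $L$-periodic. For the bound~\eqref{inhombound2}, I would ``fold'' the integral onto $[0,L]$ by writing
\begin{align*}
\phi(x) = \int_0^L \widetilde{\mathcal{K}}(x,y)\, g(y)\,\de y, \qquad \widetilde{\mathcal{K}}(x,y) := \sum_{k \in \Z} \mathcal{K}(x,\, y+kL),
\end{align*}
so that, by Fubini, $\sup_{x}\int_0^L \|\widetilde{\mathcal{K}}(x,y)\|\,\de y \leq \int_\R K\eu^{-\mu|z|}\,\de z = 2K/\mu$ (and analogously for the row integral, by symmetry in $x,y$). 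Schur's test, i.e., Young's inequality for periodic convolutions, then gives $\|\phi\|_{L_\per^2(0,L)} \leq (2K/\mu)\|g\|_{L_\per^2(0,L)}$, and the derivative bound follows verbatim from the equation.

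The construction and the $L^\infty$-estimate are essentially textbook. The only mildly delicate step is keeping the \emph{same} explicit constant $2K/\mu$ in the periodic case: this is why I would go through the folded kernel $\widetilde{\mathcal{K}}$ and Schur's test rather than attempting to invoke Young's inequality on $L^2_\per(0,L)$ through the full-line representation directly.
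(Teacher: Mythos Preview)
Your proposal is correct and follows essentially the same route as the paper: the variation-of-constants formula with dichotomy projections, uniqueness via the dichotomy, Young's inequality for the $L^2$-bound on $\R$, and Lemma~\ref{l:projper} for periodicity of $\phi$. The only difference is in obtaining the constant $2K/\mu$ in the periodic $L^2$-estimate: the paper squares the pointwise bound~\eqref{pointwisephi}, expands as a double integral in $y,z$, and applies H\"older on the inner $x$-integral, whereas you fold the kernel onto $[0,L]$ and apply Schur's test---both arguments are valid and yield the same constant.
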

\begin{proof}
Let $T(x,y)$ be the evolution operator of~\eqref{linsys}. Denote by $P(x)$ the projections associated with the exponential dichotomy of~\eqref{linsys} on $\R$. Define $\phi \colon \R \to \C^n$ by
\begin{align*}
\phi(x) = \int_{-\infty}^x T(x,y) P(y) g(y) \de y - \int_x^\infty T(x,y) (I-P(y)) g(y) \de y.
\end{align*}
We note that the properties of the exponential dichotomy, the fundamental theorem of calculus and the fact that $g$ and $A$ are bounded and continuous, readily yield that $\phi$ is well-defined, bounded, continuously differentiable and solves~\eqref{inhomsys}. Due to the exponential dichotomy of system~\eqref{linsys} on $\R$ its only bounded solution is $0$. Therefore, the bounded solution $\phi$ of~\eqref{inhomsys} is unique. Finally, we estimate
\begin{align} \label{pointwisephi}
\|\phi(x)\| \leq K\int_{\R} \eu^{-\mu|x-y|} \|g(y)\| \de y = K\int_{\R} \eu^{-\mu|y|} \|g(x-y)\| \de y,
\end{align}
for $x \in \R$. 

Take $g \in L^2(\R)$. Applying Young's convolution inequality to~\eqref{pointwisephi} leads to the estimate
\begin{align*}
\|\phi\|_{L^2} \leq \frac{2K}{\mu} \|g\|_{L^2},\end{align*}
which implies $\phi \in L^2(\R)$. So, using the fact that $\phi$ solves~\eqref{inhomsys}, we establish
\begin{align*}
\|\phi'\|_{L^2} \leq \|A\|_{L^\infty} \|\phi\|_{L^2} + \|g\|_{L^2}
\leq \left(\|A\|_{L^\infty} \frac{2K}{\mu} + 1\right)\|g\|_{L^2},
\end{align*}
which proves $\phi \in H^1(\R)$ and establishes~\eqref{inhombound}. 

Suppose $A$ is $L$-periodic and $g \in L_\per^2(0,L)$. Then, by Lemma~\ref{l:projper} we deduce
\begin{align*}
\phi(x + L) &= \int_{-\infty}^{x+L} T(x+L,y) P(y) g(y) \de y - \int_{x+L}^\infty T(x+L,y) (I-P(y)) g(y) \de y\\
&= \int_{-\infty}^{x+L} T(x,y-L) P(y-L) g(y-L) \de y\\ 
&\qquad - \, \int_{x+L}^\infty T(x,y-L) (I-P(y-L)) g(y-L) \de y = \phi(x)
\end{align*}
for $x \in \R$. Hence, $\phi$ is also $L$-periodic. Using~\eqref{pointwisephi} and H\"older's inequality we estimate
\begin{align*}
\|\phi\|_{L_\per^2(0,L)}^2 &= \int_0^L \|\phi(x)\|^2 \de x \leq K^2\int_\R \int_\R \eu^{-\mu(|y| + |z|)} \int_0^L \|g(x-y)\|\|g(x-z)\| \de x \de y \de z\\ 
&\leq K^2\int_{\R} \int_{\R} \eu^{-\mu(|y| + |z|)} \|g\|^2_{L_\per^2(0,L)} \de y \de z \leq \frac{4K^2}{\mu^2} \|g\|^2_{L_\per^2(0,L)},
\end{align*}
which proves $\phi \in L_\per^2(0,L)$. Combining the later with the fact that $\phi$ solves~\eqref{inhomsys}, we obtain
\begin{align*}
\|\phi'\|_{L_\per^2(0,L)} \leq \|A\|_{L^\infty} \|\phi\|_{L_\per^2(0,L)} + \|g\|_{L_\per^2(0,L)}
\leq \left(\|A\|_{L^\infty} \frac{2K}{\mu} + 1\right)\|g\|_{L_\per^2(0,L)},
\end{align*}
which implies $\phi \in H_\per^1(0,L)$ and establishes~\eqref{inhombound2}. 
\end{proof}

Our next step is to prove that, if the linear differential operator $\El(\unu) - \lambda$, defined in~\S\ref{sec:Abstract_ex_stab}, is invertible, then the first-order formulation~\eqref{first-order_formulation} of the associated eigenvalue problem has an exponential dichotomy on $\R$. In conjunction with lemma~\ref{l:inhomexpdi}, this characterizes invertibility of the differential operator $\El(\unu)-\lambda$ in terms of exponential dichotomies.

\begin{Lemma} \label{lem:expdi1}
Let $\unu \in C(\R)$ be bounded and $\lambda \in \C$. If the linear operator $\El(\unu) - \lambda$ is invertible, then system~\eqref{first-order_formulation} has an exponential dichotomy on $\R$. 
\end{Lemma}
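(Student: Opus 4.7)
The plan is to translate invertibility of the $k$-th order operator $\El(\unu)-\lambda$ into invertibility of the first-order differential operator associated with the system~\eqref{first-order_formulation}, and then to invoke the classical characterization of invertibility via exponential dichotomies due to Palmer~\cite{Palmer1984,Palmer1988}.

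First, I would verify that the coefficient matrix $\mathcal{A}(\cdot,\unu(\cdot);\lambda) \colon \R \to \C^{km \times km}$ is continuous and bounded. Continuity is immediate from the continuity of $\mathcal{A}$ and $\unu$. Boundedness follows from three observations: the continuous $T$-periodic coefficients $\alpha_1,\ldots,\alpha_k$ are bounded; the matrix $\alpha_k(x)^{-1}$ is continuous and $T$-periodic, hence bounded; and $\partial_u \Non(\unu(\cdot),\cdot)$ is bounded because $\unu$ is bounded, $\Non$ is $C^2$ in its first argument and $T$-periodic in its second argument.

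Next, I would exploit the natural correspondence between $H^k$-solutions of the scalar problem and $H^1$-solutions of the first-order system. Concretely, the linear map $\iota \colon H^k(\R) \to H^1(\R,\C^{km})$ given by $\iota u = (u,\partial_x u, \ldots, \partial_x^{k-1} u)^\top$ is a topological embedding with closed range $\mathcal{R}$ consisting of those $U \in H^1(\R,\C^{km})$ satisfying $U_{i+1} = U_i'$ for $i = 1,\ldots,k-1$. For any such $U = \iota u$, the identity
\begin{align*}
\left(\tfrac{d}{dx}-\mathcal{A}(\cdot,\unu(\cdot);\lambda)\right) U = \left(0,\ldots,0,\alpha_k(\cdot)^{-1}(\El(\unu)-\lambda)u\right)^\top
\end{align*}
holds, so that given $g \in L^2(\R)$ the resolvent equation $(\El(\unu)-\lambda)u = g$ in $H^k(\R)$ is equivalent to the inhomogeneous first-order problem with right-hand side $(0,\ldots,0,\alpha_k^{-1}g)^\top \in L^2(\R,\C^{km})$ having a solution $U \in H^1(\R,\C^{km})$, which automatically lies in $\mathcal{R}$ because $U_{i+1}=U_i'$ is part of the system. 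Using the hypothesis that $\El(\unu)-\lambda$ is invertible together with the fact that $\alpha_k^{-1}$ is a bounded multiplication operator on $L^2(\R)$, I conclude that the bounded operator
\begin{align*}
\mathcal{T} \colon H^1(\R,\C^{km}) \to L^2(\R,\C^{km}), \qquad \mathcal{T}U = U'-\mathcal{A}(\cdot,\unu(\cdot);\lambda)U,
\end{align*}
is bijective, hence invertible by the open mapping theorem.

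Finally, I would invoke the classical Palmer characterization (see~\cite[Lemma~4.2]{Palmer1984} and~\cite[Theorem~1]{Palmer1988}): for a first-order linear system with continuous bounded coefficients on $\R$, the associated operator $\mathcal{T}$ from $H^1$ to $L^2$ is bounded invertible if and only if the system admits an exponential dichotomy on $\R$. Applying this with the operator $\mathcal{T}$ above yields an exponential dichotomy on $\R$ for~\eqref{first-order_formulation}, which is the desired conclusion. The only conceptual subtlety in this plan is the translation step between $H^k$ and $H^1$, which amounts to bookkeeping once one recognizes that the extra components of $U$ carry no additional information beyond derivatives of the first component; the rest is the application of a standard result.
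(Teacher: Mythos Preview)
Your plan differs from the paper's: you want to show the first-order operator $\mathcal{T}U=U'-\mathcal{A}(\cdot,\unu;\lambda)U$ is bijective from $H^1(\R,\C^{km})$ to $L^2(\R,\C^{km})$ and then invoke a Palmer-type equivalence. The paper instead establishes solvability of the inhomogeneous first-order problem only for data $\psi\in H^{k-1}(\R,\C^{km})$, applies Massera--Sch\"affer's admissibility theory~\cite[Theorem~64.B]{MasseraSchaefer1966} to obtain exponential dichotomies on each half-line separately, and then pastes them using Lemma~\ref{l:pastingexpdi} after checking that the projections at $x=0$ have complementary range and kernel via an adjoint argument.

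There is a genuine gap in your surjectivity claim for $\mathcal{T}$. Your correspondence via $\iota$ shows only that $\mathcal{T}$ hits every right-hand side of the special form $(0,\ldots,0,h)^\top$: for such data the solution $U$ automatically satisfies $U_{i+1}=U_i'$ and comes from a scalar $H^k$-function. For a general $\psi=(\psi_1,\ldots,\psi_k)\in L^2(\R,\C^{km})$ with nonzero $\psi_1,\ldots,\psi_{k-1}$, the putative solution does not lie in $\mathrm{ran}(\iota)$, and reducing to the scalar resolvent equation requires differentiating $\psi_1,\ldots,\psi_{k-1}$, which is unavailable for merely $L^2$ data. The paper's explicit formula $g=\sum_{i=1}^k\alpha_i\sum_{j=1}^i\partial_x^{i-j}\psi_j$ for the scalar inhomogeneity makes this regularity obstruction visible and is exactly why it restricts to $\psi\in H^{k-1}$ and uses admissibility rather than full bijectivity. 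To close your argument you would need to show $\ker(\mathcal{T}^*)=\{0\}$ together with closed range --- essentially the adjoint computation the paper carries out in its pasting step --- or run a density argument with an a-priori solution bound; neither is the routine bookkeeping you describe.
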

\begin{proof}
By assumption there exists for each $g \in L^2(\R)$ a unique solution $u \in H^k(\R)$ of the resolvent problem 
\begin{align} \label{resolventprob1pulse}
\left(\El(\unu) - \lambda\right) u = g.
\end{align}
This implies that for each $\psi \in H^{k-1}(\R)$ the inhomogeneous problem
\begin{align} \label{inhomsys1pulse}
U' = \A\left(x,\unu(x);\lambda \right) U + \psi
\end{align}
possesses a solution $U \in H^1(\R)$, which is given by $U = (u,\partial_x u - \psi_1,\ldots,\partial_x^{k-1} u- \sum_{i=1}^{k-1} \partial_x^{k-1-i} \psi_i)^\top$, where $u \in H^k(\R)$ is the solution of the resolvent problem~\eqref{resolventprob1pulse} with inhomogeneity 
$$g = \sum_{i = 1}^k \alpha_i \sum_{j = 1}^i \partial_x^{i-j} \psi_j \in L^2(\R).$$ 

Let $\mathcal{J} = (-\infty,0]$ or $\mathcal{J} = [0,\infty)$. For each $\psi \in H^{k-1}(\mathcal{J})$ we find a solution $U \in H^1(\mathcal{J})$ of~\eqref{inhomsys1pulse}. In the language of Massera and Sch\"affer, the pair $(H^{k-1}(\mathcal{J}),H^1(\mathcal{J}))$ is regularly admissible for equation~\eqref{inhomsys1pulse}, cf.~\cite[\S51]{MasseraSchaefer1966}. Moreover, in the ordered lattice $\mathrm{b}\mathcal{N}(\mathcal{J})$ of all Banach spaces, which are stronger than the Bochner space $L(\mathcal{J})$ of strongly measurable, locally Bochner integrable functions $h \colon \mathcal{J} \to \C^{km}$ endowed with the topology of convergence in mean, the space $H^{k-1}(\mathcal{J})$ is not weaker than $L^1(\mathcal{J})$, and $H^1(\mathcal{J})$ is not stronger than $L^\infty_0(\mathcal{J})$, where $L^\infty_0(\mathcal{J})$ is the $L^\infty$-closure of all functions with compact (essential) support in $L^\infty(\mathcal{J})$ endowed with the supremum norm, see~\cite[\S20 and~\S21]{MasseraSchaefer1966}. That is, the pair $(H^{k-1}(\mathcal{J}),H^1(\mathcal{J}))$ is not weaker than $(L^1(\mathcal{J}),L_0^\infty(\mathcal{J}))$, cf.~\cite[\S50]{MasseraSchaefer1966}. Now,~\cite[Theorem~64.B]{MasseraSchaefer1966} yields that system~\eqref{first-order_formulation} has exponential dichotomies on both half-lines, $\mathcal{J} = (-\infty,0]$ and $\mathcal{J} = [0,\infty)$. We denote by $P_{\pm}(\pm x), x \geq 0$ the associated projections. 

We show that the exponential dichotomies for~\eqref{first-order_formulation} on both half-lines can be pasted together to yield an exponential dichotomy for~\eqref{first-order_formulation} on $\R$. First, we observe that it must hold $\ker(P_{-}(0)) \cap \mathrm{ran}(P_{+}(0)) = \{0\}$, since any solution $U \in H^1(\R)$ of~\eqref{first-order_formulation} with $U(0) \in \ker(P_{-}(0)) \cap \mathrm{ran}(P_{+}(0))$ is exponentially localized and, thus, generates an element $u = U_1 \in H^k(\R)$ lying in the kernel of $\El(\unu) - \lambda$, which is invertible. On the other hand, the adjoint problem
\begin{align} \label{homsys1pulsadjoint}
U' = -\A\left(x,\unu(x);\lambda\right)^* U
\end{align}
has the evolution $\Phi_{\mathrm{ad}}(x,y) = \Phi(y,x)^*$, where $\Phi(x,y)$ is the evolution of~\eqref{first-order_formulation}. Therefore,~\eqref{homsys1pulsadjoint} also possesses exponential dichotomies on both half lines with projections $I-P_{\pm}(\pm x)^*, x \geq 0$. Any solution $U \in H^1(\R)$ of~\eqref{homsys1pulsadjoint} with $U(0) \in \ker(I-P_{-}(0)^*) \cap \mathrm{ran}(I-P_{+}(0)^*) =\ker(P_{-}(0))^\perp \cap \mathrm{ran}(P_{+}(0))^\perp$ yields an element $u = U_k \in H^k(\R)$ in the kernel of the adjoint operator $(\El(\unu) - \lambda)^*$, which is invertible since $\El(\unu) - \lambda$ is. Hence, we have $\ker(P_{-}(0))^\perp \cap \mathrm{ran}(P_{+}(0))^\perp = \{0\}$, which, in combination with $\ker(P_{-}(0)) \cap \mathrm{ran}(P_{+}(0)) = \{0\}$, implies that $\ker(P_{-}(0))$ and $\mathrm{ran}(P_{+}(0))$ are complementary subspaces. Hence, Lemma~\ref{l:pastingexpdi} implies that~\eqref{first-order_formulation} admits an exponential dichotomy on $\R$. 
\end{proof}

An important property of exponential dichotomies, often referred to as roughness or robustness, is their persistence under small perturbations, cf.~\cite[Section~4]{Coppel1978}. Additionally, exponential dichotomy projections can be chosen to depend analytically on parameters if the underlying system does, see~\cite[Appendix~A]{DasLatushkin2011} and references therein. Leveraging the results from~\cite{Coppel1978,DasLatushkin2011}, we  show that, if a system, depending analytically on a complex parameter $\lambda$, admits an exponential dichotomy on a half line with $\lambda$-uniform constants, then the exponential dichotomy persists under perturbations and an analytic choice of projection is always possible.

\begin{Lemma} \label{l:projanarough}
Let $n \in \N$ and $k \in \{1,\ldots,n\}$. Let $\Omega \subset \C$ be open and let $\lambda_0 \in \Omega$. Let $A \colon [0,\infty) \times \Omega \to \C^{n \times n}$ be such that $A(\cdot;\lambda)$ is continuous for each $\lambda \in \Omega$ and $A(x;\cdot)$ is analytic for each $x \geq 0$. Suppose that there exist $K,\mu > 0$ such that for each $\lambda \in \Omega$ system
\begin{align} \label{linana}
\phi' = A(x;\lambda) \phi 
\end{align}
has an exponential dichotomy on $[0,\infty)$ with constants $K,\mu > 0$ and projections $P(x;\lambda)$ of rank $k$. Then, there exist constants $C,\delta_0, {\varrho_0},\theta > 0$ such that the closed disk $\overline{B}_{\lambda_0}(\varrho_0)$ lies in $\Omega$ and for all $\delta \in (0,\delta_0)$ and $B \in C([0,\infty),\C^{n \times n})$ with $\|B\|_{L^\infty} \leq \delta$ the perturbed system
\begin{align} \label{linanaper}
    \phi' = \left(A(x;\lambda) + B(x)\right) \phi
\end{align}
has for each $\lambda \in \overline{B}_{\lambda_0}(\varrho_0)$ an exponential dichotomy on $[0,\infty)$ with $\lambda$- and $\delta$-independent constants and projections $Q(x;\lambda)$ satisfying the following properties:
\begin{itemize}
    \item[1.] The map $Q(x; \cdot) \colon B_{\lambda_0}(\varrho_0) \to \C^{n \times n}$ is analytic for each $x \geq 0$.
    \item[2.] We have
    \begin{align*}
\|Q(0;\lambda) - \tilde{P}(\lambda)\| \leq C\delta, \qquad \|\tilde{P}(\lambda)\| \leq C
\end{align*}
for each $\lambda \in \overline{B}_{\lambda_0}(\varrho_0)$, where $\tilde{P}(\lambda)$ is the projection onto $\mathrm{ran}(P(0;\lambda))$ along $\mathrm{ran}(P(0;\lambda_0))^\perp$.
\item[3.] The estimate
\begin{align} \label{projesti2}
\|Q(x;\lambda) - P(x;\lambda)\| \leq C\left(\delta + \eu^{-\theta x}\right)
\end{align}
holds for each $x \geq 0$ and $\lambda \in \overline{B}_{\lambda_0}(\varrho_0)$.
\end{itemize}
\end{Lemma}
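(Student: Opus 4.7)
The strategy is to combine Coppel's roughness theorem with an analytic selection procedure that fixes the otherwise non-unique kernel of the dichotomy projection. First, choose $\varrho_0 > 0$ such that $\overline{B}_{\lambda_0}(\varrho_0) \subset \Omega$. Applying~\cite[Proposition~4.1]{Coppel1978} for each fixed $\lambda \in \overline{B}_{\lambda_0}(\varrho_0)$ with the perturbation $B$ satisfying $\|B\|_{L^\infty} \leq \delta$ sufficiently small, I would obtain an exponential dichotomy of~\eqref{linanaper} on $[0,\infty)$ with $\lambda$- and $\delta$-independent constants $K',\mu' > 0$ and projections $\tilde Q(x;\lambda)$ satisfying the roughness estimate $\|\tilde Q(x;\lambda) - P(x;\lambda)\| \leq C_0 \delta$ uniformly for $x \geq 0$ and $\lambda \in \overline{B}_{\lambda_0}(\varrho_0)$. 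However, $\tilde Q(x;\cdot)$ is not manifestly analytic, because only its range --- the unique stable subspace of~\eqref{linanaper} at $x$ --- is determined by the dichotomy on $[0,\infty)$.

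To promote $\tilde Q$ to an analytic projection, I would use that $\lambda \mapsto \mathrm{ran}(\tilde Q(0;\lambda))$ defines an analytic family of $k$-dimensional subspaces (cf.~\cite[Lemma~A.2]{DasLatushkin2011}) and therefore admits, by~\cite[Section~II.4.2]{Kato1995}, an analytic basis $B_1 \colon B_{\lambda_0}(\varrho_0) \to \C^{n \times k}$. The key idea is then to fix a $\lambda$-independent complement $W := \mathrm{ran}(P(0;\lambda_0))^\perp$; after possibly shrinking $\varrho_0$ and $\delta_0$, the analyticity of $P(0;\cdot)$ together with Coppel's estimate and Lemma~\ref{l:projest2} ensure that $\mathrm{ran}(B_1(\lambda))$ and $W$ remain complementary for all $\lambda \in \overline{B}_{\lambda_0}(\varrho_0)$. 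I would define $Q(0;\lambda)$ as the projection onto $\mathrm{ran}(B_1(\lambda))$ along $W$; taking $B_2$ to be the entrywise complex conjugate of a fixed basis of $\mathrm{ran}(P(0;\lambda_0))$ realizes $W = \{u : z^\top u = 0 \text{ for all } z \in \mathrm{ran}(B_2)\}$, so Lemma~\ref{l:proj_ana} yields analyticity of $Q(0;\cdot)$ on $B_{\lambda_0}(\varrho_0)$. Propagating, I set $Q(x;\lambda) := T_{\lambda,B}(x,0)\, Q(0;\lambda)\, T_{\lambda,B}(0,x)$, where $T_{\lambda,B}$ denotes the evolution of~\eqref{linanaper}; analyticity in $\lambda$ then follows from~\cite[Lemma~2.1.4]{KapitulaPromislow2013}, and since $\mathrm{ran}(Q(0;\lambda)) = \mathrm{ran}(\tilde Q(0;\lambda))$ coincides with the stable subspace at $0$, the family $Q(x;\lambda)$ realizes an exponential dichotomy of~\eqref{linanaper} on $[0,\infty)$ with the same constants $K',\mu'$ (up to adjusting $K'$).

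For the estimates, item~(2) follows from Lemma~\ref{l:projest3} applied with $A = B_1(\lambda)$, $B$ an analytic basis of $\mathrm{ran}(P(0;\lambda))$ produced by~\cite[Section~II.4.2]{Kato1995}, and $C = B_2$; the smallness hypothesis $\|A - B\| = \mathcal{O}(\delta)$ is inherited from $\|\tilde Q(0;\lambda) - P(0;\lambda)\| \leq C_0 \delta$, while $\|\tilde P(\lambda)\| \leq C$ follows from $\tilde P(\lambda_0)$ being an orthogonal projection and continuity. For~(3), the decisive observation is that $D(\lambda) := Q(0;\lambda) - \tilde Q(0;\lambda)$ vanishes on their common range and is valued in that range, so $D(\lambda) = Q(0;\lambda)\,D(\lambda)\,(I - \tilde Q(0;\lambda))$. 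Propagating and using the intertwining identity $(I - \tilde Q(0;\lambda)) T_{\lambda,B}(0,x) = T_{\lambda,B}(0,x)(I - \tilde Q(x;\lambda))$, I would obtain
\begin{align*}
    Q(x;\lambda) - \tilde Q(x;\lambda) = \bigl[T_{\lambda,B}(x,0) Q(0;\lambda)\bigr]\, D(\lambda)\, \bigl[T_{\lambda,B}(0,x)(I-\tilde Q(x;\lambda))\bigr].
\end{align*}
The dichotomy estimates bound each bracketed factor by $K' e^{-\mu' x}$, yielding $\|Q(x;\lambda) - \tilde Q(x;\lambda)\| \leq (K')^2 \|D(\lambda)\| e^{-2\mu' x}$, which combined with Coppel's bound on $\|\tilde Q(x;\lambda) - P(x;\lambda)\|$ gives~\eqref{projesti2} with $\theta = 2\mu'$. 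The main technical obstacle is the analytic selection step and reconciling it with the uniform roughness bounds; the common-kernel comparison provided by Lemma~\ref{l:projest3} and the exponential decay of projection differences with a common range are precisely what make the two ingredients fit together.
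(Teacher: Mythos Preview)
Your overall strategy coincides with the paper's: apply Coppel's roughness theorem to obtain projections $\tilde Q(x;\lambda)$ with $\|\tilde Q(x;\lambda)-P(x;\lambda)\|\leq C_0\delta$, fix the non-unique kernel by projecting onto the uniquely determined stable subspace along the $\lambda$-independent complement $W=\mathrm{ran}(P(0;\lambda_0))^\perp$, propagate via the perturbed evolution, and obtain analyticity from Lemma~\ref{l:proj_ana}. Your argument for item~(3) is correct and is precisely the half-line version of the paper's Lemma~\ref{l:projest}.

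There is, however, a gap in your derivation of item~(2). You invoke Lemma~\ref{l:projest3} with $A=B_1(\lambda)$ an analytic basis of $\mathrm{ran}(\tilde Q(0;\lambda))$ produced by Kato's construction, and $B$ an analytic basis of $\mathrm{ran}(P(0;\lambda))$, also from Kato, asserting that $\|A-B\|=\mathcal{O}(\delta)$ ``is inherited from $\|\tilde Q(0;\lambda)-P(0;\lambda)\|\leq C_0\delta$''. This inference is unjustified: Kato's analytic basis is not canonical---it depends on an initial choice and, in your setting, implicitly on the perturbation $B$---so two independently constructed analytic bases of $\delta$-close subspaces can differ by an arbitrary invertible $k\times k$ factor and need not be $\mathcal{O}(\delta)$-close. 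The paper sidesteps this by decoupling the estimate from analyticity: it fixes an analytic basis $B_{\mathrm{s}}(\lambda)$ of $\mathrm{ran}(P(0;\lambda))$ and then \emph{defines} the generally non-analytic basis $\check B_{\mathrm{s}}(\lambda):=\tilde Q(0;\lambda)B_{\mathrm{s}}(\lambda)$ of $\mathrm{ran}(\tilde Q(0;\lambda))$, for which
\[
\bigl\|\check B_{\mathrm{s}}(\lambda)-B_{\mathrm{s}}(\lambda)\bigr\|=\bigl\|(\tilde Q(0;\lambda)-P(0;\lambda))B_{\mathrm{s}}(\lambda)\bigr\|\leq C_0\delta\,\sup_{\overline B_{\lambda_0}(\varrho_0)}\|B_{\mathrm{s}}\|
\]
holds automatically. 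Lemma~\ref{l:projest3} with $A=B_{\mathrm{s}}(\lambda)$, $B=\check B_{\mathrm{s}}(\lambda)$, $C=B_{\mathrm{s}}(\lambda_0)$ then gives both $\|Q(0;\lambda)-\tilde P(\lambda)\|\leq C\delta$ and $\|\tilde P(\lambda)\|\leq C$; analyticity of $Q(0;\lambda)$ is argued separately via Lemma~\ref{l:proj_ana}, using any analytic basis of $\mathrm{ran}(\tilde Q(0;\lambda))$. With this adjustment your proof goes through.
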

\begin{proof}
Since system~\eqref{linana} depends analytically on $\lambda$ and the subspace $\mathrm{ran}(P(0;\lambda))$ is by~\cite[p.~19]{Coppel1978} uniquely determined,~\cite[Lemma~A.2]{DasLatushkin2011} and~\cite[Section~II.4.2]{Kato1995} yield that there exists an analytic map $B_{\mathrm{s}} \colon \Omega \to \C^{n \times k}$ such that $B_{\mathrm{s}}(\lambda)$ is a basis of $\mathrm{ran}(P(0;\lambda))$ for each $\lambda \in \Omega$. Since $B_{\mathrm{s}}$ is analytic, there exists a closed disk $\overline{B}_{\lambda_0}(\varrho_0) \subset \Omega$ of some radius $\varrho_0 > 0$ such that $\det(B_\mathrm{s}(\lambda_0)^* B_\mathrm{s}(\lambda)) \neq 0$ for all $\lambda \in \overline{B}_{\lambda_0}(\varrho_0)$. Thus, $\mathrm{ran}(P(0;\lambda_0))^\perp$ complements $\mathrm{ran}(P(0;\lambda))$ for all $\lambda \in \overline{B}_{\lambda_0}(\varrho_0)$.

By roughness of exponential dichotomies, cf.~\cite[Theorem~4.1]{Coppel1978}, there exists a constant $\delta_0 > 0$ such that, for each $\delta \in (0,\delta_0)$ and $B \in C([0,\infty),\C^{n \times n})$ with $\|B\|_{L^\infty} \leq \delta$, the perturbed system~\eqref{linanaper} admits an exponential dichotomy on $[0,\infty)$ with constants $K_1,\mu_1 > 0$, depending on $K$ and $\mu$ only, and projections $\mathcal{Q}(x;\lambda)$ satisfying
\begin{align} \label{projest33}
\left\|\mathcal{Q}(x;\lambda) - P(x;\lambda)\right\| \leq K_1\delta
\end{align}
for all $x \geq 0$ and $\lambda \in \overline{B}_{\lambda_0}(\varrho_0)$. 

Let $\delta \in (0,\delta_0)$ and $B \in C([0,\infty),\C^{n \times n})$ with $\|B\|_{L^\infty} \leq \delta$. Since~\eqref{linanaper} depends analytically on $\lambda$ and the subspace $\mathrm{ran}(\mathcal{Q}(x;\lambda))$ is by~\cite[p.~19]{Coppel1978} uniquely determined,~\cite[Lemma~A.2]{DasLatushkin2011} and~\cite[Section~II.4.2]{Kato1995} yield that $\mathrm{ran}(\mathcal{Q}(x;\lambda))$ possesses a basis, which is analytic in $\lambda$ on $B_{\lambda_0}(\varrho_0)$. Set $\check{B}_{\mathrm{s}}(\lambda) = \mathcal{Q}(0;\lambda) B_{\mathrm{s}}(\lambda)$. Estimate~\eqref{projest33}, analyticity of $B_{\mathrm{s}}$ on $\Omega$ and compactness of $\overline{B}_{\lambda_0}(\varrho_0)$ yield a $\lambda$- and $\delta$-independent constant $M_0 > 0$ such that it holds
\begin{align*}
\left\|B_{\mathrm{s}}(\lambda) - \check{B}_{\mathrm{s}}(\lambda)\right\| \leq M_0\delta, \qquad \left\|B_{\mathrm{s}}(\lambda)\right\| \leq M_0
\end{align*}
for each $\lambda \in \overline{B}_{\lambda_0}(\varrho_0)$. So, taking $\delta_0 > 0$ smaller if necessary, $\check{B}_{\mathrm{s}}(\lambda)$ is a basis of $\mathrm{ran}(\mathcal{Q}(0;\lambda))$ and we can arrange for
\begin{align*}
\left\|B_{\mathrm{s}}(\lambda) - \check{B}_{\mathrm{s}}(\lambda)\right\| \leq \frac{1}{2 \left\|\left(B_{\mathrm{s}}(\lambda_0)^* B_{\mathrm{s}}(\lambda)\right)^{-1}\right\| \|B_{\mathrm{s}}(\lambda_0)\|}
\end{align*}
for each $\lambda \in \overline{B}_{\lambda_0}(\varrho_0)$. Hence, taking $\delta_0 > 0$ smaller if necessary, Lemma~\ref{l:projest3} demonstrates that the projections $\tilde{P}(\lambda)$ onto $\mathrm{ran}(P(0;\lambda))$ along $\mathrm{ran}(P(0;\lambda_0))^\perp$ and $Q(0;\lambda)$ onto $\mathrm{ran}(\mathcal{Q}(0;\lambda))$ along $\mathrm{ran}(P(0;\lambda_0))^\perp$ are well-defined and there exists a $\lambda$- and $\delta$-independent constant $M_1 > 0$ such that
\begin{align*}
\|Q(0;\lambda) - \tilde{P}(\lambda)\| \leq M_1 \delta, \qquad \|\tilde{P}(\lambda)\|, \left\|Q(0;\lambda)\right\| \leq M_1
\end{align*}
for each $\lambda \in \overline{B}_{\lambda_0}(\varrho_0)$. Since $\mathrm{ran}(P(0;\lambda))$ and $\mathrm{ran}(\mathcal{Q}(0;\lambda))$ have bases which are analytic in $\lambda$ on $B_{\lambda_0}(\varrho_0)$ and the subspace $\mathrm{ran}(P(0;\lambda_0))^\perp$ is independent of $\lambda$, the projections $\tilde{P}(\lambda)$ and $Q(0;\lambda)$ are analytic in $\lambda$ on $B_{\lambda_0}(\varrho_0)$ by Lemma~\ref{l:proj_ana}. Hence, recalling that~\eqref{linanaper} has an exponential dichotomy on $[0,\infty)$ with constants $K_1,\mu_1 > 0$ and projections $\mathcal{Q}(x;\lambda)$, the exposition in~\cite[pp.~16-17]{Coppel1978} implies that~\eqref{linanaper} admits an exponential dichotomy on $[0,\infty)$ with constants $C_1,\mu_1 > 0$ with $C_1 = K_1 + K_1^2 M_1 + K_1^3$ and projections $Q(x;\lambda) = T(x,0;\lambda) Q(0;\lambda) T(0,x;\lambda)$ for each $\lambda \in \overline{B}_{\lambda_0}(\varrho_0)$, where $T(x,y;\lambda)$ is the evolution of system~\eqref{linanaper} which depends analytically on $\lambda$ by~\cite[Lemma~2.1.4]{KapitulaPromislow2013}. Therefore, $Q(x;\lambda)$ is for each $x \geq 0$ analytic in $\lambda$ on $B_{\lambda_0}(\varrho_0)$. Finally, Lemma~\ref{l:projest} yields
\begin{align} \label{projestanq}
\left\|Q(x;\lambda) - \mathcal{Q}(x;\lambda) \right\| \leq 2C_1 K_1  \eu^{-2\mu_1 x}
\end{align}
for each $\lambda \in \overline{B}_{\lambda_0}(\varrho_0)$ and $x \geq 0$. Combining~\eqref{projestanq} with~\eqref{projest33} we arrive at~\eqref{projesti2}, which completes the proof.
\end{proof}

\section{Compactness of a multiplication operator} \label{app:compact}

In this appendix, we prove an auxiliary compactness result for  multiplication operators mapping from $H^1(\R)$ into $L^2(\R)$.

\begin{Lemma} \label{l:compact}
Let $g \in H^1(\R)$. The multiplication operator $A \colon H^1(\R) \to L^2(\R)$ given by $Au = g u$ is well-defined and compact.
\end{Lemma}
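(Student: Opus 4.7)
My plan is to prove well-definedness by embedding, then compactness by approximating $g$ with compactly supported cut-offs and invoking Rellich–Kondrachov.

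First, I would establish that $A$ is well-defined and bounded. Since we are in one space dimension, we have the continuous embedding $H^1(\R) \hookrightarrow L^\infty(\R)$, so $\|gu\|_{L^2} \leq \|g\|_{L^\infty}\|u\|_{L^2} \leq C\|g\|_{H^1}\|u\|_{H^1}$ for all $u \in H^1(\R)$. Hence $A\colon H^1(\R) \to L^2(\R)$ is bounded.

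For compactness, I would fix a family of smooth cut-off functions $\chi_n \in C_c^\infty(\R)$ with $\chi_n \equiv 1$ on $[-n,n]$, $\mathrm{supp}(\chi_n) \subset [-n-1,n+1]$, and $\|\chi_n'\|_{L^\infty}$ bounded uniformly in $n$, and then set $g_n := \chi_n g \in H^1(\R)$. Each truncated operator $A_n u := g_n u$ maps $H^1(\R)$ into $H^1(\R)$ with range contained in functions supported in $I_n := [-n-1,n+1]$; indeed, using $H^1(\R) \hookrightarrow L^\infty(\R)$ again, one estimates $\|g_n u\|_{H^1} \leq C_n \|g\|_{H^1}\|u\|_{H^1}$. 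Since the inclusion $H^1(I_n) \hookrightarrow L^2(I_n)$ is compact by the Rellich–Kondrachov theorem, it follows that $A_n \colon H^1(\R) \to L^2(\R)$ is compact.

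The final step is to show $A_n \to A$ in operator norm, so that $A$ is compact as a uniform limit of compact operators. I would estimate, for $u \in H^1(\R)$,
\begin{align*}
\|(A - A_n)u\|_{L^2} = \|(g - g_n) u\|_{L^2} \leq \|g - g_n\|_{L^\infty}\|u\|_{L^2} \leq C\|g - g_n\|_{H^1}\|u\|_{H^1}.
\end{align*}
It thus suffices to check that $g_n \to g$ in $H^1(\R)$. For this, write $g - g_n = (1-\chi_n)g$, whose $L^2$-norm tends to $0$ by dominated convergence (since $g \in L^2$), and whose derivative $-\chi_n' g + (1-\chi_n) g'$ is controlled in $L^2$ by the uniform bound on $\|\chi_n'\|_{L^\infty}$ together with $g \in L^2$ and $g' \in L^2$, again via dominated convergence on the tails $\{|x| \geq n\}$. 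This yields $\|g - g_n\|_{H^1} \to 0$, hence $\|A - A_n\| \to 0$, completing the proof.

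No step poses a real obstacle: the key inputs are the one-dimensional Sobolev embedding $H^1(\R) \hookrightarrow L^\infty(\R)$ (ensuring $A$ is well-defined and giving the operator-norm bound on $A - A_n$) and Rellich–Kondrachov on bounded intervals (giving compactness of each $A_n$). The only mildly delicate point is arranging the cut-off family so that the derivative $\chi_n'$ contributes nothing in the limit, which is why I insist on the uniform bound $\|\chi_n'\|_{L^\infty} \leq C$ together with $\mathrm{supp}(\chi_n') \subset \{n \leq |x| \leq n+1\}$.
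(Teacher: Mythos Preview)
Your proof is correct. Both your argument and the paper's rely on the same two ingredients --- the one-dimensional Sobolev embedding $H^1(\R)\hookrightarrow L^\infty(\R)$ and Rellich--Kondrachov on bounded intervals --- but the packaging differs. The paper proves precompactness of $A[V]$ directly: for a bounded set $V\subset H^1(\R)$ it controls the $L^2$-tails of $gu$ using $\|g\|_{L^2(\R\setminus[-R,R])}\to 0$, and covers the restrictions to $[-R,R]$ by a finite net via Rellich--Kondrachov. You instead approximate $A$ in operator norm by the compact operators $A_n$ obtained from truncating $g$, invoking the fact that a norm limit of compact operators is compact. Your route is arguably more modular and reusable, while the paper's direct total-boundedness argument needs only the $L^2$-tail decay of $g$ (not the full $H^1$-convergence $g_n\to g$), so it would extend slightly more readily to weaker hypotheses on $g$.
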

\begin{proof}
The fact that $A$ is well-defined follows directly from the embedding $H^1(\R) \hookrightarrow L^\infty(\R)$. Let $V \subset H^1(\R)$ be a bounded subset and let $\eps > 0$. Since the embedding $H^1(\R) \hookrightarrow L^\infty(\R)$ is continuous, there exists $K > 0$ such that for each $u \in V$ we have $\|u\|_{L^\infty} \leq K$. There exists $R > 0$ such that
\begin{align*}
    \int_{\R \setminus [-R,R]} |g(x)|^2 \de x \leq \frac{\eps^2}{2K^2},
\end{align*}
implying
\begin{align*}
 \int_{\R \setminus [-R,R]} |(Au)(x)|^2 \de x \leq \|u\|_{L^\infty}^2 \int_{\R \setminus [-R,R]} |g(x)|^2 \de x \leq \frac{\eps^2}{2},
\end{align*}
for all $u \in V$. 

By the Rellich-Kondrachov theorem the embedding $H^1((-R,R)) \hookrightarrow L^2((-R,R))$ is compact. The set $W = \{(g u)|_{(-R,R)} : u \in V\}$ is bounded in $H^1((-R,R))$, because we have $\|(gu)|_{(-R,R)}\|_{H^1} \leq \|g\|_{H^1} \|u\|_{L^\infty} \leq K \|g\|_{H^1}$ for each $u \in V$, where $h|_{(-R,R)}$ denotes the restriction of a function $h \in H^1(\R)$ to the interval $(-R,R)$. So, by compactness of the embedding $H^1((-R,R)) \hookrightarrow L^2((-R,R))$, there exists a finite subset $G \subset L^2((-R,R))$ such that for each $h \in W$ there exists $f \in G$ with
\begin{align*}
\int_{-R}^R |h(x) - f(x)|^2 \de x \leq \frac{\eps^2}{2}. 
\end{align*}

We conclude that for each $u \in V$ there exists $f \in G$ such that
\begin{align*}
\int_\R |(Au)(x) - f(x) \mathbf{1}_{(-R,R)}(x)|^2 \de x =  \int_{\R \setminus [-R,R]} |(Au)(x)|^2 \de x + \int_{-R}^R |g(x) u(x) - f(x)|^2 \de x \leq \eps^2,
\end{align*}
where $\mathbf{1}_{(-R,R)}$ is the indicator function of the interval $(-R,R)$, whence $\|Au - f\mathbf{1}_{(-R,R)}\|_{L^2} \leq \eps$. We conclude that the set $A[V]$ is precompact in $L^2(\R)$, which concludes the proof.
\end{proof}

\bibliographystyle{abbrv}
\bibliography{bibliography}

\end{document}